\providecommand{\U}[1]{\protect\rule{.1in}{.1in}}
\theoremstyle{definition}
\newtheorem{theo}{Theorem}[section]
\newenvironment{theorem}[1][]
{\begin{theo}[#1]\begin{leftbar}}
{\end{leftbar}\end{theo}}
\newtheorem{lem}[theo]{Lemma}
\newenvironment{lemma}[1][]
{\begin{lem}[#1]\begin{leftbar}}
{\end{leftbar}\end{lem}}
\newtheorem{prop}[theo]{Proposition}
\newenvironment{proposition}[1][]
{\begin{prop}[#1]\begin{leftbar}}
{\end{leftbar}\end{prop}}
\newtheorem{defi}[theo]{Definition}
\newenvironment{definition}[1][]
{\begin{defi}[#1]\begin{leftbar}}
{\end{leftbar}\end{defi}}
\newtheorem{remk}[theo]{Remark}
\newenvironment{remark}[1][]
{\begin{remk}[#1]\begin{leftbar}}
{\end{leftbar}\end{remk}}
\newtheorem{coro}[theo]{Corollary}
\newenvironment{corollary}[1][]
{\begin{coro}[#1]\begin{leftbar}}
{\end{leftbar}\end{coro}}
\newtheorem{conv}[theo]{Convention}
\newenvironment{convention}[1][]
{\begin{conv}[#1]\begin{leftbar}}
{\end{leftbar}\end{conv}}
\newtheorem{quest}[theo]{Question}
\newenvironment{question}[1][]
{\begin{quest}[#1]\begin{leftbar}}
{\end{leftbar}\end{quest}}
\newtheorem{warn}[theo]{Warning}
\newenvironment{warning}[1][]
{\begin{warn}[#1]\begin{leftbar}}
{\end{leftbar}\end{warn}}
\newtheorem{conj}[theo]{Conjecture}
\newtheorem{exam}[theo]{Example}
\newenvironment{example}[1][]
{\begin{exam}[#1]\begin{leftbar}}
{\end{leftbar}\end{exam}}
\newenvironment{statement}{\begin{quote}}{\end{quote}}
\let\sumnonlimits\sum
\let\prodnonlimits\prod
\let\cupnonlimits\bigcup
\let\capnonlimits\bigcap
\renewcommand{\sum}{\sumnonlimits\limits}
\renewcommand{\prod}{\prodnonlimits\limits}
\renewcommand{\bigcup}{\cupnonlimits\limits}
\renewcommand{\bigcap}{\capnonlimits\limits}
\newenvironment{verlong}{}{}
\newenvironment{vershort}{}{}
\newcommand{\are}{\ar@{-}}
\newcommand{\sslash}{\mathbin{/\mkern-6mu/}}
\begin{document}

\title{The pre-Pieri rules}
\author{Darij Grinberg}
\date{May 21, 2026}
\maketitle

\begin{abstract}
\textbf{Abstract.} Let $R$ be a commutative ring and $n\geq1$ and $p\geq0$ two
integers. Let $\mathbb{N}=\left\{  0,1,2,\ldots\right\}  $. Let $h_{k,\ i}$ be
an element of $R$ for all $k\in\mathbb{Z}$ and $i\in\left[  n\right]  $. For
any $\alpha\in\mathbb{Z}^{n}$, we define%
\[
t_{\alpha}:=\det\left(
\begin{array}
[c]{cccc}%
h_{\alpha_{1}+1,\ 1} & h_{\alpha_{1}+2,\ 1} & \cdots & h_{\alpha_{1}+n,\ 1}\\
h_{\alpha_{2}+1,\ 2} & h_{\alpha_{2}+2,\ 2} & \cdots & h_{\alpha_{2}+n,\ 2}\\
\vdots & \vdots & \ddots & \vdots\\
h_{\alpha_{n}+1,\ n} & h_{\alpha_{n}+2,\ n} & \cdots & h_{\alpha_{n}+n,\ n}%
\end{array}
\right)  \in R
\]
(where $\alpha_{i}$ denotes the $i$-th entry of $\alpha$). Then, we have the
identity%
\[
\sum_{\substack{\beta\in\mathbb{N}^{n};\\\left\vert \beta\right\vert
=p}}t_{\alpha+\beta}=\det\left(
\begin{array}
[c]{ccccc}%
h_{\alpha_{1}+1,\ 1} & h_{\alpha_{1}+2,\ 1} & \cdots & h_{\alpha_{1}+\left(
n-1\right)  ,\ 1} & h_{\alpha_{1}+\left(  n+p\right)  ,\ 1}\\
h_{\alpha_{2}+1,\ 2} & h_{\alpha_{2}+2,\ 2} & \cdots & h_{\alpha_{2}+\left(
n-1\right)  ,\ 2} & h_{\alpha_{2}+\left(  n+p\right)  ,\ 2}\\
\vdots & \vdots & \ddots & \vdots & \vdots\\
h_{\alpha_{n}+1,\ n} & h_{\alpha_{n}+2,\ n} & \cdots & h_{\alpha_{n}+\left(
n-1\right)  ,\ n} & h_{\alpha_{n}+\left(  n+p\right)  ,\ n}%
\end{array}
\right)
\]
(where $\alpha+\beta$ denotes the entrywise sum of the tuples $\alpha$ and
$\beta$). (The matrix on the right hand side here is the one from the
definition of $t_{\alpha}$, except for its last column, where the
\textquotedblleft$+n$\textquotedblright s have been replaced by
\textquotedblleft$+\left(  n+p\right)  $\textquotedblright s.) Furthermore, if
$p\leq n$, then
\[
\sum_{\substack{\beta\in\left\{  0,1\right\}  ^{n};\\\left\vert \beta
\right\vert =p}}t_{\alpha+\beta}=\det\left(
\begin{array}
[c]{cccc}%
h_{\alpha_{1}+\xi_{1},\ 1} & h_{\alpha_{1}+\xi_{2},\ 1} & \cdots &
h_{\alpha_{1}+\xi_{n},\ 1}\\
h_{\alpha_{2}+\xi_{1},\ 2} & h_{\alpha_{2}+\xi_{2},\ 2} & \cdots &
h_{\alpha_{2}+\xi_{n},\ 2}\\
\vdots & \vdots & \ddots & \vdots\\
h_{\alpha_{n}+\xi_{1},\ n} & h_{\alpha_{n}+\xi_{2},\ n} & \cdots &
h_{\alpha_{n}+\xi_{n},\ n}%
\end{array}
\right)  ,
\]
where $\xi=\left(  1,2,\ldots,n-p,n-p+2,n-p+3,\ldots,n+1\right)  $. We prove
these two identities (in a slightly more general setting, where $R$ is not
assumed commutative) and use them to derive some variants of the Pieri rule
found in the literature.

\textbf{Keywords:} determinantal identities, determinant, Pieri rules,
symmetric functions, Schur functions, immaculate functions.

\textbf{MSC classes (2020):} 15A15, 05E05, 15A24.

\end{abstract}
\tableofcontents

\section*{Introduction}

The Pieri rules in the theory of symmetric functions (see, e.g., \cite[Chapter
I, (5.16) and (5.17)]{Macdon95} or \cite[Theorem 7.15.7 and discussion before
Corollary 7.15.9]{Stanley-EC2}) give simple formulas for multiplying a Schur
function by a complete homogeneous or elementary symmetric function. One of
the simplest ways to state them (sidestepping the combinatorial background and
the geometric motivation) is as follows: We define the ring $\Lambda$ of
symmetric functions as a polynomial ring in countably many indeterminates
$h_{1},h_{2},h_{3},\ldots$ over some commutative ring (say, over $\mathbb{Z}%
$); we furthermore set $h_{0}:=1$ and $h_{i}:=0$ for all negative $i$. If
$\lambda=\left(  \lambda_{1},\lambda_{2},\lambda_{3},\ldots\right)  $ is an
integer partition\footnote{An \emph{integer partition} (or, for short, just
\emph{partition}) means a weakly decreasing sequence $\left(  \lambda
_{1},\lambda_{2},\lambda_{3},\ldots\right)  $ of nonnegative integers such
that all but finitely many $i>0$ satisfy $\lambda_{i}=0$.}, then the
corresponding \emph{Schur function} $s_{\lambda}\in\Lambda$ can be defined by
the formula%
\begin{equation}
s_{\lambda}=\det\left(  \left(  h_{\lambda_{i}-i+j}\right)  _{i,j\in\left[
n\right]  }\right)  , \label{eq.intro.slam=jt}%
\end{equation}
where $n$ is a nonnegative integer satisfying $\lambda_{n+1}=\lambda
_{n+2}=\lambda_{n+3}=\cdots=0$ (note that there are infinitely many possible
values for $n$, but they all give the same $s_{\lambda}$). If the partition
$\lambda$ has the form $\left(  1,1,\ldots,1,0,0,0,\ldots\right)  $ with $k$
many $1$'s, then the corresponding Schur function $s_{\lambda}$ is called
$e_{k}$. (Of course, these are not the usual definitions of $s_{\lambda}$ and
$e_{k}$; see \cite[Chapter I, (3.4) and (3.9)]{Macdon95} for their equivalence
to more standard definitions.) Now, the \emph{first Pieri rule} (\cite[Chapter
I, (5.16)]{Macdon95}) states that for each $p\in\mathbb{N}$ and each partition
$\lambda=\left(  \lambda_{1},\lambda_{2},\lambda_{3},\ldots\right)  $, we have%
\begin{equation}
s_{\lambda}h_{p}=\sum_{\mu}s_{\mu}, \label{eq.intro.pieri1}%
\end{equation}
where the sum ranges over all partitions $\mu=\left(  \mu_{1},\mu_{2},\mu
_{3},\ldots\right)  $ such that $\mu/\lambda$ is a \emph{horizontal }%
$p$\emph{-strip}\footnote{The notion of a \textquotedblleft horizontal
$p$-strip\textquotedblright\ and the notation $\mu/\lambda$ are best explained
in terms of Young diagrams. However, for our purposes, we can define them
algebraically: We say that \textquotedblleft$\mu/\lambda$ is a horizontal
$p$-strip\textquotedblright\ if and only if
\[
\mu_{1}\geq\lambda_{1}\geq\mu_{2}\geq\lambda_{2}\geq\mu_{3}\geq\lambda_{3}%
\geq\cdots\ \ \ \ \ \ \ \ \ \ \text{and }\sum_{i\geq1}\left(  \mu_{i}%
-\lambda_{i}\right)  =p.
\]
}. For example,%
\[
s_{\left(  2,1\right)  }h_{2}=s_{\left(  4,1\right)  }+s_{\left(  3,2\right)
}+s_{\left(  3,1,1\right)  }+s_{\left(  2,2,1\right)  },
\]
where we are using the standard convention of omitting zeroes from a partition
(i.e., we identify a partition $\lambda=\left(  \lambda_{1},\lambda
_{2},\lambda_{3},\ldots\right)  $ with the $n$-tuple $\left(  \lambda
_{1},\lambda_{2},\ldots,\lambda_{n}\right)  $ when $\lambda_{n+1}%
=\lambda_{n+2}=\lambda_{n+3}=\cdots=0$). The \emph{second Pieri rule}
(\cite[Chapter I, (5.17)]{Macdon95}) states that for each $p\in\mathbb{N}$ and
each partition $\lambda=\left(  \lambda_{1},\lambda_{2},\lambda_{3}%
,\ldots\right)  $, we have%
\begin{equation}
s_{\lambda}e_{p}=\sum_{\mu}s_{\mu}, \label{eq.intro.pieri2}%
\end{equation}
where the sum ranges over all partitions $\mu=\left(  \mu_{1},\mu_{2},\mu
_{3},\ldots\right)  $ such that $\mu/\lambda$ is a \emph{vertical }%
$p$\emph{-strip}\footnote{The notion of a \textquotedblleft vertical
$p$-strip\textquotedblright\ and the notation $\mu/\lambda$ are best explained
in terms of Young diagrams. However, for our purposes, we can define them
algebraically: We say that \textquotedblleft$\mu/\lambda$ is a vertical
$p$-strip\textquotedblright\ if and only if
\[
\left(  \mu_{i}-\lambda_{i}\in\left\{  0,1\right\}  \text{ for each }%
i\geq1\right)  \ \ \ \ \ \ \ \ \ \ \text{and }\sum_{i\geq1}\left(  \mu
_{i}-\lambda_{i}\right)  =p.
\]
}.

There is a less-known variant of the first Pieri rule (\ref{eq.intro.pieri1}),
which sometimes appears as a stepping stone to its proof (e.g., in
\cite[\S 2.4]{Tamvak13}). In order to state it, we agree to define a
\textquotedblleft Schur function\textquotedblright\ $s_{\lambda}$ by the
equality (\ref{eq.intro.slam=jt}) not just whenever $\lambda$ is a partition,
but also whenever $\lambda=\left(  \lambda_{1},\lambda_{2},\ldots,\lambda
_{n}\right)  $ is any $n$-tuple of integers (not necessarily nonnegative, not
necessarily weakly decreasing). This does not significantly extend the notion
of a \textquotedblleft Schur function\textquotedblright, since any such
$s_{\lambda}$ equals either $0$ or $\pm s_{\mu}$ for an (honest) partition
$\mu$. (This follows easily from basic properties of determinants.) Now, if
$\lambda=\left(  \lambda_{1},\lambda_{2},\ldots,\lambda_{n}\right)  $ is any
integer partition with $\lambda_{n}=0$, and if $p\in\mathbb{N}$ is arbitrary,
then%
\begin{equation}
s_{\lambda}h_{p}=\sum_{\mu}s_{\mu}, \label{eq.intro.pieri3}%
\end{equation}
where the sum now ranges over all $n$-tuples $\mu=\left(  \mu_{1},\mu
_{2},\ldots,\mu_{n}\right)  \in\mathbb{N}^{n}$ such that
\[
\left(  \mu_{i}\geq\lambda_{i}\text{ for each }i\right)  \text{ and }%
\sum_{i=1}^{n}\left(  \mu_{i}-\lambda_{i}\right)  =p.
\]
In other words, if $\lambda=\left(  \lambda_{1},\lambda_{2},\ldots,\lambda
_{n}\right)  $ is any integer partition with $\lambda_{n}=0$, and if
$p\in\mathbb{N}$ is arbitrary, then%
\begin{equation}
s_{\lambda}h_{p}=\sum_{\substack{\beta=\left(  \beta_{1},\beta_{2}%
,\ldots,\beta_{n}\right)  \in\mathbb{N}^{n};\\\beta_{1}+\beta_{2}+\cdots
+\beta_{n}=p}}s_{\lambda+\beta}, \label{eq.intro.pieri3b}%
\end{equation}
where $\lambda+\beta$ denotes the entrywise sum of the $n$-tuples $\lambda$
and $\beta$.

Note that each addend in the sum in (\ref{eq.intro.pieri1}) is also contained
in the sum in (\ref{eq.intro.pieri3}), but (usually) not the other way around:
An $n$-tuple $\mu$ appearing on the right hand side of (\ref{eq.intro.pieri3})
might fail to be a partition, and even if it is one, it may violate the
\textquotedblleft horizontal $p$-strip\textquotedblright\ condition (by
failing to satisfy $\lambda_{i}\geq\mu_{i+1}$ for some $i$). Some of these
extraneous addends in (\ref{eq.intro.pieri3}) are $0$, while others cancel
each other out. This cancellation argument appears (in some slightly different
contexts) in \cite[Lemma 2]{Tamvak13} and \cite[(2.1) vs. (2.2)]{LakTho07}.

The \textquotedblleft alternative first Pieri rule\textquotedblright%
\ (\ref{eq.intro.pieri3}) aka (\ref{eq.intro.pieri3b}) is itself not hard to
prove. In this note, we shall generalize it in multiple directions. The
ultimate generalization -- our Theorem \ref{thm.pre-pieri} -- we call the
\emph{first pre-Pieri rule}; it is an identity for row-determinants of
matrices over noncommutative rings. We will give an elementary proof of
Theorem \ref{thm.pre-pieri} (using simple combinatorics and manipulation of
sums\footnote{Our proof could be viewed as a sequence of sign-reversing
involutions, although we do not state it in such a form.}) and derive several
corollaries, which include not only (\ref{eq.intro.pieri3b}), but also a
noncommutative \textquotedblleft right-Pieri rule\textquotedblright\ for
immaculate functions due to Berg, Bergeron, Saliola, Serrano and Zabrocki
\cite[Theorem 3.5]{BBSSZ13} as well as a Pieri-like rule for Macdonald's
9th-variation Schur functions \cite[Proposition 3.9]{Fun}.\footnote{It should
also be possible to derive the \textquotedblleft uncancelled Pieri
rule\textquotedblright\ \cite[Theorem 11.7]{basisquot} from our first
pre-Pieri rule, but this will likely require more effort than it is worth.
(Note that \cite[Theorem 11.7]{basisquot} is not about symmetric functions,
but about symmetric polynomials in $k$ variables; on the other hand, unlike
(\ref{eq.intro.pieri3b}), there is no $\lambda_{n}=0$ requirement in
\cite[Theorem 11.7]{basisquot}. These differences are fairly substantial, and
it is not immediately obvious how to bridge them.)}

We will also show a \emph{second pre-Pieri rule}: an analogue of the first
pre-Pieri rule with a parallel retinue of corollaries. One such corollary is
an analogue of (\ref{eq.intro.pieri3b}) for $e_{p}$ instead of $h_{p}$; it
says that if $\lambda=\left(  \lambda_{1},\lambda_{2},\ldots,\lambda
_{n}\right)  $ is any partition, and if $p\in\left\{  0,1,\ldots,n\right\}  $
satisfies $\lambda_{n-p+1}=\lambda_{n-p+2}=\cdots=\lambda_{n}=0$, then%
\begin{equation}
s_{\lambda}e_{p}=\sum_{\substack{\beta=\left(  \beta_{1},\beta_{2}%
,\ldots,\beta_{n}\right)  \in\left\{  0,1\right\}  ^{n};\\\beta_{1}+\beta
_{2}+\cdots+\beta_{n}=p}}s_{\lambda+\beta}. \label{eq.intro.pieri4b}%
\end{equation}
This can be viewed as an alternative version of the second Pieri rule
(\ref{eq.intro.pieri2}), and indeed it is possible to obtain
(\ref{eq.intro.pieri2}) from (\ref{eq.intro.pieri4b}) by removing vanishing
addends. (Unlike for the first Pieri rule, cancellations are not required.)

Finally, we shall speculate on the existence of a \textquotedblleft pre-LR
rule\textquotedblright, which might include both pre-Pieri rules as particular cases.

\section{Notations}

Let us first introduce the notations that will be used throughout this note.

\begin{itemize}
\item Let $R$ be a ring (unital and associative, but not necessarily commutative).

\item Let $\mathbb{N}:=\left\{  0,1,2,\ldots\right\}  $ and $\mathbb{P}%
:=\left\{  1,2,3,\ldots\right\}  $.

\item For any $n\in\mathbb{N}$, we let $\left[  n\right]  $ denote the
$n$-element set $\left\{  1,2,\ldots,n\right\}  $.

\item If $\alpha$ is an $n$-tuple (for some $n\in\mathbb{N}$), and if
$i\in\left[  n\right]  $, then we let $\alpha_{i}$ denote the $i$-th entry of
$\alpha$ (so that $\alpha=\left(  \alpha_{1},\alpha_{2},\ldots,\alpha
_{n}\right)  $).

\item If $\alpha$ is an $n$-tuple of integers (for some $n\in\mathbb{N}$),
then we define $\left\vert \alpha\right\vert :=\alpha_{1}+\alpha_{2}%
+\cdots+\alpha_{n}$.

\item For any $n\in\mathbb{N}$, we let $S_{n}$ denote the $n$-th symmetric
group (i.e., the group of all permutations of the set $\left[  n\right]  $).

\item If $n\in\mathbb{N}$ and $\sigma\in S_{n}$, then we let $\left(
-1\right)  ^{\sigma}$ denote the sign of the permutation $\sigma$.

\item If $n\in\mathbb{N}$, and if we are given an element $a_{i,j}\in R$ for
each pair $\left(  i,j\right)  \in\left[  n\right]  \times\left[  n\right]  $,
then we let $\left(  a_{i,j}\right)  _{i,j\in\left[  n\right]  }$ denote the
$n\times n$-matrix whose $\left(  i,j\right)  $-th entry is $a_{i,j}$ for each
$\left(  i,j\right)  \in\left[  n\right]  \times\left[  n\right]  $. That is,
we let%
\[
\left(  a_{i,j}\right)  _{i,j\in\left[  n\right]  }:=\left(
\begin{array}
[c]{cccc}%
a_{1,1} & a_{1,2} & \cdots & a_{1,n}\\
a_{2,1} & a_{2,2} & \cdots & a_{2,n}\\
\vdots & \vdots & \ddots & \vdots\\
a_{n,1} & a_{n,2} & \cdots & a_{n,n}%
\end{array}
\right)  \in R^{n\times n}.
\]

\item If $A=\left(  a_{i,j}\right)  _{i,j\in\left[  n\right]  }$ is any
$n\times n$-matrix over $R$ (for some $n\in\mathbb{N}$), then we define an
element $\operatorname*{rowdet}A\in R$ by%
\[
\operatorname*{rowdet}A:=\sum_{\sigma\in S_{n}}\left(  -1\right)  ^{\sigma
}a_{1,\sigma\left(  1\right)  }a_{2,\sigma\left(  2\right)  }\cdots
a_{n,\sigma\left(  n\right)  }.
\]
This element is called the \emph{row-determinant} of $A$. When the ring $R$ is
commutative, this row-determinant $\operatorname*{rowdet}A$ is just the usual
determinant of $A$.

\item We regard the set $\mathbb{Z}^{n}$ as a $\mathbb{Z}$-module in the usual
way: i.e., we have%
\begin{align*}
\alpha+\beta &  =\left(  \alpha_{1}+\beta_{1},\ \ \alpha_{2}+\beta
_{2},\ \ \ldots,\ \ \alpha_{n}+\beta_{n}\right)
\ \ \ \ \ \ \ \ \ \ \text{and}\\
\alpha-\beta &  =\left(  \alpha_{1}-\beta_{1},\ \ \alpha_{2}-\beta
_{2},\ \ \ldots,\ \ \alpha_{n}-\beta_{n}\right)
\end{align*}
for any $\alpha\in\mathbb{Z}^{n}$ and $\beta\in\mathbb{Z}^{n}$. Thus,
$\alpha+\beta$ and $\alpha-\beta$ are defined for $\alpha\in\mathbb{N}^{n}$
and $\beta\in\mathbb{N}^{n}$ as well (since $\mathbb{N}^{n}$ is a subset of
$\mathbb{Z}^{n}$).
\end{itemize}

\section{The first pre-Pieri rule}

\subsection{The theorem}

We can now state our \textquotedblleft first pre-Pieri rule\textquotedblright%
\ in full generality:

\begin{theorem}
[first pre-Pieri rule]\label{thm.pre-pieri}Let $n\in\mathbb{P}$ and
$p\in\mathbb{N}$. Let $h_{k,\ i}$ be an element of $R$ for all $k\in
\mathbb{Z}$ and $i\in\left[  n\right]  $.

For any $\alpha\in\mathbb{Z}^{n}$, we define%
\[
t_{\alpha}:=\operatorname*{rowdet}\left(  \left(  h_{\alpha_{i}+j,\ i}\right)
_{i,j\in\left[  n\right]  }\right)  \in R.
\]

Let $\eta$ be the $n$-tuple
\[
\left(  1,2,\ldots,n\right)  +\left(  \underbrace{0,0,\ldots,0}_{n-1\text{
zeroes}},p\right)  =\left(  1,2,\ldots,n-1,n+p\right)  \in\mathbb{Z}^{n}.
\]

Let $\alpha\in\mathbb{Z}^{n}$. Then,%
\begin{equation}
\sum_{\substack{\beta\in\mathbb{N}^{n};\\\left\vert \beta\right\vert
=p}}t_{\alpha+\beta}=\operatorname*{rowdet}\left(  \left(  h_{\alpha_{i}%
+\eta_{j},\ i}\right)  _{i,j\in\left[  n\right]  }\right)  .
\label{eq.thm.pre-pieri.claim}%
\end{equation}

\end{theorem}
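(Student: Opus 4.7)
The plan is to expand both row-determinants as their defining permutation sums and, on the left-hand side, change the summation variable so that the inner sum over $\beta$ can be evaluated in closed form. By definition,
\[
\sum_{\substack{\beta \in \mathbb{N}^n;\\ |\beta|=p}} t_{\alpha+\beta} = \sum_{\sigma \in S_n} (-1)^{\sigma} \sum_{\substack{\beta \in \mathbb{N}^n;\\ |\beta|=p}} \prod_{i=1}^n h_{\alpha_i + \beta_i + \sigma(i),\, i},
\]
where each product is in the fixed order $i=1,2,\ldots,n$, which is what will let the whole argument run verbatim over a noncommutative $R$. For each fixed $\sigma$, I would substitute $c_i := \beta_i + \sigma(i)$; this sets up a bijection between $\{\beta \in \mathbb{N}^n : |\beta|=p\}$ and $\{c \in \mathbb{Z}^n : c_i \geq \sigma(i)\ \forall i,\ |c| = p + \binom{n+1}{2}\}$ (using $\sum_i \sigma(i) = \binom{n+1}{2}$). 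Swapping the order of summation then yields
\[
\sum_{\substack{\beta\in\mathbb{N}^n;\\ |\beta|=p}} t_{\alpha+\beta} = \sum_{\substack{c\in\mathbb{P}^n;\\ |c|=p+\binom{n+1}{2}}} \left(\prod_{i=1}^n h_{\alpha_i + c_i,\, i}\right) A(c), \qquad A(c) := \sum_{\substack{\sigma \in S_n;\\ \sigma(i)\leq c_i\,\forall i}} (-1)^\sigma.
\]

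The crux is to evaluate $A(c)$. By the Leibniz formula, $A(c)$ equals the determinant of the $0/1$ matrix $\left([j \leq c_i]\right)_{i,j \in [n]}$; and since the $i$-th row depends only on $c'_i := \min(c_i, n)$, this determinant vanishes whenever two of the $c'_i$ coincide. So $A(c) = 0$ unless $(c'_1,\ldots,c'_n)$ is a permutation of $[n]$. When it is, let $\pi \in S_n$ be defined by $\pi(i) = c'_i$. I would then observe that the only $\sigma$ contributing to $A(c)$ is $\sigma = \pi$: the inequalities $\sigma(i) \leq \pi(i)$ sum to $\sum_i \sigma(i) \leq \sum_i \pi(i) = \binom{n+1}{2}$, and the total $\sum_i \sigma(i)$ is also $\binom{n+1}{2}$, forcing all these inequalities to be equalities termwise. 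Thus $A(c) = (-1)^\pi$. Unpacking when $(c'_i)_i$ is a permutation of $[n]$: exactly one index $i^\ast$ must satisfy $c_{i^\ast} \geq n$ (so that $c'_{i^\ast} = n$), while $(c_i)_{i\neq i^\ast}$ is a bijection from $[n]\setminus\{i^\ast\}$ to $\{1,2,\ldots,n-1\}$.

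Under this structure, the length constraint $|c| = p + \binom{n+1}{2}$ immediately pins down $c_{i^\ast} = p + \binom{n+1}{2} - \binom{n}{2} = n + p$. So for each $\pi \in S_n$ there is exactly one surviving $c$, and its entries are $c_i = \eta_{\pi(i)}$ (matching the definition of $\eta$, since $\eta_j = j$ for $j < n$ and $\eta_n = n + p$). Substituting back,
\[
\sum_{\substack{\beta\in\mathbb{N}^n;\\ |\beta|=p}} t_{\alpha+\beta} = \sum_{\pi\in S_n} (-1)^\pi \prod_{i=1}^n h_{\alpha_i + \eta_{\pi(i)},\, i} = \operatorname*{rowdet}\left((h_{\alpha_i + \eta_j,\, i})_{i,j \in [n]}\right),
\]
which is precisely the right-hand side of~\eqref{eq.thm.pre-pieri.claim}.

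The main obstacle is the clean evaluation of $A(c)$ and the attendant sign bookkeeping; once it is identified as an indicator determinant and its support narrowed to a single $c$ for each $\pi$, the rest is assembly. Noncommutativity of $R$ never interferes, since every product of $h$'s stays in the canonical order $i=1,2,\ldots,n$ throughout, so no factor is ever commuted past another.
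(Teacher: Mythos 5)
Your proposal is correct, and its overall architecture coincides with the paper's: expand the row-determinant by the Leibniz formula, shift the inner summation variable by $\left(\sigma\left(1\right),\ldots,\sigma\left(n\right)\right)$, interchange the two sums, and recognize the resulting signed sum over $\sigma$ as the determinant of the $0/1$ matrix $\left(\left[c_{i}\geq j\right]\right)_{i,j\in\left[n\right]}$ --- this is exactly the paper's Lemma \ref{lem.nu-det2}. Where you genuinely diverge is in evaluating that determinant. The paper (Lemma \ref{lem.nu-det}, via Lemma \ref{lem.nu-eta}) shows it equals $\sum_{\sigma:\ \nu=\eta\circ\sigma}\left(-1\right)^{\sigma}$ by a pigeonhole argument identifying $\nu$ as a permutation of $\eta$ and then permuting rows to reduce to a lower-triangular matrix. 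You instead truncate $c_{i}^{\prime}:=\min\left(c_{i},n\right)$, observe that the rows depend only on $c^{\prime}$, conclude that the determinant vanishes unless $c^{\prime}$ is a permutation $\pi$ of $\left[n\right]$, and then extract the unique surviving $\sigma=\pi$ from the termwise inequalities $\sigma\left(i\right)\leq\pi\left(i\right)$ together with $\sum_{i}\sigma\left(i\right)=\sum_{i}\pi\left(i\right)$. This is a slicker evaluation (no row-permutation or triangularity lemmas needed), and it shifts the role of $\eta$ from the determinant lemma to the final weight count $\left\vert c\right\vert=p+\binom{n+1}{2}$, which pins down $c_{i^{\ast}}=n+p$. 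One point you use tacitly but correctly: the restriction $c\in\mathbb{P}^{n}$ (forced by $c_{i}=\beta_{i}+\sigma\left(i\right)\geq1$) is what makes ``all $c_{i}^{\prime}$ distinct'' equivalent to ``$c^{\prime}$ is a permutation of $\left[n\right]$''; without it a row could be zero rather than repeated. Since every product of $h$'s stays in the order $i=1,\ldots,n$, the argument is indeed valid over noncommutative $R$.
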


\begin{example}
For this example, set $n=2$ and $p=2$, and let $\alpha\in\mathbb{Z}^{2}$ be
arbitrary. Fix arbitrary elements $h_{k,\ i}\in R$ for all $k\in\mathbb{Z}$
and $i\in\left[  n\right]  $. Then, the $n$-tuple $\eta$ defined in Theorem
\ref{thm.pre-pieri} is $\left(  1,2\right)  +\left(  0,2\right)  =\left(
1,4\right)  $. Hence, (\ref{eq.thm.pre-pieri.claim}) says that%
\[
\sum_{\substack{\beta\in\mathbb{N}^{2};\\\left\vert \beta\right\vert
=2}}t_{\alpha+\beta}=\operatorname*{rowdet}\left(  \left(  h_{\alpha_{i}%
+\eta_{j},\ i}\right)  _{i,j\in\left[  2\right]  }\right)
=\operatorname*{rowdet}\left(
\begin{array}
[c]{cc}%
h_{\alpha_{1}+1,\ 1} & h_{\alpha_{1}+4,\ 1}\\
h_{\alpha_{2}+1,\ 2} & h_{\alpha_{2}+4,\ 2}%
\end{array}
\right)  .
\]
The left hand side of this equality can be rewritten as%
\begin{align*}
&  \sum_{\substack{\beta\in\mathbb{N}^{2};\\\left\vert \beta\right\vert
=2}}\ \ \underbrace{t_{\alpha+\beta}}_{\substack{=\operatorname*{rowdet}%
\left(  \left(  h_{\left(  \alpha+\beta\right)  _{i}+j,\ i}\right)
_{i,j\in\left[  2\right]  }\right)  \\\text{(by the definition of }%
t_{\alpha+\beta}\text{)}}}\\
&  =\sum_{\substack{\beta\in\mathbb{N}^{2};\\\left\vert \beta\right\vert
=2}}\operatorname*{rowdet}\left(  \left(  h_{\left(  \alpha+\beta\right)
_{i}+j,\ i}\right)  _{i,j\in\left[  2\right]  }\right) \\
&  =\sum_{\substack{\beta\in\mathbb{N}^{2};\\\left\vert \beta\right\vert
=2}}\operatorname*{rowdet}\left(  \left(  h_{\alpha_{i}+\beta_{i}%
+j,\ i}\right)  _{i,j\in\left[  2\right]  }\right) \\
&  \ \ \ \ \ \ \ \ \ \ \ \ \ \ \ \ \ \ \ \ \left(  \text{since }\left(
\alpha+\beta\right)  _{i}=\alpha_{i}+\beta_{i}\text{ for all }i\in\left[
2\right]  \right) \\
&  =\sum_{\substack{\beta\in\mathbb{N}^{2};\\\left\vert \beta\right\vert
=2}}\operatorname*{rowdet}\left(
\begin{array}
[c]{cc}%
h_{\alpha_{1}+\beta_{1}+1,\ 1} & h_{\alpha_{1}+\beta_{1}+2,\ 1}\\
h_{\alpha_{2}+\beta_{2}+1,\ 2} & h_{\alpha_{2}+\beta_{2}+2,\ 2}%
\end{array}
\right)
\end{align*}%
\begin{align*}
&  =\operatorname*{rowdet}\left(
\begin{array}
[c]{cc}%
h_{\alpha_{1}+2+1,\ 1} & h_{\alpha_{1}+2+2,\ 1}\\
h_{\alpha_{2}+0+1,\ 2} & h_{\alpha_{2}+0+2,\ 2}%
\end{array}
\right)  +\operatorname*{rowdet}\left(
\begin{array}
[c]{cc}%
h_{\alpha_{1}+1+1,\ 1} & h_{\alpha_{1}+1+2,\ 1}\\
h_{\alpha_{2}+1+1,\ 2} & h_{\alpha_{2}+1+2,\ 2}%
\end{array}
\right) \\
&  \ \ \ \ \ \ \ \ \ \ +\operatorname*{rowdet}\left(
\begin{array}
[c]{cc}%
h_{\alpha_{1}+0+1,\ 1} & h_{\alpha_{1}+0+2,\ 1}\\
h_{\alpha_{2}+2+1,\ 2} & h_{\alpha_{2}+2+2,\ 2}%
\end{array}
\right) \\
&  \ \ \ \ \ \ \ \ \ \ \ \ \ \ \ \ \ \ \ \ \left(
\begin{array}
[c]{c}%
\text{since there are exactly three }2\text{-tuples }\beta\in\mathbb{N}^{2}\\
\text{satisfying }\left\vert \beta\right\vert =2\text{, namely }\left(
2,0\right)  \text{, }\left(  1,1\right)  \text{ and }\left(  0,2\right)
\end{array}
\right) \\
&  =\operatorname*{rowdet}\left(
\begin{array}
[c]{cc}%
h_{\alpha_{1}+3,\ 1} & h_{\alpha_{1}+4,\ 1}\\
h_{\alpha_{2}+1,\ 2} & h_{\alpha_{2}+2,\ 2}%
\end{array}
\right)  +\operatorname*{rowdet}\left(
\begin{array}
[c]{cc}%
h_{\alpha_{1}+2,\ 1} & h_{\alpha_{1}+3,\ 1}\\
h_{\alpha_{2}+2,\ 2} & h_{\alpha_{2}+3,\ 2}%
\end{array}
\right) \\
&  \ \ \ \ \ \ \ \ \ \ +\operatorname*{rowdet}\left(
\begin{array}
[c]{cc}%
h_{\alpha_{1}+1,\ 1} & h_{\alpha_{1}+2,\ 1}\\
h_{\alpha_{2}+3,\ 2} & h_{\alpha_{2}+4,\ 2}%
\end{array}
\right)  .
\end{align*}
Therefore, (\ref{eq.thm.pre-pieri.claim}) rewrites as%
\begin{align*}
&  \operatorname*{rowdet}\left(
\begin{array}
[c]{cc}%
h_{\alpha_{1}+3,\ 1} & h_{\alpha_{1}+4,\ 1}\\
h_{\alpha_{2}+1,\ 2} & h_{\alpha_{2}+2,\ 2}%
\end{array}
\right)  +\operatorname*{rowdet}\left(
\begin{array}
[c]{cc}%
h_{\alpha_{1}+2,\ 1} & h_{\alpha_{1}+3,\ 1}\\
h_{\alpha_{2}+2,\ 2} & h_{\alpha_{2}+3,\ 2}%
\end{array}
\right) \\
&  \ \ \ \ \ \ \ \ \ \ +\operatorname*{rowdet}\left(
\begin{array}
[c]{cc}%
h_{\alpha_{1}+1,\ 1} & h_{\alpha_{1}+2,\ 1}\\
h_{\alpha_{2}+3,\ 2} & h_{\alpha_{2}+4,\ 2}%
\end{array}
\right) \\
&  =\operatorname*{rowdet}\left(
\begin{array}
[c]{cc}%
h_{\alpha_{1}+1,\ 1} & h_{\alpha_{1}+4,\ 1}\\
h_{\alpha_{2}+1,\ 2} & h_{\alpha_{2}+4,\ 2}%
\end{array}
\right)  .
\end{align*}
This is easy to check directly by expanding all four row-determinants.
\end{example}

\begin{verlong}
\begin{example}
For another example, set $n=3$ and $p=2$, and let $\alpha\in\mathbb{Z}^{3}$ be
arbitrary. Fix arbitrary elements $h_{k,\ i}\in R$ for all $k\in\mathbb{Z}$
and $i\in\left[  n\right]  $. Then, the $n$-tuple $\eta$ defined in Theorem
\ref{thm.pre-pieri} is $\left(  1,2,3\right)  +\left(  0,0,2\right)  =\left(
1,2,5\right)  $. Hence, (\ref{eq.thm.pre-pieri.claim}) says that%
\begin{align*}
\sum_{\substack{\beta\in\mathbb{N}^{3};\\\left\vert \beta\right\vert
=2}}t_{\alpha+\beta}  &  =\operatorname*{rowdet}\left(  \left(  h_{\alpha
_{i}+\eta_{j},\ i}\right)  _{i,j\in\left[  3\right]  }\right) \\
&  =\operatorname*{rowdet}\left(
\begin{array}
[c]{ccc}%
h_{\alpha_{1}+1,\ 1} & h_{\alpha_{1}+2,\ 1} & h_{\alpha_{1}+5,\ 1}\\
h_{\alpha_{2}+1,\ 2} & h_{\alpha_{2}+2,\ 2} & h_{\alpha_{2}+5,\ 2}\\
h_{\alpha_{3}+1,\ 3} & h_{\alpha_{3}+2,\ 3} & h_{\alpha_{3}+5,\ 3}%
\end{array}
\right)  .
\end{align*}
The left hand side of this equality can be rewritten as%
\begin{align*}
&  \sum_{\substack{\beta\in\mathbb{N}^{3};\\\left\vert \beta\right\vert
=2}}\ \ \underbrace{t_{\alpha+\beta}}_{\substack{=\operatorname*{rowdet}%
\left(  \left(  h_{\left(  \alpha+\beta\right)  _{i}+j,\ i}\right)
_{i,j\in\left[  3\right]  }\right)  \\\text{(by the definition of }%
t_{\alpha+\beta}\text{)}}}\\
&  =\sum_{\substack{\beta\in\mathbb{N}^{3};\\\left\vert \beta\right\vert
=2}}\operatorname*{rowdet}\left(  \left(  h_{\left(  \alpha+\beta\right)
_{i}+j,\ i}\right)  _{i,j\in\left[  3\right]  }\right) \\
&  =\sum_{\substack{\beta\in\mathbb{N}^{3};\\\left\vert \beta\right\vert
=2}}\operatorname*{rowdet}\left(  \left(  h_{\alpha_{i}+\beta_{i}%
+j,\ i}\right)  _{i,j\in\left[  3\right]  }\right) \\
&  \ \ \ \ \ \ \ \ \ \ \ \ \ \ \ \ \ \ \ \ \left(  \text{since }\left(
\alpha+\beta\right)  _{i}=\alpha_{i}+\beta_{i}\text{ for all }i\in\left[
3\right]  \right) \\
&  =\sum_{\substack{\beta\in\mathbb{N}^{3};\\\left\vert \beta\right\vert
=2}}\operatorname*{rowdet}\left(
\begin{array}
[c]{ccc}%
h_{\alpha_{1}+\beta_{1}+1,\ 1} & h_{\alpha_{1}+\beta_{1}+2,\ 1} &
h_{\alpha_{1}+\beta_{1}+3,\ 1}\\
h_{\alpha_{2}+\beta_{2}+1,\ 2} & h_{\alpha_{2}+\beta_{2}+2,\ 2} &
h_{\alpha_{2}+\beta_{2}+3,\ 2}\\
h_{\alpha_{3}+\beta_{3}+1,\ 3} & h_{\alpha_{3}+\beta_{3}+2,\ 3} &
h_{\alpha_{3}+\beta_{3}+3,\ 3}%
\end{array}
\right)  .
\end{align*}
This is a sum with $6$ addends, since the $3$-tuples $\beta\in\mathbb{N}^{3}$
satisfying $\left\vert \beta\right\vert =2$ are
\[
\left(  2,0,0\right)  ,\qquad\left(  0,2,0\right)  ,\qquad\left(
0,0,2\right)  ,\qquad\left(  1,1,0\right)  ,\qquad\left(  1,0,1\right)
,\qquad\left(  0,1,1\right)  .
\]
The second addend (corresponding to the $3$-tuple $\beta=\left(  0,2,0\right)
$) is%
\begin{align*}
&  \operatorname*{rowdet}\left(
\begin{array}
[c]{ccc}%
h_{\alpha_{1}+0+1,\ 1} & h_{\alpha_{1}+0+2,\ 1} & h_{\alpha_{1}+0+3,\ 1}\\
h_{\alpha_{2}+2+1,\ 2} & h_{\alpha_{2}+2+2,\ 2} & h_{\alpha_{2}+2+3,\ 2}\\
h_{\alpha_{3}+0+1,\ 3} & h_{\alpha_{3}+0+2,\ 3} & h_{\alpha_{3}+0+3,\ 3}%
\end{array}
\right) \\
=  &  \operatorname*{rowdet}\left(
\begin{array}
[c]{ccc}%
h_{\alpha_{1}+1,\ 1} & h_{\alpha_{1}+2,\ 1} & h_{\alpha_{1}+3,\ 1}\\
h_{\alpha_{2}+3,\ 2} & h_{\alpha_{2}+4,\ 2} & h_{\alpha_{2}+5,\ 2}\\
h_{\alpha_{3}+1,\ 3} & h_{\alpha_{3}+2,\ 3} & h_{\alpha_{3}+3,\ 3}%
\end{array}
\right)  .
\end{align*}

\end{example}
\end{verlong}

\subsection{The proof}

We shall now prepare for the proof of Theorem \ref{thm.pre-pieri} by
introducing some notations.

First, we introduce a right action of the symmetric group $S_{n}$ on the set
$\mathbb{Z}^{n}$ of all $n$-tuples of integers:

\begin{definition}
\label{def.etapi}Let $n\in\mathbb{N}$. Let $\eta\in\mathbb{Z}^{n}$ and
$\sigma\in S_{n}$. Then, we define $\eta\circ\sigma$ to be the $n$-tuple
$\left(  \eta_{\sigma\left(  1\right)  },\eta_{\sigma\left(  2\right)
},\ldots,\eta_{\sigma\left(  n\right)  }\right)  \in\mathbb{Z}^{n}$.
\end{definition}

Thus, the $n$-tuple $\eta\circ\sigma$ is obtained from $\eta$ by permuting the
entries using the permutation $\sigma$.

The following two properties of this right action are near-obvious:

\begin{proposition}
\label{prop.etapi.len}Let $n\in\mathbb{N}$. Let $\eta\in\mathbb{Z}^{n}$ and
$\sigma\in S_{n}$. Then,
\[
\left\vert \eta\circ\sigma\right\vert =\left\vert \eta\right\vert .
\]

\end{proposition}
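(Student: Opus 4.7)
The plan is to unfold the definitions of both sides and observe that the equality reduces to the standard fact that reindexing a finite sum by a bijection does not change its value. First, I would write
\[
\left\vert \eta\circ\sigma\right\vert = \sum_{i=1}^{n} \left(\eta\circ\sigma\right)_{i} = \sum_{i=1}^{n} \eta_{\sigma(i)},
\]
where the first equality is the definition of $\left\vert \cdot\right\vert$ for an $n$-tuple of integers, and the second equality uses Definition \ref{def.etapi}, which gives $\left(\eta\circ\sigma\right)_{i} = \eta_{\sigma(i)}$ for each $i \in [n]$.

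Next, I would invoke the fact that $\sigma : [n] \to [n]$ is a bijection (since $\sigma \in S_n$). Therefore the map $i \mapsto \sigma(i)$ is a bijection from $[n]$ to $[n]$, so that
\[
\sum_{i=1}^{n} \eta_{\sigma(i)} = \sum_{j=1}^{n} \eta_{j} = \left\vert \eta\right\vert,
\]
by the substitution $j = \sigma(i)$ (equivalently, by the standard rule that a finite sum is invariant under reindexing by a bijection of the index set). Chaining these two computations yields the claim.

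There is no real obstacle: the statement is essentially a tautology modulo the bijectivity of $\sigma$, and the entire argument fits into two displayed equations. The only thing one must be careful about is not confusing the right action $\eta \circ \sigma$ with the left action $\sigma \cdot \eta = (\eta_{\sigma^{-1}(i)})_i$; but since both actions only permute the entries of $\eta$, the conclusion of the proposition holds regardless, and our specific definition makes the reindexing substitution entirely straightforward.
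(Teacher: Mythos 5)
Your proof is correct and matches the paper's own argument exactly: both unfold the definition of $\left\vert \cdot \right\vert$, use $\left(\eta\circ\sigma\right)_{i}=\eta_{\sigma\left(i\right)}$ from Definition \ref{def.etapi}, and conclude by reindexing the sum along the bijection $\sigma:\left[n\right]\rightarrow\left[n\right]$. Nothing to add.
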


\begin{verlong}
\begin{proof}
[Proof of Proposition \ref{prop.etapi.len}.]We have $\sigma\in S_{n}$. Thus,
$\sigma$ is a permutation of the set $\left[  n\right]  $ (since $S_{n}$ was
defined as the set of all permutations of the set $\left[  n\right]  $). In
other words, $\sigma$ is a bijection from $\left[  n\right]  $ to $\left[
n\right]  $. Therefore, $\eta_{\sigma\left(  1\right)  }+\eta_{\sigma\left(
2\right)  }+\cdots+\eta_{\sigma\left(  n\right)  }=\eta_{1}+\eta_{2}%
+\cdots+\eta_{n}$. However, Definition \ref{def.etapi} yields $\eta\circ
\sigma=\left(  \eta_{\sigma\left(  1\right)  },\eta_{\sigma\left(  2\right)
},\ldots,\eta_{\sigma\left(  n\right)  }\right)  $. Hence,%
\begin{align*}
\left\vert \eta\circ\sigma\right\vert  &  =\left\vert \left(  \eta
_{\sigma\left(  1\right)  },\eta_{\sigma\left(  2\right)  },\ldots
,\eta_{\sigma\left(  n\right)  }\right)  \right\vert =\eta_{\sigma\left(
1\right)  }+\eta_{\sigma\left(  2\right)  }+\cdots+\eta_{\sigma\left(
n\right)  }\\
&  \ \ \ \ \ \ \ \ \ \ \left(  \text{by the definition of }\left\vert \left(
\eta_{\sigma\left(  1\right)  },\eta_{\sigma\left(  2\right)  },\ldots
,\eta_{\sigma\left(  n\right)  }\right)  \right\vert \right) \\
&  =\eta_{1}+\eta_{2}+\cdots+\eta_{n}=\left\vert \eta\right\vert
\end{align*}
(since the definition of $\left\vert \eta\right\vert $ yields $\left\vert
\eta\right\vert =\eta_{1}+\eta_{2}+\cdots+\eta_{n}$). This proves Proposition
\ref{prop.etapi.len}.
\end{proof}
\end{verlong}

\begin{proposition}
\label{prop.etapi.dist}Let $n\in\mathbb{N}$. Let $\eta\in\mathbb{Z}^{n}$.
Assume that the $n$ numbers $\eta_{1},\eta_{2},\ldots,\eta_{n}$ are distinct.
Let $\sigma\in S_{n}$ and $\pi\in S_{n}$ be two distinct permutations. Then,
$\eta\circ\sigma\neq\eta\circ\pi$.
\end{proposition}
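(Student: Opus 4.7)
My plan is to prove Proposition \ref{prop.etapi.dist} by contrapositive: I will assume $\eta \circ \sigma = \eta \circ \pi$ and derive $\sigma = \pi$. This is the natural direction because equality of the two tuples unpacks directly into componentwise equalities, which are easier to manipulate than the ``distinctness'' of two tuples.

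The key step is to observe that the hypothesis that $\eta_1, \eta_2, \ldots, \eta_n$ are distinct means precisely that the map $[n] \to \mathbb{Z}$ sending $k \mapsto \eta_k$ is injective. Unpacking Definition \ref{def.etapi}, the assumed equality $\eta \circ \sigma = \eta \circ \pi$ translates into $\eta_{\sigma(i)} = \eta_{\pi(i)}$ for every $i \in [n]$. Then I invoke the injectivity of $k \mapsto \eta_k$ to conclude $\sigma(i) = \pi(i)$ for every $i \in [n]$, i.e., $\sigma = \pi$, contradicting the hypothesis that $\sigma$ and $\pi$ are distinct.

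There is no real obstacle here; the only thing to be careful about is to formally justify the passage from $\eta_{\sigma(i)} = \eta_{\pi(i)}$ to $\sigma(i) = \pi(i)$ by explicitly citing injectivity (as guaranteed by the distinctness hypothesis). The whole argument should fit comfortably in a few lines.
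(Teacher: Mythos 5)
Your proof is correct and is essentially the same argument as the paper's: both reduce to the fact that the distinctness of $\eta_{1},\eta_{2},\ldots,\eta_{n}$ makes $k\mapsto\eta_{k}$ injective, so $\eta_{\sigma\left(  i\right)  }=\eta_{\pi\left(  i\right)  }$ forces $\sigma\left(  i\right)  =\pi\left(  i\right)  $. The paper merely phrases it in the other direction (picking one $i$ with $\sigma\left(  i\right)  \neq\pi\left(  i\right)  $ and showing the tuples differ there), which is a cosmetic difference.
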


\begin{verlong}
\begin{proof}
[Proof of Proposition \ref{prop.etapi.dist}.]Both $\sigma$ and $\pi$ are
elements of $S_{n}$, and thus are permutations of $\left[  n\right]  $ (since
$S_{n}$ was defined as the set of all permutations of $\left[  n\right]  $).
We have assumed that $\sigma$ and $\pi$ are distinct. In other words,
$\sigma\neq\pi$. In other words, there exists an $i\in\left[  n\right]  $ such
that $\sigma\left(  i\right)  \neq\pi\left(  i\right)  $. Consider this $i$.
The elements $\sigma\left(  i\right)  $ and $\pi\left(  i\right)  $ of
$\left[  n\right]  $ are distinct (since $\sigma\left(  i\right)  \neq
\pi\left(  i\right)  $).

We have assumed that the $n$ numbers $\eta_{1},\eta_{2},\ldots,\eta_{n}$ are
distinct. In other words, if $p$ and $q$ are two distinct elements of $\left[
n\right]  $, then $\eta_{p}\neq\eta_{q}$. We can apply this to $p=\sigma
\left(  i\right)  $ and $q=\pi\left(  i\right)  $ (since $\sigma\left(
i\right)  $ and $\pi\left(  i\right)  $ are distinct), and thus obtain
$\eta_{\sigma\left(  i\right)  }\neq\eta_{\pi\left(  i\right)  }$.

However, Definition \ref{def.etapi} yields $\eta\circ\sigma=\left(
\eta_{\sigma\left(  1\right)  },\eta_{\sigma\left(  2\right)  },\ldots
,\eta_{\sigma\left(  n\right)  }\right)  $; thus, $\left(  \eta\circ
\sigma\right)  _{i}=\eta_{\sigma\left(  i\right)  }$. The same argument
(applied to $\pi$ instead of $\sigma$) yields $\left(  \eta\circ\pi\right)
_{i}=\eta_{\pi\left(  i\right)  }$. Thus, if we had $\eta\circ\sigma=\eta
\circ\pi$, then we would have%
\begin{align*}
\eta_{\sigma\left(  i\right)  }  &  =\left(  \underbrace{\eta\circ\sigma
}_{=\eta\circ\pi}\right)  _{i}\ \ \ \ \ \ \ \ \ \ \left(  \text{since }\left(
\eta\circ\sigma\right)  _{i}=\eta_{\sigma\left(  i\right)  }\right) \\
&  =\left(  \eta\circ\pi\right)  _{i}=\eta_{\pi\left(  i\right)  };
\end{align*}
but this would contradict $\eta_{\sigma\left(  i\right)  }\neq\eta_{\pi\left(
i\right)  }$. Hence, we cannot have $\eta\circ\sigma=\eta\circ\pi$. In other
words, we have $\eta\circ\sigma\neq\eta\circ\pi$. This proves Proposition
\ref{prop.etapi.dist}.
\end{proof}
\end{verlong}

Finally, we will use the \emph{Iverson bracket notation}:

\begin{definition}
\label{def.iverson}If $\mathcal{A}$ is a logical statement, then $\left[
\mathcal{A}\right]  $ means the \emph{truth value} of $\mathcal{A}$; this is
the integer $%
\begin{cases}
1, & \text{if }\mathcal{A}\text{ is true};\\
0, & \text{if }\mathcal{A}\text{ is false}.
\end{cases}
$
\end{definition}

For example, $\left[  2+2=4\right]  =1$ and $\left[  2+2=5\right]  =0$. The
following easy property of truth values can serve as a warm-up:

\begin{lemma}
\label{lem.iverson.geqk}Let $u\in\mathbb{Z}$ and $k\in\mathbb{Z}$ satisfy
$u\neq k$. Then, $\left[  u\geq k\right]  =\left[  u\geq k+1\right]  $.
\end{lemma}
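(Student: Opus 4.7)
The plan is to show that the two statements $u \geq k$ and $u \geq k+1$ are logically equivalent under the hypothesis $u \neq k$, from which the equality of their Iverson brackets follows immediately by Definition \ref{def.iverson}.

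For the equivalence, I would argue in two directions. The implication $u \geq k+1 \implies u \geq k$ is trivial (since $k+1 > k$). For the converse, suppose $u \geq k$. Combined with the hypothesis $u \neq k$, this yields $u > k$, and since $u$ and $k$ are integers, $u > k$ forces $u \geq k+1$. Thus the two conditions are equivalent, so their truth values agree, giving $[u \geq k] = [u \geq k+1]$.

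The only ``step'' with any content is the use of the fact that $u$ and $k$ are integers (so that a strict inequality $u > k$ upgrades to $u \geq k+1$); this is where the hypothesis $u,k \in \mathbb{Z}$ is used. There is no real obstacle here — the lemma is a warm-up in manipulating Iverson brackets and can be written in a few lines. One could alternatively do a four-case analysis on whether $u \geq k+1$ holds and whether $u \geq k$ holds, ruling out the only discrepant case $(u \geq k$ but $u < k+1)$ by observing it would force $u = k$, contradicting the hypothesis.
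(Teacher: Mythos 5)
Your proof is correct and uses the same essential idea as the paper's: the paper's proof also splits into the cases $u<k$ and $u\geq k$, and in the latter case uses $u\neq k$ together with integrality to upgrade $u>k$ to $u\geq k+1$. Your packaging as a two-way implication is just a cosmetic variant of that case analysis.
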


\begin{verlong}
\begin{proof}
[Proof of Lemma \ref{lem.iverson.geqk}.]We are in one of the following two cases:

\textit{Case 1:} We have $u<k$.

\textit{Case 2:} We have $u\geq k$.\medskip

Let us first consider Case 1. In this case, we have $u<k$. Hence, we don't
have $u\geq k$. Thus, we have $\left[  u\geq k\right]  =0$. Furthermore, we
don't have $u\geq k+1$ (since we have $u<k<k+1$). Hence, we have $\left[
u\geq k+1\right]  =0$. Comparing this with $\left[  u\geq k\right]  =0$, we
obtain $\left[  u\geq k\right]  =\left[  u\geq k+1\right]  $. Thus, Lemma
\ref{lem.iverson.geqk} is proven in Case 1. \medskip

Let us now consider Case 2. In this case, we have $u\geq k$. Combining this
with $u\neq k$, we obtain $u>k$. Thus, $u\geq k+1$ (since $u$ and $k$ are
integers). Therefore, $\left[  u\geq k+1\right]  =1$. Moreover, $\left[  u\geq
k\right]  =1$ (since $u\geq k+1\geq k$). Comparing these two equalities, we
obtain $\left[  u\geq k\right]  =\left[  u\geq k+1\right]  $. Hence, Lemma
\ref{lem.iverson.geqk} is proven in Case 2. \medskip

We have now proven Lemma \ref{lem.iverson.geqk} in both Cases 1 and 2. Thus,
the proof of Lemma \ref{lem.iverson.geqk} is complete.
\end{proof}
\end{verlong}

\begin{verlong}
Next, we will use the following basic fact from the theory of finite sets:

\begin{lemma}
\label{lem.m-dist-objs}Let $m\in\mathbb{N}$. Let $u_{1},u_{2},\ldots,u_{m}$ be
any $m$ objects. Assume that $\left\vert \left\{  u_{1},u_{2},\ldots
,u_{m}\right\}  \right\vert =m$. Then, the $m$ objects $u_{1},u_{2}%
,\ldots,u_{m}$ are distinct.
\end{lemma}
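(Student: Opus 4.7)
My plan is to prove Lemma \ref{lem.m-dist-objs} by contradiction. Suppose, for the sake of contradiction, that the $m$ objects $u_1, u_2, \ldots, u_m$ are not all distinct. Then there exist indices $i, j \in [m]$ with $i \neq j$ and $u_i = u_j$. I will show that the cardinality of the set $\{u_1, u_2, \ldots, u_m\}$ is at most $m - 1$, contradicting the hypothesis $\left|\{u_1, u_2, \ldots, u_m\}\right| = m$.

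The key observation is that since $u_j = u_i$, dropping the index $j$ from the indexing set does not shrink the set $\{u_k : k \in [m]\}$: every element of the original set remains present, because any occurrence of $u_j$ is already covered by $u_i$. Thus $\{u_1, u_2, \ldots, u_m\} = \{u_k : k \in [m] \setminus \{j\}\}$, and the right-hand side is the image of the map $[m] \setminus \{j\} \to \{u_1, u_2, \ldots, u_m\}$ sending $k \mapsto u_k$. Since the cardinality of an image is at most the cardinality of its domain, we obtain $\left|\{u_1, u_2, \ldots, u_m\}\right| \leq |[m] \setminus \{j\}| = m - 1$, which contradicts $\left|\{u_1, u_2, \ldots, u_m\}\right| = m$.

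I do not expect a serious obstacle here; this is a standard counting fact. The only care required is the set-theoretic bookkeeping, i.e., explicitly justifying the step $\{u_1, \ldots, u_m\} = \{u_k : k \in [m] \setminus \{j\}\}$ (this uses double inclusion, where the nontrivial direction relies on rewriting $u_j$ as $u_i$). An alternative route would be induction on $m$, with the base case $m = 0$ being vacuous and the inductive step splitting according to whether $u_{m+1}$ coincides with some $u_k$ for $k \leq m$. However, this would introduce bookkeeping overhead beyond what the direct contradiction argument requires, so I would stick with the latter.
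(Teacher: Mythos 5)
Your proposal is correct and follows essentially the same route as the paper's own proof: argue by contradiction that a repeated value forces $\left\vert \left\{ u_{1},u_{2},\ldots,u_{m}\right\} \right\vert <m$. You merely make the counting step more explicit (by exhibiting the set as the image of $\left[ m\right] \setminus\left\{ j\right\} $ under $k\mapsto u_{k}$), which is a fine elaboration but not a different argument.
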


\begin{proof}
[Proof of Lemma \ref{lem.m-dist-objs}.]The set $\left\{  u_{1},u_{2}%
,\ldots,u_{m}\right\}  $ has size $m$ (since $\left\vert \left\{  u_{1}%
,u_{2},\ldots,u_{m}\right\}  \right\vert =m$). If the $m$ objects $u_{1}%
,u_{2},\ldots,u_{m}$ were not distinct, then there would be fewer than $m$
distinct elements in the set $\left\{  u_{1},u_{2},\ldots,u_{m}\right\}  $;
therefore, the set $\left\{  u_{1},u_{2},\ldots,u_{m}\right\}  $ would have
size $<m$. But this would contradict the fact that this set has size $m$
(which is not $<m$). Thus, the $m$ objects $u_{1},u_{2},\ldots,u_{m}$ must be
distinct. This proves Lemma \ref{lem.m-dist-objs}.
\end{proof}
\end{verlong}

Besides the above generalities, our proof of Theorem \ref{thm.pre-pieri} will
rely on some more specific lemmas. The first is an easy exercise on the
pigeonhole principle:

\begin{lemma}
\label{lem.nu-eta}Let $n\in\mathbb{P}$. Let $\nu\in\mathbb{Z}^{n}$ and
$\eta\in\mathbb{Z}^{n}$ satisfy $\left\vert \nu\right\vert =\left\vert
\eta\right\vert $ and%
\[
\left\{  \eta_{1},\eta_{2},\ldots,\eta_{n-1}\right\}  \subseteq\left\{
\nu_{1},\nu_{2},\ldots,\nu_{n}\right\}  \ \ \ \ \ \ \ \ \ \ \text{and}%
\ \ \ \ \ \ \ \ \ \ \left\vert \left\{  \eta_{1},\eta_{2},\ldots,\eta
_{n-1}\right\}  \right\vert =n-1.
\]
Then, there exists some permutation $\pi\in S_{n}$ satisfying $\nu=\eta
\circ\pi$.
\end{lemma}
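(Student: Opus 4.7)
The plan is to build the permutation $\pi$ greedily by matching indices and then using the equality $|\nu| = |\eta|$ to handle the last coordinate. The entire argument is essentially a direct construction; the only mildly subtle point is that the missing matching between $\eta_n$ and the leftover $\nu_i$ is forced by the total-sum hypothesis.

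First, I would use the hypothesis $\{\eta_1,\ldots,\eta_{n-1}\} \subseteq \{\nu_1,\ldots,\nu_n\}$ to choose, for each $j \in [n-1]$, an index $i_j \in [n]$ with $\nu_{i_j} = \eta_j$. Next, I would argue that the indices $i_1,\ldots,i_{n-1}$ can be taken distinct: since $|\{\eta_1,\ldots,\eta_{n-1}\}| = n-1$, the values $\eta_1,\ldots,\eta_{n-1}$ are pairwise distinct, and the values $\nu_{i_1},\ldots,\nu_{i_{n-1}}$ are therefore also pairwise distinct, forcing $i_1,\ldots,i_{n-1}$ to be pairwise distinct. (A cleaner way to say it: for each $j$, the set $I_j := \{i \in [n] : \nu_i = \eta_j\}$ is nonempty, and the $I_j$'s are pairwise disjoint because the $\eta_j$'s are distinct, so picking any $i_j \in I_j$ yields distinct indices.)

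Since $\{i_1,\ldots,i_{n-1}\}$ is an $(n-1)$-element subset of $[n]$, there is a unique element $i_n \in [n] \setminus \{i_1, \ldots, i_{n-1}\}$. I would then invoke $|\nu| = |\eta|$ to compute
\[
\nu_{i_n} \;=\; |\nu| - \sum_{j=1}^{n-1} \nu_{i_j} \;=\; |\eta| - \sum_{j=1}^{n-1} \eta_j \;=\; \eta_n,
\]
which shows $\nu_{i_j} = \eta_j$ holds for \emph{all} $j \in [n]$, not just $j \in [n-1]$.

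Finally, I would define $\pi \in S_n$ by $\pi(i_j) := j$ for each $j \in [n]$. This is a bijection because $i_1, \ldots, i_n$ are a permutation of $[n]$. To verify $\nu = \eta \circ \pi$, observe that for any $i \in [n]$ we have $i = i_j$ for a unique $j$, and then $(\eta \circ \pi)_i = \eta_{\pi(i_j)} = \eta_j = \nu_{i_j} = \nu_i$, which is exactly what is needed. The main (and very mild) obstacle is the last-coordinate computation, which is precisely where the hypothesis $|\nu| = |\eta|$ gets used; without it, nothing forces $\nu_{i_n} = \eta_n$.
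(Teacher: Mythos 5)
Your proof is correct and follows essentially the same route as the paper's: match the $n-1$ distinct values $\eta_1,\ldots,\eta_{n-1}$ to distinct positions of $\nu$, and then use $\left\vert \nu\right\vert =\left\vert \eta\right\vert$ to force the remaining position to carry $\eta_{n}$. Your $\pi$ (with $\pi(i_j)=j$) is exactly the inverse of the injective map $g$ that the paper constructs, so the two arguments are the same up to presentation.
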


\begin{vershort}
\begin{proof}
[Proof of Lemma \ref{lem.nu-eta}.]The claim that we must prove can be restated
as \textquotedblleft the $n$-tuple $\nu$ is a permutation of $\eta
$\textquotedblright\ (that is, \textquotedblleft the $n$-tuple $\nu$ can be
obtained from $\eta$ by permuting the entries\textquotedblright). Thus, we can
permute the entries of $\nu$ without loss of generality (since neither the
truth of this claim, nor the integer $\left\vert \nu\right\vert $, nor the set
$\left\{  \nu_{1},\nu_{2},\ldots,\nu_{n}\right\}  $ change when we permute the
entries of $\nu$).

The $n-1$ numbers $\eta_{1},\eta_{2},\ldots,\eta_{n-1}$ are distinct (since
$\left\vert \left\{  \eta_{1},\eta_{2},\ldots,\eta_{n-1}\right\}  \right\vert
=n-1$). Furthermore, each of these $n-1$ numbers appears as an entry in the
$n$-tuple $\nu$ (since $\left\{  \eta_{1},\eta_{2},\ldots,\eta_{n-1}\right\}
\subseteq\left\{  \nu_{1},\nu_{2},\ldots,\nu_{n}\right\}  $). Since these
$n-1$ numbers are distinct, they must therefore appear as \textbf{distinct}
entries in $\nu$ (that is, no two of the $n-1$ numbers $\eta_{1},\eta
_{2},\ldots,\eta_{n-1}$ can appear in the same position of $\nu$). By
permuting the entries of $\nu$, we can therefore ensure that these $n-1$
numbers $\eta_{1},\eta_{2},\ldots,\eta_{n-1}$ are the \textbf{first }$n-1$
entries of $\nu$ in this very order; i.e., that we have%
\begin{equation}
\eta_{i}=\nu_{i}\ \ \ \ \ \ \ \ \ \ \text{for each }i\in\left[  n-1\right]  .
\label{pf.lem.nu-eta.short.wlog}%
\end{equation}
Thus, let us WLOG assume that (\ref{pf.lem.nu-eta.short.wlog}) holds (since we
can permute the entries of $\nu$ without loss of generality). Now, summing up
the equalities (\ref{pf.lem.nu-eta.short.wlog}) over all $i\in\left[
n-1\right]  $, we obtain $\eta_{1}+\eta_{2}+\cdots+\eta_{n-1}=\nu_{1}+\nu
_{2}+\cdots+\nu_{n-1}$. However,%
\begin{align*}
\left\vert \nu\right\vert  &  =\left\vert \eta\right\vert =\eta_{1}+\eta
_{2}+\cdots+\eta_{n}=\underbrace{\left(  \eta_{1}+\eta_{2}+\cdots+\eta
_{n-1}\right)  }_{=\nu_{1}+\nu_{2}+\cdots+\nu_{n-1}}+\eta_{n}\\
&  =\left(  \nu_{1}+\nu_{2}+\cdots+\nu_{n-1}\right)  +\eta_{n}.
\end{align*}
Comparing this with%
\[
\left\vert \nu\right\vert =\nu_{1}+\nu_{2}+\cdots+\nu_{n}=\left(  \nu_{1}%
+\nu_{2}+\cdots+\nu_{n-1}\right)  +\nu_{n},
\]
we obtain $\left(  \nu_{1}+\nu_{2}+\cdots+\nu_{n-1}\right)  +\eta_{n}=\left(
\nu_{1}+\nu_{2}+\cdots+\nu_{n-1}\right)  +\nu_{n}$. Cancelling $\nu_{1}%
+\nu_{2}+\cdots+\nu_{n-1}$, we obtain $\eta_{n}=\nu_{n}$. Thus, the equality
(\ref{pf.lem.nu-eta.short.wlog}) holds not only for each $i\in\left[
n-1\right]  $, but also for $i=n$. Hence, this equality holds for all
$i\in\left[  n\right]  $. In other words, we have $\eta=\nu$. Thus, the
$n$-tuple $\nu$ is a permutation of $\eta$. But this is precisely what we
needed to show. Thus, Lemma \ref{lem.nu-eta} is proven.
\end{proof}
\end{vershort}

\begin{verlong}
\begin{proof}
[Proof of Lemma \ref{lem.nu-eta}.]We know that $n$ is a positive integer
(since $n\in\mathbb{P}$). Hence, $n>0$ and $n-1\in\mathbb{N}$.

The definition of $\left[  n-1\right]  $ yields $\left[  n-1\right]  =\left\{
1,2,\ldots,n-1\right\}  $. The definition of $\left[  n\right]  $ yields
$\left[  n\right]  =\left\{  1,2,\ldots,n\right\}  $. Hence,%
\[
\left[  n\right]  \setminus\left\{  n\right\}  =\left\{  1,2,\ldots,n\right\}
\setminus\left\{  n\right\}  =\left\{  1,2,\ldots,n-1\right\}  =\left[
n-1\right]  .
\]
Thus, $\left[  n-1\right]  =\left[  n\right]  \setminus\left\{  n\right\}
\subseteq\left[  n\right]  $.

Now, recall that $n-1\in\mathbb{N}$ and $\left\vert \left\{  \eta_{1},\eta
_{2},\ldots,\eta_{n-1}\right\}  \right\vert =n-1$. Hence, Lemma
\ref{lem.m-dist-objs} (applied to $m=n-1$ and $u_{i}=\eta_{i}$) shows that the
$n-1$ objects $\eta_{1},\eta_{2},\ldots,\eta_{n-1}$ are distinct. In other
words, if $i$ and $j$ are two distinct elements of $\left[  n-1\right]  $,
then%
\begin{equation}
\eta_{i}\neq\eta_{j}. \label{pf.lem.nu-eta.etapneqetaq}%
\end{equation}

We define a map $f:\left[  n-1\right]  \rightarrow\left[  n\right]  $ as
follows: For each $i\in\left[  n-1\right]  $, we choose some $j\in\left[
n\right]  $ satisfying $\eta_{i}=\nu_{j}$ (indeed, such a $j$ exists, because
\begin{align*}
\eta_{i}  &  \in\left\{  \eta_{1},\eta_{2},\ldots,\eta_{n-1}\right\}
\ \ \ \ \ \ \ \ \ \ \left(  \text{since }i\in\left[  n-1\right]  =\left\{
1,2,\ldots,n-1\right\}  \right) \\
&  \subseteq\left\{  \nu_{1},\nu_{2},\ldots,\nu_{n}\right\}  =\left\{  \nu
_{j}\ \mid\ j\in\underbrace{\left\{  1,2,\ldots,n\right\}  }_{=\left[
n\right]  }\right\}  =\left\{  \nu_{j}\ \mid\ j\in\left[  n\right]  \right\}
\end{align*}
), and we define $f\left(  i\right)  $ to be this $j$. Thus, we have defined a
map $f:\left[  n-1\right]  \rightarrow\left[  n\right]  $.

For each $i\in\left[  n-1\right]  $, the image $f\left(  i\right)  $ of $i$ is
an element of $\left[  n\right]  $ and satisfies%
\begin{equation}
\eta_{i}=\nu_{f\left(  i\right)  } \label{pf.lem.nu-eta.def-f.2}%
\end{equation}
(since $f\left(  i\right)  $ is defined to be a $j\in\left[  n\right]  $
satisfying $\eta_{i}=\nu_{j}$).

The $n-1$ elements $f\left(  1\right)  ,f\left(  2\right)  ,\ldots,f\left(
n-1\right)  $ are distinct\footnote{\textit{Proof.} Let $i$ and $j$ be two
distinct elements of $\left[  n-1\right]  $. We shall show that $f\left(
i\right)  \neq f\left(  j\right)  $.
\par
Indeed, assume the contrary. Thus, $f\left(  i\right)  =f\left(  j\right)  $.
However, (\ref{pf.lem.nu-eta.def-f.2}) yields $\eta_{i}=\nu_{f\left(
i\right)  }=\nu_{f\left(  j\right)  }$ (since $f\left(  i\right)  =f\left(
j\right)  $). On the other hand, (\ref{pf.lem.nu-eta.def-f.2}) (applied to $j$
instead of $i$) yields $\eta_{j}=\nu_{f\left(  j\right)  }$. Comparing these
two equalities, we obtain $\eta_{i}=\eta_{j}$. However,
(\ref{pf.lem.nu-eta.etapneqetaq}) yields $\eta_{i}\neq\eta_{j}$. This
contradicts $\eta_{i}=\eta_{j}$. This contradiction shows that our assumption
was false. Hence, $f\left(  i\right)  \neq f\left(  j\right)  $.
\par
Forget that we fixed $i$ and $j$. We thus have shown that $f\left(  i\right)
\neq f\left(  j\right)  $ whenever $i$ and $j$ are two distinct elements of
$\left[  n-1\right]  $. In other words, the $n-1$ elements $f\left(  1\right)
,f\left(  2\right)  ,\ldots,f\left(  n-1\right)  $ are distinct.}. Hence,
\[
\left\vert \left\{  f\left(  1\right)  ,f\left(  2\right)  ,\ldots,f\left(
n-1\right)  \right\}  \right\vert =n-1.
\]
Clearly, $\left\{  f\left(  1\right)  ,f\left(  2\right)  ,\ldots,f\left(
n-1\right)  \right\}  $ is a subset of $\left[  n\right]  $ (since $f$ is a
map from $\left[  n-1\right]  $ to $\left[  n\right]  $). Thus,%
\begin{align*}
\left\vert \left[  n\right]  \setminus\left\{  f\left(  1\right)  ,f\left(
2\right)  ,\ldots,f\left(  n-1\right)  \right\}  \right\vert  &
=\underbrace{\left\vert \left[  n\right]  \right\vert }_{=n}%
-\underbrace{\left\vert \left\{  f\left(  1\right)  ,f\left(  2\right)
,\ldots,f\left(  n-1\right)  \right\}  \right\vert }_{=n-1}\\
&  =n-\left(  n-1\right)  =1.
\end{align*}
In other words, $\left[  n\right]  \setminus\left\{  f\left(  1\right)
,f\left(  2\right)  ,\ldots,f\left(  n-1\right)  \right\}  $ is a $1$-element
set. In other words,
\[
\left[  n\right]  \setminus\left\{  f\left(  1\right)  ,f\left(  2\right)
,\ldots,f\left(  n-1\right)  \right\}  =\left\{  p\right\}
\]
for some element $p$. Consider this $p$. We have $p\in\left\{  p\right\}
=\left[  n\right]  \setminus\left\{  f\left(  1\right)  ,f\left(  2\right)
,\ldots,f\left(  n-1\right)  \right\}  $. In other words,
\[
p\in\left[  n\right]  \ \ \ \ \ \ \ \ \ \ \text{and}%
\ \ \ \ \ \ \ \ \ \ p\notin\left\{  f\left(  1\right)  ,f\left(  2\right)
,\ldots,f\left(  n-1\right)  \right\}  .
\]

Now, for each $i\in\left[  n\right]  $, we have%
\begin{equation}%
\begin{cases}
f\left(  i\right)  , & \text{if }i\neq n;\\
p, & \text{if }i=n
\end{cases}
\ \ \in\left[  n\right]  \label{pf.lem.nu-eta.def-f.in-n}%
\end{equation}
\footnote{\textit{Proof of (\ref{pf.lem.nu-eta.def-f.in-n}):} Let $i\in\left[
n\right]  $. We must prove (\ref{pf.lem.nu-eta.def-f.in-n}). We are in one of
the following two cases:
\par
\textit{Case 1:} We have $i\neq n$.
\par
\textit{Case 2:} We have $i=n$.
\par
Let us first consider Case 1. In this case, we have $i\neq n$. Combining
$i\in\left[  n\right]  =\left\{  1,2,\ldots,n\right\}  $ with $i\neq n$, we
obtain%
\[
i\in\left\{  1,2,\ldots,n\right\}  \setminus\left\{  n\right\}  =\left\{
1,2,\ldots,n-1\right\}  =\left[  n-1\right]
\]
(since $\left[  n-1\right]  $ is defined as $\left\{  1,2,\ldots,n-1\right\}
$). Hence, $f\left(  i\right)  \in\left[  n\right]  $ (since $f$ is a map from
$\left[  n-1\right]  $ to $\left[  n\right]  $). Now, we have $i\neq n$ and
thus $%
\begin{cases}
f\left(  i\right)  , & \text{if }i\neq n;\\
p, & \text{if }i=n
\end{cases}
\ \ =f\left(  i\right)  \in\left[  n\right]  $. Thus,
(\ref{pf.lem.nu-eta.def-f.in-n}) is proved in Case 1.
\par
Let us now consider Case 2. In this case, we have $i=n$. Hence, $%
\begin{cases}
f\left(  i\right)  , & \text{if }i\neq n;\\
p, & \text{if }i=n
\end{cases}
\ \ =p\in\left[  n\right]  $. Thus, (\ref{pf.lem.nu-eta.def-f.in-n}) is proved
in Case 2.
\par
We have now proved (\ref{pf.lem.nu-eta.def-f.in-n}) in both Cases 1 and 2.
Thus, the proof of (\ref{pf.lem.nu-eta.def-f.in-n}) is complete.}. Hence, we
can define a map $g:\left[  n\right]  \rightarrow\left[  n\right]  $ by
setting%
\[
g\left(  i\right)  =%
\begin{cases}
f\left(  i\right)  , & \text{if }i\neq n;\\
p, & \text{if }i=n
\end{cases}
\ \ \ \ \ \ \ \ \ \ \text{for each }i\in\left[  n\right]  .
\]
Consider this map $g$.

The map $g$ is injective\footnote{\textit{Proof.} Let $i$ and $j$ be two
distinct elements of $\left[  n\right]  $. We shall show that $g\left(
i\right)  \neq g\left(  j\right)  $.
\par
Indeed, assume the contrary. Thus, $g\left(  i\right)  =g\left(  j\right)  $.
We are looking to find a contradiction.
\par
The situation is symmetric in $i$ and $j$; thus, we WLOG assume that $i\leq j$
(since otherwise, we can just swap $i$ with $j$). However, $i\neq j$ (since
$i$ and $j$ are distinct). Thus, $i<j$ (since $i\leq j$). On the other hand,
$j\in\left[  n\right]  =\left\{  1,2,\ldots,n\right\}  $, so that $j\leq n$.
Thus, $i<j\leq n$, so that $i\neq n$. Now, the definition of $g$ yields
\[
g\left(  i\right)  =%
\begin{cases}
f\left(  i\right)  , & \text{if }i\neq n;\\
p, & \text{if }i=n
\end{cases}
\ \ =f\left(  i\right)  \ \ \ \ \ \ \ \ \ \ \left(  \text{since }i\neq
n\right)  .
\]
\par
Combining $i\in\left[  n\right]  $ with $i\neq n$, we obtain $i\in\left[
n\right]  \setminus\left\{  n\right\}  =\left[  n-1\right]  =\left\{
1,2,\ldots,n-1\right\}  $. Note that%
\[
g\left(  i\right)  =f\left(  i\right)  \in\left\{  f\left(  1\right)
,f\left(  2\right)  ,\ldots,f\left(  n-1\right)  \right\}
\ \ \ \ \ \ \ \ \ \ \left(  \text{since }i\in\left\{  1,2,\ldots,n-1\right\}
\right)  .
\]
\par
If we had $j=n$, then we would have
\begin{align*}
g\left(  i\right)   &  =g\left(  j\right)  =%
\begin{cases}
f\left(  j\right)  , & \text{if }j\neq n;\\
p, & \text{if }j=n
\end{cases}
\ \ \ \ \ \ \ \ \ \ \left(  \text{by the definition of }g\right) \\
&  =p\ \ \ \ \ \ \ \ \ \ \left(  \text{since }j=n\right) \\
&  \notin\left\{  f\left(  1\right)  ,f\left(  2\right)  ,\ldots,f\left(
n-1\right)  \right\}  ,
\end{align*}
which would contradict $g\left(  i\right)  \in\left\{  f\left(  1\right)
,f\left(  2\right)  ,\ldots,f\left(  n-1\right)  \right\}  $. Hence, we cannot
have $j=n$. Thus, we have $j\neq n$. Combining $j\in\left[  n\right]  $ with
$j\neq n$, we obtain $j\in\left[  n\right]  \setminus\left\{  n\right\}
=\left[  n-1\right]  $. Thus, both $i$ and $j$ are elements of $\left[
n-1\right]  $ (since $i\in\left[  n-1\right]  $ and $j\in\left[  n-1\right]
$). These elements $i$ and $j$ are furthermore distinct (as we know). Hence,
we have $f\left(  i\right)  \neq f\left(  j\right)  $ (since the $n-1$
elements $f\left(  1\right)  ,f\left(  2\right)  ,\ldots,f\left(  n-1\right)
$ are distinct). However, from $g\left(  i\right)  =f\left(  i\right)  $, we
obtain%
\begin{align*}
f\left(  i\right)   &  =g\left(  i\right)  =g\left(  j\right)  =%
\begin{cases}
f\left(  j\right)  , & \text{if }j\neq n;\\
p, & \text{if }j=n
\end{cases}
\ \ \ \ \ \ \ \ \ \ \left(  \text{by the definition of }g\right) \\
&  =f\left(  j\right)  \ \ \ \ \ \ \ \ \ \ \left(  \text{since }j\neq
n\right)  ,
\end{align*}
and this contradicts $f\left(  i\right)  \neq f\left(  j\right)  $. This
contradiction shows that our assumption was false. Hence, $g\left(  i\right)
\neq g\left(  j\right)  $ is proven.
\par
Now, forget that we fixed $i$ and $j$. We thus have shown that if $i$ and $j$
are two distinct elements of $\left[  n\right]  $, then $g\left(  i\right)
\neq g\left(  j\right)  $. In other words, the map $g$ is injective.}.
However, a well-known fact from set theory (actually one of the versions of
the pigeonhole principle) says that any injective map $\phi$ from a finite set
$X$ to $X$ must be a permutation of $X$. Applying this to $X=\left[  n\right]
$ and $\phi=g$, we conclude that $g$ must be a permutation of $\left[
n\right]  $ (since $g$ is an injective map from the finite set $\left[
n\right]  $ to $\left[  n\right]  $). Therefore, the inverse $g^{-1}$ of $g$
is well-defined and also is a permutation of $\left[  n\right]  $. In other
words, $g^{-1}\in S_{n}$ (since $S_{n}$ is the set of all permutations of
$\left[  n\right]  $).

Next, we shall show that
\begin{equation}
\eta_{i}=\nu_{g\left(  i\right)  }\ \ \ \ \ \ \ \ \ \ \text{for each }%
i\in\left[  n-1\right]  . \label{pf.lem.nu-eta.nu-eta-g-1}%
\end{equation}

[\textit{Proof of (\ref{pf.lem.nu-eta.nu-eta-g-1}):} Let $i\in\left[
n-1\right]  $. Thus, $i\leq n-1<n$, so that $i\neq n$. However, $i\in\left[
n-1\right]  \subseteq\left[  n\right]  $. The definition of $g$ yields
$g\left(  i\right)  =%
\begin{cases}
f\left(  i\right)  , & \text{if }i\neq n;\\
p, & \text{if }i=n
\end{cases}
\ \ =f\left(  i\right)  $ (since $i\neq n$). On the other hand,
(\ref{pf.lem.nu-eta.def-f.2}) yields $\eta_{i}=\nu_{f\left(  i\right)  }$.
This rewrites as $\eta_{i}=\nu_{g\left(  i\right)  }$ (since $g\left(
i\right)  =f\left(  i\right)  $). This proves (\ref{pf.lem.nu-eta.nu-eta-g-1}%
).] \medskip

Now, recall that $g$ is a permutation of $\left[  n\right]  $. In other words,
$g$ is a bijection from $\left[  n\right]  $ to $\left[  n\right]  $. Hence,
we can substitute $i$ for $g\left(  i\right)  $ in the sum $\sum_{i\in\left[
n\right]  }\nu_{g\left(  i\right)  }$. We thus obtain%
\[
\sum_{i\in\left[  n\right]  }\nu_{g\left(  i\right)  }=\underbrace{\sum
_{i\in\left[  n\right]  }}_{=\sum_{i=1}^{n}}\nu_{i}=\sum_{i=1}^{n}\nu_{i}%
=\nu_{1}+\nu_{2}+\cdots+\nu_{n}=\left\vert \nu\right\vert
\]
(since $\left\vert \nu\right\vert $ was defined to be $\nu_{1}+\nu_{2}%
+\cdots+\nu_{n}$). Hence,%
\begin{align*}
\left\vert \nu\right\vert  &  =\underbrace{\sum_{i\in\left[  n\right]  }%
}_{=\sum_{i=1}^{n}}\nu_{g\left(  i\right)  }=\sum_{i=1}^{n}\nu_{g\left(
i\right)  }=\nu_{g\left(  n\right)  }+\sum_{i=1}^{n-1}\underbrace{\nu
_{g\left(  i\right)  }}_{\substack{=\eta_{i}\\\text{(by
(\ref{pf.lem.nu-eta.nu-eta-g-1}))}}}\ \ \ \ \ \ \ \ \ \ \left(
\begin{array}
[c]{c}%
\text{here, we have split off the}\\
\text{addend for }i=n\text{ from}\\
\text{the sum (since }n>0\text{)}%
\end{array}
\right) \\
&  =\nu_{g\left(  n\right)  }+\sum_{i=1}^{n-1}\eta_{i}.
\end{align*}
Comparing this with%
\begin{align*}
\left\vert \nu\right\vert  &  =\left\vert \eta\right\vert =\eta_{1}+\eta
_{2}+\cdots+\eta_{n}\ \ \ \ \ \ \ \ \ \ \left(  \text{by the definition of
}\left\vert \eta\right\vert \right) \\
&  =\sum_{i=1}^{n}\eta_{i}=\eta_{n}+\sum_{i=1}^{n-1}\eta_{i}%
\ \ \ \ \ \ \ \ \ \ \left(
\begin{array}
[c]{c}%
\text{here, we have split off }\\
\text{the addend for }i=n\text{ from}\\
\text{the sum (since }n>0\text{)}%
\end{array}
\right)  ,
\end{align*}
we obtain%
\[
\eta_{n}+\sum_{i=1}^{n-1}\eta_{i}=\nu_{g\left(  n\right)  }+\sum_{i=1}%
^{n-1}\eta_{i}.
\]
Subtracting $\sum_{i=1}^{n-1}\eta_{i}$ from both sides of this equality, we
obtain%
\begin{equation}
\eta_{n}=\nu_{g\left(  n\right)  }. \label{pf.lem.nu-eta.nu-eta-g-n}%
\end{equation}

Now, we can easily conclude that%
\begin{equation}
\eta_{i}=\nu_{g\left(  i\right)  }\ \ \ \ \ \ \ \ \ \ \text{for each }%
i\in\left[  n\right]  . \label{pf.lem.nu-eta.nu-eta-g}%
\end{equation}

[\textit{Proof of (\ref{pf.lem.nu-eta.nu-eta-g}):} Let $i\in\left[  n\right]
$. We then must show that $\eta_{i}=\nu_{g\left(  i\right)  }$. If $i=n$, then
this follows from (\ref{pf.lem.nu-eta.nu-eta-g-n}). Thus, for the rest of this
proof, we WLOG assume that $i\neq n$. Combining $i\in\left[  n\right]  $ and
$i\neq n$, we obtain $i\in\left[  n\right]  \setminus\left\{  n\right\}
=\left[  n-1\right]  $. Hence, (\ref{pf.lem.nu-eta.nu-eta-g-1}) yields
$\eta_{i}=\nu_{g\left(  i\right)  }$. This proves
(\ref{pf.lem.nu-eta.nu-eta-g}).] \medskip

Consequently, we obtain that%
\begin{equation}
\nu_{j}=\eta_{g^{-1}\left(  j\right)  }\ \ \ \ \ \ \ \ \ \ \text{for each
}j\in\left[  n\right]  . \label{pf.lem.nu-eta.eta-nu-g}%
\end{equation}

[\textit{Proof of (\ref{pf.lem.nu-eta.eta-nu-g}):} Let $j\in\left[  n\right]
$. Then, $g^{-1}\left(  j\right)  \in\left[  n\right]  $ (since $g^{-1}$ is a
map from $\left[  n\right]  $ to $\left[  n\right]  $). Hence,
(\ref{pf.lem.nu-eta.nu-eta-g}) (applied to $i=g^{-1}\left(  j\right)  $)
yields $\eta_{g^{-1}\left(  j\right)  }=\nu_{g\left(  g^{-1}\left(  j\right)
\right)  }=\nu_{j}$ (since $g\left(  g^{-1}\left(  j\right)  \right)  =j$). In
other words, $\nu_{j}=\eta_{g^{-1}\left(  j\right)  }$. This proves
(\ref{pf.lem.nu-eta.eta-nu-g}).] \medskip

We can rewrite (\ref{pf.lem.nu-eta.eta-nu-g}) as follows:%
\[
\left(  \nu_{1},\nu_{2},\ldots,\nu_{n}\right)  =\left(  \eta_{g^{-1}\left(
1\right)  },\eta_{g^{-1}\left(  2\right)  },\ldots,\eta_{g^{-1}\left(
n\right)  }\right)  .
\]
However, Definition \ref{def.etapi} yields%
\[
\eta\circ g^{-1}=\left(  \eta_{g^{-1}\left(  1\right)  },\eta_{g^{-1}\left(
2\right)  },\ldots,\eta_{g^{-1}\left(  n\right)  }\right)  .
\]
Comparing these two equalities, we obtain%
\[
\eta\circ g^{-1}=\left(  \nu_{1},\nu_{2},\ldots,\nu_{n}\right)  =\nu.
\]
In other words, $\nu=\eta\circ g^{-1}$. Hence, there exists some permutation
$\pi\in S_{n}$ satisfying $\nu=\eta\circ\pi$ (namely, $\pi=g^{-1}$). This
proves Lemma \ref{lem.nu-eta}.
\end{proof}
\end{verlong}

Our second lemma is about an integer determinant:

\begin{lemma}
\label{lem.nu-det}Let $n\in\mathbb{P}$ and $p\in\mathbb{N}$. Let $\eta$ be the
$n$-tuple
\[
\left(  1,2,\ldots,n\right)  +\left(  \underbrace{0,0,\ldots,0}_{n-1\text{
zeroes}},p\right)  =\left(  1,2,\ldots,n-1,n+p\right)  \in\mathbb{Z}^{n}.
\]
Let $\nu\in\mathbb{Z}^{n}$ be an $n$-tuple satisfying $\left\vert
\nu\right\vert =\left\vert \eta\right\vert $. Then,
\begin{equation}
\det\left(  \left(  \left[  \nu_{i}\geq j\right]  \right)  _{i,j\in\left[
n\right]  }\right)  =\sum_{\substack{\sigma\in S_{n};\\\nu=\eta\circ\sigma
}}\left(  -1\right)  ^{\sigma}. \label{eq.lem.nu-det.clm}%
\end{equation}

\end{lemma}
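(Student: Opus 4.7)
The plan is to evaluate the determinant on the left-hand side of (\ref{eq.lem.nu-det.clm}) directly by exploiting the ``staircase'' structure of the $0$-$1$-matrix $\left(\left[\nu_i \geq j\right]\right)_{i,j \in [n]}$, and then to show that the nonvanishing case coincides exactly with the case where $\nu = \eta \circ \sigma$ for some $\sigma \in S_n$.

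I would first observe that for $j \in [n]$, we have $\left[\nu_i \geq j\right] = \left[c_i \geq j\right]$, where $c_i := \max\left(0, \min\left(\nu_i, n\right)\right) \in \left\{0, 1, \ldots, n\right\}$. Hence row $i$ of our matrix consists of $c_i$ ones followed by $n - c_i$ zeros. If some $c_i = 0$, or if $c_i = c_{i'}$ for some $i \neq i'$, then the matrix has a zero row or two equal rows, and so its determinant vanishes. Otherwise $\left(c_1, \ldots, c_n\right)$ must be a permutation of $[n]$, so that $c_i = \tau\left(i\right)$ for a unique $\tau \in S_n$; permuting the rows appropriately (which contributes a sign $\left(-1\right)^\tau$) reduces the matrix to the lower-unitriangular matrix whose $k$-th row consists of $k$ ones followed by $n-k$ zeros, and this matrix has determinant $1$. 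Thus $\det\left(\left(\left[\nu_i \geq j\right]\right)_{i,j \in [n]}\right) = \left(-1\right)^\tau$ in this case, and $0$ otherwise.

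Next I would bring in the hypothesis $\left\vert \nu\right\vert = \left\vert \eta\right\vert = n\left(n-1\right)/2 + \left(n+p\right)$ to pin down when $\left(c_1, \ldots, c_n\right)$ is a permutation of $[n]$. If so, exactly one index $k$ satisfies $c_k = n$ (so $\nu_k \geq n$), while the remaining entries $\left\{\nu_i\right\}_{i \neq k}$ equal their $c_i$'s and form a permutation of $[n-1]$, so they sum to $n\left(n-1\right)/2$. Subtracting from $\left\vert \nu\right\vert$ forces $\nu_k = n + p$, which shows that $\nu$ is a permutation of the multiset $\left\{1, 2, \ldots, n-1, n+p\right\} = \left\{\eta_1, \ldots, \eta_n\right\}$. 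Conversely, if $\nu = \eta \circ \sigma$ for some $\sigma \in S_n$, then a direct check splitting on whether $\sigma\left(i\right) = n$ shows that $c_i = \sigma\left(i\right)$ for every $i$, so the nonvanishing case applies with $\tau = \sigma$.

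To conclude, I would note that the entries $1, 2, \ldots, n-1, n+p$ of $\eta$ are pairwise distinct (whether $p = 0$ or not), so Proposition \ref{prop.etapi.dist} gives at most one $\sigma \in S_n$ with $\nu = \eta \circ \sigma$. The right-hand side of (\ref{eq.lem.nu-det.clm}) therefore equals $\left(-1\right)^\sigma$ for this unique $\sigma = \tau$ when $\nu$ is a permutation of $\eta$, and $0$ otherwise, matching the left-hand side in both cases. The main obstacle is the arithmetic step that uses $\left\vert \nu\right\vert = \left\vert \eta\right\vert$ to force $\nu_k = n + p$; this is where the otherwise innocuous hypothesis becomes essential and where the peculiar shape of $\eta$ (one ``large'' entry and $n-1$ small ones) is exploited.
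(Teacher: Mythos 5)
Your proof is correct. The non-degenerate case is handled exactly as in the paper (permute the rows to reach the lower-unitriangular matrix $\left(\left[i\geq j\right]\right)_{i,j}$, picking up the sign $\left(-1\right)^{\tau}$, and use the distinctness of $\eta_{1},\ldots,\eta_{n}$ via Proposition \ref{prop.etapi.dist} to see that the right-hand sum has at most one term), but your treatment of the vanishing case is genuinely different. The paper splits on whether $\left[n-1\right]\subseteq\left\{\nu_{1},\ldots,\nu_{n}\right\}$: a missing $k\in\left[n-1\right]$ makes columns $k$ and $k+1$ equal (Lemma \ref{lem.iverson.geqk}), killing the determinant, while the inclusion case is fed into the separate pigeonhole Lemma \ref{lem.nu-eta} to produce the permutation $\pi$. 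You instead work row by row: the truncation $c_{i}=\max\left(0,\min\left(\nu_{i},n\right)\right)$ shows each row is a prefix of ones, so the determinant vanishes unless $\left(c_{1},\ldots,c_{n}\right)$ is a permutation of $\left[n\right]$, and then the hypothesis $\left\vert\nu\right\vert=\left\vert\eta\right\vert$ pins down the one ``large'' entry as $n+p$ by a direct sum computation. This absorbs the content of Lemma \ref{lem.nu-eta} into an elementary arithmetic step and characterizes the vanishing of the determinant intrinsically (zero row or repeated row), at the cost of a slightly longer bookkeeping argument to match the permutation $\tau$ coming from the rows with the permutation $\sigma$ in $\nu=\eta\circ\sigma$ — which you do carry out correctly by splitting on whether $\sigma\left(i\right)=n$. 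Both routes use $\left\vert\nu\right\vert=\left\vert\eta\right\vert$ in the same essential place.
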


Note that the matrix $\left(  \left[  \nu_{i}\geq j\right]  \right)
_{i,j\in\left[  n\right]  }$ in (\ref{eq.lem.nu-det.clm}) is a matrix with
integer entries; thus, its determinant is a well-defined integer.

Before we prove Lemma \ref{lem.nu-det}, a remark is in order:

\begin{remark}
The sum $\sum_{\substack{\sigma\in S_{n};\\\nu=\eta\circ\sigma}}\left(
-1\right)  ^{\sigma}$ on the right hand side of (\ref{eq.lem.nu-det.clm})
always has either no addends or only one addend. (Indeed, it is easy to see
that the $n$-tuples $\eta\circ\sigma$ for different $\sigma\in S_{n}$ are
distinct; thus, no more than one of these $n$-tuples can equal $\nu$.) Thus,
this sum can be rewritten as
\[%
\begin{cases}
\left(  -1\right)  ^{\sigma}, & \text{if }\nu=\eta\circ\sigma\text{ for some
}\sigma\in S_{n};\\
0, & \text{otherwise.}%
\end{cases}
\]

\end{remark}

\begin{proof}
[Proof of Lemma \ref{lem.nu-det}.]The definition of $\eta$ yields%
\begin{equation}
\eta=\left(  1,2,\ldots,n\right)  +\left(  \underbrace{0,0,\ldots
,0}_{n-1\text{ zeroes}},p\right)  =\left(  1,2,\ldots,n-1,n+p\right)
.\nonumber
\end{equation}
Thus,%
\begin{equation}
\left(  \eta_{1},\eta_{2},\ldots,\eta_{n}\right)  =\eta=\left(  1,2,\ldots
,n-1,n+p\right)  . \label{pf.lem.nu-det.eta=2}%
\end{equation}
In other words, we have%
\begin{equation}
\eta_{i}=i\ \ \ \ \ \ \ \ \ \ \text{for each }i\in\left[  n-1\right]
\label{pf.lem.nu-det.etai=}%
\end{equation}
and%
\begin{equation}
\eta_{n}=n+p. \label{pf.lem.nu-det.etan=}%
\end{equation}

\begin{vershort}
\noindent It follows easily that the $n$ numbers $\eta_{1},\eta_{2}%
,\ldots,\eta_{n}$ are distinct (since $p\in\mathbb{N}$).
\end{vershort}

\begin{verlong}
We have $p\geq0$ (since $p\in\mathbb{N}$), so that $n+\underbrace{p}_{\geq
0}\geq n>n-1$. Thus, we have the following chain of inequalities:%
\[
1<2<\cdots<n-1<n+p
\]
(indeed, $n-1<n+p$ follows from $n+p>n-1$, whereas the rest of this chain of
inequalities is obvious). In view of (\ref{pf.lem.nu-det.eta=2}), we can
rewrite this chain of inequalities as follows:
\[
\eta_{1}<\eta_{2}<\cdots<\eta_{n}.
\]
Hence, the $n$ numbers $\eta_{1},\eta_{2},\ldots,\eta_{n}$ are distinct.
\end{verlong}

\begin{verlong}
We have $n-1\in\mathbb{N}$ (since $n\in\mathbb{P}$). Hence, $\left[
n-1\right]  =\left\{  1,2,\ldots,n-1\right\}  $ (by the definition of $\left[
n-1\right]  $). \medskip
\end{verlong}

We are in one of the following two cases:

\textit{Case 1:} We have $\left[  n-1\right]  \not \subseteq \left\{  \nu
_{1},\nu_{2},\ldots,\nu_{n}\right\}  $.

\textit{Case 2:} We have $\left[  n-1\right]  \subseteq\left\{  \nu_{1}%
,\nu_{2},\ldots,\nu_{n}\right\}  $. \medskip

\begin{vershort}
Let us first consider Case 1. In this case, we have $\left[  n-1\right]
\not \subseteq \left\{  \nu_{1},\nu_{2},\ldots,\nu_{n}\right\}  $. In other
words, there exists some $k\in\left[  n-1\right]  $ such that $k\notin\left\{
\nu_{1},\nu_{2},\ldots,\nu_{n}\right\}  $. Consider this $k$. For each
$i\in\left[  n\right]  $, we have $\nu_{i}\neq k$ (since $k\notin\left\{
\nu_{1},\nu_{2},\ldots,\nu_{n}\right\}  $) and therefore%
\[
\left[  \nu_{i}\geq k\right]  =\left[  \nu_{i}\geq k+1\right]
\]
(by Lemma \ref{lem.iverson.geqk}, applied to $u=\nu_{i}$). This shows that the
$k$-th and the $\left(  k+1\right)  $-st columns of the matrix $\left(
\left[  \nu_{i}\geq j\right]  \right)  _{i,j\in\left[  n\right]  }$ are equal.
Hence, this matrix $\left(  \left[  \nu_{i}\geq j\right]  \right)
_{i,j\in\left[  n\right]  }$ has two equal columns; therefore, the determinant
of this matrix is $0$. In other words,
\begin{equation}
\det\left(  \left(  \left[  \nu_{i}\geq j\right]  \right)  _{i,j\in\left[
n\right]  }\right)  =0. \label{pf.lem.nu-det.c1.short.det=0}%
\end{equation}

On the other hand, $k\in\left[  n-1\right]  $ entails $\eta_{k}=k$ (by
(\ref{pf.lem.nu-det.etai=})). Thus, the $n$-tuple $\eta$ contains the entry
$k$. However, the $n$-tuple $\nu$ does not (since $k\notin\left\{  \nu_{1}%
,\nu_{2},\ldots,\nu_{n}\right\}  $). Thus, the $n$-tuple $\nu$ is not a
permutation of the $n$-tuple $\eta$. In other words, there exists no
$\sigma\in S_{n}$ satisfying $\nu=\eta\circ\sigma$. Hence,%
\[
\sum_{\substack{\sigma\in S_{n};\\\nu=\eta\circ\sigma}}\left(  -1\right)
^{\sigma}=\left(  \text{empty sum}\right)  =0.
\]
Comparing this with (\ref{pf.lem.nu-det.c1.short.det=0}), we obtain
$\det\left(  \left(  \left[  \nu_{i}\geq j\right]  \right)  _{i,j\in\left[
n\right]  }\right)  =\sum_{\substack{\sigma\in S_{n};\\\nu=\eta\circ\sigma
}}\left(  -1\right)  ^{\sigma}$. Thus, Lemma \ref{lem.nu-det} is proved in
Case 1. \medskip
\end{vershort}

\begin{verlong}
Let us first consider Case 1. In this case, we have $\left[  n-1\right]
\not \subseteq \left\{  \nu_{1},\nu_{2},\ldots,\nu_{n}\right\}  $. In other
words, there exists some $k\in\left[  n-1\right]  $ such that $k\notin\left\{
\nu_{1},\nu_{2},\ldots,\nu_{n}\right\}  $. Consider this $k$. We have
$k\in\left[  n-1\right]  =\left\{  1,2,\ldots,n-1\right\}  $; thus, both $k$
and $k+1$ are elements of $\left\{  1,2,\ldots,n\right\}  $. Hence, the
$n\times n$-matrix $\left(  \left[  \nu_{i}\geq j\right]  \right)
_{i,j\in\left[  n\right]  }$ has both a $k$-th column and a $\left(
k+1\right)  $-st column.

Now, it is easy to see that
\begin{equation}
\left[  \nu_{i}\geq k\right]  =\left[  \nu_{i}\geq k+1\right]
\ \ \ \ \ \ \ \ \ \ \text{for each }i\in\left[  n\right]  .
\label{pf.lem.nu-det.c1.coleq}%
\end{equation}

[\textit{Proof of (\ref{pf.lem.nu-det.c1.coleq}):} Let $i\in\left[  n\right]
$. Then, $\nu_{i}\in\mathbb{Z}$ (since $\nu\in\mathbb{Z}^{n}$). If we had
$\nu_{i}=k$, then we would have $k=\nu_{i}\in\left\{  \nu_{1},\nu_{2}%
,\ldots,\nu_{n}\right\}  $, which would contradict $k\notin\left\{  \nu
_{1},\nu_{2},\ldots,\nu_{n}\right\}  $. Thus, we cannot have $\nu_{i}=k$.
Hence, we have $\nu_{i}\neq k$. Therefore, Lemma \ref{lem.iverson.geqk}
(applied to $u=\nu_{i}$) yields $\left[  \nu_{i}\geq k\right]  =\left[
\nu_{i}\geq k+1\right]  $. This proves (\ref{pf.lem.nu-det.c1.coleq}).]
\medskip

The equality (\ref{pf.lem.nu-det.c1.coleq}) shows that each entry in the
$k$-th column of the matrix $\left(  \left[  \nu_{i}\geq j\right]  \right)
_{i,j\in\left[  n\right]  }$ equals the corresponding entry in the $\left(
k+1\right)  $-st column of this matrix (because the $i$-th entry in the $k$-th
column is $\left[  \nu_{i}\geq k\right]  $, whereas the corresponding entry in
the $\left(  k+1\right)  $-st column is $\left[  \nu_{i}\geq k+1\right]  $).
In other words, the $k$-th and the $\left(  k+1\right)  $-st columns of the
matrix $\left(  \left[  \nu_{i}\geq j\right]  \right)  _{i,j\in\left[
n\right]  }$ are equal. Hence, this matrix $\left(  \left[  \nu_{i}\geq
j\right]  \right)  _{i,j\in\left[  n\right]  }$ has two equal columns.
However, it is well-known that if a square matrix\footnote{In this proof, the
word \textquotedblleft matrix\textquotedblright\ always means a matrix with
integer entries.} has two equal columns, then its determinant is $0$. Hence,
the determinant of the matrix $\left(  \left[  \nu_{i}\geq j\right]  \right)
_{i,j\in\left[  n\right]  }$ is $0$ (since this matrix has two equal columns).
In other words,
\begin{equation}
\det\left(  \left(  \left[  \nu_{i}\geq j\right]  \right)  _{i,j\in\left[
n\right]  }\right)  =0. \label{pf.lem.nu-det.c1.det=0}%
\end{equation}

On the other hand, there exists no $\sigma\in S_{n}$ satisfying $\nu=\eta
\circ\sigma$\ \ \ \ \footnote{\textit{Proof.} Let $\sigma\in S_{n}$ satisfy
$\nu=\eta\circ\sigma$. We shall obtain a contradiction.
\par
Indeed, we have $\sigma\in S_{n}$. In other words, $\sigma$ is a permutation
of $\left[  n\right]  $ (since $S_{n}$ was defined as the set of all
permutations of $\left[  n\right]  $). Hence,
\begin{align*}
\left\{  \eta_{\sigma\left(  1\right)  },\eta_{\sigma\left(  2\right)
},\ldots,\eta_{\sigma\left(  n\right)  }\right\}   &  =\left\{  \eta_{1}%
,\eta_{2},\ldots,\eta_{n}\right\} \\
&  =\left\{  1,2,\ldots,n-1,n+p\right\}  \ \ \ \ \ \ \ \ \ \ \left(  \text{by
(\ref{pf.lem.nu-det.eta=2})}\right) \\
&  \supseteq\left\{  1,2,\ldots,n-1\right\}  =\left[  n-1\right]
\end{align*}
(since $\left[  n-1\right]  =\left\{  1,2,\ldots,n-1\right\}  $) and therefore%
\begin{equation}
\left[  n-1\right]  \subseteq\left\{  \eta_{\sigma\left(  1\right)  }%
,\eta_{\sigma\left(  2\right)  },\ldots,\eta_{\sigma\left(  n\right)
}\right\}  . \label{pf.lem.nu-det.c1.fn3.1}%
\end{equation}
However, $\left(  \nu_{1},\nu_{2},\ldots,\nu_{n}\right)  =\nu=\eta\circ
\sigma=\left(  \eta_{\sigma\left(  1\right)  },\eta_{\sigma\left(  2\right)
},\ldots,\eta_{\sigma\left(  n\right)  }\right)  $ (by Definition
\ref{def.etapi}). Hence, $\left\{  \nu_{1},\nu_{2},\ldots,\nu_{n}\right\}
=\left\{  \eta_{\sigma\left(  1\right)  },\eta_{\sigma\left(  2\right)
},\ldots,\eta_{\sigma\left(  n\right)  }\right\}  $. In light of this, we can
rewrite (\ref{pf.lem.nu-det.c1.fn3.1}) as $\left[  n-1\right]  \subseteq
\left\{  \nu_{1},\nu_{2},\ldots,\nu_{n}\right\}  $. But this contradicts
$\left[  n-1\right]  \not \subseteq \left\{  \nu_{1},\nu_{2},\ldots,\nu
_{n}\right\}  $.
\par
Forget that we fixed $\sigma$. We thus have found a contradiction for each
$\sigma\in S_{n}$ satisfying $\nu=\eta\circ\sigma$. Hence, there exists no
such $\sigma$. Qed.}. Hence, the sum $\sum_{\substack{\sigma\in S_{n}%
;\\\nu=\eta\circ\sigma}}\left(  -1\right)  ^{\sigma}$ is empty. Thus,%
\[
\sum_{\substack{\sigma\in S_{n};\\\nu=\eta\circ\sigma}}\left(  -1\right)
^{\sigma}=\left(  \text{empty sum}\right)  =0.
\]
Comparing this with (\ref{pf.lem.nu-det.c1.det=0}), we obtain $\det\left(
\left(  \left[  \nu_{i}\geq j\right]  \right)  _{i,j\in\left[  n\right]
}\right)  =\sum_{\substack{\sigma\in S_{n};\\\nu=\eta\circ\sigma}}\left(
-1\right)  ^{\sigma}$. Thus, Lemma \ref{lem.nu-det} is proved in Case 1.
\medskip
\end{verlong}

Let us now consider Case 2. In this case, we have $\left[  n-1\right]
\subseteq\left\{  \nu_{1},\nu_{2},\ldots,\nu_{n}\right\}  $.

\begin{vershort}
However, (\ref{pf.lem.nu-det.etai=}) shows that
\[
\left\{  \eta_{1},\eta_{2},\ldots,\eta_{n-1}\right\}  =\left\{  1,2,\ldots
,n-1\right\}  =\left[  n-1\right]  \subseteq\left\{  \nu_{1},\nu_{2}%
,\ldots,\nu_{n}\right\}  .
\]
Moreover, from $\left\{  \eta_{1},\eta_{2},\ldots,\eta_{n-1}\right\}
=\left\{  1,2,\ldots,n-1\right\}  $, we obtain $\left\vert \left\{  \eta
_{1},\eta_{2},\ldots,\eta_{n-1}\right\}  \right\vert =\left\vert \left\{
1,2,\ldots,n-1\right\}  \right\vert =n-1$. Hence, Lemma \ref{lem.nu-eta}
yields that there exists some permutation $\pi\in S_{n}$ satisfying $\nu
=\eta\circ\pi$. Consider this $\pi$.
\end{vershort}

\begin{verlong}
However, (\ref{pf.lem.nu-det.etai=}) shows that $\left(  \eta_{1},\eta
_{2},\ldots,\eta_{n-1}\right)  =\left(  1,2,\ldots,n-1\right)  $. Therefore,
\[
\left\{  \eta_{1},\eta_{2},\ldots,\eta_{n-1}\right\}  =\left\{  1,2,\ldots
,n-1\right\}  =\left[  n-1\right]  \subseteq\left\{  \nu_{1},\nu_{2}%
,\ldots,\nu_{n}\right\}  .
\]
Moreover, from $\left\{  \eta_{1},\eta_{2},\ldots,\eta_{n-1}\right\}
=\left\{  1,2,\ldots,n-1\right\}  $, we obtain $\left\vert \left\{  \eta
_{1},\eta_{2},\ldots,\eta_{n-1}\right\}  \right\vert =\left\vert \left\{
1,2,\ldots,n-1\right\}  \right\vert =n-1$ (since $n-1\in\mathbb{N}$). Hence,
Lemma \ref{lem.nu-eta} yields that there exists some permutation $\pi\in
S_{n}$ satisfying $\nu=\eta\circ\pi$. Consider this $\pi$.
\end{verlong}

\begin{vershort}
Thus, $\pi$ is a permutation $\sigma\in S_{n}$ satisfying $\nu=\eta\circ
\sigma$. Furthermore, it is easy to see that $\pi$ is the \textbf{only} such
permutation $\sigma$ (because the $n$ numbers $\eta_{1},\eta_{2},\ldots
,\eta_{n}$ are distinct)\footnote{Here is the argument in more detail: Recall
that the $n$ numbers $\eta_{1},\eta_{2},\ldots,\eta_{n}$ are distinct.
Therefore, if $\sigma\in S_{n}$ is a permutation distinct from $\pi$, then
Proposition \ref{prop.etapi.dist} shows that $\eta\circ\sigma\neq\eta\circ
\pi=\nu$, so that $\nu\neq\eta\circ\sigma$. Hence, the only permutation
$\sigma\in S_{n}$ satisfying $\nu=\eta\circ\sigma$ is $\pi$.}. Hence, the sum
$\sum_{\substack{\sigma\in S_{n};\\\nu=\eta\circ\sigma}}\left(  -1\right)
^{\sigma}$ has only one addend, namely the addend for $\sigma=\pi$. Thus,
\begin{equation}
\sum_{\substack{\sigma\in S_{n};\\\nu=\eta\circ\sigma}}\left(  -1\right)
^{\sigma}=\left(  -1\right)  ^{\pi}. \label{pf.lem.nu-det.c2.short.sum=}%
\end{equation}

\end{vershort}

\begin{verlong}
It is easy to see that $\pi$ is the \textbf{only} permutation $\sigma\in
S_{n}$ satisfying $\nu=\eta\circ\sigma$\ \ \ \ \footnote{\textit{Proof.} It is
clear that $\pi$ is a permutation $\sigma\in S_{n}$ satisfying $\nu=\eta
\circ\sigma$ (because $\pi\in S_{n}$ is a permutation satisfying $\nu
=\eta\circ\pi$). It thus remains to show that $\pi$ is the \textbf{only} such
permutation. In other words, it remains to show that if $\sigma\in S_{n}$ is a
permutation satisfying $\nu=\eta\circ\sigma$, then $\sigma=\pi$. So let us
show this.
\par
Let $\sigma\in S_{n}$ be a permutation satisfying $\nu=\eta\circ\sigma$. Then,
$\eta\circ\sigma=\nu=\eta\circ\pi$.
\par
Recall that the $n$ numbers $\eta_{1},\eta_{2},\ldots,\eta_{n}$ are distinct.
Thus, Proposition \ref{prop.etapi.dist} shows that if the permutations
$\sigma$ and $\pi$ were distinct, then we would have $\eta\circ\sigma\neq
\eta\circ\pi$; but this would contradict $\eta\circ\sigma=\eta\circ\pi$.
Hence, $\sigma$ and $\pi$ cannot be distinct. In other words, we must have
$\sigma=\pi$.
\par
Forget that we fixed $\sigma$. We thus have shown that if $\sigma\in S_{n}$ is
a permutation satisfying $\nu=\eta\circ\sigma$, then $\sigma=\pi$. In other
words, $\pi$ is the \textbf{only} permutation $\sigma\in S_{n}$ satisfying
$\nu=\eta\circ\sigma$ (since we already know that $\pi$ is such a
permutation). Qed.}. Hence, the sum $\sum_{\substack{\sigma\in S_{n}%
;\\\nu=\eta\circ\sigma}}\left(  -1\right)  ^{\sigma}$ has only one addend,
namely the addend for $\sigma=\pi$. Thus,
\begin{equation}
\sum_{\substack{\sigma\in S_{n};\\\nu=\eta\circ\sigma}}\left(  -1\right)
^{\sigma}=\left(  -1\right)  ^{\pi}. \label{pf.lem.nu-det.c2.sum=}%
\end{equation}

\end{verlong}

We have $\nu=\eta\circ\pi=\left(  \eta_{\pi\left(  1\right)  },\eta
_{\pi\left(  2\right)  },\ldots,\eta_{\pi\left(  n\right)  }\right)  $ (by
Definition \ref{def.etapi}). Thus, for each $i\in\left[  n\right]  $, we have
\begin{equation}
\nu_{i}=\left(  \eta_{\pi\left(  1\right)  },\eta_{\pi\left(  2\right)
},\ldots,\eta_{\pi\left(  n\right)  }\right)  _{i}=\eta_{\pi\left(  i\right)
}. \label{pf.lem.nu-det.c2.nui=}%
\end{equation}

On the other hand, it is well-known (see, e.g., \cite[Corollary 6.4.15]{21s}
or \cite[Lemma 6.17 \textbf{(a)}]{detnotes}) that when the rows of a matrix
are permuted, then the determinant of this matrix gets multiplied by $\left(
-1\right)  ^{\tau}$, where $\tau$ is the permutation used to permute the rows.
In other words: If $\left(  a_{i,j}\right)  _{i,j\in\left[  n\right]  }$ is a
square matrix (with integer entries), and if $\tau\in S_{n}$ is a permutation,
then%
\[
\det\left(  \left(  a_{\tau\left(  i\right)  ,j}\right)  _{i,j\in\left[
n\right]  }\right)  =\left(  -1\right)  ^{\tau}\cdot\det\left(  \left(
a_{i,j}\right)  _{i,j\in\left[  n\right]  }\right)  .
\]
Applying this to $a_{i,j}=\left[  \eta_{i}\geq j\right]  $ and $\tau=\pi$, we
obtain%
\[
\det\left(  \left(  \left[  \eta_{\pi\left(  i\right)  }\geq j\right]
\right)  _{i,j\in\left[  n\right]  }\right)  =\left(  -1\right)  ^{\pi}%
\cdot\det\left(  \left(  \left[  \eta_{i}\geq j\right]  \right)
_{i,j\in\left[  n\right]  }\right)  .
\]
In view of (\ref{pf.lem.nu-det.c2.nui=}), we can rewrite this as%
\begin{equation}
\det\left(  \left(  \left[  \nu_{i}\geq j\right]  \right)  _{i,j\in\left[
n\right]  }\right)  =\left(  -1\right)  ^{\pi}\cdot\det\left(  \left(  \left[
\eta_{i}\geq j\right]  \right)  _{i,j\in\left[  n\right]  }\right)  .
\label{pf.lem.nu-det.c2.det=1}%
\end{equation}

\begin{vershort}
However, from the definition of $\eta$, we can easily see that every
$i,j\in\left[  n\right]  $ satisfying $i<j$ satisfy $\eta_{i}<j$ and therefore
$\left[  \eta_{i}\geq j\right]  =0$. In other words, the matrix $\left(
\left[  \eta_{i}\geq j\right]  \right)  _{i,j\in\left[  n\right]  }$ is
lower-triangular. Therefore, its determinant is the product $\prod_{i=1}%
^{n}\left[  \eta_{i}\geq i\right]  $ of its diagonal entries. In other words,%
\[
\det\left(  \left(  \left[  \eta_{i}\geq j\right]  \right)  _{i,j\in\left[
n\right]  }\right)  =\prod_{i=1}^{n}\underbrace{\left[  \eta_{i}\geq i\right]
}_{\substack{=1\\\text{(since it is easy}\\\text{to see that }\eta_{i}\geq
i\text{)}}}=\prod_{i=1}^{n}1=1.
\]
Thus, (\ref{pf.lem.nu-det.c2.det=1}) becomes%
\[
\det\left(  \left(  \left[  \nu_{i}\geq j\right]  \right)  _{i,j\in\left[
n\right]  }\right)  =\left(  -1\right)  ^{\pi}\cdot\underbrace{\det\left(
\left(  \left[  \eta_{i}\geq j\right]  \right)  _{i,j\in\left[  n\right]
}\right)  }_{=1}=\left(  -1\right)  ^{\pi}=\sum_{\substack{\sigma\in
S_{n};\\\nu=\eta\circ\sigma}}\left(  -1\right)  ^{\sigma}%
\]
(by (\ref{pf.lem.nu-det.c2.short.sum=})). Thus, Lemma \ref{lem.nu-det} is
proved in Case 2. \medskip
\end{vershort}

\begin{verlong}
However, we have $\left[  \eta_{i}\geq j\right]  =0$ for any $i,j\in\left[
n\right]  $ satisfying $i<j$\ \ \ \ \footnote{\textit{Proof.} Let
$i,j\in\left[  n\right]  $ satisfy $i<j$. We must prove that $\left[  \eta
_{i}\geq j\right]  =0$.
\par
We have $j\in\left[  n\right]  =\left\{  1,2,\ldots,n\right\}  $, so that
$j\leq n$. Thus, $i<j\leq n$, so that $i\neq n$. Combining $i\in\left[
n\right]  =\left\{  1,2,\ldots,n\right\}  $ with $i\neq n$, we obtain
$i\in\left\{  1,2,\ldots,n\right\}  \setminus\left\{  n\right\}  =\left\{
1,2,\ldots,n-1\right\}  =\left[  n-1\right]  $. Thus,
(\ref{pf.lem.nu-det.etai=}) yields $\eta_{i}=i$. Hence, $\eta_{i}=i<j$. Thus,
we don't have $\eta_{i}\geq j$. Hence, $\left[  \eta_{i}\geq j\right]  =0$,
qed.}. In other words, the matrix $\left(  \left[  \eta_{i}\geq j\right]
\right)  _{i,j\in\left[  n\right]  }$ is lower-triangular. Therefore, its
determinant is the product $\prod_{i=1}^{n}\left[  \eta_{i}\geq i\right]  $ of
its diagonal entries (because it is well-known that the determinant of any
lower-triangular matrix is the product of its diagonal entries). In other
words,%
\begin{equation}
\det\left(  \left(  \left[  \eta_{i}\geq j\right]  \right)  _{i,j\in\left[
n\right]  }\right)  =\prod_{i=1}^{n}\left[  \eta_{i}\geq i\right]  .
\label{pf.lem.nu-det.c2.det=2}%
\end{equation}

However, each $i\in\left[  n\right]  $ satisfies $\eta_{i}\geq i$%
\ \ \ \ \footnote{\textit{Proof.} Let $i\in\left[  n\right]  $. We must prove
that $\eta_{i}\geq i$. If $i\in\left[  n-1\right]  $, then this follows from
(\ref{pf.lem.nu-det.etai=}) (in fact, in this case, (\ref{pf.lem.nu-det.etai=}%
) yields $\eta_{i}=i\geq i$). Hence, for the rest of this proof, we WLOG
assume that we don't have $i\in\left[  n-1\right]  $. Thus, we have
$i\notin\left[  n-1\right]  $. Combining $i\in\left[  n\right]  $ with
$i\notin\left[  n-1\right]  $, we obtain%
\[
i\in\underbrace{\left[  n\right]  }_{=\left\{  1,2,\ldots,n\right\}
}\setminus\underbrace{\left[  n-1\right]  }_{=\left\{  1,2,\ldots,n-1\right\}
}=\left\{  1,2,\ldots,n\right\}  \setminus\left\{  1,2,\ldots,n-1\right\}
=\left\{  n\right\}  .
\]
Thus, $i=n$. Hence, $\eta_{i}=\eta_{n}=n+p$ (by (\ref{pf.lem.nu-det.etan=})).
However, $p\geq0$ (since $p\in\mathbb{N}$) and thus $n+p\geq n=i$ (since
$i=n$). Thus, $\eta_{i}=n+p\geq i$, qed.} and therefore
\begin{equation}
\left[  \eta_{i}\geq i\right]  =1. \label{pf.lem.nu-det.c2.det-diag}%
\end{equation}
Hence, (\ref{pf.lem.nu-det.c2.det=2}) becomes%
\[
\det\left(  \left(  \left[  \eta_{i}\geq j\right]  \right)  _{i,j\in\left[
n\right]  }\right)  =\prod_{i=1}^{n}\underbrace{\left[  \eta_{i}\geq i\right]
}_{\substack{=1\\\text{(by (\ref{pf.lem.nu-det.c2.det-diag}))}}}=\prod
_{i=1}^{n}1=1.
\]
Thus, (\ref{pf.lem.nu-det.c2.det=1}) becomes%
\[
\det\left(  \left(  \left[  \nu_{i}\geq j\right]  \right)  _{i,j\in\left[
n\right]  }\right)  =\left(  -1\right)  ^{\pi}\cdot\underbrace{\det\left(
\left(  \left[  \eta_{i}\geq j\right]  \right)  _{i,j\in\left[  n\right]
}\right)  }_{=1}=\left(  -1\right)  ^{\pi}=\sum_{\substack{\sigma\in
S_{n};\\\nu=\eta\circ\sigma}}\left(  -1\right)  ^{\sigma}%
\]
(by (\ref{pf.lem.nu-det.c2.sum=})). Thus, Lemma \ref{lem.nu-det} is proved in
Case 2. \medskip
\end{verlong}

We have now proved Lemma \ref{lem.nu-det} in both Cases 1 and 2. This
completes the proof of Lemma \ref{lem.nu-det}.
\end{proof}

Our third lemma is another expression for the same determinant as in Lemma
\ref{lem.nu-det}:

\begin{lemma}
\label{lem.nu-det2}Let $n\in\mathbb{N}$. For any permutation $\sigma\in S_{n}%
$, we let $\overline{\sigma}$ denote the $n$-tuple $\left(  \sigma\left(
1\right)  ,\sigma\left(  2\right)  ,\ldots,\sigma\left(  n\right)  \right)
\in\mathbb{Z}^{n}$.

Let $\nu\in\mathbb{Z}^{n}$ be an $n$-tuple. Then,%
\[
\det\left(  \left(  \left[  \nu_{i}\geq j\right]  \right)  _{i,j\in\left[
n\right]  }\right)  =\sum_{\substack{\sigma\in S_{n};\\\nu-\overline{\sigma
}\in\mathbb{N}^{n}}}\left(  -1\right)  ^{\sigma}.
\]

\end{lemma}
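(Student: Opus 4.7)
The plan is to apply the Leibniz formula for the determinant directly and then interpret the resulting product of Iverson brackets.

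First, I would recall that for any $n \times n$ matrix $\left(a_{i,j}\right)_{i,j\in[n]}$ with integer entries, the Leibniz formula gives
\[
\det\left(\left(a_{i,j}\right)_{i,j\in[n]}\right) = \sum_{\sigma \in S_n} (-1)^\sigma \prod_{i=1}^n a_{i,\sigma(i)}.
\]
Applying this to the matrix $\left([\nu_i \geq j]\right)_{i,j\in[n]}$ (i.e., with $a_{i,j} = [\nu_i \geq j]$), I obtain
\[
\det\left(\left([\nu_i \geq j]\right)_{i,j\in[n]}\right) = \sum_{\sigma \in S_n} (-1)^\sigma \prod_{i=1}^n [\nu_i \geq \sigma(i)].
\]

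Next, I would analyze the product $\prod_{i=1}^n [\nu_i \geq \sigma(i)]$. Since each factor is either $0$ or $1$, the product equals $1$ if and only if $\nu_i \geq \sigma(i)$ for every $i \in [n]$, and equals $0$ otherwise. The condition ``$\nu_i \geq \sigma(i)$ for every $i \in [n]$'' is equivalent (since all numbers involved are integers) to ``$\nu_i - \sigma(i) \in \mathbb{N}$ for every $i \in [n]$'', which by the definition of $\overline{\sigma} = (\sigma(1), \sigma(2), \ldots, \sigma(n))$ is in turn equivalent to $\nu - \overline{\sigma} \in \mathbb{N}^n$. Hence
\[
\prod_{i=1}^n [\nu_i \geq \sigma(i)] = \left[\nu - \overline{\sigma} \in \mathbb{N}^n\right].
\]

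Substituting this into the Leibniz expansion and then dropping the addends for which the Iverson bracket vanishes, I conclude
\[
\det\left(\left([\nu_i \geq j]\right)_{i,j\in[n]}\right) = \sum_{\sigma \in S_n} (-1)^\sigma \left[\nu - \overline{\sigma} \in \mathbb{N}^n\right] = \sum_{\substack{\sigma \in S_n;\\ \nu - \overline{\sigma} \in \mathbb{N}^n}} (-1)^\sigma,
\]
which is the claim. There is no real obstacle here: the proof is essentially a one-liner, with the only subtlety being the routine reformulation of the componentwise inequality $\nu_i \geq \sigma(i)$ as the membership condition $\nu - \overline{\sigma} \in \mathbb{N}^n$.
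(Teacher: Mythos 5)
Your proof is correct and follows essentially the same route as the paper's: expand the determinant by the Leibniz formula, identify the product $\prod_{i=1}^{n}\left[  \nu_{i}\geq\sigma\left(  i\right)  \right]$ with the single truth value $\left[  \nu-\overline{\sigma}\in\mathbb{N}^{n}\right]$, and discard the vanishing addends. No gaps.
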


\begin{vershort}
\begin{proof}
[Proof of Lemma \ref{lem.nu-det2}.]If $\mathcal{A}$ and $\mathcal{B}$ are two
equivalent logical statements, then $\left[  \mathcal{A}\right]  =\left[
\mathcal{B}\right]  $. Thus, for each $\sigma\in S_{n}$, we have%
\begin{align}
&  \left[  \nu-\overline{\sigma}\in\mathbb{N}^{n}\right] \nonumber\\
&  =\left[  \left(  \nu-\overline{\sigma}\right)  _{i}\in\mathbb{N}\text{ for
each }i\in\left[  n\right]  \right] \nonumber\\
&  =\left[  \nu_{i}-\sigma\left(  i\right)  \in\mathbb{N}\text{ for each }%
i\in\left[  n\right]  \right] \nonumber\\
&  \ \ \ \ \ \ \ \ \ \ \ \ \ \ \ \ \ \ \ \ \left(
\begin{array}
[c]{c}%
\text{since each }i\in\left[  n\right]  \text{ satisfies }\left(
\nu-\overline{\sigma}\right)  _{i}=\nu_{i}-\overline{\sigma}_{i}=\nu
_{i}-\sigma\left(  i\right) \\
\text{(because the definition of }\overline{\sigma}\text{ yields }%
\overline{\sigma}_{i}=\sigma\left(  i\right)  \text{)}%
\end{array}
\right) \nonumber\\
&  =\left[  \nu_{i}\geq\sigma\left(  i\right)  \text{ for each }i\in\left[
n\right]  \right] \nonumber\\
&  \ \ \ \ \ \ \ \ \ \ \ \ \ \ \ \ \ \ \ \ \left(
\begin{array}
[c]{c}%
\text{since two integers }a\text{ and }b\text{ satisfy }a-b\in\mathbb{N}\\
\text{if and only if }a\geq b
\end{array}
\right) \nonumber\\
&  =\left[  \nu_{1}\geq\sigma\left(  1\right)  \text{ and }\nu_{2}\geq
\sigma\left(  2\right)  \text{ and }\cdots\text{ and }\nu_{n}\geq\sigma\left(
n\right)  \right] \nonumber\\
&  =\prod_{i=1}^{n}\left[  \nu_{i}\geq\sigma\left(  i\right)  \right]
\label{pf.lem.nu-det2.short.ive}%
\end{align}
(because for any $n$ logical statements $\mathcal{A}_{1},\mathcal{A}%
_{2},\ldots,\mathcal{A}_{n}$, we have \newline$\left[  \mathcal{A}_{1}\text{
and }\mathcal{A}_{2}\text{ and }\cdots\text{ and }\mathcal{A}_{n}\right]
=\prod_{i=1}^{n}\left[  \mathcal{A}_{i}\right]  $).

Now, the definition of the determinant of a matrix yields%
\begin{align}
\det\left(  \left(  \left[  \nu_{i}\geq j\right]  \right)  _{i,j\in\left[
n\right]  }\right)   &  =\sum_{\sigma\in S_{n}}\left(  -1\right)  ^{\sigma
}\underbrace{\prod_{i=1}^{n}\left[  \nu_{i}\geq\sigma\left(  i\right)
\right]  }_{\substack{=\left[  \nu-\overline{\sigma}\in\mathbb{N}^{n}\right]
\\\text{(by (\ref{pf.lem.nu-det2.short.ive}))}}}\nonumber\\
&  =\sum_{\sigma\in S_{n}}\left(  -1\right)  ^{\sigma}\left[  \nu
-\overline{\sigma}\in\mathbb{N}^{n}\right]  . \label{pf.lem.nu-det2.short.2}%
\end{align}

Recall that a truth value $\left[  \mathcal{A}\right]  $ is always either $1$
or $0$, depending on whether the statement $\mathcal{A}$ is true or false.
Hence, each addend of the sum on the right hand side of
(\ref{pf.lem.nu-det2.short.2}) is either of the form $\left(  -1\right)
^{\sigma}\cdot1$ or of the form $\left(  -1\right)  ^{\sigma}\cdot0$,
depending on whether the statement \textquotedblleft$\nu-\overline{\sigma}%
\in\mathbb{N}^{n}$\textquotedblright\ is true or false. The addends of the
form $\left(  -1\right)  ^{\sigma}\cdot1$ can be simplified to $\left(
-1\right)  ^{\sigma}$, whereas the addends of the form $\left(  -1\right)
^{\sigma}\cdot0$ can be discarded (since they are $0$). Thus, the sum
simplifies as follows:%
\[
\sum_{\sigma\in S_{n}}\left(  -1\right)  ^{\sigma}\left[  \nu-\overline
{\sigma}\in\mathbb{N}^{n}\right]  =\sum_{\substack{\sigma\in S_{n}%
;\\\nu-\overline{\sigma}\in\mathbb{N}^{n}}}\left(  -1\right)  ^{\sigma}.
\]
Combining this with (\ref{pf.lem.nu-det2.short.2}), we obtain
\[
\det\left(  \left(  \left[  \nu_{i}\geq j\right]  \right)  _{i,j\in\left[
n\right]  }\right)  =\sum_{\substack{\sigma\in S_{n};\\\nu-\overline{\sigma
}\in\mathbb{N}^{n}}}\left(  -1\right)  ^{\sigma}.
\]
This proves Lemma \ref{lem.nu-det2}.
\end{proof}
\end{vershort}

\begin{verlong}
\begin{proof}
[Proof of Lemma \ref{lem.nu-det2}.]We first observe that each $\sigma\in
S_{n}$ satisfies%
\begin{equation}
\left[  \nu-\overline{\sigma}\in\mathbb{N}^{n}\right]  =\prod_{i=1}^{n}\left[
\nu_{i}\geq\sigma\left(  i\right)  \right]  . \label{pf.lem.nu-det2.ives}%
\end{equation}

[\textit{Proof of (\ref{pf.lem.nu-det2.ives}):} Let $\sigma\in S_{n}$ be
arbitrary. We must prove the equality (\ref{pf.lem.nu-det2.ives}).

The definition of $\overline{\sigma}$ yields $\overline{\sigma}=\left(
\sigma\left(  1\right)  ,\sigma\left(  2\right)  ,\ldots,\sigma\left(
n\right)  \right)  $. Hence, for each $i\in\left[  n\right]  $, we have%
\begin{equation}
\overline{\sigma}_{i}=\sigma\left(  i\right)  .
\label{pf.pf.lem.nu-det2.ives.pf.sigbar}%
\end{equation}

We are in one of the following two cases:

\textit{Case 1:} We have $\nu-\overline{\sigma}\in\mathbb{N}^{n}$.

\textit{Case 2:} We have $\nu-\overline{\sigma}\notin\mathbb{N}^{n}$. \medskip

Let us first consider Case 1. In this case, we have $\nu-\overline{\sigma}%
\in\mathbb{N}^{n}$. Now, let $i\in\left[  n\right]  $. Then, $\left(
\nu-\overline{\sigma}\right)  _{i}\in\mathbb{N}$ (since $\nu-\overline{\sigma
}\in\mathbb{N}^{n}$). In view of
\[
\left(  \nu-\overline{\sigma}\right)  _{i}=\nu_{i}-\underbrace{\overline
{\sigma}_{i}}_{\substack{=\sigma\left(  i\right)  \\\text{(by
(\ref{pf.pf.lem.nu-det2.ives.pf.sigbar}))}}}=\nu_{i}-\sigma\left(  i\right)
,
\]
we can rewrite this as $\nu_{i}-\sigma\left(  i\right)  \in\mathbb{N}$. Hence,
$\nu_{i}-\sigma\left(  i\right)  \geq0$, so that $\nu_{i}\geq\sigma\left(
i\right)  $. Thus, $\left[  \nu_{i}\geq\sigma\left(  i\right)  \right]  =1$.

Forget that we fixed $i$. Thus, we have shown that $\left[  \nu_{i}\geq
\sigma\left(  i\right)  \right]  =1$ for each $i\in\left[  n\right]  $.
Multiplying these equalities over all $i\in\left[  n\right]  $, we obtain
$\prod_{i\in\left[  n\right]  }\left[  \nu_{i}\geq\sigma\left(  i\right)
\right]  =\prod_{i\in\left[  n\right]  }1=1$. On the other hand, $\left[
\nu-\overline{\sigma}\in\mathbb{N}^{n}\right]  =1$ (since $\nu-\overline
{\sigma}\in\mathbb{N}^{n}$). Comparing these two equalities, we obtain%
\[
\left[  \nu-\overline{\sigma}\in\mathbb{N}^{n}\right]  =\underbrace{\prod
_{i\in\left[  n\right]  }}_{=\prod_{i=1}^{n}}\left[  \nu_{i}\geq\sigma\left(
i\right)  \right]  =\prod_{i=1}^{n}\left[  \nu_{i}\geq\sigma\left(  i\right)
\right]  .
\]
Thus, (\ref{pf.lem.nu-det2.ives}) is proved in Case 1. \medskip

Let us now consider Case 2. In this case, we have $\nu-\overline{\sigma}%
\notin\mathbb{N}^{n}$. Hence, there exists some $j\in\left[  n\right]  $ such
that $\left(  \nu-\overline{\sigma}\right)  _{j}\notin\mathbb{N}$. Consider
this $j$. We have $\overline{\sigma}_{j}=\sigma\left(  j\right)  $ (by
(\ref{pf.pf.lem.nu-det2.ives.pf.sigbar}), applied to $i=j$). Now,
\[
\left(  \nu-\overline{\sigma}\right)  _{j}=\nu_{j}-\underbrace{\overline
{\sigma}_{j}}_{=\sigma\left(  j\right)  }=\nu_{j}-\sigma\left(  j\right)  ,
\]
so that $\nu_{j}-\sigma\left(  j\right)  =\left(  \nu-\overline{\sigma
}\right)  _{j}\notin\mathbb{N}$. Combining this with $\nu_{j}-\sigma\left(
j\right)  \in\mathbb{Z}$ (which is obvious, since $\nu_{j}$ and $\sigma\left(
j\right)  $ belong to $\mathbb{Z}$), we obtain $\nu_{j}-\sigma\left(
j\right)  \in\mathbb{Z}\setminus\mathbb{N}=\left\{  -1,-2,-3,\ldots\right\}
$. In other words, $\nu_{j}-\sigma\left(  j\right)  $ is a negative integer.
Hence $\nu_{j}-\sigma\left(  j\right)  <0$, so that $\nu_{j}<\sigma\left(
j\right)  $. Thus, we don't have $\nu_{j}\geq\sigma\left(  j\right)  $. Hence,
we have $\left[  \nu_{j}\geq\sigma\left(  j\right)  \right]  =0$.

Now, consider the product $\prod_{i=1}^{n}\left[  \nu_{i}\geq\sigma\left(
i\right)  \right]  $. The term $\left[  \nu_{j}\geq\sigma\left(  j\right)
\right]  $ is one of the factors of this product (namely, the factor for
$i=j$). Since this term is $0$ (because we have just shown that $\left[
\nu_{j}\geq\sigma\left(  j\right)  \right]  =0$), we thus conclude that one of
the factors of the product $\prod_{i=1}^{n}\left[  \nu_{i}\geq\sigma\left(
i\right)  \right]  $ is $0$. Therefore, the entire product is $0$. In other
words,
\begin{equation}
\prod_{i=1}^{n}\left[  \nu_{i}\geq\sigma\left(  i\right)  \right]  =0.
\label{pf.pf.lem.nu-det2.ives.pf.c2.prod0}%
\end{equation}
On the other hand, we do not have $\nu-\overline{\sigma}\in\mathbb{N}^{n}$
(since $\nu-\overline{\sigma}\notin\mathbb{N}^{n}$). Hence, $\left[
\nu-\overline{\sigma}\in\mathbb{N}^{n}\right]  =0$. Comparing this with
(\ref{pf.pf.lem.nu-det2.ives.pf.c2.prod0}), we obtain%
\[
\left[  \nu-\overline{\sigma}\in\mathbb{N}^{n}\right]  =\prod_{i=1}^{n}\left[
\nu_{i}\geq\sigma\left(  i\right)  \right]  .
\]
Thus, (\ref{pf.lem.nu-det2.ives}) is proved in Case 2. \medskip

We have now proved (\ref{pf.lem.nu-det2.ives}) in both Cases 1 and 2. Thus,
the proof of (\ref{pf.lem.nu-det2.ives}) is complete.] \medskip

Now, the definition of the determinant of a matrix yields%
\begin{align*}
&  \det\left(  \left(  \left[  \nu_{i}\geq j\right]  \right)  _{i,j\in\left[
n\right]  }\right) \\
&  =\sum_{\sigma\in S_{n}}\left(  -1\right)  ^{\sigma}\underbrace{\prod
_{i=1}^{n}\left[  \nu_{i}\geq\sigma\left(  i\right)  \right]  }%
_{\substack{=\left[  \nu-\overline{\sigma}\in\mathbb{N}^{n}\right]
\\\text{(by (\ref{pf.lem.nu-det2.ives}))}}}=\sum_{\sigma\in S_{n}}\left(
-1\right)  ^{\sigma}\left[  \nu-\overline{\sigma}\in\mathbb{N}^{n}\right] \\
&  =\sum_{\substack{\sigma\in S_{n};\\\nu-\overline{\sigma}\in\mathbb{N}^{n}%
}}\left(  -1\right)  ^{\sigma}\underbrace{\left[  \nu-\overline{\sigma}%
\in\mathbb{N}^{n}\right]  }_{\substack{=1\\\text{(since }\nu-\overline{\sigma
}\in\mathbb{N}^{n}\text{)}}}+\sum_{\substack{\sigma\in S_{n};\\\nu
-\overline{\sigma}\notin\mathbb{N}^{n}}}\left(  -1\right)  ^{\sigma
}\underbrace{\left[  \nu-\overline{\sigma}\in\mathbb{N}^{n}\right]
}_{\substack{=0\\\text{(since we don't}\\\text{have }\nu-\overline{\sigma}%
\in\mathbb{N}^{n}\\\text{(because }\nu-\overline{\sigma}\notin\mathbb{N}%
^{n}\text{))}}}\\
&  \ \ \ \ \ \ \ \ \ \ \ \ \ \ \ \ \ \ \ \ \left(
\begin{array}
[c]{c}%
\text{since each }\sigma\in S_{n}\text{ satisfies either }\nu-\overline
{\sigma}\in\mathbb{N}^{n}\\
\text{or }\nu-\overline{\sigma}\notin\mathbb{N}^{n}\text{ (but not both at the
same time)}%
\end{array}
\right) \\
&  =\sum_{\substack{\sigma\in S_{n};\\\nu-\overline{\sigma}\in\mathbb{N}^{n}%
}}\left(  -1\right)  ^{\sigma}+\underbrace{\sum_{\substack{\sigma\in
S_{n};\\\nu-\overline{\sigma}\notin\mathbb{N}^{n}}}\left(  -1\right)
^{\sigma}0}_{=0}=\sum_{\substack{\sigma\in S_{n};\\\nu-\overline{\sigma}%
\in\mathbb{N}^{n}}}\left(  -1\right)  ^{\sigma}.
\end{align*}
This proves Lemma \ref{lem.nu-det2}.
\end{proof}
\end{verlong}

Our fourth and last lemma is a trivial property of the $\mathbb{Z}$-module
$\mathbb{Z}^{n}$:

\begin{lemma}
\label{lem.additive}Let $\alpha\in\mathbb{Z}^{n}$ and $\beta\in\mathbb{Z}^{n}%
$. Then, $\left\vert \alpha+\beta\right\vert =\left\vert \alpha\right\vert
+\left\vert \beta\right\vert $.
\end{lemma}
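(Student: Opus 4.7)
The plan is to unfold the definitions of $|\cdot|$ and of entrywise addition, then rearrange the resulting sum. By the definition of $|\cdot|$ on $\mathbb{Z}^n$, we have $|\alpha| = \alpha_1 + \alpha_2 + \cdots + \alpha_n$ and similarly for $\beta$ and for $\alpha+\beta$. By the definition of the entrywise sum on $\mathbb{Z}^n$, the $i$-th entry $(\alpha+\beta)_i$ equals $\alpha_i + \beta_i$ for every $i \in [n]$.

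First I would substitute these into $|\alpha+\beta|$ to obtain
\[
|\alpha+\beta| = (\alpha+\beta)_1 + (\alpha+\beta)_2 + \cdots + (\alpha+\beta)_n = \sum_{i=1}^{n}(\alpha_i + \beta_i).
\]
Then I would split this single sum into two sums (using the commutativity and associativity of integer addition), obtaining $\sum_{i=1}^{n}\alpha_i + \sum_{i=1}^{n}\beta_i$. Finally, recognizing these two sums as $|\alpha|$ and $|\beta|$ respectively by the defining formula yields the claim.

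There is no real obstacle here: the statement is purely formal, holding in any abelian group with a coordinatewise sum map. The only things to be careful about are invoking the right definitions in the right order (entrywise sum before summing the entries), and not accidentally writing something that would require $\alpha, \beta \in \mathbb{N}^n$ rather than $\mathbb{Z}^n$ (we don't need this, since $\mathbb{Z}$ is a group). The proof should fit in a handful of lines.
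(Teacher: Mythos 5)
Your proposal is correct and matches the paper's own proof essentially step for step: both unfold the definitions of $\left\vert \cdot\right\vert$ and of the entrywise sum, and then rearrange $\sum_{i=1}^{n}\left(  \alpha_{i}+\beta_{i}\right)$ into $\sum_{i=1}^{n}\alpha_{i}+\sum_{i=1}^{n}\beta_{i}$ using commutativity and associativity of integer addition. Nothing to add.
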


\begin{verlong}
\begin{proof}
[Proof of Lemma \ref{lem.additive}.]The definition of $\left\vert
\alpha\right\vert $ yields $\left\vert \alpha\right\vert =\alpha_{1}%
+\alpha_{2}+\cdots+\alpha_{n}$. Similarly, $\left\vert \beta\right\vert
=\beta_{1}+\beta_{2}+\cdots+\beta_{n}$. Adding these two equalities together,
we obtain%
\begin{equation}
\left\vert \alpha\right\vert +\left\vert \beta\right\vert =\left(  \alpha
_{1}+\alpha_{2}+\cdots+\alpha_{n}\right)  +\left(  \beta_{1}+\beta_{2}%
+\cdots+\beta_{n}\right)  . \label{pf.lem.additive.1}%
\end{equation}

The definition of $\alpha+\beta$ yields $\alpha+\beta=\left(  \alpha_{1}%
+\beta_{1},\alpha_{2}+\beta_{2},\ldots,\alpha_{n}+\beta_{n}\right)  $. Hence,%
\begin{align*}
\left\vert \alpha+\beta\right\vert  &  =\left\vert \left(  \alpha_{1}%
+\beta_{1},\alpha_{2}+\beta_{2},\ldots,\alpha_{n}+\beta_{n}\right)
\right\vert \\
&  =\left(  \alpha_{1}+\beta_{1}\right)  +\left(  \alpha_{2}+\beta_{2}\right)
+\cdots+\left(  \alpha_{n}+\beta_{n}\right) \\
&  \ \ \ \ \ \ \ \ \ \ \ \ \ \ \ \ \ \ \ \ \left(  \text{by the definition of
}\left\vert \left(  \alpha_{1}+\beta_{1},\alpha_{2}+\beta_{2},\ldots
,\alpha_{n}+\beta_{n}\right)  \right\vert \right) \\
&  =\left(  \alpha_{1}+\alpha_{2}+\cdots+\alpha_{n}\right)  +\left(  \beta
_{1}+\beta_{2}+\cdots+\beta_{n}\right) \\
&  =\left\vert \alpha\right\vert +\left\vert \beta\right\vert
\ \ \ \ \ \ \ \ \ \ \left(  \text{by (\ref{pf.lem.additive.1})}\right)  .
\end{align*}
This proves Lemma \ref{lem.additive}.
\end{proof}
\end{verlong}

We are now ready to prove Theorem \ref{thm.pre-pieri}:

\begin{vershort}
\begin{proof}
[Proof of Theorem \ref{thm.pre-pieri}.]For any permutation $\sigma\in S_{n}$,
we let $\overline{\sigma}$ denote the $n$-tuple $\left(  \sigma\left(
1\right)  ,\sigma\left(  2\right)  ,\ldots,\sigma\left(  n\right)  \right)
\in\mathbb{Z}^{n}$. This $n$-tuple $\overline{\sigma}$ satisfies
\[
\left\vert \overline{\sigma}\right\vert =\sigma\left(  1\right)
+\sigma\left(  2\right)  +\cdots+\sigma\left(  n\right)  =1+2+\cdots+n
\]
and therefore
\begin{equation}
\left\vert \overline{\sigma}\right\vert +p=\left(  1+2+\cdots+n\right)
+p=\left\vert \eta\right\vert \label{pf.thm.pre-pieri.short.modsigbar1}%
\end{equation}
(since the definition of $\eta$ yields \newline$\left\vert \eta\right\vert
=1+2+\cdots+\left(  n-1\right)  +\left(  n+p\right)  =\left(  1+2+\cdots
+n\right)  +p$).

Let us now set $\prod_{i=1}^{n}a_{i}:=a_{1}a_{2}\cdots a_{n}$ for any
$a_{1},a_{2},\ldots,a_{n}\in R$. (This is, of course, the usual meaning of the
notation $\prod_{i=1}^{n}a_{i}$ when the ring $R$ is commutative; however, we
are now extending it to the case of arbitrary $R$.)

Using this notation, we can rewrite the definition of a row-determinant as
follows: If $\left(  a_{i,j}\right)  _{i,j\in\left[  n\right]  }\in R^{n\times
n}$ is any $n\times n$-matrix over $R$, then%
\begin{equation}
\operatorname*{rowdet}\left(  \left(  a_{i,j}\right)  _{i,j\in\left[
n\right]  }\right)  =\sum_{\sigma\in S_{n}}\left(  -1\right)  ^{\sigma}%
\prod_{i=1}^{n}a_{i,\sigma\left(  i\right)  }.
\label{pf.thm.pre-pieri.short.rowdetA=}%
\end{equation}

For each $\beta\in\mathbb{N}^{n}$, we have%
\begin{align}
t_{\alpha+\beta}  &  =\operatorname*{rowdet}\left(  \left(  h_{\left(
\alpha+\beta\right)  _{i}+j,\ i}\right)  _{i,j\in\left[  n\right]  }\right)
\ \ \ \ \ \ \ \ \ \ \left(  \text{by the definition of }t_{\alpha+\beta
}\right) \nonumber\\
&  =\sum_{\sigma\in S_{n}}\left(  -1\right)  ^{\sigma}\prod_{i=1}%
^{n}h_{\left(  \alpha+\beta\right)  _{i}+\sigma\left(  i\right)  ,\ i}
\label{pf.thm.pre-pieri.short.t1}%
\end{align}
(by (\ref{pf.thm.pre-pieri.short.rowdetA=}), applied to $a_{i,j}=h_{\left(
\alpha+\beta\right)  _{i}+j,\ i}$). However, for each $\beta\in\mathbb{N}^{n}%
$, each $\sigma\in S_{n}$ and each $i\in\left[  n\right]  $, we have%
\[
\underbrace{\left(  \alpha+\beta\right)  _{i}}_{=\alpha_{i}+\beta_{i}%
}+\underbrace{\sigma\left(  i\right)  }_{\substack{=\overline{\sigma}%
_{i}\\\text{(by the definition of }\overline{\sigma}\text{)}}}=\alpha
_{i}+\underbrace{\beta_{i}+\overline{\sigma}_{i}}_{=\left(  \beta
+\overline{\sigma}\right)  _{i}}=\alpha_{i}+\left(  \beta+\overline{\sigma
}\right)  _{i}.
\]
This allows us to rewrite (\ref{pf.thm.pre-pieri.short.t1}) as follows:%
\[
t_{\alpha+\beta}=\sum_{\sigma\in S_{n}}\left(  -1\right)  ^{\sigma}\prod
_{i=1}^{n}h_{\alpha_{i}+\left(  \beta+\overline{\sigma}\right)  _{i}%
,\ i}\ \ \ \ \ \ \ \ \ \ \text{for each }\beta\in\mathbb{N}^{n}.
\]
Summing these equalities over all $\beta\in\mathbb{N}^{n}$ satisfying
$\left\vert \beta\right\vert =p$, we obtain%
\begin{align}
\sum_{\substack{\beta\in\mathbb{N}^{n};\\\left\vert \beta\right\vert
=p}}t_{\alpha+\beta}  &  =\sum_{\substack{\beta\in\mathbb{N}^{n};\\\left\vert
\beta\right\vert =p}}\ \ \sum_{\sigma\in S_{n}}\left(  -1\right)  ^{\sigma
}\prod_{i=1}^{n}h_{\alpha_{i}+\left(  \beta+\overline{\sigma}\right)
_{i},\ i}\nonumber\\
&  =\sum_{\sigma\in S_{n}}\left(  -1\right)  ^{\sigma}\sum_{\substack{\beta
\in\mathbb{N}^{n};\\\left\vert \beta\right\vert =p}}\ \ \prod_{i=1}%
^{n}h_{\alpha_{i}+\left(  \beta+\overline{\sigma}\right)  _{i},\ i}.
\label{pf.thm.pre-pieri.short.sumt=1}%
\end{align}

Now, fix a permutation $\sigma\in S_{n}$. We shall rewrite the sum
$\sum_{\substack{\beta\in\mathbb{N}^{n};\\\left\vert \beta\right\vert
=p}}\ \ \prod_{i=1}^{n}h_{\alpha_{i}+\left(  \beta+\overline{\sigma}\right)
_{i},\ i}$ in terms of $\beta+\overline{\sigma}$. Indeed, we have the
following equality of summation signs:%
\begin{align*}
\sum_{\substack{\beta\in\mathbb{N}^{n};\\\left\vert \beta\right\vert =p}}  &
=\sum_{\substack{\beta\in\mathbb{N}^{n};\\\left\vert \beta\right\vert
+\left\vert \overline{\sigma}\right\vert =p+\left\vert \overline{\sigma
}\right\vert }}\ \ \ \ \ \ \ \ \ \ \left(
\begin{array}
[c]{c}%
\text{since the condition \textquotedblleft}\left\vert \beta\right\vert
=p\text{\textquotedblright\ is clearly}\\
\text{equivalent to \textquotedblleft}\left\vert \beta\right\vert +\left\vert
\overline{\sigma}\right\vert =p+\left\vert \overline{\sigma}\right\vert
\text{\textquotedblright}%
\end{array}
\right) \\
&  =\sum_{\substack{\beta\in\mathbb{N}^{n};\\\left\vert \beta+\overline
{\sigma}\right\vert =\left\vert \eta\right\vert }}\ \ \ \ \ \ \ \ \ \ \left(
\begin{array}
[c]{c}%
\text{since Lemma \ref{lem.additive} yields }\left\vert \beta\right\vert
+\left\vert \overline{\sigma}\right\vert =\left\vert \beta+\overline{\sigma
}\right\vert \text{,}\\
\text{and since }p+\left\vert \overline{\sigma}\right\vert =\left\vert
\overline{\sigma}\right\vert +p=\left\vert \eta\right\vert \text{ (by
(\ref{pf.thm.pre-pieri.short.modsigbar1}))}%
\end{array}
\right)  .
\end{align*}
Hence,%
\[
\sum_{\substack{\beta\in\mathbb{N}^{n};\\\left\vert \beta\right\vert
=p}}\ \ \prod_{i=1}^{n}h_{\alpha_{i}+\left(  \beta+\overline{\sigma}\right)
_{i},\ i}=\sum_{\substack{\beta\in\mathbb{N}^{n};\\\left\vert \beta
+\overline{\sigma}\right\vert =\left\vert \eta\right\vert }}\ \ \prod
_{i=1}^{n}h_{\alpha_{i}+\left(  \beta+\overline{\sigma}\right)  _{i},\ i}.
\]
However, $\mathbb{Z}^{n}$ is a group (under addition). Hence, when $\beta$
runs over $\mathbb{Z}^{n}$, the sum $\beta+\overline{\sigma}$ also runs over
$\mathbb{Z}^{n}$. (Formally speaking, this is saying that the map
\begin{align*}
\mathbb{Z}^{n}  &  \rightarrow\mathbb{Z}^{n},\\
\beta &  \mapsto\beta+\overline{\sigma}%
\end{align*}
is a bijection.) Thus, when $\beta$ runs over $\mathbb{N}^{n}$, the sum
$\beta+\overline{\sigma}$ runs over the set of all $\nu\in\mathbb{Z}^{n}$ that
satisfy $\nu-\overline{\sigma}\in\mathbb{N}^{n}$. Therefore, we can substitute
$\nu$ for $\beta+\overline{\sigma}$ in the sum $\sum_{\substack{\beta
\in\mathbb{N}^{n};\\\left\vert \beta+\overline{\sigma}\right\vert =\left\vert
\eta\right\vert }}\ \ \prod_{i=1}^{n}h_{\alpha_{i}+\left(  \beta
+\overline{\sigma}\right)  _{i},\ i}$. We thus obtain%
\[
\sum_{\substack{\beta\in\mathbb{N}^{n};\\\left\vert \beta+\overline{\sigma
}\right\vert =\left\vert \eta\right\vert }}\ \ \prod_{i=1}^{n}h_{\alpha
_{i}+\left(  \beta+\overline{\sigma}\right)  _{i},\ i}=\sum_{\substack{\nu
\in\mathbb{Z}^{n};\\\nu-\overline{\sigma}\in\mathbb{N}^{n};\\\left\vert
\nu\right\vert =\left\vert \eta\right\vert }}\ \ \prod_{i=1}^{n}h_{\alpha
_{i}+\nu_{i},\ i}.
\]
Hence, our above computation becomes%
\begin{align}
\sum_{\substack{\beta\in\mathbb{N}^{n};\\\left\vert \beta\right\vert
=p}}\ \ \prod_{i=1}^{n}h_{\alpha_{i}+\left(  \beta+\overline{\sigma}\right)
_{i},\ i}  &  =\sum_{\substack{\beta\in\mathbb{N}^{n};\\\left\vert
\beta+\overline{\sigma}\right\vert =\left\vert \eta\right\vert }%
}\ \ \prod_{i=1}^{n}h_{\alpha_{i}+\left(  \beta+\overline{\sigma}\right)
_{i},\ i}\nonumber\\
&  =\sum_{\substack{\nu\in\mathbb{Z}^{n};\\\nu-\overline{\sigma}\in
\mathbb{N}^{n};\\\left\vert \nu\right\vert =\left\vert \eta\right\vert
}}\ \ \prod_{i=1}^{n}h_{\alpha_{i}+\nu_{i},\ i}.
\label{pf.thm.pre-pieri.short.innersum}%
\end{align}

Forget that we fixed $\sigma$. We thus have proved
(\ref{pf.thm.pre-pieri.short.innersum}) for each permutation $\sigma\in S_{n}%
$. Now, (\ref{pf.thm.pre-pieri.short.sumt=1}) becomes%
\begin{align}
\sum_{\substack{\beta\in\mathbb{N}^{n};\\\left\vert \beta\right\vert
=p}}t_{\alpha+\beta}  &  =\sum_{\sigma\in S_{n}}\left(  -1\right)  ^{\sigma
}\sum_{\substack{\beta\in\mathbb{N}^{n};\\\left\vert \beta\right\vert
=p}}\ \ \prod_{i=1}^{n}h_{\alpha_{i}+\left(  \beta+\overline{\sigma}\right)
_{i},\ i}\nonumber\\
&  =\sum_{\sigma\in S_{n}}\left(  -1\right)  ^{\sigma}\sum_{\substack{\nu
\in\mathbb{Z}^{n};\\\nu-\overline{\sigma}\in\mathbb{N}^{n};\\\left\vert
\nu\right\vert =\left\vert \eta\right\vert }}\ \ \prod_{i=1}^{n}h_{\alpha
_{i}+\nu_{i},\ i}\ \ \ \ \ \ \ \ \ \ \left(  \text{by
(\ref{pf.thm.pre-pieri.short.innersum})}\right) \nonumber\\
&  =\underbrace{\sum_{\sigma\in S_{n}}\ \ \sum_{\substack{\nu\in\mathbb{Z}%
^{n};\\\nu-\overline{\sigma}\in\mathbb{N}^{n};\\\left\vert \nu\right\vert
=\left\vert \eta\right\vert }}}_{=\sum_{\substack{\nu\in\mathbb{Z}%
^{n};\\\left\vert \nu\right\vert =\left\vert \eta\right\vert }}\ \ \sum
_{\substack{\sigma\in S_{n};\\\nu-\overline{\sigma}\in\mathbb{N}^{n}}}}\left(
-1\right)  ^{\sigma}\prod_{i=1}^{n}h_{\alpha_{i}+\nu_{i},\ i}\nonumber\\
&  =\sum_{\substack{\nu\in\mathbb{Z}^{n};\\\left\vert \nu\right\vert
=\left\vert \eta\right\vert }}\ \ \sum_{\substack{\sigma\in S_{n}%
;\\\nu-\overline{\sigma}\in\mathbb{N}^{n}}}\left(  -1\right)  ^{\sigma}%
\prod_{i=1}^{n}h_{\alpha_{i}+\nu_{i},\ i}\nonumber\\
&  =\sum_{\substack{\nu\in\mathbb{Z}^{n};\\\left\vert \nu\right\vert
=\left\vert \eta\right\vert }}\underbrace{\left(  \sum_{\substack{\sigma\in
S_{n};\\\nu-\overline{\sigma}\in\mathbb{N}^{n}}}\left(  -1\right)  ^{\sigma
}\right)  }_{\substack{=\det\left(  \left(  \left[  \nu_{i}\geq j\right]
\right)  _{i,j\in\left[  n\right]  }\right)  \\\text{(by Lemma
\ref{lem.nu-det2})}}}\prod_{i=1}^{n}h_{\alpha_{i}+\nu_{i},\ i}\nonumber\\
&  =\sum_{\substack{\nu\in\mathbb{Z}^{n};\\\left\vert \nu\right\vert
=\left\vert \eta\right\vert }}\underbrace{\det\left(  \left(  \left[  \nu
_{i}\geq j\right]  \right)  _{i,j\in\left[  n\right]  }\right)  }%
_{\substack{=\sum_{\substack{\sigma\in S_{n};\\\nu=\eta\circ\sigma}}\left(
-1\right)  ^{\sigma}\\\text{(by Lemma \ref{lem.nu-det})}}}\cdot\prod_{i=1}%
^{n}h_{\alpha_{i}+\nu_{i},\ i}\nonumber\\
&  =\sum_{\substack{\nu\in\mathbb{Z}^{n};\\\left\vert \nu\right\vert
=\left\vert \eta\right\vert }}\left(  \sum_{\substack{\sigma\in S_{n}%
;\\\nu=\eta\circ\sigma}}\left(  -1\right)  ^{\sigma}\right)  \prod_{i=1}%
^{n}h_{\alpha_{i}+\nu_{i},\ i}\nonumber\\
&  =\sum_{\substack{\nu\in\mathbb{Z}^{n};\\\left\vert \nu\right\vert
=\left\vert \eta\right\vert }}\ \ \sum_{\substack{\sigma\in S_{n};\\\nu
=\eta\circ\sigma}}\left(  -1\right)  ^{\sigma}\prod_{i=1}^{n}h_{\alpha_{i}%
+\nu_{i},\ i}. \label{pf.thm.pre-pieri.short.2b}%
\end{align}

On the other hand,%
\begin{equation}
\operatorname*{rowdet}\left(  \left(  h_{\alpha_{i}+\eta_{j},\ i}\right)
_{i,j\in\left[  n\right]  }\right)  =\sum_{\sigma\in S_{n}}\left(  -1\right)
^{\sigma}\prod_{i=1}^{n}h_{\alpha_{i}+\eta_{\sigma\left(  i\right)  },\ i}
\label{pf.thm.pre-pieri.short.3a}%
\end{equation}
(by (\ref{pf.thm.pre-pieri.short.rowdetA=}), applied to $a_{i,j}=h_{\alpha
_{i}+\eta_{j},\ i}$). However, for each $\sigma\in S_{n}$, the $n$-tuple
$\eta\circ\sigma$ belongs to $\mathbb{Z}^{n}$ and satisfies $\left\vert
\eta\circ\sigma\right\vert =\left\vert \eta\right\vert $ (by Proposition
\ref{prop.etapi.len}). In other words, for each $\sigma\in S_{n}$, the
$n$-tuple $\eta\circ\sigma$ is a $\nu\in\mathbb{Z}^{n}$ satisfying $\left\vert
\nu\right\vert =\left\vert \eta\right\vert $. Hence, any sum ranging over all
$\sigma\in S_{n}$ can be split according to the value of $\eta\circ\sigma$. In
other words, we have the following equality of summation signs:%
\[
\sum_{\sigma\in S_{n}}=\sum_{\substack{\nu\in\mathbb{Z}^{n};\\\left\vert
\nu\right\vert =\left\vert \eta\right\vert }}\ \ \sum_{\substack{\sigma\in
S_{n};\\\eta\circ\sigma=\nu}}=\sum_{\substack{\nu\in\mathbb{Z}^{n}%
;\\\left\vert \nu\right\vert =\left\vert \eta\right\vert }}\ \ \sum
_{\substack{\sigma\in S_{n};\\\nu=\eta\circ\sigma}}.
\]
Thus, (\ref{pf.thm.pre-pieri.short.3a}) becomes%
\begin{align*}
\operatorname*{rowdet}\left(  \left(  h_{\alpha_{i}+\eta_{j},\ i}\right)
_{i,j\in\left[  n\right]  }\right)   &  =\underbrace{\sum_{\sigma\in S_{n}}%
}_{=\sum_{\substack{\nu\in\mathbb{Z}^{n};\\\left\vert \nu\right\vert
=\left\vert \eta\right\vert }}\ \ \sum_{\substack{\sigma\in S_{n};\\\nu
=\eta\circ\sigma}}}\left(  -1\right)  ^{\sigma}\prod_{i=1}^{n}h_{\alpha
_{i}+\eta_{\sigma\left(  i\right)  },\ i}\\
&  =\sum_{\substack{\nu\in\mathbb{Z}^{n};\\\left\vert \nu\right\vert
=\left\vert \eta\right\vert }}\ \ \sum_{\substack{\sigma\in S_{n};\\\nu
=\eta\circ\sigma}}\left(  -1\right)  ^{\sigma}\prod_{i=1}^{n}%
\underbrace{h_{\alpha_{i}+\eta_{\sigma\left(  i\right)  },\ i}}%
_{\substack{=h_{\alpha_{i}+\nu_{i},\ i}\\\text{(since }\nu=\eta\circ
\sigma\text{ entails }\nu_{i}=\eta_{\sigma\left(  i\right)  }\text{,}%
\\\text{so that }h_{\alpha_{i}+\nu_{i},\ i}=h_{\alpha_{i}+\eta_{\sigma\left(
i\right)  },\ i}\text{)}}}\\
&  =\sum_{\substack{\nu\in\mathbb{Z}^{n};\\\left\vert \nu\right\vert
=\left\vert \eta\right\vert }}\ \ \sum_{\substack{\sigma\in S_{n};\\\nu
=\eta\circ\sigma}}\left(  -1\right)  ^{\sigma}\prod_{i=1}^{n}h_{\alpha_{i}%
+\nu_{i},\ i}.
\end{align*}
Comparing this with (\ref{pf.thm.pre-pieri.short.2b}), we obtain%
\[
\sum_{\substack{\beta\in\mathbb{N}^{n};\\\left\vert \beta\right\vert
=p}}t_{\alpha+\beta}=\operatorname*{rowdet}\left(  \left(  h_{\alpha_{i}%
+\eta_{j},\ i}\right)  _{i,j\in\left[  n\right]  }\right)  .
\]
This proves Theorem \ref{thm.pre-pieri}.
\end{proof}
\end{vershort}

\begin{verlong}
\begin{proof}
[Proof of Theorem \ref{thm.pre-pieri}.]We have $n\in\mathbb{P}$. In other
words, $n$ is a positive integer. Thus, $n-1\in\mathbb{N}$ and $n>0$. The
definition of $\eta$ yields%
\[
\eta=\left(  1,2,\ldots,n\right)  +\left(  \underbrace{0,0,\ldots
,0}_{n-1\text{ zeroes}},p\right)  =\left(  1,2,\ldots,n-1,n+p\right)  .
\]
Thus,%
\begin{align}
\left\vert \eta\right\vert  &  =\left\vert \left(  1,2,\ldots,n-1,n+p\right)
\right\vert =1+2+\cdots+\left(  n-1\right)  +\left(  n+p\right) \nonumber\\
&  \ \ \ \ \ \ \ \ \ \ \ \ \ \ \ \ \ \ \ \ \left(  \text{by the definition of
}\left\vert \left(  1,2,\ldots,n-1,n+p\right)  \right\vert \right) \nonumber\\
&  =\underbrace{\left(  1+2+\cdots+\left(  n-1\right)  \right)  +n}%
_{=1+2+\cdots+n}+p\nonumber\\
&  =\left(  1+2+\cdots+n\right)  +p. \label{pf.thm.pre-pieri.sum-eta}%
\end{align}

For any permutation $\sigma\in S_{n}$, we let $\overline{\sigma}$ denote the
$n$-tuple $\left(  \sigma\left(  1\right)  ,\sigma\left(  2\right)
,\ldots,\sigma\left(  n\right)  \right)  \in\mathbb{Z}^{n}$. This $n$-tuple
$\overline{\sigma}$ satisfies $\left\vert \overline{\sigma}\right\vert
=1+2+\cdots+n$\ \ \ \ \footnote{\textit{Proof.} Let $\sigma\in S_{n}$. Thus,
$\sigma$ is a permutation of $\left[  n\right]  $ (since $S_{n}$ was defined
as the set of all permutations of $\left[  n\right]  $). In other words,
$\sigma$ is a bijection from $\left[  n\right]  $ to $\left[  n\right]  $.
However, the definition of $\overline{\sigma}$ yields $\overline{\sigma
}=\left(  \sigma\left(  1\right)  ,\sigma\left(  2\right)  ,\ldots
,\sigma\left(  n\right)  \right)  $. Thus,%
\begin{align*}
\left\vert \overline{\sigma}\right\vert  &  =\left\vert \left(  \sigma\left(
1\right)  ,\sigma\left(  2\right)  ,\ldots,\sigma\left(  n\right)  \right)
\right\vert =\sigma\left(  1\right)  +\sigma\left(  2\right)  +\cdots
+\sigma\left(  n\right) \\
&  \ \ \ \ \ \ \ \ \ \ \ \ \ \ \ \ \ \ \ \ \left(  \text{by the definition of
}\left\vert \left(  \sigma\left(  1\right)  ,\sigma\left(  2\right)
,\ldots,\sigma\left(  n\right)  \right)  \right\vert \right) \\
&  =\sum_{i\in\left[  n\right]  }\sigma\left(  i\right)  =\sum_{i\in\left[
n\right]  }i\ \ \ \ \ \ \ \ \ \ \left(
\begin{array}
[c]{c}%
\text{here, we have substituted }i\text{ for }\sigma\left(  i\right)  \text{
in the sum,}\\
\text{since the map }\sigma:\left[  n\right]  \rightarrow\left[  n\right]
\text{ is a bijection}%
\end{array}
\right) \\
&  =1+2+\cdots+n,
\end{align*}
qed.} and therefore
\begin{equation}
\underbrace{\left\vert \overline{\sigma}\right\vert }_{=1+2+\cdots
+n}+p=\left(  1+2+\cdots+n\right)  +p=\left\vert \eta\right\vert
\label{pf.thm.pre-pieri.modsigbar1}%
\end{equation}
(by (\ref{pf.thm.pre-pieri.sum-eta})). Moreover, for each $\sigma\in S_{n}$
and each $i\in\left[  n\right]  $, we have%
\begin{equation}
\overline{\sigma}_{i}=\sigma\left(  i\right)  \label{pf.thm.pre-pieri.sigbari}%
\end{equation}
(since $\overline{\sigma}=\left(  \sigma\left(  1\right)  ,\sigma\left(
2\right)  ,\ldots,\sigma\left(  n\right)  \right)  $).

Now, we claim the following:

\begin{statement}
\textit{Claim 1:} Let $\beta\in\mathbb{N}^{n}$ and $\sigma\in S_{n}$ be
arbitrary. Then, the statement \textquotedblleft$\left\vert \beta\right\vert
=p$\textquotedblright\ is equivalent to \textquotedblleft$\left\vert
\beta+\overline{\sigma}\right\vert =\left\vert \eta\right\vert $%
\textquotedblright.
\end{statement}

[\textit{Proof of Claim 1:} We have $\beta+\overline{\sigma}=\overline{\sigma
}+\beta$ (since $\mathbb{Z}^{n}$ is an abelian group). Thus, $\left\vert
\beta+\overline{\sigma}\right\vert =\left\vert \overline{\sigma}%
+\beta\right\vert =\left\vert \overline{\sigma}\right\vert +\left\vert
\beta\right\vert $ (by Lemma \ref{lem.additive}, applied to $\alpha
=\overline{\sigma}$). Therefore,%
\begin{align}
\underbrace{\left\vert \beta+\overline{\sigma}\right\vert }_{=\left\vert
\overline{\sigma}\right\vert +\left\vert \beta\right\vert }%
-\underbrace{\left\vert \eta\right\vert }_{\substack{=\left\vert
\overline{\sigma}\right\vert +p\\\text{(by (\ref{pf.thm.pre-pieri.modsigbar1}%
))}}}  &  =\left(  \left\vert \overline{\sigma}\right\vert +\left\vert
\beta\right\vert \right)  -\left(  \left\vert \overline{\sigma}\right\vert
+p\right) \nonumber\\
&  =\left\vert \beta\right\vert -p. \label{pf.thm.pre-pieri.modsigbar2}%
\end{align}
Hence, we have the following chain of logical equivalences:%
\begin{align*}
\left(  \left\vert \beta\right\vert =p\right)  \  &  \Longleftrightarrow
\ \left(  \underbrace{\left\vert \beta\right\vert -p}_{\substack{=\left\vert
\beta+\overline{\sigma}\right\vert -\left\vert \eta\right\vert \\\text{(by
(\ref{pf.thm.pre-pieri.modsigbar2}))}}}=0\right)  \ \ \Longleftrightarrow
\ \left(  \left\vert \beta+\overline{\sigma}\right\vert -\left\vert
\eta\right\vert =0\right) \\
&  \Longleftrightarrow\ \left(  \left\vert \beta+\overline{\sigma}\right\vert
=\left\vert \eta\right\vert \right)  .
\end{align*}
In other words, the statement \textquotedblleft$\left\vert \beta\right\vert
=p$\textquotedblright\ is equivalent to \textquotedblleft$\left\vert
\beta+\overline{\sigma}\right\vert =\left\vert \eta\right\vert $%
\textquotedblright. This proves Claim 1.] \medskip

Let us now set%
\begin{equation}
\prod_{i=1}^{n}a_{i}:=a_{1}a_{2}\cdots a_{n}
\label{pf.thm.pre-pieri.bottom-up-prod}%
\end{equation}
for any $a_{1},a_{2},\ldots,a_{n}\in R$. (This is, of course, the usual
meaning of the notation $\prod_{i=1}^{n}a_{i}$ when the ring $R$ is
commutative; however, we are now extending it to the case of arbitrary $R$ by
using the formula (\ref{pf.thm.pre-pieri.bottom-up-prod}).)

Thus, if $\left(  a_{i,j}\right)  _{i,j\in\left[  n\right]  }$ is any $n\times
n$-matrix over $R$, then the definition of $\operatorname*{rowdet}\left(
\left(  a_{i,j}\right)  _{i,j\in\left[  n\right]  }\right)  $ yields%
\begin{align}
\operatorname*{rowdet}\left(  \left(  a_{i,j}\right)  _{i,j\in\left[
n\right]  }\right)   &  =\sum_{\sigma\in S_{n}}\left(  -1\right)  ^{\sigma
}\underbrace{a_{1,\sigma\left(  1\right)  }a_{2,\sigma\left(  2\right)
}\cdots a_{n,\sigma\left(  n\right)  }}_{=\prod_{i=1}^{n}a_{i,\sigma\left(
i\right)  }}\nonumber\\
&  =\sum_{\sigma\in S_{n}}\left(  -1\right)  ^{\sigma}\prod_{i=1}%
^{n}a_{i,\sigma\left(  i\right)  }. \label{pf.thm.pre-pieri.rowdetA=}%
\end{align}

For each $\beta\in\mathbb{N}^{n}$, we have%
\begin{align}
t_{\alpha+\beta}  &  =\operatorname*{rowdet}\left(  \left(  h_{\left(
\alpha+\beta\right)  _{i}+j,\ i}\right)  _{i,j\in\left[  n\right]  }\right)
\ \ \ \ \ \ \ \ \ \ \left(  \text{by the definition of }t_{\alpha+\beta
}\right) \nonumber\\
&  =\operatorname*{rowdet}\left(  \left(  h_{\alpha_{i}+\beta_{i}%
+j,\ i}\right)  _{i,j\in\left[  n\right]  }\right) \nonumber\\
&  \ \ \ \ \ \ \ \ \ \ \ \ \ \ \ \ \ \ \ \ \left(  \text{since }\left(
\alpha+\beta\right)  _{i}=\alpha_{i}+\beta_{i}\text{ for each }i\in\left[
n\right]  \right) \nonumber\\
&  =\sum_{\sigma\in S_{n}}\left(  -1\right)  ^{\sigma}\prod_{i=1}^{n}%
h_{\alpha_{i}+\beta_{i}+\sigma\left(  i\right)  ,\ i}%
\label{pf.thm.pre-pieri.t1}\\
&  \ \ \ \ \ \ \ \ \ \ \ \ \ \ \ \ \ \ \ \ \left(  \text{by
(\ref{pf.thm.pre-pieri.rowdetA=}), applied to }a_{i,j}=h_{\alpha_{i}+\beta
_{i}+j,\ i}\right)  .\nonumber
\end{align}
However, for each $\beta\in\mathbb{N}^{n}$ and each $\sigma\in S_{n}$, we have%
\[
\beta_{i}+\underbrace{\sigma\left(  i\right)  }_{\substack{=\overline{\sigma
}_{i}\\\text{(by (\ref{pf.thm.pre-pieri.sigbari}))}}}=\beta_{i}+\overline
{\sigma}_{i}=\left(  \beta+\overline{\sigma}\right)  _{i}%
\]
and therefore%
\begin{equation}
h_{\alpha_{i}+\beta_{i}+\sigma\left(  i\right)  ,\ i}=h_{\alpha_{i}+\left(
\beta+\overline{\sigma}\right)  _{i},\ i}. \label{pf.thm.pre-pieri.h=h1}%
\end{equation}

Hence, for each $\beta\in\mathbb{N}^{n}$, we have the equality%
\begin{align*}
t_{\alpha+\beta}  &  =\sum_{\sigma\in S_{n}}\left(  -1\right)  ^{\sigma}%
\prod_{i=1}^{n}\underbrace{h_{\alpha_{i}+\beta_{i}+\sigma\left(  i\right)
,\ i}}_{\substack{=h_{\alpha_{i}+\left(  \beta+\overline{\sigma}\right)
_{i},\ i}\\\text{(by (\ref{pf.thm.pre-pieri.h=h1}))}}%
}\ \ \ \ \ \ \ \ \ \ \left(  \text{by (\ref{pf.thm.pre-pieri.t1})}\right) \\
&  =\sum_{\sigma\in S_{n}}\left(  -1\right)  ^{\sigma}\prod_{i=1}^{n}%
h_{\alpha_{i}+\left(  \beta+\overline{\sigma}\right)  _{i},\ i}.
\end{align*}
Summing these equalities over all $\beta\in\mathbb{N}^{n}$ satisfying
$\left\vert \beta\right\vert =p$, we obtain%
\begin{align}
\sum_{\substack{\beta\in\mathbb{N}^{n};\\\left\vert \beta\right\vert
=p}}t_{\alpha+\beta}  &  =\underbrace{\sum_{\substack{\beta\in\mathbb{N}%
^{n};\\\left\vert \beta\right\vert =p}}\ \ \sum_{\sigma\in S_{n}}}%
_{=\sum_{\sigma\in S_{n}}\ \ \sum_{\substack{\beta\in\mathbb{N}^{n}%
;\\\left\vert \beta\right\vert =p}}}\left(  -1\right)  ^{\sigma}\prod
_{i=1}^{n}h_{\alpha_{i}+\left(  \beta+\overline{\sigma}\right)  _{i}%
,\ i}\nonumber\\
&  =\sum_{\sigma\in S_{n}}\ \ \sum_{\substack{\beta\in\mathbb{N}%
^{n};\\\left\vert \beta\right\vert =p}}\left(  -1\right)  ^{\sigma}\prod
_{i=1}^{n}h_{\alpha_{i}+\left(  \beta+\overline{\sigma}\right)  _{i}%
,\ i}\nonumber\\
&  =\sum_{\sigma\in S_{n}}\left(  -1\right)  ^{\sigma}\sum_{\substack{\beta
\in\mathbb{N}^{n};\\\left\vert \beta\right\vert =p}}\ \ \prod_{i=1}%
^{n}h_{\alpha_{i}+\left(  \beta+\overline{\sigma}\right)  _{i},\ i}.
\label{pf.thm.pre-pieri.sumt=1}%
\end{align}

Now, fix a permutation $\sigma\in S_{n}$. We shall rewrite the sum
$\sum_{\substack{\beta\in\mathbb{N}^{n};\\\left\vert \beta\right\vert
=p}}\ \ \prod_{i=1}^{n}h_{\alpha_{i}+\left(  \beta+\overline{\sigma}\right)
_{i},\ i}$ in terms of $\beta+\overline{\sigma}$. Indeed, Claim 1 shows that
for any $n$-tuple $\beta\in\mathbb{N}^{n}$, the statement \textquotedblleft%
$\left\vert \beta\right\vert =p$\textquotedblright\ is equivalent to
\textquotedblleft$\left\vert \beta+\overline{\sigma}\right\vert =\left\vert
\eta\right\vert $\textquotedblright. Hence, we have the following equality of
summation signs:%
\begin{align*}
\sum_{\substack{\beta\in\mathbb{N}^{n};\\\left\vert \beta\right\vert =p}}  &
=\sum_{\substack{\beta\in\mathbb{N}^{n};\\\left\vert \beta+\overline{\sigma
}\right\vert =\left\vert \eta\right\vert }}=\sum_{\substack{\beta\in
\mathbb{Z}^{n};\\\beta\in\mathbb{N}^{n};\\\left\vert \beta+\overline{\sigma
}\right\vert =\left\vert \eta\right\vert }}\ \ \ \ \ \ \ \ \ \ \left(
\text{since }\mathbb{N}^{n}\subseteq\mathbb{Z}^{n}\right) \\
&  =\sum_{\substack{\beta\in\mathbb{Z}^{n};\\\left(  \beta+\overline{\sigma
}\right)  -\overline{\sigma}\in\mathbb{N}^{n};\\\left\vert \beta
+\overline{\sigma}\right\vert =\left\vert \eta\right\vert }%
}\ \ \ \ \ \ \ \ \ \ \left(  \text{since }\beta=\left(  \beta+\overline
{\sigma}\right)  -\overline{\sigma}\right)  .
\end{align*}
Thus,%
\[
\underbrace{\sum_{\substack{\beta\in\mathbb{N}^{n};\\\left\vert \beta
\right\vert =p}}}_{\substack{=\sum_{\substack{\beta\in\mathbb{Z}^{n};\\\left(
\beta+\overline{\sigma}\right)  -\overline{\sigma}\in\mathbb{N}^{n}%
;\\\left\vert \beta+\overline{\sigma}\right\vert =\left\vert \eta\right\vert
}}}}\ \ \prod_{i=1}^{n}h_{\alpha_{i}+\left(  \beta+\overline{\sigma}\right)
_{i},\ i}=\sum_{\substack{\beta\in\mathbb{Z}^{n};\\\left(  \beta
+\overline{\sigma}\right)  -\overline{\sigma}\in\mathbb{N}^{n};\\\left\vert
\beta+\overline{\sigma}\right\vert =\left\vert \eta\right\vert }%
}\ \ \prod_{i=1}^{n}h_{\alpha_{i}+\left(  \beta+\overline{\sigma}\right)
_{i},\ i}.
\]

However, $\mathbb{Z}^{n}$ is a group (under addition). Hence, for any $\rho
\in\mathbb{Z}^{n}$, the map%
\begin{align*}
\mathbb{Z}^{n}  &  \rightarrow\mathbb{Z}^{n},\\
\beta &  \mapsto\beta+\rho
\end{align*}
is a bijection (whose inverse is the map $\mathbb{Z}^{n}\rightarrow
\mathbb{Z}^{n},\ \nu\mapsto\nu-\rho$). Applying this to $\rho=\overline
{\sigma}$, we conclude that the map
\begin{align*}
\mathbb{Z}^{n}  &  \rightarrow\mathbb{Z}^{n},\\
\beta &  \mapsto\beta+\overline{\sigma}%
\end{align*}
is a bijection (whose inverse is the map $\mathbb{Z}^{n}\rightarrow
\mathbb{Z}^{n},\ \nu\mapsto\nu-\overline{\sigma}$). Therefore, we can
substitute $\nu$ for $\beta+\overline{\sigma}$ in the sum $\sum
_{\substack{\beta\in\mathbb{Z}^{n};\\\left(  \beta+\overline{\sigma}\right)
-\overline{\sigma}\in\mathbb{N}^{n};\\\left\vert \beta+\overline{\sigma
}\right\vert =\left\vert \eta\right\vert }}\ \ \prod_{i=1}^{n}h_{\alpha
_{i}+\left(  \beta+\overline{\sigma}\right)  _{i},\ i}$. We thus obtain%
\[
\sum_{\substack{\beta\in\mathbb{Z}^{n};\\\left(  \beta+\overline{\sigma
}\right)  -\overline{\sigma}\in\mathbb{N}^{n};\\\left\vert \beta
+\overline{\sigma}\right\vert =\left\vert \eta\right\vert }}\ \ \prod
_{i=1}^{n}h_{\alpha_{i}+\left(  \beta+\overline{\sigma}\right)  _{i},\ i}%
=\sum_{\substack{\nu\in\mathbb{Z}^{n};\\\nu-\overline{\sigma}\in\mathbb{N}%
^{n};\\\left\vert \nu\right\vert =\left\vert \eta\right\vert }}\ \ \prod
_{i=1}^{n}h_{\alpha_{i}+\nu_{i},\ i}.
\]
Hence, our above computation becomes%
\begin{align}
\sum_{\substack{\beta\in\mathbb{N}^{n};\\\left\vert \beta\right\vert
=p}}\ \ \prod_{i=1}^{n}h_{\alpha_{i}+\left(  \beta+\overline{\sigma}\right)
_{i},\ i}  &  =\sum_{\substack{\beta\in\mathbb{Z}^{n};\\\left(  \beta
+\overline{\sigma}\right)  -\overline{\sigma}\in\mathbb{N}^{n};\\\left\vert
\beta+\overline{\sigma}\right\vert =\left\vert \eta\right\vert }%
}\ \ \prod_{i=1}^{n}h_{\alpha_{i}+\left(  \beta+\overline{\sigma}\right)
_{i},\ i}\nonumber\\
&  =\sum_{\substack{\nu\in\mathbb{Z}^{n};\\\nu-\overline{\sigma}\in
\mathbb{N}^{n};\\\left\vert \nu\right\vert =\left\vert \eta\right\vert
}}\ \ \prod_{i=1}^{n}h_{\alpha_{i}+\nu_{i},\ i}.
\label{pf.thm.pre-pieri.innersum}%
\end{align}

Forget that we fixed $\sigma$. We thus have proved
(\ref{pf.thm.pre-pieri.innersum}) for each permutation $\sigma\in S_{n}$. Now,
(\ref{pf.thm.pre-pieri.sumt=1}) becomes%
\begin{align}
\sum_{\substack{\beta\in\mathbb{N}^{n};\\\left\vert \beta\right\vert
=p}}t_{\alpha+\beta}  &  =\sum_{\sigma\in S_{n}}\left(  -1\right)  ^{\sigma
}\underbrace{\sum_{\substack{\beta\in\mathbb{N}^{n};\\\left\vert
\beta\right\vert =p}}\ \ \prod_{i=1}^{n}h_{\alpha_{i}+\left(  \beta
+\overline{\sigma}\right)  _{i},\ i}}_{\substack{=\sum_{\substack{\nu
\in\mathbb{Z}^{n};\\\nu-\overline{\sigma}\in\mathbb{N}^{n};\\\left\vert
\nu\right\vert =\left\vert \eta\right\vert }}\ \ \prod_{i=1}^{n}h_{\alpha
_{i}+\nu_{i},\ i}\\\text{(by (\ref{pf.thm.pre-pieri.innersum}))}}}\nonumber\\
&  =\sum_{\sigma\in S_{n}}\left(  -1\right)  ^{\sigma}\sum_{\substack{\nu
\in\mathbb{Z}^{n};\\\nu-\overline{\sigma}\in\mathbb{N}^{n};\\\left\vert
\nu\right\vert =\left\vert \eta\right\vert }}\ \ \prod_{i=1}^{n}h_{\alpha
_{i}+\nu_{i},\ i}\nonumber\\
&  =\underbrace{\sum_{\sigma\in S_{n}}\ \ \sum_{\substack{\nu\in\mathbb{Z}%
^{n};\\\nu-\overline{\sigma}\in\mathbb{N}^{n};\\\left\vert \nu\right\vert
=\left\vert \eta\right\vert }}}_{=\sum_{\substack{\nu\in\mathbb{Z}%
^{n};\\\left\vert \nu\right\vert =\left\vert \eta\right\vert }}\ \ \sum
_{\substack{\sigma\in S_{n};\\\nu-\overline{\sigma}\in\mathbb{N}^{n}}}}\left(
-1\right)  ^{\sigma}\prod_{i=1}^{n}h_{\alpha_{i}+\nu_{i},\ i}\nonumber\\
&  =\sum_{\substack{\nu\in\mathbb{Z}^{n};\\\left\vert \nu\right\vert
=\left\vert \eta\right\vert }}\ \ \sum_{\substack{\sigma\in S_{n}%
;\\\nu-\overline{\sigma}\in\mathbb{N}^{n}}}\left(  -1\right)  ^{\sigma}%
\prod_{i=1}^{n}h_{\alpha_{i}+\nu_{i},\ i}\nonumber\\
&  =\sum_{\substack{\nu\in\mathbb{Z}^{n};\\\left\vert \nu\right\vert
=\left\vert \eta\right\vert }}\left(  \sum_{\substack{\sigma\in S_{n}%
;\\\nu-\overline{\sigma}\in\mathbb{N}^{n}}}\left(  -1\right)  ^{\sigma
}\right)  \prod_{i=1}^{n}h_{\alpha_{i}+\nu_{i},\ i}.
\label{pf.thm.pre-pieri.2}%
\end{align}

Now, if $\nu\in\mathbb{Z}^{n}$ is an $n$-tuple satisfying $\left\vert
\nu\right\vert =\left\vert \eta\right\vert $, then%
\begin{align}
\sum_{\substack{\sigma\in S_{n};\\\nu-\overline{\sigma}\in\mathbb{N}^{n}%
}}\left(  -1\right)  ^{\sigma}  &  =\det\left(  \left(  \left[  \nu_{i}\geq
j\right]  \right)  _{i,j\in\left[  n\right]  }\right)
\ \ \ \ \ \ \ \ \ \ \left(  \text{by Lemma \ref{lem.nu-det2}}\right)
\nonumber\\
&  =\sum_{\substack{\sigma\in S_{n};\\\nu=\eta\circ\sigma}}\left(  -1\right)
^{\sigma}\ \ \ \ \ \ \ \ \ \ \left(  \text{by Lemma \ref{lem.nu-det}}\right)
. \label{pf.thm.pre-pieri.sum=sum}%
\end{align}

Thus, (\ref{pf.thm.pre-pieri.2}) becomes%
\begin{align}
\sum_{\substack{\beta\in\mathbb{N}^{n};\\\left\vert \beta\right\vert
=p}}t_{\alpha+\beta}  &  =\sum_{\substack{\nu\in\mathbb{Z}^{n};\\\left\vert
\nu\right\vert =\left\vert \eta\right\vert }}\underbrace{\left(
\sum_{\substack{\sigma\in S_{n};\\\nu-\overline{\sigma}\in\mathbb{N}^{n}%
}}\left(  -1\right)  ^{\sigma}\right)  }_{\substack{=\sum_{\substack{\sigma\in
S_{n};\\\nu=\eta\circ\sigma}}\left(  -1\right)  ^{\sigma}\\\text{(by
(\ref{pf.thm.pre-pieri.sum=sum}))}}}\prod_{i=1}^{n}h_{\alpha_{i}+\nu_{i}%
,\ i}\nonumber\\
&  =\sum_{\substack{\nu\in\mathbb{Z}^{n};\\\left\vert \nu\right\vert
=\left\vert \eta\right\vert }}\left(  \sum_{\substack{\sigma\in S_{n}%
;\\\nu=\eta\circ\sigma}}\left(  -1\right)  ^{\sigma}\right)  \prod_{i=1}%
^{n}h_{\alpha_{i}+\nu_{i},\ i}\nonumber\\
&  =\sum_{\substack{\nu\in\mathbb{Z}^{n};\\\left\vert \nu\right\vert
=\left\vert \eta\right\vert }}\ \ \sum_{\substack{\sigma\in S_{n};\\\nu
=\eta\circ\sigma}}\left(  -1\right)  ^{\sigma}\prod_{i=1}^{n}h_{\alpha_{i}%
+\nu_{i},\ i}. \label{pf.thm.pre-pieri.2b}%
\end{align}

On the other hand,%
\begin{equation}
\operatorname*{rowdet}\left(  \left(  h_{\alpha_{i}+\eta_{j},\ i}\right)
_{i,j\in\left[  n\right]  }\right)  =\sum_{\sigma\in S_{n}}\left(  -1\right)
^{\sigma}\prod_{i=1}^{n}h_{\alpha_{i}+\eta_{\sigma\left(  i\right)  },\ i}
\label{pf.thm.pre-pieri.3a}%
\end{equation}
(by (\ref{pf.thm.pre-pieri.rowdetA=}), applied to $a_{i,j}=h_{\alpha_{i}%
+\eta_{j},\ i}$). However, for each $\sigma\in S_{n}$, the $n$-tuple
$\eta\circ\sigma$ belongs to $\mathbb{Z}^{n}$ and satisfies $\left\vert
\eta\circ\sigma\right\vert =\left\vert \eta\right\vert $ (by Proposition
\ref{prop.etapi.len}). In other words, for each $\sigma\in S_{n}$, the
$n$-tuple $\eta\circ\sigma$ is a $\nu\in\mathbb{Z}^{n}$ satisfying $\left\vert
\nu\right\vert =\left\vert \eta\right\vert $. Hence, any sum ranging over all
$\sigma\in S_{n}$ can be split according to the value of $\eta\circ\sigma$. In
other words, we have the following equality of summation signs:%
\[
\sum_{\sigma\in S_{n}}=\sum_{\substack{\nu\in\mathbb{Z}^{n};\\\left\vert
\nu\right\vert =\left\vert \eta\right\vert }}\ \ \underbrace{\sum
_{\substack{\sigma\in S_{n};\\\eta\circ\sigma=\nu}}}_{=\sum_{\substack{\sigma
\in S_{n};\\\nu=\eta\circ\sigma}}}=\sum_{\substack{\nu\in\mathbb{Z}%
^{n};\\\left\vert \nu\right\vert =\left\vert \eta\right\vert }}\ \ \sum
_{\substack{\sigma\in S_{n};\\\nu=\eta\circ\sigma}}.
\]
Thus, (\ref{pf.thm.pre-pieri.3a}) becomes%
\begin{align*}
\operatorname*{rowdet}\left(  \left(  h_{\alpha_{i}+\eta_{j},\ i}\right)
_{i,j\in\left[  n\right]  }\right)   &  =\underbrace{\sum_{\sigma\in S_{n}}%
}_{=\sum_{\substack{\nu\in\mathbb{Z}^{n};\\\left\vert \nu\right\vert
=\left\vert \eta\right\vert }}\ \ \sum_{\substack{\sigma\in S_{n};\\\nu
=\eta\circ\sigma}}}\left(  -1\right)  ^{\sigma}\prod_{i=1}^{n}h_{\alpha
_{i}+\eta_{\sigma\left(  i\right)  },\ i}\\
&  =\sum_{\substack{\nu\in\mathbb{Z}^{n};\\\left\vert \nu\right\vert
=\left\vert \eta\right\vert }}\ \ \sum_{\substack{\sigma\in S_{n};\\\nu
=\eta\circ\sigma}}\left(  -1\right)  ^{\sigma}\prod_{i=1}^{n}%
\underbrace{h_{\alpha_{i}+\eta_{\sigma\left(  i\right)  },\ i}}%
_{\substack{=h_{\alpha_{i}+\nu_{i},\ i}\\\text{(since }\eta_{\sigma\left(
i\right)  }=\nu_{i}\\\text{(because }\nu=\eta\circ\sigma=\left(  \eta
_{\sigma\left(  1\right)  },\eta_{\sigma\left(  2\right)  },\ldots
,\eta_{\sigma\left(  n\right)  }\right)  \\\text{(by Definition
\ref{def.etapi}) and therefore }\nu_{i}=\eta_{\sigma\left(  i\right)
}\text{))}}}\\
&  =\sum_{\substack{\nu\in\mathbb{Z}^{n};\\\left\vert \nu\right\vert
=\left\vert \eta\right\vert }}\ \ \sum_{\substack{\sigma\in S_{n};\\\nu
=\eta\circ\sigma}}\left(  -1\right)  ^{\sigma}\prod_{i=1}^{n}h_{\alpha_{i}%
+\nu_{i},\ i}.
\end{align*}
Comparing this with (\ref{pf.thm.pre-pieri.2b}), we obtain%
\[
\sum_{\substack{\beta\in\mathbb{N}^{n};\\\left\vert \beta\right\vert
=p}}t_{\alpha+\beta}=\operatorname*{rowdet}\left(  \left(  h_{\alpha_{i}%
+\eta_{j},\ i}\right)  _{i,j\in\left[  n\right]  }\right)  .
\]
This proves Theorem \ref{thm.pre-pieri}.
\end{proof}
\end{verlong}

\section{Corollaries}

\subsection{The $h_{k,\ n}=0$ case}

We shall now derive some corollaries from Theorem \ref{thm.pre-pieri} by
imposing some conditions on $R$ or on the $h_{k,\ i}$. We begin with the most
basic one, in which we force $h_{k,\ n}$ to be $0$ for all negative $k$:

\begin{corollary}
\label{cor.pre-pieri.2}Let $n\in\mathbb{P}$ and $p\in\mathbb{Z}$. Let
$h_{k,\ i}$ be an element of $R$ for all $k\in\mathbb{Z}$ and $i\in\left[
n\right]  $. Assume that%
\begin{equation}
h_{k,\ n}=0\ \ \ \ \ \ \ \ \ \ \text{for all }k<0.
\label{eq.cor.pre-pieri.2.ass0}%
\end{equation}

For any $\alpha\in\mathbb{Z}^{n}$, we define%
\[
t_{\alpha}:=\operatorname*{rowdet}\left(  \left(  h_{\alpha_{i}+j,\ i}\right)
_{i,j\in\left[  n\right]  }\right)  \in R.
\]

Let $\alpha\in\mathbb{Z}^{n}$ be such that $\alpha_{n}\leq-n$. Then,%
\[
\sum_{\substack{\beta\in\mathbb{N}^{n};\\\left\vert \beta\right\vert
=p}}t_{\alpha+\beta}=\operatorname*{rowdet}\left(  \left(  h_{\alpha
_{i}+j,\ i}\right)  _{i,j\in\left[  n-1\right]  }\right)  \cdot h_{\alpha
_{n}+n+p,\ n}.
\]

\end{corollary}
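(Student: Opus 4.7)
The plan is to reduce the claim to the first pre-Pieri rule (Theorem \ref{thm.pre-pieri}) and then exploit the vanishing assumption (\ref{eq.cor.pre-pieri.2.ass0}) to collapse the resulting $n\times n$ row-determinant to an $(n-1)\times(n-1)$ one.

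First I would dispose of the edge case $p<0$: then the sum on the left is empty (since $|\beta|\ge 0$ for all $\beta\in\mathbb{N}^n$), hence equals $0$; and on the right, $\alpha_n+n+p\le p<0$ by the hypothesis $\alpha_n\le -n$, so $h_{\alpha_n+n+p,\ n}=0$ by (\ref{eq.cor.pre-pieri.2.ass0}) and the right-hand side is also $0$. Henceforth assume $p\in\mathbb{N}$, so that Theorem \ref{thm.pre-pieri} applies.

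Applying Theorem \ref{thm.pre-pieri} with $\eta=(1,2,\ldots,n-1,n+p)$, the left-hand side becomes
\[
\sum_{\substack{\beta\in\mathbb{N}^{n};\\\left\vert \beta\right\vert=p}}t_{\alpha+\beta}=\operatorname*{rowdet}\left(\left(h_{\alpha_i+\eta_j,\ i}\right)_{i,j\in[n]}\right).
\]
Now look at the last row of this $n\times n$ matrix: its $(n,j)$-entry is $h_{\alpha_n+\eta_j,\ n}$. For $j\in\{1,2,\ldots,n-1\}$ we have $\eta_j=j$, and $\alpha_n+j\le -n+(n-1)=-1<0$, so the hypothesis (\ref{eq.cor.pre-pieri.2.ass0}) forces $h_{\alpha_n+\eta_j,\ n}=0$. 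Only the entry at $(n,n)$, namely $h_{\alpha_n+n+p,\ n}$, can be nonzero.

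Writing out the row-determinant by its defining formula,
\[
\operatorname*{rowdet}\left(\left(h_{\alpha_i+\eta_j,\ i}\right)_{i,j\in[n]}\right)=\sum_{\sigma\in S_n}(-1)^{\sigma}\,h_{\alpha_1+\eta_{\sigma(1)},\ 1}\,h_{\alpha_2+\eta_{\sigma(2)},\ 2}\cdots h_{\alpha_n+\eta_{\sigma(n)},\ n},
\]
every term with $\sigma(n)\ne n$ vanishes by the previous observation. So the sum restricts to those $\sigma\in S_n$ with $\sigma(n)=n$; these are in sign-preserving bijection with $S_{n-1}$ (viewed as permutations of $[n-1]$). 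Since $h_{\alpha_n+n+p,\ n}$ occupies the last factor in each surviving product, it can be factored out on the right (this is where noncommutativity of $R$ must be respected, and the row-determinant convention makes this clean), giving
\[
\left(\sum_{\tau\in S_{n-1}}(-1)^{\tau}\prod_{i=1}^{n-1}h_{\alpha_i+\tau(i),\ i}\right)\cdot h_{\alpha_n+n+p,\ n}=\operatorname*{rowdet}\left(\left(h_{\alpha_i+j,\ i}\right)_{i,j\in[n-1]}\right)\cdot h_{\alpha_n+n+p,\ n},
\]
which is the desired right-hand side. The only delicate points are the $p<0$ edge case and the placement of $h_{\alpha_n+n+p,\ n}$ on the right of the product; otherwise the argument is a direct Laplace-style expansion along the last row.
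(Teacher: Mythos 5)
Your proof is correct and follows essentially the same route as the paper: reduce the $p<0$ case by hand, apply Theorem \ref{thm.pre-pieri}, observe that the hypothesis kills all last-row entries except the $(n,n)$-one, and expand the row-determinant along the last row with the surviving factor pulled out on the right. The only cosmetic difference is that you inline this last-row expansion, whereas the paper isolates it as Lemma \ref{lem.laplace.pre}.
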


In order to derive this from Theorem \ref{thm.pre-pieri}, we shall need the
following formula for the row-determinant of a matrix whose last row is $0$
except for its rightmost entry:

\begin{lemma}
\label{lem.laplace.pre}Let $n$ be a positive integer. Let $A=\left(
a_{i,j}\right)  _{i,j\in\left[  n\right]  }\in R^{n\times n}$ be an $n\times
n$-matrix. Assume that%
\begin{equation}
a_{n,j}=0\ \ \ \ \ \ \ \ \ \ \text{for every }j\in\left\{  1,2,\ldots
,n-1\right\}  . \label{eq.lem.laplace.pre.ass}%
\end{equation}
Then,
\[
\operatorname*{rowdet}A=\operatorname*{rowdet}\left(  \left(  a_{i,j}\right)
_{i,j\in\left[  n-1\right]  }\right)  \cdot a_{n,n}.
\]

\end{lemma}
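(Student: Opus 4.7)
The plan is to expand $\operatorname*{rowdet} A$ using its definition as a sum over $S_n$ and observe that only permutations fixing $n$ contribute. Specifically, the definition gives
\[
\operatorname*{rowdet} A = \sum_{\sigma \in S_n} (-1)^\sigma a_{1,\sigma(1)} a_{2,\sigma(2)} \cdots a_{n,\sigma(n)}.
\]
For any $\sigma \in S_n$ with $\sigma(n) \neq n$, we have $\sigma(n) \in \{1, 2, \ldots, n-1\}$ (since $\sigma(n) \in [n]$ and $\sigma(n) \neq n$), and thus $a_{n,\sigma(n)} = 0$ by the hypothesis \eqref{eq.lem.laplace.pre.ass}. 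Consequently, the entire product $a_{1,\sigma(1)} \cdots a_{n,\sigma(n)}$ vanishes (since it ends in the factor $a_{n, \sigma(n)} = 0$). Hence the sum restricts to those $\sigma$ with $\sigma(n) = n$.

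Next, I would set up the natural bijection between $\{\sigma \in S_n : \sigma(n) = n\}$ and $S_{n-1}$ that sends each such $\sigma$ to its restriction $\tau := \sigma|_{[n-1]} \in S_{n-1}$ (and conversely extends each $\tau \in S_{n-1}$ to $\sigma \in S_n$ by $\sigma(n) = n$). Under this bijection we have $(-1)^\sigma = (-1)^\tau$ (since appending a fixed point does not change the sign), and $\sigma(i) = \tau(i)$ for each $i \in [n-1]$. Hence the sum becomes
\[
\sum_{\tau \in S_{n-1}} (-1)^\tau a_{1,\tau(1)} a_{2,\tau(2)} \cdots a_{n-1,\tau(n-1)} \cdot a_{n,n}.
\]

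Finally, since $a_{n,n}$ appears as the rightmost factor in every term (and does not depend on $\tau$), I can factor it out on the right, without any commutativity assumption:
\[
\sum_{\tau \in S_{n-1}} (-1)^\tau a_{1,\tau(1)} \cdots a_{n-1,\tau(n-1)} \cdot a_{n,n} = \left( \sum_{\tau \in S_{n-1}} (-1)^\tau a_{1,\tau(1)} \cdots a_{n-1,\tau(n-1)} \right) \cdot a_{n,n}.
\]
The bracketed sum is exactly $\operatorname*{rowdet} \left( (a_{i,j})_{i,j \in [n-1]} \right)$ by the definition of the row-determinant, which yields the claimed identity. There is no real obstacle here; the only subtle point is that $a_{n,n}$ must be the \emph{rightmost} factor (which is true because we have written the product in the order $i=1,2,\ldots,n$), so that right-factoring is legitimate in the noncommutative ring $R$.
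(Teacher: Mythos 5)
Your proof is correct and follows essentially the same route as the paper's own argument: kill the terms with $\sigma\left(  n\right)  \neq n$ using the hypothesis, identify the surviving permutations with $S_{n-1}$ via the sign-preserving bijection, and factor $a_{n,n}$ out on the right (legitimate since it is the rightmost factor of every product). Nothing to add.
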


\begin{example}
For $n=3$ and $A=\left(
\begin{array}
[c]{ccc}%
a & b & c\\
d & e & f\\
0 & 0 & g
\end{array}
\right)  $, the claim of Lemma \ref{lem.laplace.pre} says that%
\[
\operatorname*{rowdet}\left(
\begin{array}
[c]{ccc}%
a & b & c\\
d & e & f\\
0 & 0 & g
\end{array}
\right)  =\operatorname*{rowdet}\left(
\begin{array}
[c]{cc}%
a & b\\
d & e
\end{array}
\right)  \cdot g.
\]
(The two zeroes in the third row of $A$ are necessary for Lemma
\ref{lem.laplace.pre} to be applicable, as they guarantee that the condition
(\ref{eq.lem.laplace.pre.ass}) is satisfied.)
\end{example}

\begin{vershort}
\begin{proof}
[Proof of Lemma \ref{lem.laplace.pre}.]This is a generalization of
\cite[Theorem 6.43]{detnotes} to the case of arbitrary $R$ (not necessarily
commutative). The proof given in \cite{detnotes} still works for this
generalization, as long as we keep in mind that the products are noncommutative.
\end{proof}
\end{vershort}

\begin{verlong}
\begin{proof}
[Proof of Lemma \ref{lem.laplace.pre}.]This is a generalization of
\cite[Theorem 6.43]{detnotes} to the case of arbitrary $R$ (not necessarily
commutative). The proof given in \cite{detnotes} still works for this
generalization, as long as we keep in mind that the products are noncommutative.

For the sake of completeness, here is this argument in detail, except for the
parts that can be copied from \cite[proof of Theorem 6.43]{detnotes} verbatim.

We shall first prove the following auxiliary claim (an analogue of \cite[Lemma
6.44]{detnotes}):

\begin{statement}
\textit{Claim 1:} We have
\[
\sum_{\substack{\sigma\in S_{n};\\\sigma\left(  n\right)  =n}}\left(
-1\right)  ^{\sigma}a_{1,\sigma\left(  1\right)  }a_{2,\sigma\left(  2\right)
}\cdots a_{n-1,\sigma\left(  n-1\right)  }=\operatorname*{rowdet}\left(
\left(  a_{i,j}\right)  _{i,j\in\left[  n-1\right]  }\right)  .
\]

\end{statement}

[\textit{Proof of Claim 1:} We define a subset $T$ of $S_{n}$ by%
\[
T=\left\{  \tau\in S_{n}\ \mid\ \tau\left(  n\right)  =n\right\}  .
\]

For every $\sigma\in S_{n-1}$, we define a map $\widehat{\sigma}:\left[
n\right]  \rightarrow\left[  n\right]  $ by setting%
\[
\widehat{\sigma}\left(  i\right)  =%
\begin{cases}
\sigma\left(  i\right)  , & \text{if }i<n;\\
n, & \text{if }i=n
\end{cases}
\ \ \ \ \ \ \ \ \ \ \text{for every }i\in\left[  n\right]  .
\]
It is easy to see (and has been proved in \cite[proof of Lemma 6.44]%
{detnotes}\footnote{A reader looking these arguments up in \cite[proof of
Lemma 6.44]{detnotes} should keep in mind that we are here using the notation
$\left[  n\right]  $ for what has been called $\left\{  1,2,\ldots,n\right\}
$ in \cite[proof of Lemma 6.44]{detnotes}.}) that this map $\widehat{\sigma}$
is well-defined and belongs to $T$. Thus, we can define a map $\Phi
:S_{n-1}\rightarrow T$ by setting%
\[
\Phi\left(  \sigma\right)  =\widehat{\sigma}\ \ \ \ \ \ \ \ \ \ \text{for
every }\sigma\in S_{n-1}.
\]
Consider this map $\Phi$. It is not hard to see (and has been proved in
\cite[proof of Lemma 6.44]{detnotes}) that the map $\Phi$ is a bijection.
Furthermore, every $\sigma\in S_{n-1}$ satisfies%
\begin{equation}
\left(  -1\right)  ^{\widehat{\sigma}}=\left(  -1\right)  ^{\sigma}
\label{pf.lem.laplace.pre.c1.pf.-1}%
\end{equation}
(again, this has been proved in \cite[proof of Lemma 6.44]{detnotes}).
Finally, every $\sigma\in S_{n-1}$ satisfies%
\begin{align}
&  a_{1,\widehat{\sigma}\left(  1\right)  }a_{2,\widehat{\sigma}\left(
2\right)  }\cdots a_{n-1,\widehat{\sigma}\left(  n-1\right)  }\nonumber\\
&  =a_{1,\sigma\left(  1\right)  }a_{2,\sigma\left(  2\right)  }\cdots
a_{n-1,\sigma\left(  n-1\right)  } \label{pf.lem.laplace.pre.c1.pf.prod}%
\end{align}
\footnote{\textit{Proof of (\ref{pf.lem.laplace.pre.c1.pf.prod}):} Let
$\sigma\in S_{n-1}$. Let $i\in\left[  n-1\right]  $. Then, $i\in\left[
n-1\right]  \subseteq\left[  n\right]  $. Moreover, from $i\in\left[
n-1\right]  =\left\{  1,2,\ldots,n-1\right\}  $, we obtain $i\leq n-1<n$. Now,
the definition of $\widehat{\sigma}$ yields
\[
\widehat{\sigma}\left(  i\right)  =%
\begin{cases}
\sigma\left(  i\right)  , & \text{if }i<n;\\
n, & \text{if }i=n
\end{cases}
\ \ =\sigma\left(  i\right)  \ \ \ \ \ \ \ \ \ \ \left(  \text{since
}i<n\right)  .
\]
Hence, $a_{i,\widehat{\sigma}\left(  i\right)  }=a_{i,\sigma\left(  i\right)
}$.
\par
Forget that we fixed $i$. We thus have proved that the equality
$a_{i,\widehat{\sigma}\left(  i\right)  }=a_{i,\sigma\left(  i\right)  }$
holds for each $i\in\left[  n-1\right]  $. Multiplying these equalities for
all $i\in\left[  n-1\right]  $, we obtain $a_{1,\widehat{\sigma}\left(
1\right)  }a_{2,\widehat{\sigma}\left(  2\right)  }\cdots
a_{n-1,\widehat{\sigma}\left(  n-1\right)  }=a_{1,\sigma\left(  1\right)
}a_{2,\sigma\left(  2\right)  }\cdots a_{n-1,\sigma\left(  n-1\right)  }$.
This proves (\ref{pf.lem.laplace.pre.c1.pf.prod}).}.

Now,%
\begin{align*}
&  \underbrace{\sum_{\substack{\sigma\in S_{n};\\\sigma\left(  n\right)  =n}%
}}_{\substack{=\sum_{\sigma\in\left\{  \tau\in S_{n}\ \mid\ \tau\left(
n\right)  =n\right\}  }=\sum_{\sigma\in T}\\\text{(since }\left\{  \tau\in
S_{n}\ \mid\ \tau\left(  n\right)  =n\right\}  =T\text{)}}}\left(  -1\right)
^{\sigma}a_{1,\sigma\left(  1\right)  }a_{2,\sigma\left(  2\right)  }\cdots
a_{n-1,\sigma\left(  n-1\right)  }\\
&  =\sum_{\sigma\in T}\left(  -1\right)  ^{\sigma}a_{1,\sigma\left(  1\right)
}a_{2,\sigma\left(  2\right)  }\cdots a_{n-1,\sigma\left(  n-1\right)  }\\
&  =\sum_{\sigma\in S_{n-1}}\underbrace{\left(  -1\right)  ^{\Phi\left(
\sigma\right)  }a_{1,\left(  \Phi\left(  \sigma\right)  \right)  \left(
1\right)  }a_{2,\left(  \Phi\left(  \sigma\right)  \right)  \left(  2\right)
}\cdots a_{n-1,\left(  \Phi\left(  \sigma\right)  \right)  \left(  n-1\right)
}}_{\substack{=\left(  -1\right)  ^{\widehat{\sigma}}a_{1,\widehat{\sigma
}\left(  1\right)  }a_{2,\widehat{\sigma}\left(  2\right)  }\cdots
a_{n-1,\widehat{\sigma}\left(  n-1\right)  }\\\text{(since }\Phi\left(
\sigma\right)  =\widehat{\sigma}\text{)}}}\\
&  \ \ \ \ \ \ \ \ \ \ \ \ \ \ \ \ \ \ \ \ \left(
\begin{array}
[c]{c}%
\text{here, we have substituted }\Phi\left(  \sigma\right)  \text{ for }%
\sigma\text{ in the sum,}\\
\text{since the map }\Phi:S_{n-1}\rightarrow T\text{ is a bijection}%
\end{array}
\right) \\
&  =\sum_{\sigma\in S_{n-1}}\underbrace{\left(  -1\right)  ^{\widehat{\sigma}%
}}_{\substack{=\left(  -1\right)  ^{\sigma}\\\text{(by
(\ref{pf.lem.laplace.pre.c1.pf.-1}))}}}\underbrace{a_{1,\widehat{\sigma
}\left(  1\right)  }a_{2,\widehat{\sigma}\left(  2\right)  }\cdots
a_{n-1,\widehat{\sigma}\left(  n-1\right)  }}_{\substack{=a_{1,\sigma\left(
1\right)  }a_{2,\sigma\left(  2\right)  }\cdots a_{n-1,\sigma\left(
n-1\right)  }\\\text{(by (\ref{pf.lem.laplace.pre.c1.pf.prod}))}}}\\
&  =\sum_{\sigma\in S_{n-1}}\left(  -1\right)  ^{\sigma}a_{1,\sigma\left(
1\right)  }a_{2,\sigma\left(  2\right)  }\cdots a_{n-1,\sigma\left(
n-1\right)  }.
\end{align*}
But the definition of the row-determinant of a matrix yields%
\[
\operatorname*{rowdet}\left(  \left(  a_{i,j}\right)  _{i,j\in\left[
n-1\right]  }\right)  =\sum_{\sigma\in S_{n-1}}\left(  -1\right)  ^{\sigma
}a_{1,\sigma\left(  1\right)  }a_{2,\sigma\left(  2\right)  }\cdots
a_{n-1,\sigma\left(  n-1\right)  }.
\]
Comparing these two equalities, we obtain%
\[
\sum_{\substack{\sigma\in S_{n};\\\sigma\left(  n\right)  =n}}\left(
-1\right)  ^{\sigma}a_{1,\sigma\left(  1\right)  }a_{2,\sigma\left(  2\right)
}\cdots a_{n-1,\sigma\left(  n-1\right)  }=\operatorname*{rowdet}\left(
\left(  a_{i,j}\right)  _{i,j\in\left[  n-1\right]  }\right)  .
\]
This proves Claim 1.] \medskip

Now, every permutation $\sigma\in S_{n}$ satisfying $\sigma\left(  n\right)
\neq n$ satisfies
\begin{equation}
a_{n,\sigma\left(  n\right)  }=0 \label{pf.lem.laplace.pre.1}%
\end{equation}
\footnote{\textit{Proof of (\ref{pf.lem.laplace.pre.1}):} Let $\sigma\in
S_{n}$ be a permutation satisfying $\sigma\left(  n\right)  \neq n$. Since
$\sigma\left(  n\right)  \in\left[  n\right]  =\left\{  1,2,\ldots,n\right\}
$ and $\sigma\left(  n\right)  \neq n$, we have $\sigma\left(  n\right)
\in\left\{  1,2,\ldots,n\right\}  \setminus\left\{  n\right\}  =\left\{
1,2,\ldots,n-1\right\}  $. Hence, (\ref{eq.lem.laplace.pre.ass}) (applied to
$j=\sigma\left(  n\right)  $) shows that $a_{n,\sigma\left(  n\right)  }=0$.
This proves (\ref{pf.lem.laplace.pre.1}).}.

Recall that $A=\left(  a_{i,j}\right)  _{i,j\in\left[  n\right]  }$. Hence,
the definition of the row-determinant of a matrix yields%
\begin{align*}
\operatorname*{rowdet}A  &  =\sum_{\sigma\in S_{n}}\left(  -1\right)
^{\sigma}\underbrace{a_{1,\sigma\left(  1\right)  }a_{2,\sigma\left(
2\right)  }\cdots a_{n,\sigma\left(  n\right)  }}_{=\left(  a_{1,\sigma\left(
1\right)  }a_{2,\sigma\left(  2\right)  }\cdots a_{n-1,\sigma\left(
n-1\right)  }\right)  a_{n,\sigma\left(  n\right)  }}\\
&  =\sum_{\sigma\in S_{n}}\left(  -1\right)  ^{\sigma}\left(  a_{1,\sigma
\left(  1\right)  }a_{2,\sigma\left(  2\right)  }\cdots a_{n-1,\sigma\left(
n-1\right)  }\right)  a_{n,\sigma\left(  n\right)  }\\
&  =\sum_{\substack{\sigma\in S_{n};\\\sigma\left(  n\right)  =n}}\left(
-1\right)  ^{\sigma}\left(  a_{1,\sigma\left(  1\right)  }a_{2,\sigma\left(
2\right)  }\cdots a_{n-1,\sigma\left(  n-1\right)  }\right)
\underbrace{a_{n,\sigma\left(  n\right)  }}_{\substack{=a_{n,n}\\\text{(since
}\sigma\left(  n\right)  =n\text{)}}}\\
&  \ \ \ \ \ \ \ \ \ \ +\sum_{\substack{\sigma\in S_{n};\\\sigma\left(
n\right)  \neq n}}\left(  -1\right)  ^{\sigma}\left(  a_{1,\sigma\left(
1\right)  }a_{2,\sigma\left(  2\right)  }\cdots a_{n-1,\sigma\left(
n-1\right)  }\right)  \underbrace{a_{n,\sigma\left(  n\right)  }%
}_{\substack{=0\\\text{(by (\ref{pf.lem.laplace.pre.1}))}}}\\
&  \ \ \ \ \ \ \ \ \ \ \ \ \ \ \ \ \ \ \ \ \left(
\begin{array}
[c]{c}%
\text{since every }\sigma\in S_{n}\text{ satisfies}\\
\text{either }\sigma\left(  n\right)  =n\text{ or }\sigma\left(  n\right)
\neq n\text{ (but not both)}%
\end{array}
\right) \\
&  =\sum_{\substack{\sigma\in S_{n};\\\sigma\left(  n\right)  =n}}\left(
-1\right)  ^{\sigma}\left(  a_{1,\sigma\left(  1\right)  }a_{2,\sigma\left(
2\right)  }\cdots a_{n-1,\sigma\left(  n-1\right)  }\right)  a_{n,n}\\
&  \ \ \ \ \ \ \ \ \ \ +\underbrace{\sum_{\substack{\sigma\in S_{n}%
;\\\sigma\left(  n\right)  \neq n}}\left(  -1\right)  ^{\sigma}\left(
a_{1,\sigma\left(  1\right)  }a_{2,\sigma\left(  2\right)  }\cdots
a_{n-1,\sigma\left(  n-1\right)  }\right)  0}_{=0}\\
&  =\sum_{\substack{\sigma\in S_{n};\\\sigma\left(  n\right)  =n}}\left(
-1\right)  ^{\sigma}\left(  a_{1,\sigma\left(  1\right)  }a_{2,\sigma\left(
2\right)  }\cdots a_{n-1,\sigma\left(  n-1\right)  }\right)  a_{n,n}\\
&  =\underbrace{\left(  \sum_{\substack{\sigma\in S_{n};\\\sigma\left(
n\right)  =n}}\left(  -1\right)  ^{\sigma}a_{1,\sigma\left(  1\right)
}a_{2,\sigma\left(  2\right)  }\cdots a_{n-1,\sigma\left(  n-1\right)
}\right)  }_{\substack{=\operatorname*{rowdet}\left(  \left(  a_{i,j}\right)
_{i,j\in\left[  n-1\right]  }\right)  \\\text{(by Claim 1)}}}\cdot a_{n,n}\\
&  =\operatorname*{rowdet}\left(  \left(  a_{i,j}\right)  _{i,j\in\left[
n-1\right]  }\right)  \cdot a_{n,n}.
\end{align*}
This proves Lemma \ref{lem.laplace.pre}.
\end{proof}
\end{verlong}

\begin{proof}
[Proof of Corollary \ref{cor.pre-pieri.2}.]If $p<0$, then Corollary
\ref{cor.pre-pieri.2} is easily seen to hold\footnotemark. Thus, for the rest
of this proof, we WLOG assume that we don't have $p<0$. Hence, we have
$p\geq0$. Hence, $p\in\mathbb{N}$ (since $p\in\mathbb{Z}$).

\begin{vershort}
\footnotetext{\textit{Proof.} Assume that $p<0$. Then, the sum $\sum
_{\substack{\beta\in\mathbb{N}^{n};\\\left\vert \beta\right\vert =p}%
}t_{\alpha+\beta}$ is empty and thus equals $0$. However, we have
$\underbrace{\alpha_{n}}_{\leq-n}+n+p\leq\left(  -n\right)  +n+p=p<0$, and
therefore (\ref{eq.cor.pre-pieri.2.ass0}) (applied to $k=\alpha_{n}+n+p$)
yields $h_{\alpha_{n}+n+p,\ n}=0$. Thus, the equality%
\[
\sum_{\substack{\beta\in\mathbb{N}^{n};\\\left\vert \beta\right\vert
=p}}t_{\alpha+\beta}=\operatorname*{rowdet}\left(  \left(  h_{\alpha
_{i}+j,\ i}\right)  _{i,j\in\left[  n-1\right]  }\right)  \cdot h_{\alpha
_{n}+n+p,\ n}%
\]
holds due to both of its sides being $0$. Thus, we have shown that Corollary
\ref{cor.pre-pieri.2} holds under the assumption that $p<0$.}
\end{vershort}

\begin{verlong}
\footnotetext{\textit{Proof.} Assume that $p<0$. Then, $\underbrace{\alpha
_{n}}_{\leq-n}+n+p\leq\left(  -n\right)  +n+p=p<0$. Thus,
(\ref{eq.cor.pre-pieri.2.ass0}) (applied to $k=\alpha_{n}+n+p$) yields
$h_{\alpha_{n}+n+p,\ n}=0$.
\par
However, if $\beta\in\mathbb{N}^{n}$, then
\begin{align*}
\left\vert \beta\right\vert  &  =\beta_{1}+\beta_{2}+\cdots+\beta
_{n}\ \ \ \ \ \ \ \ \ \ \left(  \text{by the definition of }\left\vert
\beta\right\vert \right) \\
&  =\sum_{i=1}^{n}\underbrace{\beta_{i}}_{\substack{\geq0\\\text{(since }%
\beta_{i}\in\mathbb{N}\\\text{(because }\beta\in\mathbb{N}^{n}\text{))}}%
}\geq\sum_{i=1}^{n}0=0>p\ \ \ \ \ \ \ \ \ \ \left(  \text{since }p<0\right)
\end{align*}
and therefore $\left\vert \beta\right\vert \neq p$. In other words, there
exists no $\beta\in\mathbb{N}^{n}$ satisfying $\left\vert \beta\right\vert
=p$. Hence, the sum $\sum_{\substack{\beta\in\mathbb{N}^{n};\\\left\vert
\beta\right\vert =p}}t_{\alpha+\beta}$ is empty. Thus,%
\[
\sum_{\substack{\beta\in\mathbb{N}^{n};\\\left\vert \beta\right\vert
=p}}t_{\alpha+\beta}=\left(  \text{empty sum}\right)  =0.
\]
Comparing this with%
\[
\operatorname*{rowdet}\left(  \left(  h_{\alpha_{i}+j,\ i}\right)
_{i,j\in\left[  n-1\right]  }\right)  \cdot\underbrace{h_{\alpha_{n}+n+p,\ n}%
}_{=0}=0,
\]
we obtain%
\[
\sum_{\substack{\beta\in\mathbb{N}^{n};\\\left\vert \beta\right\vert
=p}}t_{\alpha+\beta}=\operatorname*{rowdet}\left(  \left(  h_{\alpha
_{i}+j,\ i}\right)  _{i,j\in\left[  n-1\right]  }\right)  \cdot h_{\alpha
_{n}+n+p,\ n}.
\]
Thus, we have shown that Corollary \ref{cor.pre-pieri.2} holds under the
assumption that $p<0$.}
\end{verlong}

Define an $n$-tuple $\eta\in\mathbb{Z}^{n}$ as in Theorem \ref{thm.pre-pieri}.
Thus,%
\[
\left(  \eta_{1},\eta_{2},\ldots,\eta_{n}\right)  =\eta=\left(  1,2,\ldots
,n-1,n+p\right)  .
\]
In other words, we have%
\begin{equation}
\eta_{j}=j\ \ \ \ \ \ \ \ \ \ \text{for each }j\in\left[  n-1\right]
\label{pf.cor.pre-pieri.2.etai=i}%
\end{equation}
and%
\begin{equation}
\eta_{n}=n+p. \label{pf.cor.pre-pieri.2.etan=}%
\end{equation}

Now, Theorem \ref{thm.pre-pieri} yields%
\begin{equation}
\sum_{\substack{\beta\in\mathbb{N}^{n};\\\left\vert \beta\right\vert
=p}}t_{\alpha+\beta}=\operatorname*{rowdet}\left(  \left(  h_{\alpha_{i}%
+\eta_{j},\ i}\right)  _{i,j\in\left[  n\right]  }\right)  .
\label{pf.cor.pre-pieri.2.thm}%
\end{equation}

\begin{vershort}
However, it is easy to see that $h_{\alpha_{n}+\eta_{j},\ n}=0$ for every
$j\in\left\{  1,2,\ldots,n-1\right\}  $\ \ \ \ \footnote{\textit{Proof.} Let
$j\in\left\{  1,2,\ldots,n-1\right\}  $. We must show that $h_{\alpha_{n}%
+\eta_{j},\ n}=0$.
\par
We have $j\in\left\{  1,2,\ldots,n-1\right\}  =\left[  n-1\right]  $ and
therefore $\eta_{j}=j$ (by (\ref{pf.cor.pre-pieri.2.etai=i})). From
$j\in\left\{  1,2,\ldots,n-1\right\}  $, we also obtain $j\leq n-1<n$, so that
$\eta_{j}=j<n$ and thus $\alpha_{n}+\eta_{j}<\alpha_{n}+n\leq0$ (since
$\alpha_{n}\leq-n$). Therefore, (\ref{eq.cor.pre-pieri.2.ass0}) (applied to
$k=\alpha_{n}+\eta_{j}$) yields $h_{\alpha_{n}+\eta_{j},\ n}=0$, qed.}. Hence,
Lemma \ref{lem.laplace.pre} (applied to $\left(  h_{\alpha_{i}+\eta_{j}%
,\ i}\right)  _{i,j\in\left[  n\right]  }$ and $h_{\alpha_{i}+\eta_{j},\ i}$
instead of $A$ and $a_{i,j}$) yields%
\begin{align*}
\operatorname*{rowdet}\left(  \left(  h_{\alpha_{i}+\eta_{j},\ i}\right)
_{i,j\in\left[  n\right]  }\right)   &  =\operatorname*{rowdet}\left(  \left(
h_{\alpha_{i}+\eta_{j},\ i}\right)  _{i,j\in\left[  n-1\right]  }\right)
\cdot h_{\alpha_{n}+\eta_{n},\ n}\\
&  =\operatorname*{rowdet}\left(  \left(  h_{\alpha_{i}+j,\ i}\right)
_{i,j\in\left[  n-1\right]  }\right)  \cdot h_{\alpha_{n}+n+p,\ n}%
\end{align*}
(by (\ref{pf.cor.pre-pieri.2.etai=i}) and (\ref{pf.cor.pre-pieri.2.etan=})).
Hence, (\ref{pf.cor.pre-pieri.2.thm}) can be rewritten as%
\[
\sum_{\substack{\beta\in\mathbb{N}^{n};\\\left\vert \beta\right\vert
=p}}t_{\alpha+\beta}=\operatorname*{rowdet}\left(  \left(  h_{\alpha
_{i}+j,\ i}\right)  _{i,j\in\left[  n-1\right]  }\right)  \cdot h_{\alpha
_{n}+n+p,\ n}.
\]

\end{vershort}

\begin{verlong}
However, it is easy to see that $h_{\alpha_{n}+\eta_{j},\ n}=0$ for every
$j\in\left\{  1,2,\ldots,n-1\right\}  $\ \ \ \ \footnote{\textit{Proof.} Let
$j\in\left\{  1,2,\ldots,n-1\right\}  $. We must show that $h_{\alpha_{n}%
+\eta_{j},\ n}=0$.
\par
We have $j\in\left\{  1,2,\ldots,n-1\right\}  =\left[  n-1\right]  $ and
therefore $\eta_{j}=j$ (by (\ref{pf.cor.pre-pieri.2.etai=i})). From
$j\in\left\{  1,2,\ldots,n-1\right\}  $, we also obtain $j\leq n-1<n$, so that
$\underbrace{\alpha_{n}}_{\leq-n}+\underbrace{\eta_{j}}_{=j<n}<\left(
-n\right)  +n=0$. Therefore, (\ref{eq.cor.pre-pieri.2.ass0}) (applied to
$k=\alpha_{n}+\eta_{j}$) yields $h_{\alpha_{n}+\eta_{j},\ n}=0$, qed.}. Hence,
Lemma \ref{lem.laplace.pre} (applied to $\left(  h_{\alpha_{i}+\eta_{j}%
,\ i}\right)  _{i,j\in\left[  n\right]  }$ and $h_{\alpha_{i}+\eta_{j},\ i}$
instead of $A$ and $a_{i,j}$) yields%
\[
\operatorname*{rowdet}\left(  \left(  h_{\alpha_{i}+\eta_{j},\ i}\right)
_{i,j\in\left[  n\right]  }\right)  =\operatorname*{rowdet}\left(  \left(
h_{\alpha_{i}+\eta_{j},\ i}\right)  _{i,j\in\left[  n-1\right]  }\right)
\cdot h_{\alpha_{n}+\eta_{n},\ n}.
\]
Hence, (\ref{pf.cor.pre-pieri.2.thm}) becomes%
\begin{align}
\sum_{\substack{\beta\in\mathbb{N}^{n};\\\left\vert \beta\right\vert
=p}}t_{\alpha+\beta}  &  =\operatorname*{rowdet}\left(  \left(  h_{\alpha
_{i}+\eta_{j},\ i}\right)  _{i,j\in\left[  n\right]  }\right) \nonumber\\
&  =\operatorname*{rowdet}\left(  \left(  h_{\alpha_{i}+\eta_{j},\ i}\right)
_{i,j\in\left[  n-1\right]  }\right)  \cdot h_{\alpha_{n}+\eta_{n},\ n}.
\label{pf.cor.pre-pieri.2.3}%
\end{align}

However, each $j\in\left[  n-1\right]  $ satisfies $\eta_{j}=j$ (by
(\ref{pf.cor.pre-pieri.2.etai=i})). Thus, $h_{\alpha_{i}+\eta_{j}%
,\ i}=h_{\alpha_{i}+j,\ i}$ for every $i,j\in\left[  n-1\right]  $. In other
words,
\begin{equation}
\left(  h_{\alpha_{i}+\eta_{j},\ i}\right)  _{i,j\in\left[  n-1\right]
}=\left(  h_{\alpha_{i}+j,\ i}\right)  _{i,j\in\left[  n-1\right]  }.
\label{pf.cor.pre-pieri.2.rewr1}%
\end{equation}
Using (\ref{pf.cor.pre-pieri.2.rewr1}) and (\ref{pf.cor.pre-pieri.2.etan=}),
we can rewrite (\ref{pf.cor.pre-pieri.2.3}) as
\[
\sum_{\substack{\beta\in\mathbb{N}^{n};\\\left\vert \beta\right\vert
=p}}t_{\alpha+\beta}=\operatorname*{rowdet}\left(  \left(  h_{\alpha
_{i}+j,\ i}\right)  _{i,j\in\left[  n-1\right]  }\right)  \cdot h_{\alpha
_{n}+n+p,\ n}.
\]

\end{verlong}

\noindent Thus, Corollary \ref{cor.pre-pieri.2} is proved.
\end{proof}

\subsection{The commutative case}

\begin{corollary}
\label{cor.pre-pieri.2comm}Assume that $R$ is commutative. Let $n\in
\mathbb{P}$ and $p\in\mathbb{Z}$. Let $h_{k,\ i}$ be an element of $R$ for all
$k\in\mathbb{Z}$ and $i\in\left[  n\right]  $. Assume that%
\[
h_{k,\ n}=0\ \ \ \ \ \ \ \ \ \ \text{for all }k<0.
\]

For any $\alpha\in\mathbb{Z}^{n}$, we define%
\[
t_{\alpha}:=\det\left(  \left(  h_{\alpha_{i}+j,\ i}\right)  _{i,j\in\left[
n\right]  }\right)  \in R.
\]

Let $\alpha\in\mathbb{Z}^{n}$ be such that $\alpha_{n}\leq-n$. Then,%
\[
\sum_{\substack{\beta\in\mathbb{N}^{n};\\\left\vert \beta\right\vert
=p}}t_{\alpha+\beta}=\det\left(  \left(  h_{\alpha_{i}+j,\ i}\right)
_{i,j\in\left[  n-1\right]  }\right)  \cdot h_{\alpha_{n}+n+p,\ n}.
\]

\end{corollary}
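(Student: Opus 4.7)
The plan is to observe that Corollary \ref{cor.pre-pieri.2comm} is essentially a restatement of Corollary \ref{cor.pre-pieri.2} under the extra hypothesis that $R$ is commutative, with the determinant $\det$ taking the place of the row-determinant $\operatorname*{rowdet}$. Since the notations section explicitly records that $\operatorname*{rowdet} A = \det A$ whenever $A$ is a square matrix over a commutative ring, the two notions of $t_{\alpha}$ coincide in our situation.

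Concretely, I would argue as follows. First, note that the hypotheses of Corollary \ref{cor.pre-pieri.2} are all satisfied: we have $n \in \mathbb{P}$, $p \in \mathbb{Z}$, elements $h_{k,\ i} \in R$ for all $k \in \mathbb{Z}$ and $i \in [n]$, the vanishing condition $h_{k,\ n} = 0$ for all $k < 0$, and an $\alpha \in \mathbb{Z}^{n}$ with $\alpha_{n} \leq -n$. Next, since $R$ is commutative, for any $n$-tuple $\gamma \in \mathbb{Z}^{n}$ we have
\[
\det\left(\left(h_{\gamma_{i}+j,\ i}\right)_{i,j\in[n]}\right) = \operatorname*{rowdet}\left(\left(h_{\gamma_{i}+j,\ i}\right)_{i,j\in[n]}\right),
\]
so the element $t_{\gamma}$ defined in Corollary \ref{cor.pre-pieri.2comm} agrees with the element $t_{\gamma}$ defined in Corollary \ref{cor.pre-pieri.2}. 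Similarly,
\[
\det\left(\left(h_{\alpha_{i}+j,\ i}\right)_{i,j\in[n-1]}\right) = \operatorname*{rowdet}\left(\left(h_{\alpha_{i}+j,\ i}\right)_{i,j\in[n-1]}\right).
\]

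Applying Corollary \ref{cor.pre-pieri.2} to these data, I would obtain
\[
\sum_{\substack{\beta \in \mathbb{N}^{n};\\\left\vert\beta\right\vert = p}} t_{\alpha+\beta} = \operatorname*{rowdet}\left(\left(h_{\alpha_{i}+j,\ i}\right)_{i,j\in[n-1]}\right) \cdot h_{\alpha_{n}+n+p,\ n},
\]
and rewriting the row-determinant on the right as a determinant (using commutativity of $R$ once more) would give exactly the claim. There is no real obstacle here; the only thing to be careful about is the compatibility of the two definitions of $t_{\alpha}$, which is immediate from the remark in the notations section. Thus the entire argument is a one-line reduction to Corollary \ref{cor.pre-pieri.2}.
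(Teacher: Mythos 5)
Your proposal is correct and is essentially identical to the paper's own proof: both reduce Corollary \ref{cor.pre-pieri.2comm} to Corollary \ref{cor.pre-pieri.2} by noting that $\operatorname*{rowdet}A=\det A$ for square matrices over a commutative ring, so the two definitions of $t_{\alpha}$ coincide and the conclusion transfers verbatim. Nothing further is needed.
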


\begin{proof}
[Proof of Corollary \ref{cor.pre-pieri.2comm}.]We have assumed that the ring
$R$ is commutative. Thus, every square matrix over $R$ has a well-defined
determinant. Furthermore, the row-determinant of any square matrix over $R$ is
the same as the determinant of this matrix (since our above definition of the
row-determinant clearly generalizes the standard definition of a determinant).
In other words, if $A$ is any square matrix over $R$, then%
\begin{equation}
\operatorname*{rowdet}A=\det A. \label{pf.cor.pre-pieri.2comm.rd=d}%
\end{equation}

\begin{vershort}
Hence, we can apply Corollary \ref{cor.pre-pieri.2}, replacing
\textquotedblleft$\operatorname*{rowdet}$\textquotedblright\ by
\textquotedblleft$\det$\textquotedblright\ throughout the statement. As a
result, we obtain precisely the claim of Corollary \ref{cor.pre-pieri.2comm}.
\qedhere

\end{vershort}

\begin{verlong}
Now, for any $\alpha\in\mathbb{Z}^{n}$, we have%
\begin{equation}
\operatorname*{rowdet}\left(  \left(  h_{\alpha_{i}+j,\ i}\right)
_{i,j\in\left[  n\right]  }\right)  =\det\left(  \left(  h_{\alpha_{i}%
+j,\ i}\right)  _{i,j\in\left[  n\right]  }\right)
\label{pf.cor.pre-pieri.2comm.1}%
\end{equation}
(by (\ref{pf.cor.pre-pieri.2comm.rd=d}), applied to $A=\left(  h_{\alpha
_{i}+j,\ i}\right)  _{i,j\in\left[  n\right]  }$). Hence, for any $\alpha
\in\mathbb{Z}^{n}$, we have%
\begin{align*}
t_{\alpha}  &  =\det\left(  \left(  h_{\alpha_{i}+j,\ i}\right)
_{i,j\in\left[  n\right]  }\right)  \ \ \ \ \ \ \ \ \ \ \left(  \text{by our
definition of }t_{\alpha}\right) \\
&  =\operatorname*{rowdet}\left(  \left(  h_{\alpha_{i}+j,\ i}\right)
_{i,j\in\left[  n\right]  }\right)  \ \ \ \ \ \ \ \ \ \ \left(  \text{by
(\ref{pf.cor.pre-pieri.2comm.1})}\right)  .
\end{align*}
But this is precisely how $t_{\alpha}$ was defined in Corollary
\ref{cor.pre-pieri.2}. Thus, our notations match the notations of Corollary
\ref{cor.pre-pieri.2}. Hence, we can apply Corollary \ref{cor.pre-pieri.2}. As
a result, we obtain%
\begin{align*}
\sum_{\substack{\beta\in\mathbb{N}^{n};\\\left\vert \beta\right\vert
=p}}t_{\alpha+\beta}  &  =\underbrace{\operatorname*{rowdet}\left(  \left(
h_{\alpha_{i}+j,\ i}\right)  _{i,j\in\left[  n-1\right]  }\right)
}_{\substack{=\det\left(  \left(  h_{\alpha_{i}+j,\ i}\right)  _{i,j\in\left[
n-1\right]  }\right)  \\\text{(by (\ref{pf.cor.pre-pieri.2comm.rd=d}), applied
to }A=\left(  h_{\alpha_{i}+j,\ i}\right)  _{i,j\in\left[  n-1\right]
}\text{)}}}\cdot h_{\alpha_{n}+n+p,\ n}\\
&  =\det\left(  \left(  h_{\alpha_{i}+j,\ i}\right)  _{i,j\in\left[
n-1\right]  }\right)  \cdot h_{\alpha_{n}+n+p,\ n}.
\end{align*}
This proves Corollary \ref{cor.pre-pieri.2comm}. \qedhere

\end{verlong}
\end{proof}

\subsection{A Schur-like reindexing}

Here are some more consequences of the first pre-Pieri rule:

\begin{corollary}
\label{cor.pre-pieri.4}Let $n\in\mathbb{P}$ and $p\in\mathbb{Z}$. Let
$h_{k,\ i}$ be an element of $R$ for all $k\in\mathbb{Z}$ and $i\in\left[
n\right]  $. Assume that%
\[
h_{k,\ n}=0\ \ \ \ \ \ \ \ \ \ \text{for all }k<0.
\]

For any $m\in\left\{  0,1,\ldots,n\right\}  $ and any $\lambda\in
\mathbb{Z}^{m}$, we define%
\[
s_{\lambda}:=\operatorname*{rowdet}\left(  \left(  h_{\lambda_{i}%
+j-i,\ i}\right)  _{i,j\in\left[  m\right]  }\right)  \in R.
\]

Fix an $n$-tuple $\mu\in\mathbb{Z}^{n}$ with $\mu_{n}=0$. Let $\overline{\mu
}=\left(  \mu_{1},\mu_{2},\ldots,\mu_{n-1}\right)  $. Then,%
\[
s_{\overline{\mu}}\cdot h_{p,\ n}=\sum_{\substack{\beta\in\mathbb{N}%
^{n};\\\left\vert \beta\right\vert =p}}s_{\mu+\beta}.
\]

\end{corollary}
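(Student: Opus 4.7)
The plan is to reduce this directly to Corollary \ref{cor.pre-pieri.2} via a routine reindexing that converts the Jacobi--Trudi-style shift $\lambda_i + j - i$ into the pure shift $\alpha_i + j$ used in the earlier setup.

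First I would set $\alpha := \mu - (1,2,\ldots,n) \in \mathbb{Z}^n$, so that $\alpha_i = \mu_i - i$ for each $i \in [n]$. Then $\alpha_i + j = \mu_i + j - i$, and consequently the $n \times n$ matrix appearing in the definition of $s_\mu$ (applied with $m = n$) coincides entry-for-entry with the matrix $(h_{\alpha_i + j,\ i})_{i,j \in [n]}$ from the statement of Corollary \ref{cor.pre-pieri.2}. Hence $s_\mu = t_\alpha$, where $t_\alpha$ is as defined there. The same reindexing applies after adding any $\beta \in \mathbb{N}^n$: since $(\mu+\beta)_i - i = \alpha_i + \beta_i = (\alpha+\beta)_i$, we have $s_{\mu+\beta} = t_{\alpha+\beta}$ for every such $\beta$.

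Next I would verify that the hypothesis $\alpha_n \le -n$ of Corollary \ref{cor.pre-pieri.2} is satisfied. This is immediate from $\mu_n = 0$: indeed $\alpha_n = \mu_n - n = -n$. The hypothesis on the vanishing of $h_{k,\ n}$ for $k < 0$ is assumed verbatim. Applying Corollary \ref{cor.pre-pieri.2} therefore yields
\[
\sum_{\substack{\beta\in\mathbb{N}^{n};\\\left\vert \beta\right\vert =p}} t_{\alpha+\beta} = \operatorname*{rowdet}\left(\left(h_{\alpha_i+j,\ i}\right)_{i,j\in[n-1]}\right) \cdot h_{\alpha_n+n+p,\ n}.
\]

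Finally I would identify the two factors on the right. For $i, j \in [n-1]$ we have $\alpha_i + j = \mu_i + j - i = \overline{\mu}_i + j - i$, so the $(n-1)\times(n-1)$ row-determinant on the right equals $s_{\overline{\mu}}$ by the definition applied with $m = n-1$. Meanwhile $\alpha_n + n + p = -n + n + p = p$, so the scalar factor is $h_{p,\ n}$. Combining these identifications with $s_{\mu+\beta} = t_{\alpha+\beta}$ on the left gives
\[
\sum_{\substack{\beta\in\mathbb{N}^{n};\\\left\vert \beta\right\vert =p}} s_{\mu+\beta} = s_{\overline{\mu}} \cdot h_{p,\ n},
\]
which is the claim. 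There is no genuine obstacle here; the only thing to watch is that the shift by $(1,2,\ldots,n)$ makes $\mu_n = 0$ correspond to $\alpha_n = -n$, which is precisely the borderline case $\alpha_n \le -n$ required for Corollary \ref{cor.pre-pieri.2} to turn the last column of the $\eta$-matrix into a single surviving entry.
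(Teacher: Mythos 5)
Your proof is correct and follows essentially the same route as the paper's: define $\alpha_i = \mu_i - i$, observe that $\mu_n = 0$ gives $\alpha_n = -n$, apply Corollary \ref{cor.pre-pieri.2}, and translate the resulting row-determinants back into $s_{\overline{\mu}}$, $h_{p,\ n}$ and $s_{\mu+\beta}$. All the index bookkeeping checks out.
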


\begin{proof}
[Proof of Corollary \ref{cor.pre-pieri.4}.]Define $t_{\alpha}\in R$ for each
$\alpha\in\mathbb{Z}^{n}$ as in Corollary \ref{cor.pre-pieri.2}.

Define an $n$-tuple $\alpha\in\mathbb{Z}^{n}$ by%
\[
\alpha=\left(  \mu_{1}-1,\ \mu_{2}-2,\ \ldots,\ \mu_{n}-n\right)  .
\]
Thus,%
\begin{equation}
\alpha_{i}=\mu_{i}-i\ \ \ \ \ \ \ \ \ \ \text{for each }i\in\left[  n\right]
. \label{pf.cor.pre-pieri.4.ali=}%
\end{equation}
Applying this to $i=n$, we obtain $\alpha_{n}=\underbrace{\mu_{n}}%
_{=0}-n=-n\leq-n$. Hence, Corollary \ref{cor.pre-pieri.2} yields
\begin{equation}
\sum_{\substack{\beta\in\mathbb{N}^{n};\\\left\vert \beta\right\vert
=p}}t_{\alpha+\beta}=\operatorname*{rowdet}\left(  \left(  h_{\alpha
_{i}+j,\ i}\right)  _{i,j\in\left[  n-1\right]  }\right)  \cdot h_{\alpha
_{n}+n+p,\ n}. \label{pf.cor.pre-pieri.4.old}%
\end{equation}

\begin{vershort}
However, using the definitions of $s_{\overline{\mu}}$ and $\alpha$, it is
easy to see that%
\[
\operatorname*{rowdet}\left(  \left(  h_{\alpha_{i}+j,\ i}\right)
_{i,j\in\left[  n-1\right]  }\right)  =s_{\overline{\mu}}.
\]
Furthermore, again using the definition of $\alpha$, we can easily check that%
\[
t_{\alpha+\beta}=s_{\mu+\beta}\ \ \ \ \ \ \ \ \ \ \text{for each }\beta
\in\mathbb{N}^{n}%
\]
(indeed, both $t_{\alpha+\beta}$ and $s_{\mu+\beta}$ are defined as
row-determinants of certain matrices, and a simple calculation of indices
shows that these two matrices are identical). Finally, we have
\[
\alpha_{n}+n+p=p\ \ \ \ \ \ \ \ \ \ \left(  \text{since }\alpha_{n}=-n\right)
.
\]
Using these three equalities, we can rewrite (\ref{pf.cor.pre-pieri.4.old}) as%
\[
\sum_{\substack{\beta\in\mathbb{N}^{n};\\\left\vert \beta\right\vert
=p}}s_{\mu+\beta}=s_{\overline{\mu}}\cdot h_{p,\ n}.
\]

\end{vershort}

\begin{verlong}
However, each $\beta\in\mathbb{N}^{n}$ satisfies
\begin{equation}
t_{\alpha+\beta}=s_{\mu+\beta} \label{pf.cor.pre-pieri.4.3}%
\end{equation}
\footnote{\textit{Proof.} Let $\beta\in\mathbb{N}^{n}$. For each
$i,j\in\left[  n\right]  $, we have%
\[
\underbrace{\left(  \alpha+\beta\right)  _{i}}_{=\alpha_{i}+\beta_{i}%
}+j=\underbrace{\alpha_{i}}_{\substack{=\mu_{i}-i\\\text{(by
(\ref{pf.cor.pre-pieri.4.ali=}))}}}+\beta_{i}+j=\mu_{i}-i+\beta_{i}%
+j=\underbrace{\mu_{i}+\beta_{i}}_{=\left(  \mu+\beta\right)  _{i}%
}+j-i=\left(  \mu+\beta\right)  _{i}+j-i
\]
and therefore
\[
h_{\left(  \alpha+\beta\right)  _{i}+j,\ i}=h_{\left(  \mu+\beta\right)
_{i}+j-i,\ i}.
\]
In other words, we have%
\[
\left(  h_{\left(  \alpha+\beta\right)  _{i}+j,\ i}\right)  _{i,j\in\left[
n\right]  }=\left(  h_{\left(  \mu+\beta\right)  _{i}+j-i,\ i}\right)
_{i,j\in\left[  n\right]  }.
\]
\par
The definition of $t_{\alpha+\beta}$ yields
\[
t_{\alpha+\beta}=\operatorname*{rowdet}\left(  \underbrace{\left(  h_{\left(
\alpha+\beta\right)  _{i}+j,\ i}\right)  _{i,j\in\left[  n\right]  }%
}_{=\left(  h_{\left(  \mu+\beta\right)  _{i}+j-i,\ i}\right)  _{i,j\in\left[
n\right]  }}\right)  =\operatorname*{rowdet}\left(  \left(  h_{\left(
\mu+\beta\right)  _{i}+j-i,\ i}\right)  _{i,j\in\left[  n\right]  }\right)
=s_{\mu+\beta}%
\]
(since the definition of $s_{\mu+\beta}$ yields $s_{\mu+\beta}%
=\operatorname*{rowdet}\left(  \left(  h_{\left(  \mu+\beta\right)
_{i}+j-i,\ i}\right)  _{i,j\in\left[  n\right]  }\right)  $). Qed.}
Furthermore, we have
\begin{equation}
\operatorname*{rowdet}\left(  \left(  h_{\alpha_{i}+j,\ i}\right)
_{i,j\in\left[  n-1\right]  }\right)  =s_{\overline{\mu}}
\label{pf.cor.pre-pieri.4.4}%
\end{equation}
\footnote{\textit{Proof.} We have $\overline{\mu}=\left(  \mu_{1},\mu
_{2},\ldots,\mu_{n-1}\right)  $; thus,
\begin{equation}
\overline{\mu}_{i}=\mu_{i}\ \ \ \ \ \ \ \ \ \ \text{for each }i\in\left[
n-1\right]  . \label{pf.cor.pre-pieri.4.4.pf.1}%
\end{equation}
\par
The definition of $s_{\overline{\mu}}$ yields%
\begin{equation}
s_{\overline{\mu}}=\operatorname*{rowdet}\left(  \left(  h_{\overline{\mu}%
_{i}+j-i,\ i}\right)  _{i,j\in\left[  n-1\right]  }\right)  .
\label{pf.cor.pre-pieri.4.4.pf.2}%
\end{equation}
\par
For each $i,j\in\left[  n-1\right]  $, we have
\[
\underbrace{\alpha_{i}}_{\substack{=\mu_{i}-i\\\text{(by
(\ref{pf.cor.pre-pieri.4.ali=}))}}}+j=\underbrace{\mu_{i}}%
_{\substack{=\overline{\mu}_{i}\\\text{(by (\ref{pf.cor.pre-pieri.4.4.pf.1}%
))}}}-i+j=\overline{\mu}_{i}-i+j=\overline{\mu}_{i}+j-i
\]
and thus $h_{\alpha_{i}+j,\ i}=h_{\overline{\mu}_{i}+j-i,\ i}$. In other
words, we have
\[
\left(  h_{\alpha_{i}+j,\ i}\right)  _{i,j\in\left[  n-1\right]  }=\left(
h_{\overline{\mu}_{i}+j-i,\ i}\right)  _{i,j\in\left[  n-1\right]  }.
\]
Hence,%
\[
\operatorname*{rowdet}\left(  \underbrace{\left(  h_{\alpha_{i}+j,\ i}\right)
_{i,j\in\left[  n-1\right]  }}_{=\left(  h_{\overline{\mu}_{i}+j-i,\ i}%
\right)  _{i,j\in\left[  n-1\right]  }}\right)  =\operatorname*{rowdet}\left(
\left(  h_{\overline{\mu}_{i}+j-i,\ i}\right)  _{i,j\in\left[  n-1\right]
}\right)  =s_{\overline{\mu}}%
\]
(by (\ref{pf.cor.pre-pieri.4.4.pf.2})).}. Finally, from $\alpha_{n}=-n$, we
obtain
\begin{equation}
\alpha_{n}+n+p=\left(  -n\right)  +n+p=p. \label{pf.cor.pre-pieri.4.5}%
\end{equation}

Using (\ref{pf.cor.pre-pieri.4.3}), (\ref{pf.cor.pre-pieri.4.4}) and
(\ref{pf.cor.pre-pieri.4.5}), we can rewrite (\ref{pf.cor.pre-pieri.4.old}) as%
\[
\sum_{\substack{\beta\in\mathbb{N}^{n};\\\left\vert \beta\right\vert
=p}}s_{\mu+\beta}=s_{\overline{\mu}}\cdot h_{p,\ n}.
\]

\end{verlong}

\noindent This proves Corollary \ref{cor.pre-pieri.4}.
\end{proof}

\begin{corollary}
\label{cor.pre-pieri.4comm}Assume that $R$ is commutative. Let $n\in
\mathbb{P}$ and $p\in\mathbb{Z}$. Let $h_{k,\ i}$ be an element of $R$ for all
$k\in\mathbb{Z}$ and $i\in\left[  n\right]  $. Assume that%
\[
h_{k,\ n}=0\ \ \ \ \ \ \ \ \ \ \text{for all }k<0.
\]

For any $m\in\left\{  0,1,\ldots,n\right\}  $ and any $\lambda\in
\mathbb{Z}^{m}$, we define%
\[
s_{\lambda}:=\det\left(  \left(  h_{\lambda_{i}+j-i,\ i}\right)
_{i,j\in\left[  m\right]  }\right)  \in R.
\]

Fix an $n$-tuple $\mu\in\mathbb{Z}^{n}$ with $\mu_{n}=0$. Let $\overline{\mu
}=\left(  \mu_{1},\mu_{2},\ldots,\mu_{n-1}\right)  $. Then,%
\begin{equation}
h_{p,\ n}\cdot s_{\overline{\mu}}=\sum_{\substack{\beta\in\mathbb{N}%
^{n};\\\left\vert \beta\right\vert =p}}s_{\mu+\beta}.
\label{eq.cor.pre-pieri.4comm.eq}%
\end{equation}

\end{corollary}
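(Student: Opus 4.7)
The plan is to mimic the derivation of Corollary \ref{cor.pre-pieri.2comm} from Corollary \ref{cor.pre-pieri.2}: we observe that in the commutative case, the row-determinant coincides with the ordinary determinant, so that the $s_\lambda$ defined here agrees with the $s_\lambda$ defined in Corollary \ref{cor.pre-pieri.4}, and then we invoke that corollary directly.

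More concretely, I would first recall that since $R$ is commutative, every square matrix $A$ over $R$ satisfies $\operatorname{rowdet} A = \det A$ (because the standard determinant formula and the definition of $\operatorname{rowdet}$ differ only in the order of the factors $a_{i,\sigma(i)}$, which commute). Applying this to the matrix $\left(h_{\lambda_{i}+j-i,\ i}\right)_{i,j\in\left[m\right]}$ for any $m\in\left\{0,1,\ldots,n\right\}$ and $\lambda\in\mathbb{Z}^{m}$, we get
\[
\operatorname*{rowdet}\left(\left(h_{\lambda_{i}+j-i,\ i}\right)_{i,j\in\left[m\right]}\right)=\det\left(\left(h_{\lambda_{i}+j-i,\ i}\right)_{i,j\in\left[m\right]}\right)=s_{\lambda},
\]
so the $s_{\lambda}$ of Corollary \ref{cor.pre-pieri.4comm} agrees with the $s_{\lambda}$ of Corollary \ref{cor.pre-pieri.4}. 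Hence the hypotheses and notations of Corollary \ref{cor.pre-pieri.4} apply verbatim.

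Next, I would apply Corollary \ref{cor.pre-pieri.4} to our $\mu$, obtaining
\[
s_{\overline{\mu}}\cdot h_{p,\ n}=\sum_{\substack{\beta\in\mathbb{N}^{n};\\\left\vert \beta\right\vert =p}}s_{\mu+\beta}.
\]
Finally, commutativity of $R$ gives $s_{\overline{\mu}}\cdot h_{p,\ n}=h_{p,\ n}\cdot s_{\overline{\mu}}$, which yields \eqref{eq.cor.pre-pieri.4comm.eq}.

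There is no real obstacle here; the whole proof is essentially bookkeeping. The only thing to be careful about is checking that our notations genuinely match those of Corollary \ref{cor.pre-pieri.4} (in particular, the assumption $h_{k,\ n}=0$ for $k<0$ carries over unchanged, and $\mu_{n}=0$ is retained), and that commutativity is used both in the identification $\operatorname*{rowdet}=\det$ and in the final swap between $s_{\overline{\mu}}$ and $h_{p,\ n}$.
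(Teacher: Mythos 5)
Your proposal is correct and matches the paper's own proof: the paper likewise identifies $\operatorname{rowdet}$ with $\det$ in the commutative case, invokes Corollary \ref{cor.pre-pieri.4}, and then uses commutativity to swap $s_{\overline{\mu}}$ with $h_{p,\ n}$. No issues.
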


\begin{vershort}
\begin{proof}
[Proof of Corollary \ref{cor.pre-pieri.4comm}.]This can be derived from
Corollary \ref{cor.pre-pieri.4} in the same way as we derived Corollary
\ref{cor.pre-pieri.2comm} from Corollary \ref{cor.pre-pieri.2}.
\end{proof}
\end{vershort}

\begin{verlong}
\begin{proof}
[Proof of Corollary \ref{cor.pre-pieri.4comm}.]If $A$ is any square matrix
over $R$, then%
\begin{equation}
\operatorname*{rowdet}A=\det A. \label{pf.cor.pre-pieri.4comm.rd=d}%
\end{equation}
(Indeed, we have already shown this in our proof of Corollary
\ref{cor.pre-pieri.2comm}.) Now, for any $m\in\left\{  0,1,\ldots,n\right\}  $
and any $\lambda\in\mathbb{Z}^{m}$, we have%
\begin{equation}
\operatorname*{rowdet}\left(  \left(  h_{\lambda_{i}+j-i,\ i}\right)
_{i,j\in\left[  m\right]  }\right)  =\det\left(  \left(  h_{\lambda
_{i}+j-i,\ i}\right)  _{i,j\in\left[  m\right]  }\right)
\label{pf.cor.pre-pieri.4comm.1}%
\end{equation}
(by (\ref{pf.cor.pre-pieri.4comm.rd=d}), applied to $A=\left(  h_{\lambda
_{i}+j-i,\ i}\right)  _{i,j\in\left[  m\right]  }$). Hence, for any
$m\in\left\{  0,1,\ldots,n\right\}  $ and any $\lambda\in\mathbb{Z}^{m}$, we
have%
\begin{align*}
s_{\lambda}  &  =\det\left(  \left(  h_{\lambda_{i}+j-i,\ i}\right)
_{i,j\in\left[  m\right]  }\right)  \ \ \ \ \ \ \ \ \ \ \left(  \text{by our
definition of }s_{\lambda}\right) \\
&  =\operatorname*{rowdet}\left(  \left(  h_{\lambda_{i}+j-i,\ i}\right)
_{i,j\in\left[  m\right]  }\right)  \ \ \ \ \ \ \ \ \ \ \left(  \text{by
(\ref{pf.cor.pre-pieri.4comm.1})}\right)  .
\end{align*}
But this is precisely how $s_{\lambda}$ was defined in Corollary
\ref{cor.pre-pieri.4}. Thus, our notations match the notations of Corollary
\ref{cor.pre-pieri.4}. Hence, we can apply Corollary \ref{cor.pre-pieri.4}. As
a result, we obtain%
\[
s_{\overline{\mu}}\cdot h_{p,\ n}=\sum_{\substack{\beta\in\mathbb{N}%
^{n};\\\left\vert \beta\right\vert =p}}s_{\mu+\beta}.
\]
Hence,%
\[
\sum_{\substack{\beta\in\mathbb{N}^{n};\\\left\vert \beta\right\vert
=p}}s_{\mu+\beta}=s_{\overline{\mu}}\cdot h_{p,\ n}=h_{p,\ n}\cdot
s_{\overline{\mu}}.
\]
This proves Corollary \ref{cor.pre-pieri.4comm}.
\end{proof}
\end{verlong}

We note that (\ref{eq.intro.pieri3b}) is the particular case of Corollary
\ref{cor.pre-pieri.4comm} for $R=\Lambda$, $h_{k,\ i}=h_{k}$ and $\mu=\lambda$.

\subsection{Recovering Fun's rule}

Let us now explain how \cite[Proposition 3.9]{Fun} follows from Corollary
\ref{cor.pre-pieri.4comm}. Here is the claim of \cite[Proposition 3.9]{Fun}
with slightly modified notations:

\begin{proposition}
\label{prop.fun}Assume that $R$ is commutative. Let $\ell\in\mathbb{N}$,
$p\in\mathbb{Z}$ and $q\in\mathbb{Z}$. Let $g_{k,\ j}$ be an element of $R$
for all $k,j\in\mathbb{Z}$. Assume that%
\begin{equation}
g_{k,\ j}=0\ \ \ \ \ \ \ \ \ \ \text{for all }k<0\text{ and }j\in\mathbb{Z}.
\label{eq.prop.fun.ass=0}%
\end{equation}

For any $m\in\mathbb{N}$ and any $\mu\in\mathbb{Z}^{m}$ and $\beta
\in\mathbb{Z}^{m}$, we define%
\[
s_{\mu,\ \beta}:=\det\left(  \left(  g_{\mu_{i}+j-i,\ \beta_{i}+j-i}\right)
_{i,j\in\left[  m\right]  }\right)  \in R.
\]

Let $\mu\in\mathbb{Z}^{\ell+1}$ be an $\left(  \ell+1\right)  $-tuple
satisfying $\mu_{\ell+1}=0$. Let $\overline{\mu}$ be the $\ell$-tuple $\left(
\mu_{1},\mu_{2},\ldots,\mu_{\ell}\right)  \in\mathbb{Z}^{\ell}$.

Let $\beta\in\mathbb{Z}^{\ell}$ be an $\ell$-tuple, and let $\beta^{\prime}%
\in\mathbb{Z}^{\ell+1}$ be the $\left(  \ell+1\right)  $-tuple $\left(
\beta_{1},\beta_{2},\ldots,\beta_{\ell},q-p\right)  $.

Then,%
\[
g_{p,\ q}\cdot s_{\overline{\mu},\ \beta}=\sum_{\substack{\delta\in
\mathbb{N}^{\ell+1};\\\left\vert \delta\right\vert =p}}s_{\mu+\delta
,\ \beta^{\prime}+\delta}.
\]

\end{proposition}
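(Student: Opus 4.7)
The plan is to reduce the statement directly to Corollary \ref{cor.pre-pieri.4comm} by a change of indexing that absorbs the second subscript of $g$ into a modified first subscript.

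Set $n := \ell+1 \in \mathbb{P}$, and define, for every $k\in\mathbb{Z}$ and every $i\in\left[ n\right]$,
\[
h_{k,\ i}\ :=\ g_{k,\ k+\beta'_{i}-\mu_{i}}\in R.
\]
First I would verify the vanishing hypothesis of Corollary \ref{cor.pre-pieri.4comm}: since $\mu_{\ell+1}=0$ and $\beta'_{\ell+1}=q-p$, we have $h_{k,\ n}=g_{k,\ k+q-p}$, which is $0$ whenever $k<0$ by (\ref{eq.prop.fun.ass=0}).

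Next, I would check that the generalized Schur symbols of Proposition \ref{prop.fun} and of Corollary \ref{cor.pre-pieri.4comm} agree on the tuples we care about. The key observation is that for any $m\in\left\{ 0,1,\ldots,n\right\}$ and any $\lambda\in\mathbb{Z}^{m}$ together with a $\tau\in\mathbb{Z}^{m}$ satisfying $\tau_{i}-\lambda_{i}=\beta'_{i}-\mu_{i}$ for every $i\in\left[ m\right]$, we have
\[
g_{\lambda_{i}+j-i,\ \tau_{i}+j-i}\ =\ g_{\lambda_{i}+j-i,\ \left(\lambda_{i}+j-i\right)+\left(\beta'_{i}-\mu_{i}\right)}\ =\ h_{\lambda_{i}+j-i,\ i}
\]
for all $i,j\in\left[ m\right]$, so that $s_{\lambda,\ \tau}$ (in the sense of Proposition \ref{prop.fun}) equals $\det\left(\left(h_{\lambda_{i}+j-i,\ i}\right)_{i,j\in\left[ m\right]}\right)$ (i.e., $s_{\lambda}$ in the sense of Corollary \ref{cor.pre-pieri.4comm}). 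Applied with $\left(\lambda,\tau\right)=\left(\overline{\mu},\beta\right)$ (using $\beta'_{i}=\beta_{i}$ and $\overline{\mu}_{i}=\mu_{i}$ for $i\in\left[\ell\right]$), this yields $s_{\overline{\mu},\ \beta}=s_{\overline{\mu}}$; applied with $\left(\lambda,\tau\right)=\left(\mu+\delta,\beta'+\delta\right)$ for any $\delta\in\mathbb{Z}^{\ell+1}$ (where the shift by $\delta$ cancels in the difference), it yields $s_{\mu+\delta,\ \beta'+\delta}=s_{\mu+\delta}$.

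Finally, I would note that $h_{p,\ n}=g_{p,\ p+q-p}=g_{p,\ q}$, and then apply Corollary \ref{cor.pre-pieri.4comm} to $n=\ell+1$, to the elements $h_{k,\ i}$ defined above, to the integer $p$, and to the $n$-tuple $\mu$ (for which $\mu_{n}=\mu_{\ell+1}=0$ holds by assumption). The conclusion (\ref{eq.cor.pre-pieri.4comm.eq}) reads
\[
h_{p,\ n}\cdot s_{\overline{\mu}}\ =\ \sum_{\substack{\delta\in\mathbb{N}^{n};\\\left\vert \delta\right\vert =p}}s_{\mu+\delta},
\]
which translates via the identifications above into the desired identity
\[
g_{p,\ q}\cdot s_{\overline{\mu},\ \beta}\ =\ \sum_{\substack{\delta\in\mathbb{N}^{\ell+1};\\\left\vert \delta\right\vert =p}}s_{\mu+\delta,\ \beta'+\delta}.
\]
The only real content is the index-matching in the second paragraph; once that is in hand, there is no genuine obstacle, and the proof reduces to citing the corollary.
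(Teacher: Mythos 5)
Your proof is correct and follows essentially the same route as the paper's own proof: the same choice $n=\ell+1$ and the same substitution $h_{k,\,i}=g_{k,\ \beta'_i-\mu_i+k}$, followed by an application of Corollary \ref{cor.pre-pieri.4comm}. Your unified index-matching observation (that $s_{\lambda,\tau}=s_{\lambda}$ whenever $\tau_i-\lambda_i=\beta'_i-\mu_i$) packages the two verifications the paper does separately, but the content is identical.
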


To be precise, Proposition \ref{prop.fun} does not only differ from
\cite[Proposition 3.9]{Fun} in the notations (what we call $q$ and $g_{k,\ j}$
corresponds to $\beta_{e}$ and $h_{k,\ j}$ in \cite[Proposition 3.9]{Fun};
furthermore, the summation index $\delta$ in our sum is called $\sigma$ in
\cite[Proposition 3.9]{Fun}), but is also slightly more general (our $\mu$ can
be any $\left(  \ell+1\right)  $-tuple in $\mathbb{Z}^{\ell+1}$ satisfying
$\mu_{\ell+1}=0$ rather than just a partition of length $\ell$; our
$g_{k,\ j}$ are not required to satisfy $g_{0,\ j}=1$; our $p$ is not assumed
to be positive). Note that our definition of $s_{\mu,\ \beta}$ in Proposition
\ref{prop.fun} is equivalent to the one in \cite{Fun}, because of
\cite[(3.3)]{Fun}.

As promised, we can now easily derive Proposition \ref{prop.fun} from
Corollary \ref{cor.pre-pieri.4comm}:

\begin{vershort}
\begin{proof}
[Proof of Proposition \ref{prop.fun}.]Set $n:=\ell+1$. Set%
\[
h_{k,\ i}:=g_{k,\ \beta_{i}^{\prime}-\mu_{i}+k}\ \ \ \ \ \ \ \ \ \ \text{for
every }k\in\mathbb{Z}\text{ and }i\in\left[  n\right]  .
\]
Then, (\ref{eq.prop.fun.ass=0}) entails that $h_{k,\ n}=0$ for all $k<0$.
Furthermore, using $\mu_{n}=\mu_{\ell+1}=0$ and $\beta_{n}^{\prime}%
=\beta_{\ell+1}^{\prime}=q-p$, we obtain $h_{p,\ n}=g_{p,\ \left(  q-p\right)
-0+p}=g_{p,\ q}$.

For any $m\in\left\{  0,1,\ldots,n\right\}  $ and any $\lambda\in
\mathbb{Z}^{m}$, we define $s_{\lambda}\in R$ as in Corollary
\ref{cor.pre-pieri.4comm}. Thus, (\ref{eq.cor.pre-pieri.4comm.eq}) (with the
summation index $\beta$ renamed as $\delta$) yields%
\begin{equation}
h_{p,\ n}\cdot s_{\overline{\mu}}=\sum_{\substack{\delta\in\mathbb{N}%
^{n};\\\left\vert \delta\right\vert =p}}s_{\mu+\delta}.
\label{pf.prop.fun.short.cor}%
\end{equation}

However, let us recall that $h_{p,\ n}=g_{p,\ q}$ and $n=\ell+1$; furthermore,
it is easy to see that $s_{\overline{\mu}}=s_{\overline{\mu},\ \beta}$ and%
\[
s_{\mu+\delta}=s_{\mu+\delta,\ \beta^{\prime}+\delta}%
\ \ \ \ \ \ \ \ \ \ \text{for each }\delta\in\mathbb{N}^{\ell+1}.
\]
Thus, (\ref{pf.prop.fun.short.cor}) rewrites as
\[
g_{p,\ q}\cdot s_{\overline{\mu},\ \beta}=\sum_{\substack{\delta\in
\mathbb{N}^{\ell+1};\\\left\vert \delta\right\vert =p}}s_{\mu+\delta
,\ \beta^{\prime}+\delta}.
\]
This proves Proposition \ref{prop.fun}.
\end{proof}
\end{vershort}

\begin{verlong}
\begin{proof}
[Proof of Proposition \ref{prop.fun}.]We have $\overline{\mu}=\left(  \mu
_{1},\mu_{2},\ldots,\mu_{\ell}\right)  $. Thus,%
\begin{equation}
\overline{\mu}_{i}=\mu_{i}\ \ \ \ \ \ \ \ \ \ \text{for each }i\in\left[
\ell\right]  . \label{pf.prop.fun.mubari}%
\end{equation}
From $\beta^{\prime}=\left(  \beta_{1},\beta_{2},\ldots,\beta_{\ell
},q-p\right)  $, we obtain%
\begin{equation}
\beta_{i}^{\prime}=\beta_{i}\ \ \ \ \ \ \ \ \ \ \text{for each }i\in\left[
\ell\right]  \label{pf.prop.fun.bprimei}%
\end{equation}
and $\beta_{\ell+1}^{\prime}=q-p$.

Set
\begin{equation}
n:=\ell+1. \label{pf.prop.fun.0}%
\end{equation}
Set%
\[
h_{k,\ i}:=g_{k,\ \beta_{i}^{\prime}-\mu_{i}+k}\ \ \ \ \ \ \ \ \ \ \text{for
every }k\in\mathbb{Z}\text{ and }i\in\left[  n\right]  .
\]
Then, we have $h_{k,\ n}=0$ for all $k<0$\ \ \ \ \footnote{\textit{Proof.} Let
$k<0$ be an integer. Then, the definition of $h_{k,\ n}$ yields $h_{k,\ n}%
=g_{k,\ \beta_{n}^{\prime}-\mu_{n}+k}=0$ (by (\ref{eq.prop.fun.ass=0}),
applied to $j=\beta_{n}^{\prime}-\mu_{n}+k$). Qed.}. Furthermore, from
$n=\ell+1$, we obtain $\mu_{n}=\mu_{\ell+1}=0$ and $\beta_{n}^{\prime}%
=\beta_{\ell+1}^{\prime}=q-p$; therefore, $\underbrace{\beta_{n}^{\prime}%
}_{=q-p}-\underbrace{\mu_{n}}_{=0}+p=\left(  q-p\right)  -0+p=q$. Now, the
definition of $h_{p,\ n}$ yields%
\begin{equation}
h_{p,\ n}=g_{p,\ \beta_{n}^{\prime}-\mu_{n}+p}=g_{p,\ q} \label{pf.prop.fun.1}%
\end{equation}
(since $\beta_{n}^{\prime}-\mu_{n}+p=q$).

For any $m\in\left\{  0,1,\ldots,n\right\}  $ and any $\lambda\in
\mathbb{Z}^{m}$, we define $s_{\lambda}\in R$ as in Corollary
\ref{cor.pre-pieri.4comm}. Thus, (\ref{eq.cor.pre-pieri.4comm.eq}) (with the
summation index $\beta$ renamed as $\delta$) yields%
\begin{equation}
h_{p,\ n}\cdot s_{\overline{\mu}}=\sum_{\substack{\delta\in\mathbb{N}%
^{n};\\\left\vert \delta\right\vert =p}}s_{\mu+\delta}.
\label{pf.prop.fun.cor}%
\end{equation}

However, it is easy to see that%
\begin{equation}
s_{\overline{\mu}}=s_{\overline{\mu},\ \beta} \label{pf.prop.fun.2}%
\end{equation}
\footnote{\textit{Proof of (\ref{pf.prop.fun.2}):} For each $i,j\in\left[
\ell\right]  $, we have%
\begin{align}
h_{\overline{\mu}_{i}+j-i,\ i}  &  =h_{\mu_{i}+j-i,\ i}%
\ \ \ \ \ \ \ \ \ \ \left(  \text{since (\ref{pf.prop.fun.mubari}) yields
}\overline{\mu}_{i}=\mu_{i}\right) \nonumber\\
&  =g_{\mu_{i}+j-i,\ \beta_{i}^{\prime}-\mu_{i}+\left(  \mu_{i}+j-i\right)
}\ \ \ \ \ \ \ \ \ \ \left(  \text{by the definition of }h_{\mu_{i}%
+j-i,\ i}\right) \nonumber\\
&  =g_{\mu_{i}+j-i,\ \beta_{i}+j-i}\nonumber\\
&  \ \ \ \ \ \ \ \ \ \ \left(  \text{since }\beta_{i}^{\prime}-\mu_{i}+\left(
\mu_{i}+j-i\right)  =\underbrace{\beta_{i}^{\prime}}_{\substack{=\beta
_{i}\\\text{(by (\ref{pf.prop.fun.bprimei}))}}}+j-i=\beta_{i}+j-i\right)
\nonumber\\
&  =g_{\overline{\mu}_{i}+j-i,\ \beta_{i}+j-i} \label{pf.prop.fun.2.pf.2}%
\end{align}
(since (\ref{pf.prop.fun.mubari}) yields $\mu_{i}=\overline{\mu}_{i}$).
\par
The definition of $s_{\overline{\mu}}$ yields%
\[
s_{\overline{\mu}}=\det\left(  \left(  \underbrace{h_{\overline{\mu}%
_{i}+j-i,\ i}}_{\substack{=g_{\overline{\mu}_{i}+j-i,\ \beta_{i}%
+j-i}\\\text{(by (\ref{pf.prop.fun.2.pf.2}))}}}\right)  _{i,j\in\left[
\ell\right]  }\right)  =\det\left(  \left(  g_{\overline{\mu}_{i}%
+j-i,\ \beta_{i}+j-i}\right)  _{i,j\in\left[  \ell\right]  }\right)  .
\]
On the other hand, the definition of $s_{\overline{\mu},\ \beta}$ yields%
\[
s_{\overline{\mu},\ \beta}=\det\left(  \left(  g_{\overline{\mu}%
_{i}+j-i,\ \beta_{i}+j-i}\right)  _{i,j\in\left[  \ell\right]  }\right)  .
\]
Comparing these two equalities, we obtain $s_{\overline{\mu}}=s_{\overline
{\mu},\ \beta}$, qed.}. Furthermore,
\begin{equation}
s_{\mu+\delta}=s_{\mu+\delta,\ \beta^{\prime}+\delta}%
\ \ \ \ \ \ \ \ \ \ \text{for each }\delta\in\mathbb{N}^{\ell+1}
\label{pf.prop.fun.3}%
\end{equation}
\footnote{\textit{Proof of (\ref{pf.prop.fun.3}):} Let $\delta\in
\mathbb{N}^{\ell+1}$.
\par
For each $i,j\in\left[  \ell+1\right]  $, we have%
\begin{align}
h_{\left(  \mu+\delta\right)  _{i}+j-i,\ i}  &  =h_{\mu_{i}+\delta
_{i}+j-i,\ i}\ \ \ \ \ \ \ \ \ \ \left(  \text{since }\left(  \mu
+\delta\right)  _{i}=\mu_{i}+\delta_{i}\right) \nonumber\\
&  =g_{\mu_{i}+\delta_{i}+j-i,\ \beta_{i}^{\prime}-\mu_{i}+\left(  \mu
_{i}+\delta_{i}+j-i\right)  }\ \ \ \ \ \ \ \ \ \ \left(  \text{by the
definition of }h_{\mu_{i}+\delta_{i}+j-i,\ i}\right) \nonumber\\
&  =g_{\mu_{i}+\delta_{i}+j-i,\ \left(  \beta^{\prime}+\delta\right)
_{i}+j-i}\nonumber\\
&  \ \ \ \ \ \ \ \ \ \ \left(  \text{since }\beta_{i}^{\prime}-\mu_{i}+\left(
\mu_{i}+\delta_{i}+j-i\right)  =\underbrace{\beta_{i}^{\prime}+\delta_{i}%
}_{=\left(  \beta^{\prime}+\delta\right)  _{i}}+j-i=\left(  \beta^{\prime
}+\delta\right)  _{i}+j-i\right) \nonumber\\
&  =g_{\left(  \mu+\delta\right)  _{i}+j-i,\ \left(  \beta^{\prime}%
+\delta\right)  _{i}+j-i} \label{pf.prop.fun.3.pf.2}%
\end{align}
(since $\mu_{i}+\delta_{i}=\left(  \mu+\delta\right)  _{i}$).
\par
The definition of $s_{\mu+\delta}$ yields%
\[
s_{\mu+\delta}=\det\left(  \left(  \underbrace{h_{\left(  \mu+\delta\right)
_{i}+j-i,\ i}}_{\substack{=g_{\left(  \mu+\delta\right)  _{i}+j-i,\ \left(
\beta^{\prime}+\delta\right)  _{i}+j-i}\\\text{(by (\ref{pf.prop.fun.3.pf.2}%
))}}}\right)  _{i,j\in\left[  \ell+1\right]  }\right)  =\det\left(  \left(
g_{\left(  \mu+\delta\right)  _{i}+j-i,\ \left(  \beta^{\prime}+\delta\right)
_{i}+j-i}\right)  _{i,j\in\left[  \ell+1\right]  }\right)  .
\]
On the other hand, the definition of $s_{\mu+\delta,\ \beta^{\prime}+\delta}$
yields%
\[
s_{\mu+\delta,\ \beta^{\prime}+\delta}=\det\left(  \left(  g_{\left(
\mu+\delta\right)  _{i}+j-i,\ \left(  \beta^{\prime}+\delta\right)  _{i}%
+j-i}\right)  _{i,j\in\left[  \ell+1\right]  }\right)  .
\]
Comparing these two equalities, we obtain $s_{\mu+\delta}=s_{\mu
+\delta,\ \beta^{\prime}+\delta}$. This proves (\ref{pf.prop.fun.3}).}.

Using (\ref{pf.prop.fun.0}), (\ref{pf.prop.fun.1}), (\ref{pf.prop.fun.2}) and
(\ref{pf.prop.fun.3}), we can rewrite (\ref{pf.prop.fun.cor}) as%
\[
g_{p,\ q}\cdot s_{\overline{\mu},\ \beta}=\sum_{\substack{\delta\in
\mathbb{N}^{\ell+1};\\\left\vert \delta\right\vert =p}}s_{\mu+\delta
,\ \beta^{\prime}+\delta}.
\]
This proves Proposition \ref{prop.fun}.
\end{proof}
\end{verlong}

\subsection{Recovering the immaculate Pieri rule}

Next, we shall exhibit \cite[Theorem 3.5]{BBSSZ13} as a consequence of
Corollary \ref{cor.pre-pieri.4}. To this aim, we will briefly introduce the
relevant parts of the scene of \cite[Theorem 3.5]{BBSSZ13}. We fix a
commutative ring $\mathbf{k}$, and we let $\operatorname*{NSym}$ be the
algebra of noncommutative polynomials in countably many variables $H_{1}%
,H_{2},H_{3},\ldots$ over $\mathbf{k}$. We also set $H_{0}:=1$ and $H_{k}:=0$
for all $k<0$. For every $m\in\mathbb{N}$ and every $\alpha\in\mathbb{Z}^{m}$,
we set%
\[
H_{\alpha}:=H_{\alpha_{1}}H_{\alpha_{2}}\cdots H_{\alpha_{m}}\in
\operatorname*{NSym}.
\]
For every $m\in\mathbb{N}$ and every $\alpha\in\mathbb{Z}^{m}$, we set
\[
\mathfrak{S}_{\alpha}:=\sum_{\sigma\in S_{m}}\left(  -1\right)  ^{\sigma
}H_{\left(  \alpha_{1}+\sigma\left(  1\right)  -1,\ \alpha_{2}+\sigma\left(
2\right)  -2,\ \ldots,\ \alpha_{m}+\sigma\left(  m\right)  -m\right)  }%
\in\operatorname*{NSym}.
\]
(This is not how $\mathfrak{S}_{\alpha}$ is defined in \cite{BBSSZ13}, but it
is an equivalent definition, because \cite[Theorem 3.27]{BBSSZ13} shows that
the $\mathfrak{S}_{\alpha}$ from \cite{BBSSZ13} is given by the same formula.)
Now, \cite[Theorem 3.5]{BBSSZ13} (the \textquotedblleft right-Pieri rule for
multiplication by $H_{s}$\textquotedblright\ in the terminology of
\cite{BBSSZ13}) states the following:

\begin{proposition}
\label{prop.immac}Let $n\in\mathbb{N}$ and $\alpha\in\mathbb{Z}^{n}$. Let
$s\in\mathbb{Z}$. Then,%
\[
\mathfrak{S}_{\alpha}H_{s}=\sum_{\substack{\beta\in\mathbb{Z}^{n+1}%
;\\\left\vert \beta\right\vert =\left\vert \alpha\right\vert +s;\\\alpha
_{i}\leq\beta_{i}\text{ for all }i\in\left[  n\right]  ;\\0\leq\beta_{n+1}%
}}\mathfrak{S}_{\beta}.
\]

\end{proposition}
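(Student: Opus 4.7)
The plan is to deduce Proposition \ref{prop.immac} from Corollary \ref{cor.pre-pieri.4} by a direct identification of $\mathfrak{S}_{\alpha}$ with a row-determinant of the form considered there. First I would observe that, using the multiplicative extension $H_{\gamma}=H_{\gamma_{1}}H_{\gamma_{2}}\cdots H_{\gamma_{m}}$ of the $H_{k}$'s, the definition of $\mathfrak{S}_{\alpha}$ can be rewritten as
\[
\mathfrak{S}_{\alpha}=\sum_{\sigma\in S_{m}}\left(-1\right)^{\sigma}H_{\alpha_{1}+\sigma\left(1\right)-1}H_{\alpha_{2}+\sigma\left(2\right)-2}\cdots H_{\alpha_{m}+\sigma\left(m\right)-m}=\operatorname*{rowdet}\left(\left(H_{\alpha_{i}+j-i}\right)_{i,j\in\left[m\right]}\right).
\]
This is exactly the shape of the $s_{\lambda}$ used in Corollary \ref{cor.pre-pieri.4}, provided we take $R=\operatorname*{NSym}$ and define $h_{k,\ i}:=H_{k}$ for all $k\in\mathbb{Z}$ and $i$. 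The required hypothesis $h_{k,\ n}=0$ for $k<0$ holds automatically, since $H_{k}=0$ for negative $k$ by convention.

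Next I would apply Corollary \ref{cor.pre-pieri.4} with $n$ replaced by $n+1$ and with $p:=s$ to the $\left(n+1\right)$-tuple
\[
\mu:=\left(\alpha_{1},\alpha_{2},\ldots,\alpha_{n},0\right)\in\mathbb{Z}^{n+1},
\]
whose last entry is $0$ as required. Then $\overline{\mu}=\left(\alpha_{1},\ldots,\alpha_{n}\right)=\alpha$, so $s_{\overline{\mu}}=\mathfrak{S}_{\alpha}$; and $h_{p,\ n+1}=H_{s}$. For each $\gamma\in\mathbb{N}^{n+1}$, the element $s_{\mu+\gamma}$ equals $\mathfrak{S}_{\mu+\gamma}$ by the same identification. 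Corollary \ref{cor.pre-pieri.4} thus yields
\[
\mathfrak{S}_{\alpha}\cdot H_{s}=\sum_{\substack{\gamma\in\mathbb{N}^{n+1};\\\left\vert \gamma\right\vert =s}}\mathfrak{S}_{\mu+\gamma}.
\]

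Finally, I would substitute $\beta:=\mu+\gamma$ in the sum. This substitution is a bijection from $\left\{\gamma\in\mathbb{N}^{n+1}:\left\vert\gamma\right\vert=s\right\}$ onto the set of $\beta\in\mathbb{Z}^{n+1}$ such that $\beta-\mu\in\mathbb{N}^{n+1}$ and $\left\vert\beta-\mu\right\vert=s$. The condition $\beta-\mu\in\mathbb{N}^{n+1}$ translates componentwise into $\beta_{i}\geq\alpha_{i}$ for $i\in\left[n\right]$ and $\beta_{n+1}\geq 0$, while $\left\vert\beta-\mu\right\vert=s$ becomes $\left\vert\beta\right\vert=\left\vert\mu\right\vert+s=\left\vert\alpha\right\vert+s$ (using $\mu_{n+1}=0$ and Lemma \ref{lem.additive}). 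Reindexing the sum in this way yields precisely the right-hand side of Proposition \ref{prop.immac}, completing the proof.

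There is essentially no hard step: the only substantive work is the notational alignment of $\mathfrak{S}_{\alpha}$ with the row-determinant $s_{\overline{\mu}}$, and the routine translation of the summation conditions under $\beta=\mu+\gamma$. The whole argument is a one-shot application of Corollary \ref{cor.pre-pieri.4}.
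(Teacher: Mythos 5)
Your proposal is correct and follows essentially the same route as the paper's own proof: identify $\mathfrak{S}_{\lambda}$ with the row-determinant $s_{\lambda}$ over $R=\operatorname*{NSym}$ with $h_{k,\ i}:=H_{k}$, apply Corollary \ref{cor.pre-pieri.4} to $\mu=\left(\alpha_{1},\ldots,\alpha_{n},0\right)$ with $n+1$ and $p=s$, and reindex the sum via $\beta=\mu+\gamma$. No gaps.
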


(The conditions under the summation sign here are essentially what is denoted
by \textquotedblleft$\alpha\subset_{s}\beta$\textquotedblright\ in
\cite{BBSSZ13}.)

To be fully precise, Proposition \ref{prop.immac} is a bit more general than
\cite[Theorem 3.5]{BBSSZ13}, since $\alpha$ is assumed to be a composition
(i.e., a tuple of positive integers) in \cite[Theorem 3.5]{BBSSZ13}, while we
are only assuming that $\alpha\in\mathbb{Z}^{n}$.

\begin{proof}
[Proof of Proposition \ref{prop.immac}.]Let $R$ be the ring
$\operatorname*{NSym}$. Set%
\[
h_{k,\ i}:=H_{k}\in R\ \ \ \ \ \ \ \ \ \ \text{for each }k\in\mathbb{Z}\text{
and }i\in\mathbb{Z}.
\]
Thus, $h_{s,\ n+1}=H_{s}$.

For any $m\in\left\{  0,1,\ldots,n\right\}  $ and any $\lambda\in
\mathbb{Z}^{m}$, we define $s_{\lambda}\in R$ as in Corollary
\ref{cor.pre-pieri.4}. (This $s_{\lambda}$ has nothing to do with the integer
$s$.)

\begin{vershort}
Let $\mu\in\mathbb{Z}^{n+1}$ be the $\left(  n+1\right)  $-tuple $\left(
\alpha_{1},\alpha_{2},\ldots,\alpha_{n},0\right)  $. Thus, $\mu_{n+1}=0$ and
$\left\vert \mu\right\vert =\left\vert \alpha\right\vert $.
\end{vershort}

\begin{verlong}
Let $\mu\in\mathbb{Z}^{n+1}$ be the $\left(  n+1\right)  $-tuple $\left(
\alpha_{1},\alpha_{2},\ldots,\alpha_{n},0\right)  $. Thus, $\mu_{n+1}=0$ and
$\left\vert \mu\right\vert =\left\vert \alpha\right\vert $%
\ \ \ \ \footnote{\textit{Proof.} From $\mu=\left(  \alpha_{1},\alpha
_{2},\ldots,\alpha_{n},0\right)  $, we obtain%
\begin{align*}
\left\vert \mu\right\vert  &  =\left\vert \left(  \alpha_{1},\alpha_{2}%
,\ldots,\alpha_{n},0\right)  \right\vert =\alpha_{1}+\alpha_{2}+\cdots
+\alpha_{n}+0\ \ \ \ \ \ \ \ \ \ \left(  \text{by the definition of
}\left\vert \left(  \alpha_{1},\alpha_{2},\ldots,\alpha_{n},0\right)
\right\vert \right) \\
&  =\alpha_{1}+\alpha_{2}+\cdots+\alpha_{n}=\left\vert \alpha\right\vert
\ \ \ \ \ \ \ \ \ \ \left(  \text{since }\left\vert \alpha\right\vert \text{
is defined to be }\alpha_{1}+\alpha_{2}+\cdots+\alpha_{n}\right)  .
\end{align*}
}. Moreover, we have%
\[
\left(  \mu_{1},\mu_{2},\ldots,\mu_{n+1}\right)  =\mu=\left(  \alpha
_{1},\alpha_{2},\ldots,\alpha_{n},0\right)  .
\]
Thus,%
\begin{equation}
\mu_{i}=\alpha_{i}\ \ \ \ \ \ \ \ \ \ \text{for each }i\in\left[  n\right]  .
\label{pf.prop.immac.mui=}%
\end{equation}

\end{verlong}

\begin{vershort}
Let $\overline{\mu}=\left(  \mu_{1},\mu_{2},\ldots,\mu_{n}\right)
\in\mathbb{Z}^{n}$. Thus, $\overline{\mu}=\alpha$.
\end{vershort}

\begin{verlong}
Let $\overline{\mu}=\left(  \mu_{1},\mu_{2},\ldots,\mu_{n}\right)
\in\mathbb{Z}^{n}$. Thus, $\overline{\mu}=\alpha$%
\ \ \ \ \footnote{\textit{Proof.} We just have seen that%
\[
\mu_{i}=\alpha_{i}\ \ \ \ \ \ \ \ \ \ \text{for each }i\in\left[  n\right]  .
\]
In other words, $\left(  \mu_{1},\mu_{2},\ldots,\mu_{n}\right)  =\left(
\alpha_{1},\alpha_{2},\ldots,\alpha_{n}\right)  $. Hence, $\overline{\mu
}=\left(  \mu_{1},\mu_{2},\ldots,\mu_{n}\right)  =\left(  \alpha_{1}%
,\alpha_{2},\ldots,\alpha_{n}\right)  =\alpha$ (since $\alpha\in\mathbb{Z}%
^{n}$).}.
\end{verlong}

For each $k<0$, we have
\begin{align*}
h_{k,\ n+1}  &  =H_{k}\ \ \ \ \ \ \ \ \ \ \left(  \text{by the definition of
}h_{k,\ n+1}\right) \\
&  =0\ \ \ \ \ \ \ \ \ \ \left(  \text{since }k<0\right)  .
\end{align*}
Thus, we have shown that $h_{k,\ n+1}=0$ for all $k<0$. Hence, Corollary
\ref{cor.pre-pieri.4} (applied to $n+1$ and $s$ instead of $n$ and $p$) yields%
\[
s_{\overline{\mu}}\cdot h_{s,\ n+1}=\sum_{\substack{\beta\in\mathbb{N}%
^{n+1};\\\left\vert \beta\right\vert =s}}s_{\mu+\beta}.
\]
In view of $h_{s,\ n+1}=H_{s}$ and $\overline{\mu}=\alpha$, we can rewrite
this as%
\begin{equation}
s_{\alpha}\cdot H_{s}=\sum_{\substack{\beta\in\mathbb{N}^{n+1};\\\left\vert
\beta\right\vert =s}}s_{\mu+\beta}. \label{pf.prop.immac.1}%
\end{equation}

\begin{vershort}
It is easy to see (by comparing the definitions of $\mathfrak{S}_{\lambda}$
and $s_{\lambda}$) that%
\begin{equation}
\mathfrak{S}_{\lambda}=s_{\lambda}\ \ \ \ \ \ \ \ \ \ \text{for any }%
m\in\mathbb{N}\text{ and any }\lambda\in\mathbb{Z}^{m}.
\label{pf.prop.immac.short.S=s}%
\end{equation}

\end{vershort}

\begin{verlong}
It is easy to see that%
\begin{equation}
\mathfrak{S}_{\lambda}=s_{\lambda}\ \ \ \ \ \ \ \ \ \ \text{for any }%
m\in\mathbb{N}\text{ and any }\lambda\in\mathbb{Z}^{m}.
\label{pf.prop.immac.S=s}%
\end{equation}

[\textit{Proof of (\ref{pf.prop.immac.S=s}):} Let $m\in\mathbb{N}$ and
$\lambda\in\mathbb{Z}^{m}$. The definition of $\mathfrak{S}_{\lambda}$ yields%
\begin{align*}
\mathfrak{S}_{\lambda}  &  =\sum_{\sigma\in S_{m}}\left(  -1\right)  ^{\sigma
}\underbrace{H_{\left(  \lambda_{1}+\sigma\left(  1\right)  -1,\ \lambda
_{2}+\sigma\left(  2\right)  -2,\ \ldots,\ \lambda_{m}+\sigma\left(  m\right)
-m\right)  }}_{\substack{=H_{\lambda_{1}+\sigma\left(  1\right)  -1}%
H_{\lambda_{2}+\sigma\left(  2\right)  -2}\cdots H_{\lambda_{m}+\sigma\left(
m\right)  -m}\\\text{(by the definition of }H_{\left(  \lambda_{1}%
+\sigma\left(  1\right)  -1,\ \lambda_{2}+\sigma\left(  2\right)
-2,\ \ldots,\ \lambda_{m}+\sigma\left(  m\right)  -m\right)  }\text{)}}}\\
&  =\sum_{\sigma\in S_{m}}\left(  -1\right)  ^{\sigma}H_{\lambda_{1}%
+\sigma\left(  1\right)  -1}H_{\lambda_{2}+\sigma\left(  2\right)  -2}\cdots
H_{\lambda_{m}+\sigma\left(  m\right)  -m}.
\end{align*}
On the other hand, the definition of $s_{\lambda}$ yields%
\begin{align*}
s_{\lambda}  &  =\operatorname*{rowdet}\left(  \left(  \underbrace{h_{\lambda
_{i}+j-i,\ i}}_{\substack{=H_{\lambda_{i}+j-i}\\\text{(by the definition of
}h_{\lambda_{i}+j-i,\ i}\text{)}}}\right)  _{i,j\in\left[  m\right]  }\right)
=\operatorname*{rowdet}\left(  \left(  H_{\lambda_{i}+j-i}\right)
_{i,j\in\left[  m\right]  }\right) \\
&  =\sum_{\sigma\in S_{m}}\left(  -1\right)  ^{\sigma}H_{\lambda_{1}%
+\sigma\left(  1\right)  -1}H_{\lambda_{2}+\sigma\left(  2\right)  -2}\cdots
H_{\lambda_{m}+\sigma\left(  m\right)  -m}%
\end{align*}
(by the definition of a row-determinant). Comparing these two equalities, we
obtain $\mathfrak{S}_{\lambda}=s_{\lambda}$. This proves
(\ref{pf.prop.immac.S=s}).] \medskip
\end{verlong}

\begin{vershort}
Hence, (\ref{pf.prop.immac.1}) rewrites as
\begin{equation}
\mathfrak{S}_{\alpha}\cdot H_{s}=\sum_{\substack{\beta\in\mathbb{N}%
^{n+1};\\\left\vert \beta\right\vert =s}}\mathfrak{S}_{\mu+\beta}.
\label{pf.prop.immac.short.2}%
\end{equation}
Thus,
\[
\mathfrak{S}_{\alpha}\cdot H_{s}=\sum_{\substack{\beta\in\mathbb{N}%
^{n+1};\\\left\vert \beta\right\vert =s}}\mathfrak{S}_{\mu+\beta}%
=\sum_{\substack{\gamma\in\mathbb{Z}^{n+1};\\\left\vert \gamma\right\vert
=\left\vert \mu\right\vert +s;\\\mu_{i}\leq\gamma_{i}\text{ for all }%
i\in\left[  n+1\right]  }}\mathfrak{S}_{\gamma}%
\]
(here, we have substituted $\gamma$ for $\mu+\beta$ in the sum, noticing that
the conditions under the new summation sign are precisely the conditions that
guarantee that $\gamma$ has the form $\mu+\beta$ for some $\beta\in
\mathbb{N}^{n+1}$ satisfying $\left\vert \beta\right\vert =s$). Hence,%
\[
\mathfrak{S}_{\alpha}\cdot H_{s}=\sum_{\substack{\gamma\in\mathbb{Z}%
^{n+1};\\\left\vert \gamma\right\vert =\left\vert \mu\right\vert +s;\\\mu
_{i}\leq\gamma_{i}\text{ for all }i\in\left[  n+1\right]  }}\mathfrak{S}%
_{\gamma}=\sum_{\substack{\gamma\in\mathbb{Z}^{n+1};\\\left\vert
\gamma\right\vert =\left\vert \alpha\right\vert +s;\\\alpha_{i}\leq\gamma
_{i}\text{ for all }i\in\left[  n\right]  ;\\0\leq\gamma_{n+1}}}\mathfrak{S}%
_{\gamma}%
\]
(here, we have rewritten the conditions under the summation sign in an
equivalent fashion, using the facts that $\mu=\left(  \alpha_{1},\alpha
_{2},\ldots,\alpha_{n},0\right)  $ and $\left\vert \mu\right\vert =\left\vert
\alpha\right\vert $). Renaming the summation index $\gamma$ as $\beta$, we
obtain precisely the claim of Proposition \ref{prop.immac}. \qedhere

\end{vershort}

\begin{verlong}
Now, (\ref{pf.prop.immac.S=s}) (applied to $m=n$ and $\lambda=\alpha$) yields
$\mathfrak{S}_{\alpha}=s_{\alpha}$. Moreover, for each $\beta\in
\mathbb{N}^{n+1}$, we have $\mathfrak{S}_{\mu+\beta}=s_{\mu+\beta}$ (by
(\ref{pf.prop.immac.S=s}), applied to $m=n+1$ and $\lambda=\mu+\beta$). In
view of these two equalities, we can rewrite (\ref{pf.prop.immac.1}) as%
\begin{equation}
\mathfrak{S}_{\alpha}\cdot H_{s}=\sum_{\substack{\beta\in\mathbb{N}%
^{n+1};\\\left\vert \beta\right\vert =s}}\mathfrak{S}_{\mu+\beta}.
\label{pf.prop.immac.2}%
\end{equation}

Now, let $W$ be the set of all $\left(  n+1\right)  $-tuples $\gamma
\in\mathbb{Z}^{n+1}$ satisfying $\left\vert \gamma\right\vert =\left\vert
\alpha\right\vert +s$ and $\left(  \alpha_{i}\leq\gamma_{i}\text{ for all
}i\in\left[  n\right]  \right)  $ and $0\leq\gamma_{n+1}$. Thus,%
\begin{equation}
\sum_{\gamma\in W}\mathfrak{S}_{\gamma}=\sum_{\substack{\gamma\in
\mathbb{Z}^{n+1};\\\left\vert \gamma\right\vert =\left\vert \alpha\right\vert
+s;\\\alpha_{i}\leq\gamma_{i}\text{ for all }i\in\left[  n\right]
;\\0\leq\gamma_{n+1}}}\mathfrak{S}_{\gamma}. \label{pf.prop.immac.3}%
\end{equation}

Now, for each $\beta\in\mathbb{N}^{n+1}$ satisfying $\left\vert \beta
\right\vert =s$, we have $\mu+\beta\in W$\ \ \ \ \footnote{\textit{Proof.} Let
$\beta\in\mathbb{N}^{n+1}$ satisfy $\left\vert \beta\right\vert =s$. We must
prove that $\mu+\beta\in W$.
\par
We have $\mu+\beta\in\mathbb{Z}^{n+1}$ for obvious reasons. Moreover, Lemma
\ref{lem.additive} (applied to $n+1$ and $\mu$ instead of $n$ and $\alpha$)
yields%
\[
\left\vert \mu+\beta\right\vert =\underbrace{\left\vert \mu\right\vert
}_{=\left\vert \alpha\right\vert }+\underbrace{\left\vert \beta\right\vert
}_{=s}=\left\vert \alpha\right\vert +s.
\]
Furthermore, for each $i\in\left[  n\right]  $, we have%
\begin{align*}
\left(  \mu+\beta\right)  _{i}  &  =\mu_{i}+\beta_{i}\geq\mu_{i}%
\ \ \ \ \ \ \ \ \ \ \left(  \text{since }\beta_{i}\geq0\text{ (because }%
\beta_{i}\in\mathbb{N}\text{ (since }\beta\in\mathbb{N}^{n+1}\text{))}\right)
\\
&  =\alpha_{i}\ \ \ \ \ \ \ \ \ \ \left(  \text{by (\ref{pf.prop.immac.mui=}%
)}\right)  ,
\end{align*}
so that $\alpha_{i}\leq\left(  \mu+\beta\right)  _{i}$. In other words, we
have $\alpha_{i}\leq\left(  \mu+\beta\right)  _{i}$ for all $i\in\left[
n\right]  $. Finally, we have%
\[
\left(  \mu+\beta\right)  _{n+1}=\underbrace{\mu_{n+1}}_{=0}+\beta_{n+1}%
=\beta_{n+1}\geq0\ \ \ \ \ \ \ \ \ \ \left(  \text{since }\beta_{n+1}%
\in\mathbb{N}\text{ (because }\beta\in\mathbb{N}^{n+1}\text{)}\right)  ,
\]
so that $0\leq\left(  \mu+\beta\right)  _{n+1}$.
\par
We now have proved that $\mu+\beta$ is an $\left(  n+1\right)  $-tuple in
$\mathbb{Z}^{n+1}$ that satisfies $\left\vert \mu+\beta\right\vert =\left\vert
\alpha\right\vert +s$ and $\left(  \alpha_{i}\leq\left(  \mu+\beta\right)
_{i}\text{ for all }i\in\left[  n\right]  \right)  $ and $0\leq\left(
\mu+\beta\right)  _{n+1}$. In other words, $\mu+\beta$ is an $\left(
n+1\right)  $-tuple $\gamma\in\mathbb{Z}^{n+1}$ satisfying $\left\vert
\gamma\right\vert =\left\vert \alpha\right\vert +s$ and $\left(  \alpha
_{i}\leq\gamma_{i}\text{ for all }i\in\left[  n\right]  \right)  $ and
$0\leq\gamma_{n+1}$. In other words, $\mu+\beta\in W$ (since $W$ is the set of
all such $\left(  n+1\right)  $-tuples $\gamma$). Qed.}. Hence, we can define
a map%
\begin{align*}
f:\left\{  \beta\in\mathbb{N}^{n+1}\ \mid\ \left\vert \beta\right\vert
=s\right\}   &  \rightarrow W,\\
\beta &  \mapsto\mu+\beta.
\end{align*}
Consider this map $f$.

On the other hand, for each $\delta\in W$, we have $\delta-\mu\in\left\{
\beta\in\mathbb{N}^{n+1}\ \mid\ \left\vert \beta\right\vert =s\right\}
$\ \ \ \ \footnote{\textit{Proof.} Let $\delta\in W$. In other words, $\delta$
is an $\left(  n+1\right)  $-tuple $\gamma\in\mathbb{Z}^{n+1}$ satisfying
$\left\vert \gamma\right\vert =\left\vert \alpha\right\vert +s$ and $\left(
\alpha_{i}\leq\gamma_{i}\text{ for all }i\in\left[  n\right]  \right)  $ and
$0\leq\gamma_{n+1}$ (since $W$ is the set of all such $\left(  n+1\right)
$-tuples $\gamma$). In other words, $\delta$ is an $\left(  n+1\right)
$-tuple in $\mathbb{Z}^{n+1}$ that satisfies $\left\vert \delta\right\vert
=\left\vert \alpha\right\vert +s$ and $\left(  \alpha_{i}\leq\delta_{i}\text{
for all }i\in\left[  n\right]  \right)  $ and $0\leq\delta_{n+1}$. In
particular, we therefore have%
\begin{equation}
\alpha_{i}\leq\delta_{i}\ \ \ \ \ \ \ \ \ \ \text{for all }i\in\left[
n\right]  . \label{pf.prop.immac.fn5.pf.1}%
\end{equation}
\par
Now, $\delta=\left(  \delta-\mu\right)  +\mu$, so that%
\[
\left\vert \delta\right\vert =\left\vert \left(  \delta-\mu\right)
+\mu\right\vert =\left\vert \delta-\mu\right\vert +\left\vert \mu\right\vert
\]
(by Lemma \ref{lem.additive}, applied to $n+1$, $\delta-\mu$ and $\mu$ instead
of $n$, $\alpha$ and $\beta$). Thus,%
\[
\left\vert \delta-\mu\right\vert =\underbrace{\left\vert \delta\right\vert
}_{=\left\vert \alpha\right\vert +s}-\underbrace{\left\vert \mu\right\vert
}_{=\left\vert \alpha\right\vert }=\left\vert \alpha\right\vert +s-\left\vert
\alpha\right\vert =s.
\]
\par
Let $i\in\left[  n+1\right]  $. We shall now prove that $\left(  \delta
-\mu\right)  _{i}\geq0$. Indeed, if $i=n+1$, then we have%
\[
\left(  \delta-\mu\right)  _{i}=\left(  \delta-\mu\right)  _{n+1}=\delta
_{n+1}-\underbrace{\mu_{n+1}}_{=0}=\delta_{n+1}\geq
0\ \ \ \ \ \ \ \ \ \ \left(  \text{since }0\leq\delta_{n+1}\right)  .
\]
Hence, $\left(  \delta-\mu\right)  _{i}\geq0$ is proved in the case when
$i=n+1$. Thus, for the rest of this proof of $\left(  \delta-\mu\right)
_{i}\geq0$, we WLOG assume that $i\neq n+1$. Combining $i\in\left[
n+1\right]  $ with $i\neq n+1$, we obtain $i\in\left[  n+1\right]
\setminus\left\{  n+1\right\}  =\left[  n\right]  $. Hence,%
\[
\left(  \delta-\mu\right)  _{i}=\delta_{i}-\underbrace{\mu_{i}}%
_{\substack{=\alpha_{i}\\\text{(by (\ref{pf.prop.immac.mui=}))}}}=\delta
_{i}-\alpha_{i}\geq0
\]
(by (\ref{pf.prop.immac.fn5.pf.1})). Thus, the proof of $\left(  \delta
-\mu\right)  _{i}\geq0$ is complete.
\par
Now, from $\left(  \delta-\mu\right)  _{i}\geq0$, we obtain $\left(
\delta-\mu\right)  _{i}\in\mathbb{N}$ (since $\left(  \delta-\mu\right)  _{i}$
is an integer).
\par
Forget that we fixed $i$. We thus have shown that $\left(  \delta-\mu\right)
_{i}\in\mathbb{N}$ for each $i\in\left[  n+1\right]  $. In other words,
$\delta-\mu\in\mathbb{N}^{n+1}$.
\par
We now know that $\delta-\mu\in\mathbb{N}^{n+1}$ and $\left\vert \delta
-\mu\right\vert =s$. In other words, $\delta-\mu$ is a $\beta\in
\mathbb{N}^{n+1}$ satisfying $\left\vert \beta\right\vert =s$. In other words,
$\delta-\mu\in\left\{  \beta\in\mathbb{N}^{n+1}\ \mid\ \left\vert
\beta\right\vert =s\right\}  $. Qed.}. Thus, we can define a map%
\begin{align*}
g:W  &  \rightarrow\left\{  \beta\in\mathbb{N}^{n+1}\ \mid\ \left\vert
\beta\right\vert =s\right\}  ,\\
\delta &  \mapsto\delta-\mu.
\end{align*}
Consider this map $g$.

We have $f\circ g=\operatorname*{id}$\ \ \ \ \footnote{\textit{Proof.} Let
$\delta\in W$. Then, the definition of $g$ yields $g\left(  \delta\right)
=\delta-\mu$. However,%
\begin{align*}
\left(  f\circ g\right)  \left(  \delta\right)   &  =f\left(  g\left(
\delta\right)  \right)  =\mu+\underbrace{g\left(  \delta\right)  }%
_{=\delta-\mu}\ \ \ \ \ \ \ \ \ \ \left(  \text{by the definition of }f\right)
\\
&  =\mu+\left(  \delta-\mu\right)  =\delta=\operatorname*{id}\left(
\delta\right)  .
\end{align*}
\par
Forget that we fixed $\delta$. We thus have shown that $\left(  f\circ
g\right)  \left(  \delta\right)  =\operatorname*{id}\left(  \delta\right)  $
for each $\delta\in W$. In other words, $f\circ g=\operatorname*{id}$.} and
$g\circ f=\operatorname*{id}$\ \ \ \ \footnote{\textit{Proof.} Let $\gamma
\in\left\{  \beta\in\mathbb{N}^{n+1}\ \mid\ \left\vert \beta\right\vert
=s\right\}  $. Then, the definition of $f$ yields $f\left(  \gamma\right)
=\mu+\gamma$. However,%
\begin{align*}
\left(  g\circ f\right)  \left(  \gamma\right)   &  =g\left(  f\left(
\gamma\right)  \right)  =\underbrace{f\left(  \gamma\right)  }_{=\mu+\gamma
}-\mu\ \ \ \ \ \ \ \ \ \ \left(  \text{by the definition of }g\right) \\
&  =\left(  \mu+\gamma\right)  -\mu=\gamma=\operatorname*{id}\left(
\gamma\right)  .
\end{align*}
\par
Forget that we fixed $\gamma$. We thus have shown that $\left(  g\circ
f\right)  \left(  \gamma\right)  =\operatorname*{id}\left(  \gamma\right)  $
for each $\gamma\in\left\{  \beta\in\mathbb{N}^{n+1}\ \mid\ \left\vert
\beta\right\vert =s\right\}  $. In other words, $g\circ f=\operatorname*{id}%
$.}. Thus, the maps $f$ and $g$ are mutually inverse. Hence, the map $f$ is
invertible, i.e., is a bijection. In other words, the map%
\begin{align*}
\left\{  \beta\in\mathbb{N}^{n+1}\ \mid\ \left\vert \beta\right\vert
=s\right\}   &  \rightarrow W,\\
\beta &  \mapsto\mu+\beta
\end{align*}
is a bijection (since the map $f$ is precisely this map). Hence, we can
substitute $\mu+\beta$ for $\gamma$ in the sum $\sum_{\gamma\in W}%
\mathfrak{S}_{\gamma}$. Thus, we obtain%
\[
\sum_{\gamma\in W}\mathfrak{S}_{\gamma}=\sum_{\substack{\beta\in
\mathbb{N}^{n+1};\\\left\vert \beta\right\vert =s}}\mathfrak{S}_{\mu+\beta}.
\]
Comparing this with (\ref{pf.prop.immac.2}), we obtain%
\begin{align*}
\mathfrak{S}_{\alpha}\cdot H_{s}  &  =\sum_{\gamma\in W}\mathfrak{S}_{\gamma
}=\sum_{\substack{\gamma\in\mathbb{Z}^{n+1};\\\left\vert \gamma\right\vert
=\left\vert \alpha\right\vert +s;\\\alpha_{i}\leq\gamma_{i}\text{ for all
}i\in\left[  n\right]  ;\\0\leq\gamma_{n+1}}}\mathfrak{S}_{\gamma
}\ \ \ \ \ \ \ \ \ \ \left(  \text{by (\ref{pf.prop.immac.3})}\right) \\
&  =\sum_{\substack{\beta\in\mathbb{Z}^{n+1};\\\left\vert \beta\right\vert
=\left\vert \alpha\right\vert +s;\\\alpha_{i}\leq\beta_{i}\text{ for all }%
i\in\left[  n\right]  ;\\0\leq\beta_{n+1}}}\mathfrak{S}_{\beta}%
\end{align*}
(here, we have renamed the summation index $\gamma$ as $\beta$). This proves
Proposition \ref{prop.immac}.
\end{verlong}
\end{proof}

\section{The second pre-Pieri rule}

The \textquotedblleft second pre-Pieri rule\textquotedblright\ structurally
resembles the first, but involves a sum over (a subset of) $\left\{
0,1\right\}  ^{n}$ instead of $\mathbb{N}^{n}$. This is, of course, analogous
to the relationship between the elementary symmetric functions and the
complete homogeneous symmetric functions, or the relationship between sets and
multisets, or various other \textquotedblleft combinatorial
reciprocities\textquotedblright.

\subsection{The theorem}

Before we state the second pre-Pieri rule, we observe that $\left\{
0,1\right\}  ^{n}\subseteq\mathbb{Z}^{n}$ for each $n\in\mathbb{N}$.

\begin{theorem}
[second pre-Pieri rule]\label{thm.pre-pieri2}Let $n\in\mathbb{N}$ and
$p\in\left\{  0,1,\ldots,n\right\}  $. Let $h_{k,\ i}$ be an element of $R$
for all $k\in\mathbb{Z}$ and $i\in\left[  n\right]  $.

For any $\alpha\in\mathbb{Z}^{n}$, we define%
\[
t_{\alpha}:=\operatorname*{rowdet}\left(  \left(  h_{\alpha_{i}+j,\ i}\right)
_{i,j\in\left[  n\right]  }\right)  \in R.
\]

Let $\xi$ be the $n$-tuple
\begin{align*}
&  \left(  1,2,\ldots,n\right)  +\left(  \underbrace{0,0,\ldots,0}_{n-p\text{
zeroes}},\underbrace{1,1,\ldots,1}_{p\text{ ones}}\right) \\
&  =\left(  1,2,\ldots,n-p,n-p+2,n-p+3,\ldots,n+1\right)  \in\mathbb{Z}^{n}.
\end{align*}

Let $\alpha\in\mathbb{Z}^{n}$. Then,%
\begin{equation}
\sum_{\substack{\beta\in\left\{  0,1\right\}  ^{n};\\\left\vert \beta
\right\vert =p}}t_{\alpha+\beta}=\operatorname*{rowdet}\left(  \left(
h_{\alpha_{i}+\xi_{j},\ i}\right)  _{i,j\in\left[  n\right]  }\right)  .
\label{eq.thm.pre-pieri2.claim}%
\end{equation}

\end{theorem}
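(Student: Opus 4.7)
The plan is to mirror the proof of Theorem \ref{thm.pre-pieri} step by step, with $\{0,1\}^n$ taking the role of $\mathbb{N}^n$ and $\xi$ taking the role of $\eta$. Writing $\overline{\sigma} := (\sigma(1),\ldots,\sigma(n)) \in \mathbb{Z}^n$ for each $\sigma \in S_n$, I would expand each $t_{\alpha+\beta}$ on the left-hand side of (\ref{eq.thm.pre-pieri2.claim}) via the Leibniz formula for $\operatorname*{rowdet}$, swap the order of summation, and substitute $\nu := \beta + \overline{\sigma}$. Since $|\xi| = (1+2+\cdots+n) + p$, the condition $|\beta| = p$ becomes $|\nu| = |\xi|$, and $\beta \in \{0,1\}^n$ becomes $\nu - \overline{\sigma} \in \{0,1\}^n$. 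Performing the same expansion on the right-hand side (grouping summands according to $\nu := \xi \circ \sigma$, legitimate since $|\xi \circ \sigma| = |\xi|$ by Proposition \ref{prop.etapi.len}) and comparing coefficients of $\prod_{i=1}^n h_{\alpha_i + \nu_i,\,i}$, the theorem reduces to the identity
\begin{equation}
\sum_{\substack{\sigma \in S_n;\\ \nu - \overline{\sigma} \in \{0,1\}^n}} (-1)^\sigma \;=\; \sum_{\substack{\sigma \in S_n;\\ \nu = \xi \circ \sigma}} (-1)^\sigma
\label{eq.plan.key}
\end{equation}
for every $\nu \in \mathbb{Z}^n$ with $|\nu| = |\xi|$.

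To handle (\ref{eq.plan.key}), I recognize its left-hand side as a determinant. Exactly as in Lemma \ref{lem.nu-det2}, with $\mathbb{N}$ replaced by $\{0,1\}$, one obtains
\[
\sum_{\substack{\sigma \in S_n;\\ \nu - \overline{\sigma} \in \{0,1\}^n}} (-1)^\sigma \;=\; \det M, \qquad \text{where } M_{i,j} := [\nu_i - j \in \{0,1\}].
\]
On the other hand, because $\xi$ consists of the $n$ distinct integers $\{1,2,\ldots,n+1\} \setminus \{n-p+1\}$, the right-hand side of (\ref{eq.plan.key}) equals $(-1)^\pi$ if $\nu = \xi \circ \pi$ for the (then unique) $\pi \in S_n$, and is $0$ otherwise. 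So the task is to show that $\det M$ takes precisely these values.

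I would then proceed by a case analysis. Row $i$ of $M$ is the characteristic vector of $\{\nu_i - 1, \nu_i\} \cap [n]$: it is the zero row iff $\nu_i \notin \{1,2,\ldots,n+1\}$, and otherwise it uniquely determines $\nu_i$. Case 1: if some $\nu_i \notin \{1,\ldots,n+1\}$, then $\det M = 0$ (zero row), and $\nu$ is not a permutation of $\xi$, so both sides of (\ref{eq.plan.key}) vanish. Case 2: if all $\nu_i$ lie in $\{1,\ldots,n+1\}$ but two entries coincide, then two rows of $M$ are equal, so $\det M = 0$; again $\nu$ is not a permutation of $\xi$, so (\ref{eq.plan.key}) holds trivially. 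Case 3: the $\nu_i$ are distinct elements of $\{1,\ldots,n+1\}$, so exactly one element $m \in \{1,\ldots,n+1\}$ is missing, and $|\nu| = \binom{n+2}{2} - m$. Since $|\xi| = \binom{n+2}{2} - (n-p+1)$, the hypothesis $|\nu| = |\xi|$ forces $m = n-p+1$, so $\{\nu_1,\ldots,\nu_n\} = \{\xi_1,\ldots,\xi_n\}$ and $\nu = \xi \circ \pi$ for a unique $\pi \in S_n$. Permuting rows of $M$ via $\pi$ (cf.\ the argument in Lemma \ref{lem.nu-det}) then yields $\det M = (-1)^\pi \det M'$, where $M' := ([\xi_i - j \in \{0,1\}])_{i,j \in [n]}$.

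The remaining piece is to show $\det M' = 1$. Using $\xi_i = i$ for $i \leq n-p$ and $\xi_i = i+1$ for $i > n-p$, a direct inspection shows $M'$ is block-diagonal: an $(n-p) \times (n-p)$ lower-bidiagonal block with $1$'s on and just below the diagonal, and a $p \times p$ upper-bidiagonal block with $1$'s on and just above the diagonal; each block is triangular with all diagonal entries equal to $1$, so $\det M' = 1$. The main obstacle is Case 3 --- specifically the crucial observation that $|\nu| = |\xi|$ together with distinct entries in $\{1,\ldots,n+1\}$ pins down the missing element as $n-p+1$ and hence forces $\nu$ to be a permutation of $\xi$. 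Everything else is a mechanical adaptation of the proof of Theorem \ref{thm.pre-pieri}, replacing $\mathbb{N}^n$ by $\{0,1\}^n$ and $\eta$ by $\xi$ throughout.
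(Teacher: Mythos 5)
Your proposal is correct and follows essentially the same route as the paper: the sum manipulations and the substitution $\nu=\beta+\overline{\sigma}$ reduce the claim to the determinant identity of Lemmas \ref{lem.xi-det2} and \ref{lem.xi-det}, and your three-case analysis of $\det M$ matches the paper's proof of Lemma \ref{lem.xi-det}, with your ``missing element'' computation in Case 3 being exactly the content of Lemma \ref{lem.nu-xi}. The only cosmetic difference is that you evaluate $\det M'=1$ by exhibiting the block-triangular structure directly, whereas the paper (Lemma \ref{lem.xi-unique}) shows that every non-identity permutation contributes $0$ to the Leibniz expansion --- the same observation in different clothing.
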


\begin{example}
For this example, set $n=3$ and $p=2$, and let $\alpha\in\mathbb{Z}^{3}$ be
arbitrary. Fix arbitrary elements $h_{k,\ i}\in R$ for all $k\in\mathbb{Z}$
and $i\in\left[  n\right]  $. Then, the $n$-tuple $\xi$ defined in Theorem
\ref{thm.pre-pieri2} is $\left(  1,2,3\right)  +\left(  0,1,1\right)  =\left(
1,3,4\right)  $. Hence, (\ref{eq.thm.pre-pieri2.claim}) says that%
\[
\sum_{\substack{\beta\in\left\{  0,1\right\}  ^{3};\\\left\vert \beta
\right\vert =2}}t_{\alpha+\beta}=\operatorname*{rowdet}\left(  \left(
h_{\alpha_{i}+\xi_{j},\ i}\right)  _{i,j\in\left[  3\right]  }\right)
=\operatorname*{rowdet}\left(
\begin{array}
[c]{ccc}%
h_{\alpha_{1}+1,\ 1} & h_{\alpha_{1}+3,\ 1} & h_{\alpha_{1}+4,\ 1}\\
h_{\alpha_{2}+1,\ 2} & h_{\alpha_{2}+3,\ 2} & h_{\alpha_{2}+4,\ 2}\\
h_{\alpha_{3}+1,\ 3} & h_{\alpha_{3}+3,\ 3} & h_{\alpha_{3}+4,\ 3}%
\end{array}
\right)  .
\]
The left hand side of this equality can be rewritten as%
\begin{align*}
&  \sum_{\substack{\beta\in\left\{  0,1\right\}  ^{3};\\\left\vert
\beta\right\vert =2}}\ \ \underbrace{t_{\alpha+\beta}}%
_{\substack{=\operatorname*{rowdet}\left(  \left(  h_{\left(  \alpha
+\beta\right)  _{i}+j,\ i}\right)  _{i,j\in\left[  3\right]  }\right)
\\\text{(by the definition of }t_{\alpha+\beta}\text{)}}}\\
&  =\sum_{\substack{\beta\in\left\{  0,1\right\}  ^{3};\\\left\vert
\beta\right\vert =2}}\operatorname*{rowdet}\left(  \left(  h_{\left(
\alpha+\beta\right)  _{i}+j,\ i}\right)  _{i,j\in\left[  3\right]  }\right) \\
&  =\sum_{\substack{\beta\in\left\{  0,1\right\}  ^{3};\\\left\vert
\beta\right\vert =2}}\operatorname*{rowdet}\left(  \left(  h_{\alpha_{i}%
+\beta_{i}+j,\ i}\right)  _{i,j\in\left[  3\right]  }\right) \\
&  \ \ \ \ \ \ \ \ \ \ \ \ \ \ \ \ \ \ \ \ \left(  \text{since }\left(
\alpha+\beta\right)  _{i}=\alpha_{i}+\beta_{i}\text{ for all }i\in\left[
3\right]  \right) \\
&  =\sum_{\substack{\beta\in\left\{  0,1\right\}  ^{3};\\\left\vert
\beta\right\vert =2}}\operatorname*{rowdet}\left(
\begin{array}
[c]{ccc}%
h_{\alpha_{1}+\beta_{1}+1,\ 1} & h_{\alpha_{1}+\beta_{1}+2,\ 1} &
h_{\alpha_{1}+\beta_{1}+3,\ 1}\\
h_{\alpha_{2}+\beta_{2}+1,\ 2} & h_{\alpha_{2}+\beta_{2}+2,\ 2} &
h_{\alpha_{2}+\beta_{2}+3,\ 2}\\
h_{\alpha_{3}+\beta_{3}+1,\ 3} & h_{\alpha_{3}+\beta_{3}+2,\ 3} &
h_{\alpha_{3}+\beta_{3}+3,\ 3}%
\end{array}
\right)
\end{align*}%
\begin{align*}
&  =\operatorname*{rowdet}\left(
\begin{array}
[c]{ccc}%
h_{\alpha_{1}+1+1,\ 1} & h_{\alpha_{1}+1+2,\ 1} & h_{\alpha_{1}+1+3,\ 1}\\
h_{\alpha_{2}+1+1,\ 2} & h_{\alpha_{2}+1+2,\ 2} & h_{\alpha_{2}+1+3,\ 2}\\
h_{\alpha_{3}+0+1,\ 3} & h_{\alpha_{3}+0+2,\ 3} & h_{\alpha_{3}+0+3,\ 3}%
\end{array}
\right) \\
&  \ \ \ \ \ \ \ \ \ \ +\operatorname*{rowdet}\left(
\begin{array}
[c]{ccc}%
h_{\alpha_{1}+1+1,\ 1} & h_{\alpha_{1}+1+2,\ 1} & h_{\alpha_{1}+1+3,\ 1}\\
h_{\alpha_{2}+0+1,\ 2} & h_{\alpha_{2}+0+2,\ 2} & h_{\alpha_{2}+0+3,\ 2}\\
h_{\alpha_{3}+1+1,\ 3} & h_{\alpha_{3}+1+2,\ 3} & h_{\alpha_{3}+1+3,\ 3}%
\end{array}
\right) \\
&  \ \ \ \ \ \ \ \ \ \ +\operatorname*{rowdet}\left(
\begin{array}
[c]{ccc}%
h_{\alpha_{1}+0+1,\ 1} & h_{\alpha_{1}+0+2,\ 1} & h_{\alpha_{1}+0+3,\ 1}\\
h_{\alpha_{2}+1+1,\ 2} & h_{\alpha_{2}+1+2,\ 2} & h_{\alpha_{2}+1+3,\ 2}\\
h_{\alpha_{3}+1+1,\ 3} & h_{\alpha_{3}+1+2,\ 3} & h_{\alpha_{3}+1+3,\ 3}%
\end{array}
\right) \\
&  \ \ \ \ \ \ \ \ \ \ \ \ \ \ \ \ \ \ \ \ \left(
\begin{array}
[c]{c}%
\text{since there are exactly three }2\text{-tuples }\beta\in\left\{
0,1\right\}  ^{3}\\
\text{satisfying }\left\vert \beta\right\vert =2\text{, namely }\left(
1,1,0\right)  \text{, }\left(  1,0,1\right)  \text{ and }\left(  0,1,1\right)
\end{array}
\right) \\
&  =\operatorname*{rowdet}\left(
\begin{array}
[c]{ccc}%
h_{\alpha_{1}+2,\ 1} & h_{\alpha_{1}+3,\ 1} & h_{\alpha_{1}+4,\ 1}\\
h_{\alpha_{2}+2,\ 2} & h_{\alpha_{2}+3,\ 2} & h_{\alpha_{2}+4,\ 2}\\
h_{\alpha_{3}+1,\ 3} & h_{\alpha_{3}+2,\ 3} & h_{\alpha_{3}+3,\ 3}%
\end{array}
\right) \\
&  \ \ \ \ \ \ \ \ \ \ +\operatorname*{rowdet}\left(
\begin{array}
[c]{ccc}%
h_{\alpha_{1}+2,\ 1} & h_{\alpha_{1}+3,\ 1} & h_{\alpha_{1}+4,\ 1}\\
h_{\alpha_{2}+1,\ 2} & h_{\alpha_{2}+2,\ 2} & h_{\alpha_{2}+3,\ 2}\\
h_{\alpha_{3}+2,\ 3} & h_{\alpha_{3}+3,\ 3} & h_{\alpha_{3}+4,\ 3}%
\end{array}
\right) \\
&  \ \ \ \ \ \ \ \ \ \ +\operatorname*{rowdet}\left(
\begin{array}
[c]{ccc}%
h_{\alpha_{1}+1,\ 1} & h_{\alpha_{1}+2,\ 1} & h_{\alpha_{1}+3,\ 1}\\
h_{\alpha_{2}+2,\ 2} & h_{\alpha_{2}+3,\ 2} & h_{\alpha_{2}+4,\ 2}\\
h_{\alpha_{3}+2,\ 3} & h_{\alpha_{3}+3,\ 3} & h_{\alpha_{3}+4,\ 3}%
\end{array}
\right)  .
\end{align*}
Therefore, (\ref{eq.thm.pre-pieri2.claim}) rewrites as%
\begin{align*}
&  \operatorname*{rowdet}\left(
\begin{array}
[c]{ccc}%
h_{\alpha_{1}+2,\ 1} & h_{\alpha_{1}+3,\ 1} & h_{\alpha_{1}+4,\ 1}\\
h_{\alpha_{2}+2,\ 2} & h_{\alpha_{2}+3,\ 2} & h_{\alpha_{2}+4,\ 2}\\
h_{\alpha_{3}+1,\ 3} & h_{\alpha_{3}+2,\ 3} & h_{\alpha_{3}+3,\ 3}%
\end{array}
\right) \\
&  \ \ \ \ \ \ \ \ \ \ +\operatorname*{rowdet}\left(
\begin{array}
[c]{ccc}%
h_{\alpha_{1}+2,\ 1} & h_{\alpha_{1}+3,\ 1} & h_{\alpha_{1}+4,\ 1}\\
h_{\alpha_{2}+1,\ 2} & h_{\alpha_{2}+2,\ 2} & h_{\alpha_{2}+3,\ 2}\\
h_{\alpha_{3}+2,\ 3} & h_{\alpha_{3}+3,\ 3} & h_{\alpha_{3}+4,\ 3}%
\end{array}
\right) \\
&  \ \ \ \ \ \ \ \ \ \ +\operatorname*{rowdet}\left(
\begin{array}
[c]{ccc}%
h_{\alpha_{1}+1,\ 1} & h_{\alpha_{1}+2,\ 1} & h_{\alpha_{1}+3,\ 1}\\
h_{\alpha_{2}+2,\ 2} & h_{\alpha_{2}+3,\ 2} & h_{\alpha_{2}+4,\ 2}\\
h_{\alpha_{3}+2,\ 3} & h_{\alpha_{3}+3,\ 3} & h_{\alpha_{3}+4,\ 3}%
\end{array}
\right) \\
&  =\operatorname*{rowdet}\left(
\begin{array}
[c]{ccc}%
h_{\alpha_{1}+1,\ 1} & h_{\alpha_{1}+3,\ 1} & h_{\alpha_{1}+4,\ 1}\\
h_{\alpha_{2}+1,\ 2} & h_{\alpha_{2}+3,\ 2} & h_{\alpha_{2}+4,\ 2}\\
h_{\alpha_{3}+1,\ 2} & h_{\alpha_{3}+3,\ 2} & h_{\alpha_{3}+4,\ 2}%
\end{array}
\right)  .
\end{align*}
This is easy to check directly by expanding all four row-determinants.
\end{example}

\subsection{The proof}

Our proof of the second pre-Pieri rule will be similar to that of the first.

Again, we will use Definition \ref{def.etapi} and Definition \ref{def.iverson}%
. Again, several lemmas will be used. The first is an analogue of Lemma
\ref{lem.nu-eta}:

\begin{lemma}
\label{lem.nu-xi}Let $n\in\mathbb{N}$. Let $q\in\mathbb{Z}$ and $\nu
\in\mathbb{Z}^{n}$ and $\xi\in\mathbb{Z}^{n}$ satisfy $\left\vert
\nu\right\vert =\left\vert \xi\right\vert $ and%
\[
\left\{  \nu_{1},\nu_{2},\ldots,\nu_{n}\right\}  \subseteq\left\{  \xi_{1}%
,\xi_{2},\ldots,\xi_{n}\right\}  \cup\left\{  q\right\}
\ \ \ \ \ \ \ \ \ \ \text{and}\ \ \ \ \ \ \ \ \ \ \left\vert \left\{  \nu
_{1},\nu_{2},\ldots,\nu_{n}\right\}  \right\vert =n.
\]
Then, there exists some permutation $\pi\in S_{n}$ satisfying $\nu=\xi\circ
\pi$.
\end{lemma}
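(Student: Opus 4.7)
The claim of Lemma \ref{lem.nu-xi} is equivalent to saying that $\nu$ is a rearrangement of $\xi$ (in the sense that the entries of $\xi$ can be permuted to yield $\nu$, encoded by a permutation $\pi \in S_n$ with $\nu = \xi \circ \pi$ in the notation of Definition \ref{def.etapi}). Hence it suffices to show that $\nu$ and $\xi$ have the same multiset of entries. I would prove this by splitting into two cases according to whether $q$ occurs as an entry of $\nu$.

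If $q \notin \{\nu_1, \ldots, \nu_n\}$, then the containment hypothesis collapses to $\{\nu_1, \ldots, \nu_n\} \subseteq \{\xi_1, \ldots, \xi_n\}$. Since the left side has cardinality $n$ while the right side has cardinality at most $n$, the two sets coincide, and this also forces $|\{\xi_1, \ldots, \xi_n\}| = n$, so $\xi$ has $n$ distinct entries. Consequently $\nu$ and $\xi$ share the same multiset of entries, and Lemma \ref{lem.nu-xi} follows in this case.

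If instead $q \in \{\nu_1, \ldots, \nu_n\}$, I would set $N := \{\nu_1, \ldots, \nu_n\} \setminus \{q\}$, an $(n-1)$-element subset of $\{\xi_1, \ldots, \xi_n\}$ with $q \notin N$ by construction. The argument now hinges on the sum equality $|\nu| = |\xi|$: writing $|\nu| = q + \sum_{v \in N} v$ and expanding $|\xi|$ over the entries of $\xi$ grouped by multiplicity, a short bookkeeping step using that each element of $\{\xi_1, \ldots, \xi_n\}$ has multiplicity $\geq 1$ (and that these multiplicities sum to $n$) would force $\xi$ to have $n$ distinct entries forming the set $N \cup \{q\}$, which is exactly the set of entries of $\nu$. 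The main obstacle is ruling out the degenerate scenario in which $\xi$ has a repeated entry $v$; combined with $|\nu| = |\xi|$ and the already-established inclusion $N \subseteq \{\xi_1, \ldots, \xi_n\}$, such a repetition would force $q = v$ and hence $q \in N$, contradicting $q \notin N$. It is precisely the distinctness hypothesis $|\{\nu_1, \ldots, \nu_n\}| = n$ that rules out this degeneracy.
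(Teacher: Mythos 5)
Your proof is correct, and it rests on the same two ingredients as the paper's: the pigeonhole observation that the $n$ distinct values $\nu_{1},\ldots,\nu_{n}$ inject into the $n+1$ values $\xi_{1},\ldots,\xi_{n},q$, and the sum equality $\left\vert \nu\right\vert =\left\vert \xi\right\vert $ used to identify the one leftover value as $q$. The organizational difference is that the paper avoids your case split by appending $q$ as an $\left(  n+1\right)  $-st entry $\xi_{n+1}:=q$ and running a single uniform argument: the distinct entries of $\nu$ occupy $n$ of the $n+1$ positions of $\left(  \xi_{1},\ldots,\xi_{n+1}\right)  $, leaving one position $p$ unused, and the sum equality forces $\xi_{p}=\xi_{n+1}$, so that deleting position $p$ gives the same multiset as deleting position $n+1$. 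Your version buys a cheaper Case 1 (where $q$ is not an entry of $\nu$, the cardinality count alone suffices and the sum hypothesis is not needed), at the price of the explicit multiplicity bookkeeping in Case 2; that bookkeeping is sound as sketched, since a repeated entry of $\xi$ lying in $N$ would force $q\in N$ via the sum equality, while a repeated entry outside $N$ is excluded by counting. Either route is a complete proof.
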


\begin{vershort}
\begin{proof}
[Proof of Lemma \ref{lem.nu-xi}.]Set $\xi_{n+1}:=q$. Thus, $\left\{  \nu
_{1},\nu_{2},\ldots,\nu_{n}\right\}  \subseteq\left\{  \xi_{1},\xi_{2}%
,\ldots,\xi_{n}\right\}  \cup\left\{  q\right\}  $ rewrites as
\begin{equation}
\left\{  \nu_{1},\nu_{2},\ldots,\nu_{n}\right\}  \subseteq\left\{  \xi_{1}%
,\xi_{2},\ldots,\xi_{n+1}\right\}  . \label{pf.lem.nu-xi.short.1}%
\end{equation}
The $n$ numbers $\nu_{1},\nu_{2},\ldots,\nu_{n}$ are distinct (since
$\left\vert \left\{  \nu_{1},\nu_{2},\ldots,\nu_{n}\right\}  \right\vert =n$).
Furthermore, each of these $n$ numbers appears in the $\left(  n+1\right)
$-tuple $\left(  \xi_{1},\xi_{2},\ldots,\xi_{n+1}\right)  $ (by
(\ref{pf.lem.nu-xi.short.1})). Since these $n$ numbers are distinct, they must
therefore appear as $n$ \textbf{distinct} entries in this $\left(  n+1\right)
$-tuple. Thus, they must be the entries $\xi_{1},\xi_{2},\ldots,\xi_{p-1}%
,\xi_{p+1},\xi_{p+2},\ldots,\xi_{n+1}$ in some order, where $p$ is some
element of $\left[  n+1\right]  $. Consider this $p$.

Now,%
\[
\left\vert \nu\right\vert =\left\vert \xi\right\vert =\xi_{1}+\xi_{2}%
+\cdots+\xi_{n}=\left(  \xi_{1}+\xi_{2}+\cdots+\xi_{n+1}\right)  -\xi_{n+1}.
\]
Comparing this with%
\begin{align*}
\left\vert \nu\right\vert  &  =\nu_{1}+\nu_{2}+\cdots+\nu_{n}=\xi_{1}+\xi
_{2}+\cdots+\xi_{p-1}+\xi_{p+1}+\xi_{p+2}+\cdots+\xi_{n+1}\\
&  \ \ \ \ \ \ \ \ \ \ \ \ \ \ \ \ \ \ \ \ \left(
\begin{array}
[c]{c}%
\text{since the }n\text{ numbers }\nu_{1},\nu_{2},\ldots,\nu_{n}\text{ are}\\
\text{the numbers }\xi_{1},\xi_{2},\ldots,\xi_{p-1},\xi_{p+1},\xi_{p+2}%
,\ldots,\xi_{n+1}\text{ in some order}%
\end{array}
\right) \\
&  =\left(  \xi_{1}+\xi_{2}+\cdots+\xi_{n+1}\right)  -\xi_{p},
\end{align*}
we obtain $\left(  \xi_{1}+\xi_{2}+\cdots+\xi_{n+1}\right)  -\xi_{n+1}=\left(
\xi_{1}+\xi_{2}+\cdots+\xi_{n+1}\right)  -\xi_{p}$. In other words, $\xi
_{p}=\xi_{n+1}$. Hence, the numbers $\xi_{1},\xi_{2},\ldots,\xi_{p-1}%
,\xi_{p+1},\xi_{p+2},\ldots,\xi_{n+1}$ are precisely the numbers $\xi_{1}%
,\xi_{2},\ldots,\xi_{n}$ (up to order). Since the $n$ numbers $\nu_{1},\nu
_{2},\ldots,\nu_{n}$ are the numbers $\xi_{1},\xi_{2},\ldots,\xi_{p-1}%
,\xi_{p+1},\xi_{p+2},\ldots,\xi_{n+1}$ in some order, we thus conclude that
the $n$ numbers $\nu_{1},\nu_{2},\ldots,\nu_{n}$ are the numbers $\xi_{1}%
,\xi_{2},\ldots,\xi_{n}$ in some order. In other words, the $n$-tuple $\nu$ is
obtained from $\xi$ by permuting the entries. This proves Lemma
\ref{lem.nu-xi}.
\end{proof}
\end{vershort}

\begin{verlong}
\begin{proof}
[Proof of Lemma \ref{lem.nu-xi}.]The definition of $\left[  n\right]  $ yields
$\left[  n\right]  =\left\{  1,2,\ldots,n\right\}  $. The definition of
$\left[  n+1\right]  $ yields $\left[  n+1\right]  =\left\{  1,2,\ldots
,n+1\right\}  $. Hence,%
\[
\left[  n+1\right]  \setminus\left\{  n+1\right\}  =\left\{  1,2,\ldots
,n+1\right\}  \setminus\left\{  n+1\right\}  =\left\{  1,2,\ldots,n\right\}
=\left[  n\right]  .
\]
Thus, $\left[  n\right]  =\left[  n+1\right]  \setminus\left\{  n+1\right\}
\subseteq\left[  n+1\right]  $.

The definition of $\left\vert \xi\right\vert $ yields
\begin{equation}
\left\vert \xi\right\vert =\xi_{1}+\xi_{2}+\cdots+\xi_{n}=\sum_{i=1}^{n}%
\xi_{i}. \label{pf.lem.nu-xi.sum-xi}%
\end{equation}
The definition of $\left\vert \nu\right\vert $ yields
\begin{equation}
\left\vert \nu\right\vert =\nu_{1}+\nu_{2}+\cdots+\nu_{n}=\sum_{i=1}^{n}%
\nu_{i}. \label{pf.lem.nu-xi.sum-nu}%
\end{equation}

We extend the $n$-tuple $\xi=\left(  \xi_{1},\xi_{2},\ldots,\xi_{n}\right)  $
to an $\left(  n+1\right)  $-tuple $\left(  \xi_{1},\xi_{2},\ldots,\xi
_{n+1}\right)  \in\mathbb{Z}^{n+1}$ by setting $\xi_{n+1}:=q$. Then,%
\begin{align}
\left\{  \xi_{1},\xi_{2},\ldots,\xi_{n+1}\right\}   &  =\left\{  \xi_{1}%
,\xi_{2},\ldots,\xi_{n},\xi_{n+1}\right\}  =\left\{  \xi_{1},\xi_{2}%
,\ldots,\xi_{n}\right\}  \cup\left\{  \underbrace{\xi_{n+1}}_{=q}\right\}
\nonumber\\
&  =\left\{  \xi_{1},\xi_{2},\ldots,\xi_{n}\right\}  \cup\left\{  q\right\}  .
\label{pf.lem.nu-xi.1}%
\end{align}
Now,
\begin{align}
\left\{  \nu_{1},\nu_{2},\ldots,\nu_{n}\right\}   &  \subseteq\left\{  \xi
_{1},\xi_{2},\ldots,\xi_{n}\right\}  \cup\left\{  q\right\} \nonumber\\
&  =\left\{  \xi_{1},\xi_{2},\ldots,\xi_{n+1}\right\}
\ \ \ \ \ \ \ \ \ \ \left(  \text{by (\ref{pf.lem.nu-xi.1})}\right)  .
\label{pf.lem.nu-xi.2}%
\end{align}

Now, recall that $n\in\mathbb{N}$ and $\left\vert \left\{  \nu_{1},\nu
_{2},\ldots,\nu_{n}\right\}  \right\vert =n$. Hence, Lemma
\ref{lem.m-dist-objs} (applied to $m=n$ and $u_{i}=\nu_{i}$) shows that the
$n$ objects $\nu_{1},\nu_{2},\ldots,\nu_{n}$ are distinct. In other words, if
$i$ and $j$ are two distinct elements of $\left[  n\right]  $, then%
\begin{equation}
\nu_{i}\neq\nu_{j}. \label{pf.lem.nu-xi.xipneqetaq}%
\end{equation}

We define a map $f:\left[  n\right]  \rightarrow\left[  n+1\right]  $ as
follows: For each $i\in\left[  n\right]  $, we choose some $j\in\left[
n+1\right]  $ satisfying $\nu_{i}=\xi_{j}$ (indeed, such a $j$ exists,
because
\begin{align*}
\nu_{i}  &  \in\left\{  \nu_{1},\nu_{2},\ldots,\nu_{n}\right\}
\ \ \ \ \ \ \ \ \ \ \left(  \text{since }i\in\left[  n\right]  =\left\{
1,2,\ldots,n\right\}  \right) \\
&  \subseteq\left\{  \xi_{1},\xi_{2},\ldots,\xi_{n+1}\right\}
\ \ \ \ \ \ \ \ \ \ \left(  \text{by (\ref{pf.lem.nu-xi.2})}\right) \\
&  =\left\{  \xi_{j}\ \mid\ j\in\underbrace{\left\{  1,2,\ldots,n+1\right\}
}_{=\left[  n+1\right]  }\right\}  =\left\{  \xi_{j}\ \mid\ j\in\left[
n+1\right]  \right\}
\end{align*}
), and we define $f\left(  i\right)  $ to be this $j$. Thus, we have defined a
map $f:\left[  n\right]  \rightarrow\left[  n+1\right]  $.

For each $i\in\left[  n\right]  $, the image $f\left(  i\right)  $ of $i$ is
an element of $\left[  n+1\right]  $ and satisfies%
\begin{equation}
\nu_{i}=\xi_{f\left(  i\right)  } \label{pf.lem.nu-xi.def-f.2}%
\end{equation}
(since $f\left(  i\right)  $ is defined to be a $j\in\left[  n+1\right]  $
satisfying $\nu_{i}=\xi_{j}$).

The $n$ elements $f\left(  1\right)  ,f\left(  2\right)  ,\ldots,f\left(
n\right)  $ are distinct\footnote{\textit{Proof.} Let $i$ and $j$ be two
distinct elements of $\left[  n\right]  $. We shall show that $f\left(
i\right)  \neq f\left(  j\right)  $.
\par
Indeed, assume the contrary. Thus, $f\left(  i\right)  =f\left(  j\right)  $.
However, (\ref{pf.lem.nu-xi.def-f.2}) yields $\nu_{i}=\xi_{f\left(  i\right)
}=\xi_{f\left(  j\right)  }$ (since $f\left(  i\right)  =f\left(  j\right)
$). On the other hand, (\ref{pf.lem.nu-xi.def-f.2}) (applied to $j$ instead of
$i$) yields $\nu_{j}=\xi_{f\left(  j\right)  }$. Comparing these two
equalities, we obtain $\nu_{i}=\nu_{j}$. However,
(\ref{pf.lem.nu-xi.xipneqetaq}) yields $\nu_{i}\neq\nu_{j}$. This contradicts
$\nu_{i}=\nu_{j}$. This contradiction shows that our assumption was false.
Hence, $f\left(  i\right)  \neq f\left(  j\right)  $.
\par
Forget that we fixed $i$ and $j$. We thus have shown that $f\left(  i\right)
\neq f\left(  j\right)  $ whenever $i$ and $j$ are two distinct elements of
$\left[  n\right]  $. In other words, the $n$ elements $f\left(  1\right)
,f\left(  2\right)  ,\ldots,f\left(  n\right)  $ are distinct.}. Hence,
\[
\left\vert \left\{  f\left(  1\right)  ,f\left(  2\right)  ,\ldots,f\left(
n\right)  \right\}  \right\vert =n.
\]
Clearly, $\left\{  f\left(  1\right)  ,f\left(  2\right)  ,\ldots,f\left(
n\right)  \right\}  $ is a subset of $\left[  n+1\right]  $ (since $f$ is a
map from $\left[  n\right]  $ to $\left[  n+1\right]  $). Thus,%
\begin{align*}
\left\vert \left[  n+1\right]  \setminus\left\{  f\left(  1\right)  ,f\left(
2\right)  ,\ldots,f\left(  n\right)  \right\}  \right\vert  &
=\underbrace{\left\vert \left[  n+1\right]  \right\vert }%
_{\substack{=n+1\\\text{(since }\left[  n+1\right]  =\left\{  1,2,\ldots
,n+1\right\}  \text{)}}}-\underbrace{\left\vert \left\{  f\left(  1\right)
,f\left(  2\right)  ,\ldots,f\left(  n\right)  \right\}  \right\vert }_{=n}\\
&  =\left(  n+1\right)  -n=1.
\end{align*}
In other words, $\left[  n+1\right]  \setminus\left\{  f\left(  1\right)
,f\left(  2\right)  ,\ldots,f\left(  n\right)  \right\}  $ is a $1$-element
set. In other words,
\[
\left[  n+1\right]  \setminus\left\{  f\left(  1\right)  ,f\left(  2\right)
,\ldots,f\left(  n\right)  \right\}  =\left\{  p\right\}
\ \ \ \ \ \ \ \ \ \ \text{for some element }p.
\]
Consider this $p$. We have $p\in\left\{  p\right\}  =\left[  n+1\right]
\setminus\left\{  f\left(  1\right)  ,f\left(  2\right)  ,\ldots,f\left(
n\right)  \right\}  $. In other words,
\[
p\in\left[  n+1\right]  \ \ \ \ \ \ \ \ \ \ \text{and}%
\ \ \ \ \ \ \ \ \ \ p\notin\left\{  f\left(  1\right)  ,f\left(  2\right)
,\ldots,f\left(  n\right)  \right\}  .
\]

As we have seen, the $n$ elements $f\left(  1\right)  ,f\left(  2\right)
,\ldots,f\left(  n\right)  $ are distinct. Hence, the $n$-tuple $\left(
f\left(  1\right)  ,f\left(  2\right)  ,\ldots,f\left(  n\right)  \right)  $
is a list of all $n$ elements of the set $\left\{  f\left(  1\right)
,f\left(  2\right)  ,\ldots,f\left(  n\right)  \right\}  $ with no repetition.
Therefore,%
\begin{align}
\sum_{i\in\left\{  f\left(  1\right)  ,f\left(  2\right)  ,\ldots,f\left(
n\right)  \right\}  }\xi_{i}  &  =\xi_{f\left(  1\right)  }+\xi_{f\left(
2\right)  }+\cdots+\xi_{f\left(  n\right)  }=\sum_{i=1}^{n}\underbrace{\xi
_{f\left(  i\right)  }}_{\substack{=\nu_{i}\\\text{(by
(\ref{pf.lem.nu-xi.def-f.2}))}}}=\sum_{i=1}^{n}\nu_{i}\nonumber\\
&  =\left\vert \nu\right\vert \ \ \ \ \ \ \ \ \ \ \left(  \text{by
(\ref{pf.lem.nu-xi.sum-nu})}\right)  . \label{pf.lem.nu-xi.sum-nu2}%
\end{align}
However,
\begin{align*}
\sum_{i\in\left[  n+1\right]  }\xi_{i}  &  =\underbrace{\sum_{\substack{i\in
\left[  n+1\right]  ;\\i\in\left\{  f\left(  1\right)  ,f\left(  2\right)
,\ldots,f\left(  n\right)  \right\}  }}}_{\substack{=\sum_{i\in\left\{
f\left(  1\right)  ,f\left(  2\right)  ,\ldots,f\left(  n\right)  \right\}
}\\\text{(since }\left\{  f\left(  1\right)  ,f\left(  2\right)
,\ldots,f\left(  n\right)  \right\}  \\\text{is a subset of }\left[
n+1\right]  \text{)}}}\xi_{i}+\underbrace{\sum_{\substack{i\in\left[
n+1\right]  ;\\i\notin\left\{  f\left(  1\right)  ,f\left(  2\right)
,\ldots,f\left(  n\right)  \right\}  }}}_{\substack{=\sum_{i\in\left[
n+1\right]  \setminus\left\{  f\left(  1\right)  ,f\left(  2\right)
,\ldots,f\left(  n\right)  \right\}  }\\=\sum_{i\in\left\{  p\right\}
}\\\text{(since }\left[  n+1\right]  \setminus\left\{  f\left(  1\right)
,f\left(  2\right)  ,\ldots,f\left(  n\right)  \right\}  =\left\{  p\right\}
\text{)}}}\xi_{i}\\
&  \ \ \ \ \ \ \ \ \ \ \ \ \ \ \ \ \ \ \ \ \left(
\begin{array}
[c]{c}%
\text{since each }i\in\left[  n+1\right]  \text{ satisfies}\\
\text{either }i\in\left\{  f\left(  1\right)  ,f\left(  2\right)
,\ldots,f\left(  n\right)  \right\} \\
\text{or }i\notin\left\{  f\left(  1\right)  ,f\left(  2\right)
,\ldots,f\left(  n\right)  \right\}  \text{ (but not both)}%
\end{array}
\right) \\
&  =\underbrace{\sum_{i\in\left\{  f\left(  1\right)  ,f\left(  2\right)
,\ldots,f\left(  n\right)  \right\}  }\xi_{i}}_{\substack{=\left\vert
\nu\right\vert \\\text{(by (\ref{pf.lem.nu-xi.sum-nu2}))}}}+\underbrace{\sum
_{i\in\left\{  p\right\}  }\xi_{i}}_{=\xi_{p}}=\underbrace{\left\vert
\nu\right\vert }_{=\left\vert \xi\right\vert }+\xi_{p}=\left\vert
\xi\right\vert +\xi_{p}.
\end{align*}
Therefore,%
\begin{align*}
\left\vert \xi\right\vert +\xi_{p}  &  =\underbrace{\sum_{i\in\left[
n+1\right]  }}_{=\sum_{i=1}^{n+1}}\xi_{i}=\sum_{i=1}^{n+1}\xi_{i}=\xi_{1}%
+\xi_{2}+\cdots+\xi_{n+1}\\
&  =\underbrace{\left(  \xi_{1}+\xi_{2}+\cdots+\xi_{n}\right)  }%
_{\substack{=\sum_{i=1}^{n}\xi_{i}=\left\vert \xi\right\vert \\\text{(by
(\ref{pf.lem.nu-xi.sum-xi}))}}}+\xi_{n+1}=\left\vert \xi\right\vert +\xi
_{n+1}.
\end{align*}
Subtracting $\left\vert \xi\right\vert $ from both sides of this equality, we
obtain%
\begin{equation}
\xi_{p}=\xi_{n+1}. \label{pf.lem.nu-xi.xip=q}%
\end{equation}

Using this equality, it is now easy to see that
\begin{equation}
\left\{  \nu_{1},\nu_{2},\ldots,\nu_{n}\right\}  \subseteq\left\{  \xi_{1}%
,\xi_{2},\ldots,\xi_{n}\right\}  . \label{pf.lem.nu-xi.nu-in-xi}%
\end{equation}

[\textit{Proof of (\ref{pf.lem.nu-xi.nu-in-xi}):} Let $i\in\left\{
1,2,\ldots,n\right\}  $. We shall show that $\nu_{i}\in\left\{  \xi_{1}%
,\xi_{2},\ldots,\xi_{n}\right\}  $.

We have $i\in\left\{  1,2,\ldots,n\right\}  =\left[  n\right]  $, so that
$\nu_{i}=\xi_{f\left(  i\right)  }$ (by (\ref{pf.lem.nu-xi.def-f.2})). We are
in one of the following two cases:

\textit{Case 1:} We have $f\left(  i\right)  =n+1$.

\textit{Case 2:} We have $f\left(  i\right)  \neq n+1$.

Let us first consider Case 1. In this case, we have $f\left(  i\right)  =n+1$.
Hence, $\xi_{f\left(  i\right)  }=\xi_{n+1}$. However, from $f\left(
i\right)  =n+1$, we obtain $n+1=f\left(  i\right)  \in\left\{  f\left(
1\right)  ,f\left(  2\right)  ,\ldots,f\left(  n\right)  \right\}  $ (since
$i\in\left\{  1,2,\ldots,n\right\}  $). Thus, we cannot have $p=n+1$ (because
if we had $p=n+1$, then we would have $p=n+1\in\left\{  f\left(  1\right)
,f\left(  2\right)  ,\ldots,f\left(  n\right)  \right\}  $, which would
contradict $p\notin\left\{  f\left(  1\right)  ,f\left(  2\right)
,\ldots,f\left(  n\right)  \right\}  $). In other words, we have $p\neq n+1$.
Combining $p\in\left[  n+1\right]  $ with $p\neq n+1$, we obtain $p\in\left[
n+1\right]  \setminus\left\{  n+1\right\}  =\left[  n\right]  =\left\{
1,2,\ldots,n\right\}  $. Hence, $\xi_{p}\in\left\{  \xi_{1},\xi_{2},\ldots
,\xi_{n}\right\}  $. Now,
\begin{align*}
\nu_{i}  &  =\xi_{f\left(  i\right)  }=\xi_{n+1}=\xi_{p}%
\ \ \ \ \ \ \ \ \ \ \left(  \text{by (\ref{pf.lem.nu-xi.xip=q})}\right) \\
&  \in\left\{  \xi_{1},\xi_{2},\ldots,\xi_{n}\right\}  .
\end{align*}
Thus, $\nu_{i}\in\left\{  \xi_{1},\xi_{2},\ldots,\xi_{n}\right\}  $ is proved
in Case 1.

Next, let us consider Case 2. In this case, we have $f\left(  i\right)  \neq
n+1$. Combining $f\left(  i\right)  \in\left[  n+1\right]  $ with $f\left(
i\right)  \neq n+1$, we obtain $f\left(  i\right)  \in\left[  n+1\right]
\setminus\left\{  n+1\right\}  =\left[  n\right]  =\left\{  1,2,\ldots
,n\right\}  $. Hence, $\xi_{f\left(  i\right)  }\in\left\{  \xi_{1},\xi
_{2},\ldots,\xi_{n}\right\}  $. Now,
\[
\nu_{i}=\xi_{f\left(  i\right)  }\in\left\{  \xi_{1},\xi_{2},\ldots,\xi
_{n}\right\}  .
\]
Thus, $\nu_{i}\in\left\{  \xi_{1},\xi_{2},\ldots,\xi_{n}\right\}  $ is proved
in Case 2.

We have now proved $\nu_{i}\in\left\{  \xi_{1},\xi_{2},\ldots,\xi_{n}\right\}
$ in both Cases 1 and 2. Hence, $\nu_{i}\in\left\{  \xi_{1},\xi_{2},\ldots
,\xi_{n}\right\}  $ always holds.

Now, forget that we fixed $i$. We thus have shown that $\nu_{i}\in\left\{
\xi_{1},\xi_{2},\ldots,\xi_{n}\right\}  $ for each $i\in\left\{
1,2,\ldots,n\right\}  $. In other words, all $n$ numbers $\nu_{1},\nu
_{2},\ldots,\nu_{n}$ are elements of $\left\{  \xi_{1},\xi_{2},\ldots,\xi
_{n}\right\}  $. In other words, $\left\{  \nu_{1},\nu_{2},\ldots,\nu
_{n}\right\}  \subseteq\left\{  \xi_{1},\xi_{2},\ldots,\xi_{n}\right\}  $.
This proves (\ref{pf.lem.nu-xi.nu-in-xi}).] \medskip

Now, we define a map $g:\left[  n\right]  \rightarrow\left[  n\right]  $ as
follows: For each $i\in\left[  n\right]  $, we choose some $j\in\left[
n\right]  $ satisfying $\nu_{i}=\xi_{j}$ (indeed, such a $j$ exists, because
\begin{align*}
\nu_{i}  &  \in\left\{  \nu_{1},\nu_{2},\ldots,\nu_{n}\right\}
\ \ \ \ \ \ \ \ \ \ \left(  \text{since }i\in\left[  n\right]  =\left\{
1,2,\ldots,n\right\}  \right) \\
&  \subseteq\left\{  \xi_{1},\xi_{2},\ldots,\xi_{n}\right\}
\ \ \ \ \ \ \ \ \ \ \left(  \text{by (\ref{pf.lem.nu-xi.nu-in-xi})}\right) \\
&  =\left\{  \xi_{j}\ \mid\ j\in\underbrace{\left\{  1,2,\ldots,n\right\}
}_{=\left[  n\right]  }\right\}  =\left\{  \xi_{j}\ \mid\ j\in\left[
n\right]  \right\}
\end{align*}
), and we define $g\left(  i\right)  $ to be this $j$. Thus, we have defined a
map $g:\left[  n\right]  \rightarrow\left[  n\right]  $.

For each $i\in\left[  n\right]  $, the image $g\left(  i\right)  $ of $i$ is
an element of $\left[  n\right]  $ and satisfies%
\begin{equation}
\nu_{i}=\xi_{g\left(  i\right)  } \label{pf.lem.nu-xi.def-g.2}%
\end{equation}
(since $g\left(  i\right)  $ is defined to be a $j\in\left[  n\right]  $
satisfying $\nu_{i}=\xi_{j}$).

The map $g$ is injective\footnote{\textit{Proof.} Let $i$ and $j$ be two
distinct elements of $\left[  n\right]  $. We shall show that $g\left(
i\right)  \neq g\left(  j\right)  $.
\par
Indeed, assume the contrary. Thus, $g\left(  i\right)  =g\left(  j\right)  $.
However, (\ref{pf.lem.nu-xi.def-g.2}) yields $\nu_{i}=\xi_{g\left(  i\right)
}=\xi_{g\left(  j\right)  }$ (since $g\left(  i\right)  =g\left(  j\right)
$). On the other hand, (\ref{pf.lem.nu-xi.def-g.2}) (applied to $j$ instead of
$i$) yields $\nu_{j}=\xi_{g\left(  j\right)  }$. Comparing these two
equalities, we obtain $\nu_{i}=\nu_{j}$. However,
(\ref{pf.lem.nu-xi.xipneqetaq}) yields $\nu_{i}\neq\nu_{j}$. This contradicts
$\nu_{i}=\nu_{j}$. This contradiction shows that our assumption was false.
Hence, $g\left(  i\right)  \neq g\left(  j\right)  $.
\par
Forget that we fixed $i$ and $j$. We thus have shown that $g\left(  i\right)
\neq g\left(  j\right)  $ whenever $i$ and $j$ are two distinct elements of
$\left[  n\right]  $. In other words, the map $g$ is injective.}. However, a
well-known fact from set theory (actually one of the versions of the
pigeonhole principle) says that any injective map $\phi$ from a finite set $X$
to $X$ must be a permutation of $X$. Applying this to $X=\left[  n\right]  $
and $\phi=g$, we conclude that $g$ must be a permutation of $\left[  n\right]
$ (since $g$ is an injective map from the finite set $\left[  n\right]  $ to
$\left[  n\right]  $). In other words, $g\in S_{n}$ (since $S_{n}$ is the set
of all permutations of $\left[  n\right]  $).

From (\ref{pf.lem.nu-xi.def-g.2}), we know that $\nu_{i}=\xi_{g\left(
i\right)  }$ for each $i\in\left[  n\right]  $. In other words,%
\[
\left(  \nu_{1},\nu_{2},\ldots,\nu_{n}\right)  =\left(  \xi_{g\left(
1\right)  },\xi_{g\left(  2\right)  },\ldots,\xi_{g\left(  n\right)  }\right)
.
\]
However, Definition \ref{def.etapi} yields%
\[
\xi\circ g=\left(  \xi_{g\left(  1\right)  },\xi_{g\left(  2\right)  }%
,\ldots,\xi_{g\left(  n\right)  }\right)  .
\]
Comparing these two equalities, we obtain%
\[
\xi\circ g=\left(  \nu_{1},\nu_{2},\ldots,\nu_{n}\right)  =\nu.
\]
In other words, $\nu=\xi\circ g$. Hence, there exists some permutation $\pi\in
S_{n}$ satisfying $\nu=\xi\circ\pi$ (namely, $\pi=g$). This proves Lemma
\ref{lem.nu-xi}.
\end{proof}
\end{verlong}

We shall furthermore use the following notation:

\begin{definition}
\label{def.cover01}Let $u$ and $v$ be two integers. We write \textquotedblleft%
$u\trianglerighteq v$\textquotedblright\ if and only if $u-v\in\left\{
0,1\right\}  $.
\end{definition}

Thus, for example, $2\trianglerighteq2$ and $2\trianglerighteq1$, but we don't
have $2\trianglerighteq0$.

We need the following simple lemma:

\begin{lemma}
\label{lem.xi-unique}Let $n\in\mathbb{N}$ and $p\in\left\{  0,1,\ldots
,n\right\}  $. Let $\xi$ be the $n$-tuple
\begin{align*}
&  \left(  1,2,\ldots,n\right)  +\left(  \underbrace{0,0,\ldots,0}_{n-p\text{
zeroes}},\underbrace{1,1,\ldots,1}_{p\text{ ones}}\right) \\
&  =\left(  1,2,\ldots,n-p,n-p+2,n-p+3,\ldots,n+1\right)  \in\mathbb{Z}^{n}.
\end{align*}
Let $\sigma\in S_{n}$ be a permutation such that $\sigma\neq\operatorname*{id}%
$ (where $\operatorname*{id}$ denotes the identity map $\left[  n\right]
\rightarrow\left[  n\right]  $). Then,%
\[
\prod_{i=1}^{n}\left[  \xi_{i}\trianglerighteq\sigma\left(  i\right)  \right]
=0.
\]

\end{lemma}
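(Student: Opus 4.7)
The strategy is to argue by contradiction: assume the product is nonzero; then each Iverson bracket must equal $1$, so $\sigma\left(i\right)\in\left\{\xi_{i}-1,\xi_{i}\right\}$ for every $i\in\left[n\right]$. By the explicit form of $\xi$ (namely $\xi_{i}=i$ for $i\leq n-p$, and $\xi_{i}=i+1$ for $i>n-p$), this splits into two regimes: $\sigma\left(i\right)\in\left\{i-1,i\right\}$ for $i\in\left\{1,2,\ldots,n-p\right\}$, and $\sigma\left(i\right)\in\left\{i,i+1\right\}$ for $i\in\left\{n-p+1,n-p+2,\ldots,n\right\}$. The goal is then to deduce that $\sigma=\operatorname{id}$, which contradicts the hypothesis $\sigma\neq\operatorname{id}$.

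To pin down $\sigma$, I would let $i_{0}$ denote the smallest index in $\left[n\right]$ with $\sigma\left(i_{0}\right)\neq i_{0}$ (such an $i_{0}$ exists because $\sigma\neq\operatorname{id}$). If $i_{0}\leq n-p$, then the first regime forces $\sigma\left(i_{0}\right)=i_{0}-1$; but either $i_{0}=1$ (in which case $\sigma\left(1\right)=0\notin\left[n\right]$, absurd) or $i_{0}\geq 2$, in which case the minimality of $i_{0}$ gives $\sigma\left(i_{0}-1\right)=i_{0}-1=\sigma\left(i_{0}\right)$, contradicting the injectivity of $\sigma$. If instead $i_{0}>n-p$, then minimality of $i_{0}$ forces $\sigma\left(i\right)=i$ for all $i<i_{0}$, so $\sigma$ restricts to a permutation of $\left\{i_{0},i_{0}+1,\ldots,n\right\}$; the second regime combined with injectivity then forces $\sigma\left(i\right)=i+1$ for all $i\in\left\{i_{0},i_{0}+1,\ldots,n\right\}$ by a short induction starting from $\sigma\left(i_{0}\right)=i_{0}+1$ (at each step, $\sigma\left(i+1\right)$ cannot equal $i+1$ since $i+1=\sigma\left(i\right)$ is already taken, and must therefore equal $i+2$). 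In particular $\sigma\left(n\right)=n+1\notin\left[n\right]$, another contradiction.

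In both cases we reach a contradiction, so the initial hypothesis that the product is nonzero must fail, i.e., $\prod_{i=1}^{n}\left[\xi_{i}\trianglerighteq\sigma\left(i\right)\right]=0$, as claimed. The main obstacle is really just the careful handling of the boundary cases ($i_{0}=1$ in the first case, $i_{0}=n$ in the second, together with the edge values $p=0$ and $p=n$); the rest is a routine peeling-off argument using the injectivity of $\sigma$ and the cramped structure of $\xi$.
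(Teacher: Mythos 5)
Your proof is correct. It follows the same overall skeleton as the paper's — locate the smallest non-fixed point of $\sigma$ and split according to whether it lies in the first regime ($\leq n-p$, where $\xi_{i}=i$) or the second ($>n-p$, where $\xi_{i}=i+1$) — but the second case is handled by a different mechanism. The paper avoids your induction entirely by also introducing $b$, the \emph{largest} non-fixed point: since $\sigma(b)<b$ (the mirror image of the standard fact $\sigma(a)>a$ for the smallest non-fixed point $a$, which the paper uses in the first case), and since $a>n-p$ forces $b>n-p$ and hence $\xi_{b}=b+1$, one gets $\xi_{b}-\sigma(b)>1$ in a single line, so the factor $\left[\xi_{b}\trianglerighteq\sigma(b)\right]$ vanishes. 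Your route instead assumes all brackets equal $1$ and peels off $\sigma(i_{0})=i_{0}+1$, $\sigma(i_{0}+1)=i_{0}+2$, and so on up to $\sigma(n)=n+1\notin\left[n\right]$; this is longer but extracts strictly more information (it pins down $\sigma$ completely on $\{i_{0},\ldots,n\}$ before reaching the contradiction), whereas the paper's argument only needs the sign of $\sigma(b)-b$. Both handle the boundary values $p=0$ and $p=n$ automatically, since then one of the two cases is vacuous.
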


\begin{vershort}
\begin{proof}
[Proof of Lemma \ref{lem.xi-unique}.]The definition of $\xi$ shows that%
\begin{equation}
\xi_{i}=i\ \ \ \ \ \ \ \ \ \ \text{for each }i\in\left\{  1,2,\ldots
,n-p\right\}  \label{pf.lem.xi-unique.short.xii=}%
\end{equation}
and%
\begin{equation}
\xi_{i}=i+1\ \ \ \ \ \ \ \ \ \ \text{for each }i\in\left\{  n-p+1,n-p+2,\ldots
,n\right\}  . \label{pf.lem.xi-unique.short.xin=}%
\end{equation}

We assumed that $\sigma\neq\operatorname*{id}$. Hence, there exists some
$i\in\left[  n\right]  $ such that $\sigma\left(  i\right)  \neq i$. Let $a$
be the \textbf{smallest} such $i$, and let $b$ be the \textbf{largest} such
$i$. Then, $\sigma\left(  a\right)  >a$\ \ \ \ \footnote{\textit{Proof.} We
have $\sigma\left(  a\right)  \neq a$ (since $a$ is an $i\in\left[  n\right]
$ such that $\sigma\left(  i\right)  \neq i$). Thus, $\sigma\left(
\sigma\left(  a\right)  \right)  \neq\sigma\left(  a\right)  $ (since $\sigma$
is a permutation and therefore injective). Hence, $\sigma\left(  a\right)  $
is an $i\in\left[  n\right]  $ such that $\sigma\left(  i\right)  \neq i$.
Since $a$ is the \textbf{smallest} such $i$, we thus conclude that
$\sigma\left(  a\right)  \geq a$. Hence, $\sigma\left(  a\right)  >a$ (since
$\sigma\left(  a\right)  \neq a$).} and $\sigma\left(  b\right)
<b$\ \ \ \ \footnote{The proof of this is similar to the proof we just gave
for $\sigma\left(  a\right)  >a$.} and $a\leq b$\ \ \ \ \footnote{since $a$ is
the \textbf{smallest} $i\in\left[  n\right]  $ such that $\sigma\left(
i\right)  \neq i$, while $b$ is the \textbf{largest} such $i$}.

Now, we are in one of the following two cases:

\textit{Case 1:} We have $a\leq n-p$.

\textit{Case 2:} We have $a>n-p$.

Let us first consider Case 1. In this case, we have $a\leq n-p$. Thus,
$a\in\left\{  1,2,\ldots,n-p\right\}  $. Hence,
(\ref{pf.lem.xi-unique.short.xii=}) (applied to $i=a$) yields $\xi
_{a}=a<\sigma\left(  a\right)  $ (since $\sigma\left(  a\right)  >a$), so that
$\xi_{a}-\sigma\left(  a\right)  <0$. Hence, we do not have $\xi
_{a}\trianglerighteq\sigma\left(  a\right)  $ (since $\xi_{a}\trianglerighteq
\sigma\left(  a\right)  $ would mean that $\xi_{a}-\sigma\left(  a\right)
\in\left\{  0,1\right\}  $, which would contradict $\xi_{a}-\sigma\left(
a\right)  <0$). In other words, we have $\left[  \xi_{a}\trianglerighteq
\sigma\left(  a\right)  \right]  =0$.

Therefore, $\prod_{i=1}^{n}\left[  \xi_{i}\trianglerighteq\sigma\left(
i\right)  \right]  =0$ (since $\left[  \xi_{a}\trianglerighteq\sigma\left(
a\right)  \right]  $ is one of the factors of the product $\prod_{i=1}%
^{n}\left[  \xi_{i}\trianglerighteq\sigma\left(  i\right)  \right]  $). Thus,
Lemma \ref{lem.xi-unique} is proved in Case 1.

Let us next consider Case 2. In this case, we have $a>n-p$. Thus, $b\geq
a>n-p$, so that $b\in\left\{  n-p+1,n-p+2,\ldots,n\right\}  $. Hence,
(\ref{pf.lem.xi-unique.short.xin=}) (applied to $i=b$) yields $\xi_{b}=b+1$.
However, recall that $\sigma\left(  b\right)  <b$. Thus, $\underbrace{\xi_{b}%
}_{=b+1}-\underbrace{\sigma\left(  b\right)  }_{<b}>\left(  b+1\right)  -b=1$.
Therefore, we do not have $\xi_{b}\trianglerighteq\sigma\left(  b\right)  $
(because $\xi_{b}\trianglerighteq\sigma\left(  b\right)  $ would mean that
$\xi_{b}-\sigma\left(  b\right)  \in\left\{  0,1\right\}  $, which would
contradict $\xi_{b}-\sigma\left(  b\right)  >1$). In other words, we have
$\left[  \xi_{b}\trianglerighteq\sigma\left(  b\right)  \right]  =0$.

Therefore, $\prod_{i=1}^{n}\left[  \xi_{i}\trianglerighteq\sigma\left(
i\right)  \right]  =0$ (since $\left[  \xi_{b}\trianglerighteq\sigma\left(
b\right)  \right]  $ is one of the factors of the product $\prod_{i=1}%
^{n}\left[  \xi_{i}\trianglerighteq\sigma\left(  i\right)  \right]  $). Thus,
Lemma \ref{lem.xi-unique} is proved in Case 2.

We have now proved Lemma \ref{lem.xi-unique} in both Cases 1 and 2. Hence, the
proof of Lemma \ref{lem.xi-unique} is complete.
\end{proof}
\end{vershort}

\begin{verlong}
\begin{proof}
[Proof of Lemma \ref{lem.xi-unique}.]The definition of $\xi$ yields%
\begin{align*}
\xi &  =\left(  1,2,\ldots,n\right)  +\left(  \underbrace{0,0,\ldots
,0}_{n-p\text{ zeroes}},\underbrace{1,1,\ldots,1}_{p\text{ ones}}\right) \\
&  =\left(  1,2,\ldots,n-p,n-p+2,n-p+3,\ldots,n+1\right)  .
\end{align*}
Thus,%
\begin{align}
&  \left(  \xi_{1},\xi_{2},\ldots,\xi_{n}\right) \nonumber\\
&  =\xi=\left(  1,2,\ldots,n-p,n-p+2,n-p+3,\ldots,n+1\right)  .
\label{pf.lem.xi-unique.xi=2}%
\end{align}
In other words, we have%
\begin{equation}
\xi_{i}=i\ \ \ \ \ \ \ \ \ \ \text{for each }i\in\left\{  1,2,\ldots
,n-p\right\}  \label{pf.lem.xi-unique.xii=}%
\end{equation}
and%
\begin{equation}
\xi_{i}=i+1\ \ \ \ \ \ \ \ \ \ \text{for each }i\in\left\{  n-p+1,n-p+2,\ldots
,n\right\}  . \label{pf.lem.xi-unique.xin=}%
\end{equation}

We define a subset $Z$ of $\left[  n\right]  $ by%
\[
Z=\left\{  i\in\left[  n\right]  \ \mid\ \sigma\left(  i\right)  \neq
i\right\}  .
\]
This set $Z$ is nonempty\footnote{\textit{Proof.} Assume the contrary. Thus,
$Z$ is empty. In other words, $Z=\varnothing$. In other words, $\left\{
i\in\left[  n\right]  \ \mid\ \sigma\left(  i\right)  \neq i\right\}
=\varnothing$ (since $Z=\left\{  i\in\left[  n\right]  \ \mid\ \sigma\left(
i\right)  \neq i\right\}  $). In other words, there exists no $i\in\left[
n\right]  $ satisfying $\sigma\left(  i\right)  \neq i$. In other words, each
$i\in\left[  n\right]  $ satisfies $\sigma\left(  i\right)  =i$. Hence, each
$i\in\left[  n\right]  $ satisfies $\sigma\left(  i\right)
=i=\operatorname*{id}\left(  i\right)  $. In other words, we have
$\sigma=\operatorname*{id}$ (since both $\sigma$ and $\operatorname*{id}$ are
maps from $\left[  n\right]  $ to $\left[  n\right]  $). This contradicts
$\sigma\neq\operatorname*{id}$. This contradiction shows that our assumption
was false, qed.} and finite\footnote{since it is a subset of the finite set
$\left[  n\right]  $ (by its definition)}. Hence, this set $Z$ has a smallest
element and a largest element. Let us denote these two elements by $a$ and
$b$, respectively. Thus, $a$ is the smallest element of $Z$. In other words,
$a\in Z$ and%
\begin{equation}
a\leq z\ \ \ \ \ \ \ \ \ \ \text{for each }z\in Z.
\label{pf.lem.xi-unique.amin}%
\end{equation}
Furthermore, $b$ is the largest element of $Z$. In other words, $b\in Z$ and%
\begin{equation}
b\geq z\ \ \ \ \ \ \ \ \ \ \text{for each }z\in Z.
\label{pf.lem.xi-unique.bmax}%
\end{equation}
Applying (\ref{pf.lem.xi-unique.bmax}) to $z=a$, we obtain $b\geq a$ (since
$a\in Z$). Now, we are in one of the following two cases:

\textit{Case 1:} We have $a\leq n-p$.

\textit{Case 2:} We have $a>n-p$.

Let us first consider Case 1. In this case, we have $a\leq n-p$. Also, $a\in
Z\subseteq\left[  n\right]  =\left\{  1,2,\ldots,n\right\}  $ (by the
definition of $\left[  n\right]  $), so that $a\geq1$. Combining this with
$a\leq n-p$, we obtain $a\in\left\{  1,2,\ldots,n-p\right\}  $. Hence,
(\ref{pf.lem.xi-unique.xii=}) (applied to $i=a$) yields $\xi_{a}=a$. However,
it is easy to see that $\sigma\left(  a\right)  >a$%
\ \ \ \ \footnote{\textit{Proof.} We have $a\in Z=\left\{  i\in\left[
n\right]  \ \mid\ \sigma\left(  i\right)  \neq i\right\}  $. In other words,
$a$ is an $i\in\left[  n\right]  $ satisfying $\sigma\left(  i\right)  \neq
i$. In other words, $a$ is an element of $\left[  n\right]  $ and satisfies
$\sigma\left(  a\right)  \neq a$.
\par
The map $\sigma$ belongs to $S_{n}$, and thus is a permutation of $\left[
n\right]  $ (since $S_{n}$ is the set of all permutations of $\left[
n\right]  $). Hence, this map is bijective, therefore injective. In other
words, if $i$ and $j$ are two elements of $\left[  n\right]  $ satisfying
$i\neq j$, then $\sigma\left(  i\right)  \neq\sigma\left(  j\right)  $.
Applying this to $i=\sigma\left(  a\right)  $ and $j=a$, we obtain
$\sigma\left(  \sigma\left(  a\right)  \right)  \neq\sigma\left(  a\right)  $
(since $\sigma\left(  a\right)  \neq a$). Thus, $\sigma\left(  a\right)  $ is
an $i\in\left[  n\right]  $ satisfying $\sigma\left(  i\right)  \neq i$. In
other words, $\sigma\left(  a\right)  \in\left\{  i\in\left[  n\right]
\ \mid\ \sigma\left(  i\right)  \neq i\right\}  $. In other words,
$\sigma\left(  a\right)  \in Z$ (since $Z=\left\{  i\in\left[  n\right]
\ \mid\ \sigma\left(  i\right)  \neq i\right\}  $). Hence,
(\ref{pf.lem.xi-unique.amin}) (applied to $z=\sigma\left(  a\right)  $) yields
$a\leq\sigma\left(  a\right)  $. In other words, $\sigma\left(  a\right)  \geq
a$. Combining this with $\sigma\left(  a\right)  \neq a$, we obtain
$\sigma\left(  a\right)  >a$.}. Hence, $\xi_{a}=a<\sigma\left(  a\right)  $
(since $\sigma\left(  a\right)  >a$), so that $\xi_{a}-\sigma\left(  a\right)
<0$. Therefore, we do not have $\xi_{a}-\sigma\left(  a\right)  \in\left\{
0,1\right\}  $ (because if we had $\xi_{a}-\sigma\left(  a\right)  \in\left\{
0,1\right\}  $, then we would have $\xi_{a}-\sigma\left(  a\right)
\in\left\{  0,1\right\}  \subseteq\mathbb{N}$ and therefore $\xi_{a}%
-\sigma\left(  a\right)  \geq0$, which would contradict $\xi_{a}-\sigma\left(
a\right)  <0$). However, the statement \textquotedblleft$\xi_{a}%
\trianglerighteq\sigma\left(  a\right)  $\textquotedblright\ is equivalent to
\textquotedblleft$\xi_{a}-\sigma\left(  a\right)  \in\left\{  0,1\right\}
$\textquotedblright\ (by Definition \ref{def.cover01}). Hence, we do not have
$\xi_{a}\trianglerighteq\sigma\left(  a\right)  $ (since we do not have
$\xi_{a}-\sigma\left(  a\right)  \in\left\{  0,1\right\}  $). In other words,
we have $\left[  \xi_{a}\trianglerighteq\sigma\left(  a\right)  \right]  =0$.

Now, recall that $a\in\left\{  1,2,\ldots,n\right\}  $. Thus, the term
$\left[  \xi_{a}\trianglerighteq\sigma\left(  a\right)  \right]  $ is one of
the factors of the product $\prod_{i=1}^{n}\left[  \xi_{i}\trianglerighteq
\sigma\left(  i\right)  \right]  $ (namely, the factor for $i=a$). Since this
term is $0$ (because we have just shown that $\left[  \xi_{a}\trianglerighteq
\sigma\left(  a\right)  \right]  =0$), we thus conclude that one of the
factors of the product $\prod_{i=1}^{n}\left[  \xi_{i}\trianglerighteq
\sigma\left(  i\right)  \right]  $ is $0$. Therefore, the entire product is
$0$. In other words, we have $\prod_{i=1}^{n}\left[  \xi_{i}\trianglerighteq
\sigma\left(  i\right)  \right]  =0$. Thus, Lemma \ref{lem.xi-unique} is
proved in Case 1.

Let us next consider Case 2. In this case, we have $a>n-p$. Thus, $b\geq
a>n-p$, so that $b\geq n-p+1$ (since $b$ and $n-p$ are integers). Also, $b\in
Z\subseteq\left[  n\right]  =\left\{  1,2,\ldots,n\right\}  $ (by the
definition of $\left[  n\right]  $), so that $b\leq n$. Combining this with
$b\geq n-p+1$, we obtain $b\in\left\{  n-p+1,n-p+2,\ldots,n\right\}  $. Hence,
(\ref{pf.lem.xi-unique.xin=}) (applied to $i=b$) yields $\xi_{b}=b+1$.
However, it is easy to see that $\sigma\left(  b\right)  <b$%
\ \ \ \ \footnote{\textit{Proof.} We have $b\in Z=\left\{  i\in\left[
n\right]  \ \mid\ \sigma\left(  i\right)  \neq i\right\}  $. In other words,
$b$ is an $i\in\left[  n\right]  $ satisfying $\sigma\left(  i\right)  \neq
i$. In other words, $b$ is an element of $\left[  n\right]  $ and satisfies
$\sigma\left(  b\right)  \neq b$.
\par
The map $\sigma$ belongs to $S_{n}$, and thus is a permutation of $\left[
n\right]  $ (since $S_{n}$ is the set of all permutations of $\left[
n\right]  $). Hence, this map is bijective, therefore injective. In other
words, if $i$ and $j$ are two elements of $\left[  n\right]  $ satisfying
$i\neq j$, then $\sigma\left(  i\right)  \neq\sigma\left(  j\right)  $.
Applying this to $i=\sigma\left(  b\right)  $ and $j=b$, we obtain
$\sigma\left(  \sigma\left(  b\right)  \right)  \neq\sigma\left(  b\right)  $
(since $\sigma\left(  b\right)  \neq b$). Thus, $\sigma\left(  b\right)  $ is
an $i\in\left[  n\right]  $ satisfying $\sigma\left(  i\right)  \neq i$. In
other words, $\sigma\left(  b\right)  \in\left\{  i\in\left[  n\right]
\ \mid\ \sigma\left(  i\right)  \neq i\right\}  $. In other words,
$\sigma\left(  b\right)  \in Z$ (since $Z=\left\{  i\in\left[  n\right]
\ \mid\ \sigma\left(  i\right)  \neq i\right\}  $). Hence,
(\ref{pf.lem.xi-unique.bmax}) (applied to $z=\sigma\left(  b\right)  $) yields
$b\geq\sigma\left(  b\right)  $. In other words, $\sigma\left(  b\right)  \leq
b$. Combining this with $\sigma\left(  b\right)  \neq b$, we obtain
$\sigma\left(  b\right)  <b$.}. Thus, $\underbrace{\xi_{b}}_{=b+1}%
-\underbrace{\sigma\left(  b\right)  }_{<b}>\left(  b+1\right)  -b=1$.
Therefore, we do not have $\xi_{b}-\sigma\left(  b\right)  \in\left\{
0,1\right\}  $ (because if we had $\xi_{b}-\sigma\left(  b\right)  \in\left\{
0,1\right\}  $, then we would have $\xi_{b}-\sigma\left(  b\right)  \leq1$
(because any element of the set $\left\{  0,1\right\}  $ is $\leq1$), which
would contradict $\xi_{b}-\sigma\left(  b\right)  >1$). However, the statement
\textquotedblleft$\xi_{b}\trianglerighteq\sigma\left(  b\right)
$\textquotedblright\ is equivalent to \textquotedblleft$\xi_{b}-\sigma\left(
b\right)  \in\left\{  0,1\right\}  $\textquotedblright\ (by Definition
\ref{def.cover01}). Hence, we do not have $\xi_{b}\trianglerighteq
\sigma\left(  b\right)  $ (since we do not have $\xi_{b}-\sigma\left(
b\right)  \in\left\{  0,1\right\}  $). In other words, we have $\left[
\xi_{b}\trianglerighteq\sigma\left(  b\right)  \right]  =0$.

Now, recall that $b\in\left\{  1,2,\ldots,n\right\}  $. Thus, the term
$\left[  \xi_{b}\trianglerighteq\sigma\left(  b\right)  \right]  $ is one of
the factors of the product $\prod_{i=1}^{n}\left[  \xi_{i}\trianglerighteq
\sigma\left(  i\right)  \right]  $ (namely, the factor for $i=b$). Since this
term is $0$ (because we have just shown that $\left[  \xi_{b}\trianglerighteq
\sigma\left(  b\right)  \right]  =0$), we thus conclude that one of the
factors of the product $\prod_{i=1}^{n}\left[  \xi_{i}\trianglerighteq
\sigma\left(  i\right)  \right]  $ is $0$. Therefore, the entire product is
$0$. In other words, we have $\prod_{i=1}^{n}\left[  \xi_{i}\trianglerighteq
\sigma\left(  i\right)  \right]  =0$. Thus, Lemma \ref{lem.xi-unique} is
proved in Case 2.

We have now proved Lemma \ref{lem.xi-unique} in both Cases 1 and 2. Hence, the
proof of Lemma \ref{lem.xi-unique} is complete.
\end{proof}
\end{verlong}

The following lemma is an analogue of Lemma \ref{lem.nu-det}:

\begin{lemma}
\label{lem.xi-det}Let $n\in\mathbb{N}$ and $p\in\left\{  0,1,\ldots,n\right\}
$. Let $\xi$ be the $n$-tuple
\begin{align*}
&  \left(  1,2,\ldots,n\right)  +\left(  \underbrace{0,0,\ldots,0}_{n-p\text{
zeroes}},\underbrace{1,1,\ldots,1}_{p\text{ ones}}\right) \\
&  =\left(  1,2,\ldots,n-p,n-p+2,n-p+3,\ldots,n+1\right)  \in\mathbb{Z}^{n}.
\end{align*}
Let $\nu\in\mathbb{Z}^{n}$ be an $n$-tuple satisfying $\left\vert
\nu\right\vert =\left\vert \xi\right\vert $. Then,
\begin{equation}
\det\left(  \left(  \left[  \nu_{i}\trianglerighteq j\right]  \right)
_{i,j\in\left[  n\right]  }\right)  =\sum_{\substack{\sigma\in S_{n};\\\nu
=\xi\circ\sigma}}\left(  -1\right)  ^{\sigma}. \label{eq.lem.xi-det.clm}%
\end{equation}

\end{lemma}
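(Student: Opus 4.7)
My plan is to mirror the proof of Lemma \ref{lem.nu-det}, with Lemma \ref{lem.nu-xi} and Lemma \ref{lem.xi-unique} taking the roles previously played by Lemma \ref{lem.nu-eta} and the lower-triangularity argument. Two preparatory observations set the stage: the entries of $\xi$ are the $n$ distinct integers in $\{1,2,\ldots,n+1\}\setminus\{n-p+1\}$, and $\xi_i-i\in\{0,1\}$ for every $i\in[n]$, so $\prod_{i=1}^{n}[\xi_i\trianglerighteq i]=1$. I would then split into three cases depending on the shape of $\nu$, in two of which both sides of \eqref{eq.lem.xi-det.clm} vanish for trivial reasons.

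In the first degenerate case, suppose some $\nu_i$ lies outside $\{1,\ldots,n+1\}$. Then $\nu_i-j\le -1$ or $\nu_i-j\ge 2$ for every $j\in[n]$, forcing $[\nu_i\trianglerighteq j]=0$; the $i$-th row of the matrix is zero, so the left-hand side vanishes. Any $\sigma\in S_n$ with $\nu=\xi\circ\sigma$ would give $\nu_i=\xi_{\sigma(i)}\in\{1,\ldots,n+1\}$, which is impossible; hence the right-hand side vanishes too. In the second degenerate case, suppose $\nu_i=\nu_{i'}$ for some $i\ne i'$; then two rows of the matrix coincide so the left-hand side vanishes, and the distinctness of the entries of $\xi$ rules out any $\sigma$ contributing to the right-hand side. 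In the remaining case $\nu$ has distinct entries, all in $\{1,\ldots,n+1\}$, so $\{\nu_1,\ldots,\nu_n\}$ misses a single element $m\in\{1,\ldots,n+1\}$; then $|\nu|=\binom{n+2}{2}-m$ and $|\xi|=\binom{n+2}{2}-(n-p+1)$, so the hypothesis $|\nu|=|\xi|$ forces $m=n-p+1$, i.e.\ $\{\nu_1,\ldots,\nu_n\}=\{\xi_1,\ldots,\xi_n\}$.

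Now I can apply Lemma \ref{lem.nu-xi} (with $q=n-p+1$) to obtain some $\pi\in S_n$ with $\nu=\xi\circ\pi$, and Proposition \ref{prop.etapi.dist} gives uniqueness of $\pi$; thus the right-hand side of \eqref{eq.lem.xi-det.clm} collapses to $(-1)^\pi$. For the left-hand side I would use the standard row-permutation identity (the same one used in the proof of Lemma \ref{lem.nu-det}) applied to the matrix $([\xi_i\trianglerighteq j])_{i,j\in[n]}$ and the permutation $\pi$, combined with $\nu_i=\xi_{\pi(i)}$, to obtain
\[
\det\bigl(([\nu_i\trianglerighteq j])_{i,j\in[n]}\bigr)=(-1)^\pi\det\bigl(([\xi_i\trianglerighteq j])_{i,j\in[n]}\bigr).
\]
Finally, I expand the remaining determinant by the Leibniz formula and invoke Lemma \ref{lem.xi-unique} to kill every term with $\sigma\ne\operatorname{id}$, leaving $\prod_{i=1}^{n}[\xi_i\trianglerighteq i]=1$. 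Hence the left-hand side equals $(-1)^\pi$, matching the right-hand side.

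The main obstacle is handling the third case cleanly: the arithmetic observation that $|\nu|=|\xi|$ upgrades the weak condition ``$\nu$ has distinct entries in $\{1,\ldots,n+1\}$'' to the strong condition ``$\nu$ is a permutation of $\xi$'' is what makes Lemma \ref{lem.nu-xi} produce a unique $\pi$ and lets the remainder of the argument proceed as a direct translation of the proof of Lemma \ref{lem.nu-det}; without this observation one would have to contend with permutations of $\xi$ that miss the ``wrong'' value in $\{1,\ldots,n+1\}$, and the required cancellations would be considerably more fragile.
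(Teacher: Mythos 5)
Your proof is correct and follows essentially the same route as the paper: the same three-case split (an entry outside $\left[  n+1\right]  $, a repeated entry, and the generic case), with Lemma \ref{lem.nu-xi}, Proposition \ref{prop.etapi.dist}, the row-permutation identity, and Lemma \ref{lem.xi-unique} playing exactly the roles you assign them. Your explicit arithmetic argument identifying the missing value $m=n-p+1$ is harmless but redundant, since Lemma \ref{lem.nu-xi} already performs that cancellation internally; the paper just feeds it the inclusion $\left\{  \nu_{1},\ldots,\nu_{n}\right\}  \subseteq\left\{  \xi_{1},\ldots,\xi_{n}\right\}  \cup\left\{  n-p+1\right\}  $ directly.
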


Note that the matrix $\left(  \left[  \nu_{i}\trianglerighteq j\right]
\right)  _{i,j\in\left[  n\right]  }$ in (\ref{eq.lem.xi-det.clm}) is a matrix
with integer entries; thus, its determinant is a well-defined integer.

Before we prove Lemma \ref{lem.xi-det}, a remark is in order:

\begin{remark}
The sum $\sum_{\substack{\sigma\in S_{n};\\\nu=\xi\circ\sigma}}\left(
-1\right)  ^{\sigma}$ on the right hand side of (\ref{eq.lem.xi-det.clm})
always has either no addends or only one addend. (Indeed, it is easy to see
that the $n$-tuples $\xi\circ\sigma$ for different $\sigma\in S_{n}$ are
distinct; thus, no more than one of these $n$-tuples can equal $\nu$.) Thus,
this sum can be rewritten as
\[%
\begin{cases}
\left(  -1\right)  ^{\sigma}, & \text{if }\nu=\xi\circ\sigma\text{ for some
}\sigma\in S_{n};\\
0, & \text{otherwise.}%
\end{cases}
\]

\end{remark}

\begin{proof}
[Proof of Lemma \ref{lem.xi-det}.]The definition of $\xi$ yields%
\begin{align*}
\xi &  =\left(  1,2,\ldots,n\right)  +\left(  \underbrace{0,0,\ldots
,0}_{n-p\text{ zeroes}},\underbrace{1,1,\ldots,1}_{p\text{ ones}}\right) \\
&  =\left(  1,2,\ldots,n-p,n-p+2,n-p+3,\ldots,n+1\right)  .
\end{align*}
Thus,%
\begin{align}
&  \left(  \xi_{1},\xi_{2},\ldots,\xi_{n}\right) \nonumber\\
&  =\xi=\left(  1,2,\ldots,n-p,n-p+2,n-p+3,\ldots,n+1\right)  .
\label{pf.lem.xi-det.xi=2}%
\end{align}
In other words, we have%
\begin{equation}
\xi_{i}=i\ \ \ \ \ \ \ \ \ \ \text{for each }i\in\left\{  1,2,\ldots
,n-p\right\}  \label{pf.lem.xi-det.xii=}%
\end{equation}
and%
\begin{equation}
\xi_{i}=i+1\ \ \ \ \ \ \ \ \ \ \text{for each }i\in\left\{  n-p+1,n-p+2,\ldots
,n\right\}  . \label{pf.lem.xi-det.xin=}%
\end{equation}

\begin{vershort}
\noindent It follows easily that the $n$ numbers $\xi_{1},\xi_{2},\ldots
,\xi_{n}$ are distinct. Therefore, we have $\left\vert \left\{  \xi_{1}%
,\xi_{2},\ldots,\xi_{n}\right\}  \right\vert =n$.
\end{vershort}

\begin{verlong}
Clearly, the following two chains of inequalities hold:%
\[
1<2<\cdots<n-p\ \ \ \ \ \ \ \ \ \ \text{and}%
\ \ \ \ \ \ \ \ \ \ n-p+2<n-p+3<\cdots<n+1.
\]
Since $n-p<n-p+2$, we can splice these two chains together to obtain a single
chain of inequalities:%
\[
1<2<\cdots<n-p<n-p+2<n-p+3<\cdots<n+1.
\]
In view of (\ref{pf.lem.xi-det.xi=2}), we can rewrite this chain of
inequalities as follows:
\[
\xi_{1}<\xi_{2}<\cdots<\xi_{n}.
\]
Hence, the $n$ numbers $\xi_{1},\xi_{2},\ldots,\xi_{n}$ are distinct.
Therefore, $\left\vert \left\{  \xi_{1},\xi_{2},\ldots,\xi_{n}\right\}
\right\vert =n$.
\end{verlong}

\begin{verlong}
We have $\left[  n\right]  =\left\{  1,2,\ldots,n\right\}  $ (by the
definition of $\left[  n\right]  $) and $\left[  n+1\right]  =\left\{
1,2,\ldots,n+1\right\}  $ (by the definition of $\left[  n+1\right]  $).
Hence, $\left[  n\right]  =\left\{  1,2,\ldots,n\right\}  \subseteq\left\{
1,2,\ldots,n+1\right\}  =\left[  n+1\right]  $.\medskip
\end{verlong}

We are in one of the following three cases:

\textit{Case 1:} We have $\left\{  \nu_{1},\nu_{2},\ldots,\nu_{n}\right\}
\not \subseteq \left[  n+1\right]  $.

\textit{Case 2:} We have $\left\vert \left\{  \nu_{1},\nu_{2},\ldots,\nu
_{n}\right\}  \right\vert \neq n$.

\textit{Case 3:} We have neither $\left\{  \nu_{1},\nu_{2},\ldots,\nu
_{n}\right\}  \not \subseteq \left[  n+1\right]  $ nor $\left\vert \left\{
\nu_{1},\nu_{2},\ldots,\nu_{n}\right\}  \right\vert \neq n$.\medskip

\begin{vershort}
Let us first consider Case 1. In this case, we have $\left\{  \nu_{1},\nu
_{2},\ldots,\nu_{n}\right\}  \not \subseteq \left[  n+1\right]  $. In other
words, there exists some $k\in\left[  n\right]  $ such that $\nu_{k}%
\notin\left[  n+1\right]  $. Consider this $k$. Then, $\left[  \nu
_{k}\trianglerighteq j\right]  =0$ for each $j\in\left[  n\right]  $ (since
$\nu_{k}\trianglerighteq j$ would entail $\nu_{k}\in\left\{  j,j+1\right\}
\subseteq\left[  n+1\right]  $, contradicting $\nu_{k}\notin\left[
n+1\right]  $). Hence, the matrix $\left(  \left[  \nu_{i}\trianglerighteq
j\right]  \right)  _{i,j\in\left[  n\right]  }$ has a zero row (namely, the
$k$-th row); therefore, the determinant of this matrix is $0$. In other
words,
\begin{equation}
\det\left(  \left(  \left[  \nu_{i}\trianglerighteq j\right]  \right)
_{i,j\in\left[  n\right]  }\right)  =0. \label{pf.lem.xi-det.c1.short.det=0}%
\end{equation}

On the other hand, the $n$-tuple $\nu$ contains the entry $\nu_{k}$
(obviously), whereas the $n$-tuple $\xi$ does not (since $\nu_{k}\notin\left[
n+1\right]  $, but all entries of $\xi$ belong to $\left[  n+1\right]  $).
Thus, the $n$-tuple $\nu$ is not a permutation of the $n$-tuple $\xi$. In
other words, there exists no $\sigma\in S_{n}$ satisfying $\nu=\xi\circ\sigma
$. Hence,%
\[
\sum_{\substack{\sigma\in S_{n};\\\nu=\xi\circ\sigma}}\left(  -1\right)
^{\sigma}=\left(  \text{empty sum}\right)  =0.
\]
Comparing this with (\ref{pf.lem.xi-det.c1.short.det=0}), we obtain
$\det\left(  \left(  \left[  \nu_{i}\trianglerighteq j\right]  \right)
_{i,j\in\left[  n\right]  }\right)  =\sum_{\substack{\sigma\in S_{n};\\\nu
=\xi\circ\sigma}}\left(  -1\right)  ^{\sigma}$. Thus, Lemma \ref{lem.xi-det}
is proved in Case 1. \medskip
\end{vershort}

\begin{verlong}
Let us first consider Case 1. In this case, we have $\left\{  \nu_{1},\nu
_{2},\ldots,\nu_{n}\right\}  \not \subseteq \left[  n+1\right]  $. In other
words, there exists some $w\in\left\{  \nu_{1},\nu_{2},\ldots,\nu_{n}\right\}
$ such that $w\notin\left[  n+1\right]  $. Consider this $w$. We have
$w\in\left\{  \nu_{1},\nu_{2},\ldots,\nu_{n}\right\}  $; in other words, there
exists some $k\in\left\{  1,2,\ldots,n\right\}  $ such that $w=\nu_{k}$.
Consider this $k$. Hence, $\nu_{k}=w\notin\left[  n+1\right]  $.

Now, it is easy to see that
\begin{equation}
\left[  \nu_{k}\trianglerighteq j\right]  =0\ \ \ \ \ \ \ \ \ \ \text{for each
}j\in\left[  n\right]  . \label{pf.lem.xi-det.c1.coleq}%
\end{equation}

[\textit{Proof of (\ref{pf.lem.xi-det.c1.coleq}):} Let $j\in\left[  n\right]
$. Then, $j\in\left[  n\right]  =\left\{  1,2,\ldots,n\right\}  $, so that
$j+1\in\left\{  2,3,\ldots,n+1\right\}  \subseteq\left\{  1,2,\ldots
,n+1\right\}  =\left[  n+1\right]  $. It is easy to see that we do not have
$\nu_{k}-j\in\left\{  0,1\right\}  $\ \ \ \ \footnote{\textit{Proof.} Assume
the contrary. Thus, $\nu_{k}-j\in\left\{  0,1\right\}  $. In other words,
$\nu_{k}-j=0$ or $\nu_{k}-j=1$. Hence, we are in one of the following two
cases:
\par
\textit{Case 1:} We have $\nu_{k}-j=0$.
\par
\textit{Case 2:} We have $\nu_{k}-j=1$.
\par
Let us consider Case 1 first. In this case, we have $\nu_{k}-j=0$. Thus,
$\nu_{k}=j\in\left[  n\right]  \subseteq\left[  n+1\right]  $. This
contradicts $\nu_{k}\notin\left[  n+1\right]  $. Thus, we have obtained a
contradiction in Case 1.
\par
Let us now consider Case 2. In this case, we have $\nu_{k}-j=1$. Thus,
$\nu_{k}=j+1\in\left[  n+1\right]  $. This contradicts $\nu_{k}\notin\left[
n+1\right]  $. Thus, we have obtained a contradiction in Case 2.
\par
We have now found a contradiction in each of the two Cases 1 and 2. Thus, we
always have a contradiction. This shows that our assumption was false, qed.}.
However, the statement \textquotedblleft$\nu_{k}\trianglerighteq
j$\textquotedblright\ is equivalent to \textquotedblleft$\nu_{k}-j\in\left\{
0,1\right\}  $\textquotedblright\ (by Definition \ref{def.cover01}). Hence, we
do not have $\nu_{k}\trianglerighteq j$ (since we do not have $\nu_{k}%
-j\in\left\{  0,1\right\}  $). In other words, we have $\left[  \nu
_{k}\trianglerighteq j\right]  =0$. This proves (\ref{pf.lem.xi-det.c1.coleq}%
).] \medskip

The equality (\ref{pf.lem.xi-det.c1.coleq}) shows that each entry in the
$k$-th row of the matrix $\left(  \left[  \nu_{i}\trianglerighteq j\right]
\right)  _{i,j\in\left[  n\right]  }$ equals $0$ (because the $j$-th entry in
the $k$-th row of this matrix is $\left[  \nu_{k}\trianglerighteq j\right]
$). In other words, the $k$-th row of the matrix $\left(  \left[  \nu
_{i}\trianglerighteq j\right]  \right)  _{i,j\in\left[  n\right]  }$ is zero.
Hence, this matrix $\left(  \left[  \nu_{i}\trianglerighteq j\right]  \right)
_{i,j\in\left[  n\right]  }$ has a zero row. However, it is well-known that if
a square matrix\footnote{In this proof, the word \textquotedblleft
matrix\textquotedblright\ always means a matrix with integer entries.} has a
zero row, then its determinant is $0$. Hence, the determinant of the matrix
$\left(  \left[  \nu_{i}\trianglerighteq j\right]  \right)  _{i,j\in\left[
n\right]  }$ is $0$ (since this matrix has a zero row). In other words,
\begin{equation}
\det\left(  \left(  \left[  \nu_{i}\trianglerighteq j\right]  \right)
_{i,j\in\left[  n\right]  }\right)  =0. \label{pf.lem.xi-det.c1.det=0}%
\end{equation}

On the other hand, there exists no $\sigma\in S_{n}$ satisfying $\nu=\xi
\circ\sigma$\ \ \ \ \footnote{\textit{Proof.} Let $\sigma\in S_{n}$ satisfy
$\nu=\xi\circ\sigma$. We shall obtain a contradiction.
\par
Indeed, we have $\sigma\in S_{n}$. In other words, $\sigma$ is a permutation
of $\left[  n\right]  $ (since $S_{n}$ was defined as the set of all
permutations of $\left[  n\right]  $). Hence,
\begin{align*}
\left\{  \xi_{\sigma\left(  1\right)  },\xi_{\sigma\left(  2\right)  }%
,\ldots,\xi_{\sigma\left(  n\right)  }\right\}   &  =\left\{  \xi_{1},\xi
_{2},\ldots,\xi_{n}\right\} \\
&  =\left\{  1,2,\ldots,n-p,n-p+2,n-p+3,\ldots,n+1\right\}
\ \ \ \ \ \ \ \ \ \ \left(  \text{by (\ref{pf.lem.xi-det.xi=2})}\right) \\
&  =\underbrace{\left\{  1,2,\ldots,n+1\right\}  }_{=\left[  n+1\right]
}\setminus\left\{  n-p+1\right\}  =\left[  n+1\right]  \setminus\left\{
n-p+1\right\}  \subseteq\left[  n+1\right]  .
\end{align*}
However, $\left(  \nu_{1},\nu_{2},\ldots,\nu_{n}\right)  =\nu=\xi\circ
\sigma=\left(  \xi_{\sigma\left(  1\right)  },\xi_{\sigma\left(  2\right)
},\ldots,\xi_{\sigma\left(  n\right)  }\right)  $ (by Definition
\ref{def.etapi}). Hence,
\[
\left\{  \nu_{1},\nu_{2},\ldots,\nu_{n}\right\}  =\left\{  \xi_{\sigma\left(
1\right)  },\xi_{\sigma\left(  2\right)  },\ldots,\xi_{\sigma\left(  n\right)
}\right\}  \subseteq\left[  n+1\right]  .
\]
But this contradicts $\left\{  \nu_{1},\nu_{2},\ldots,\nu_{n}\right\}
\not \subseteq \left[  n+1\right]  $.
\par
Forget that we fixed $\sigma$. We thus have found a contradiction for each
$\sigma\in S_{n}$ satisfying $\nu=\xi\circ\sigma$. Hence, there exists no such
$\sigma$. Qed.}. Hence, the sum $\sum_{\substack{\sigma\in S_{n};\\\nu
=\xi\circ\sigma}}\left(  -1\right)  ^{\sigma}$ is empty. Thus,%
\[
\sum_{\substack{\sigma\in S_{n};\\\nu=\xi\circ\sigma}}\left(  -1\right)
^{\sigma}=\left(  \text{empty sum}\right)  =0.
\]
Comparing this with (\ref{pf.lem.xi-det.c1.det=0}), we obtain $\det\left(
\left(  \left[  \nu_{i}\trianglerighteq j\right]  \right)  _{i,j\in\left[
n\right]  }\right)  =\sum_{\substack{\sigma\in S_{n};\\\nu=\xi\circ\sigma
}}\left(  -1\right)  ^{\sigma}$. Thus, Lemma \ref{lem.xi-det} is proved in
Case 1. \medskip
\end{verlong}

\begin{vershort}
Let us next consider Case 2. In this case, we have $\left\vert \left\{
\nu_{1},\nu_{2},\ldots,\nu_{n}\right\}  \right\vert \neq n$. Thus, two of the
numbers $\nu_{1},\nu_{2},\ldots,\nu_{n}$ are equal. Hence, the corresponding
two rows of the matrix $\left(  \left[  \nu_{i}\trianglerighteq j\right]
\right)  _{i,j\in\left[  n\right]  }$ are equal. Thus, this matrix $\left(
\left[  \nu_{i}\trianglerighteq j\right]  \right)  _{i,j\in\left[  n\right]
}$ has two equal rows, and therefore its determinant is $0$. In other words,
\begin{equation}
\det\left(  \left(  \left[  \nu_{i}\trianglerighteq j\right]  \right)
_{i,j\in\left[  n\right]  }\right)  =0. \label{pf.lem.xi-det.c2.short.det=0}%
\end{equation}
On the other hand, the $n$-tuple $\nu$ contains two equal entries (since two
of the numbers $\nu_{1},\nu_{2},\ldots,\nu_{n}$ are equal), whereas the
$n$-tuple $\xi$ does not (since $\xi_{1},\xi_{2},\ldots,\xi_{n}$ are
distinct). Thus, the $n$-tuple $\nu$ is not a permutation of the $n$-tuple
$\xi$. In other words, there exists no $\sigma\in S_{n}$ satisfying $\nu
=\xi\circ\sigma$. Hence,%
\[
\sum_{\substack{\sigma\in S_{n};\\\nu=\xi\circ\sigma}}\left(  -1\right)
^{\sigma}=\left(  \text{empty sum}\right)  =0.
\]
Comparing this with (\ref{pf.lem.xi-det.c2.short.det=0}), we obtain
$\det\left(  \left(  \left[  \nu_{i}\trianglerighteq j\right]  \right)
_{i,j\in\left[  n\right]  }\right)  =\sum_{\substack{\sigma\in S_{n};\\\nu
=\xi\circ\sigma}}\left(  -1\right)  ^{\sigma}$. Thus, Lemma \ref{lem.xi-det}
is proved in Case 2. \medskip
\end{vershort}

\begin{verlong}
Let us next consider Case 2. In this case, we have $\left\vert \left\{
\nu_{1},\nu_{2},\ldots,\nu_{n}\right\}  \right\vert \neq n$. Hence, the $n$
numbers $\nu_{1},\nu_{2},\ldots,\nu_{n}$ cannot be distinct (because if they
were distinct, then we would have $\left\vert \left\{  \nu_{1},\nu_{2}%
,\ldots,\nu_{n}\right\}  \right\vert =n$, which would contradict $\left\vert
\left\{  \nu_{1},\nu_{2},\ldots,\nu_{n}\right\}  \right\vert \neq n$). In
other words, two of these $n$ numbers are equal. In other words, there exist
two distinct elements $r$ and $s$ of $\left[  n\right]  $ such that $\nu
_{r}=\nu_{s}$. Consider these $r$ and $s$. Now, for each $j\in\left[
n\right]  $, we have%
\[
\left[  \nu_{r}\trianglerighteq j\right]  =\left[  \nu_{s}\trianglerighteq
j\right]  \ \ \ \ \ \ \ \ \ \ \left(  \text{since }\nu_{r}=\nu_{s}\right)  .
\]
This shows that each entry in the $r$-th row of the matrix $\left(  \left[
\nu_{i}\trianglerighteq j\right]  \right)  _{i,j\in\left[  n\right]  }$ equals
the corresponding entry in the $s$-th row of this matrix (because the $j$-th
entry in the $r$-th row of this matrix is $\left[  \nu_{r}\trianglerighteq
j\right]  $, whereas the corresponding entry in the $s$-th row is $\left[
\nu_{s}\trianglerighteq j\right]  $). In other words, the $r$-th row of the
matrix $\left(  \left[  \nu_{i}\trianglerighteq j\right]  \right)
_{i,j\in\left[  n\right]  }$ equals the $s$-th row of this matrix. Hence, this
matrix $\left(  \left[  \nu_{i}\trianglerighteq j\right]  \right)
_{i,j\in\left[  n\right]  }$ has two equal rows (since $r$ and $s$ are
distinct). However, it is well-known that if a square matrix\footnote{In this
proof, the word \textquotedblleft matrix\textquotedblright\ always means a
matrix with integer entries.} has two equal rows, then its determinant is $0$.
Hence, the determinant of the matrix $\left(  \left[  \nu_{i}\trianglerighteq
j\right]  \right)  _{i,j\in\left[  n\right]  }$ is $0$ (since this matrix has
two equal rows). In other words,
\begin{equation}
\det\left(  \left(  \left[  \nu_{i}\trianglerighteq j\right]  \right)
_{i,j\in\left[  n\right]  }\right)  =0. \label{pf.lem.xi-det.c2.det=0}%
\end{equation}

On the other hand, there exists no $\sigma\in S_{n}$ satisfying $\nu=\xi
\circ\sigma$\ \ \ \ \footnote{\textit{Proof.} Let $\sigma\in S_{n}$ satisfy
$\nu=\xi\circ\sigma$. We shall obtain a contradiction.
\par
Indeed, we have $\sigma\in S_{n}$. In other words, $\sigma$ is a permutation
of $\left[  n\right]  $ (since $S_{n}$ was defined as the set of all
permutations of $\left[  n\right]  $). Hence, $\left\{  \xi_{\sigma\left(
1\right)  },\xi_{\sigma\left(  2\right)  },\ldots,\xi_{\sigma\left(  n\right)
}\right\}  =\left\{  \xi_{1},\xi_{2},\ldots,\xi_{n}\right\}  $.
\par
However, $\left(  \nu_{1},\nu_{2},\ldots,\nu_{n}\right)  =\nu=\xi\circ
\sigma=\left(  \xi_{\sigma\left(  1\right)  },\xi_{\sigma\left(  2\right)
},\ldots,\xi_{\sigma\left(  n\right)  }\right)  $ (by Definition
\ref{def.etapi}). Hence,
\[
\left\{  \nu_{1},\nu_{2},\ldots,\nu_{n}\right\}  =\left\{  \xi_{\sigma\left(
1\right)  },\xi_{\sigma\left(  2\right)  },\ldots,\xi_{\sigma\left(  n\right)
}\right\}  =\left\{  \xi_{1},\xi_{2},\ldots,\xi_{n}\right\}  .
\]
Thus, $\left\vert \left\{  \nu_{1},\nu_{2},\ldots,\nu_{n}\right\}  \right\vert
=\left\vert \left\{  \xi_{1},\xi_{2},\ldots,\xi_{n}\right\}  \right\vert =n$.
This contradicts $\left\vert \left\{  \nu_{1},\nu_{2},\ldots,\nu_{n}\right\}
\right\vert \neq n$.
\par
Forget that we fixed $\sigma$. We thus have found a contradiction for each
$\sigma\in S_{n}$ satisfying $\nu=\xi\circ\sigma$. Hence, there exists no such
$\sigma$. Qed.}. Hence, the sum $\sum_{\substack{\sigma\in S_{n};\\\nu
=\xi\circ\sigma}}\left(  -1\right)  ^{\sigma}$ is empty. Thus,%
\[
\sum_{\substack{\sigma\in S_{n};\\\nu=\xi\circ\sigma}}\left(  -1\right)
^{\sigma}=\left(  \text{empty sum}\right)  =0.
\]
Comparing this with (\ref{pf.lem.xi-det.c2.det=0}), we obtain $\det\left(
\left(  \left[  \nu_{i}\trianglerighteq j\right]  \right)  _{i,j\in\left[
n\right]  }\right)  =\sum_{\substack{\sigma\in S_{n};\\\nu=\xi\circ\sigma
}}\left(  -1\right)  ^{\sigma}$. Thus, Lemma \ref{lem.xi-det} is proved in
Case 2. \medskip
\end{verlong}

Finally, let us consider Case 3. In this case, we have neither $\left\{
\nu_{1},\nu_{2},\ldots,\nu_{n}\right\}  \not \subseteq \left[  n+1\right]  $
nor $\left\vert \left\{  \nu_{1},\nu_{2},\ldots,\nu_{n}\right\}  \right\vert
\neq n$. In other words, we have $\left\{  \nu_{1},\nu_{2},\ldots,\nu
_{n}\right\}  \subseteq\left[  n+1\right]  $ and $\left\vert \left\{  \nu
_{1},\nu_{2},\ldots,\nu_{n}\right\}  \right\vert =n$.

\begin{vershort}
Note that%
\[
\left\{  \nu_{1},\nu_{2},\ldots,\nu_{n}\right\}  \subseteq\left[  n+1\right]
=\left\{  \xi_{1},\xi_{2},\ldots,\xi_{n}\right\}  \cup\left\{  n-p+1\right\}
\]
(since (\ref{pf.lem.xi-det.xi=2}) shows that $\xi_{1},\xi_{2},\ldots,\xi_{n}$
are precisely the numbers $1,2,\ldots,n+1$ except for $n-p+1$). Thus, Lemma
\ref{lem.nu-xi} (applied to $q=n-p+1$) yields that there exists some
permutation $\pi\in S_{n}$ satisfying $\nu=\xi\circ\pi$. Consider this $\pi$.
\end{vershort}

\begin{verlong}
Set $q:=n-p+1$. Thus, $q\in\mathbb{Z}$. From (\ref{pf.lem.xi-det.xi=2}), we
obtain%
\begin{align*}
\left\{  \xi_{1},\xi_{2},\ldots,\xi_{n}\right\}   &  =\left\{  1,2,\ldots
,n-p,n-p+2,n-p+3,\ldots,n+1\right\} \\
&  =\underbrace{\left\{  1,2,\ldots,n+1\right\}  }_{=\left[  n+1\right]
}\setminus\left\{  \underbrace{n-p+1}_{=q}\right\}  =\left[  n+1\right]
\setminus\left\{  q\right\}  .
\end{align*}
Hence,%
\begin{align}
\underbrace{\left\{  \xi_{1},\xi_{2},\ldots,\xi_{n}\right\}  }_{=\left[
n+1\right]  \setminus\left\{  q\right\}  }\cup\left\{  q\right\}   &  =\left(
\left[  n+1\right]  \setminus\left\{  q\right\}  \right)  \cup\left\{
q\right\} \nonumber\\
&  =\left[  n+1\right]  \cup\left\{  q\right\}  . \label{pf.lem.xi-det.c3.1}%
\end{align}
Now,
\[
\left\{  \nu_{1},\nu_{2},\ldots,\nu_{n}\right\}  \subseteq\left[  n+1\right]
\subseteq\left[  n+1\right]  \cup\left\{  q\right\}  =\left\{  \xi_{1},\xi
_{2},\ldots,\xi_{n}\right\}  \cup\left\{  q\right\}
\]
(by (\ref{pf.lem.xi-det.c3.1})). Thus, Lemma \ref{lem.nu-xi} yields that there
exists some permutation $\pi\in S_{n}$ satisfying $\nu=\xi\circ\pi$. Consider
this $\pi$.
\end{verlong}

\begin{vershort}
Thus, $\pi$ is a permutation $\sigma\in S_{n}$ satisfying $\nu=\xi\circ\sigma
$. Furthermore, it is easy to see that $\pi$ is the \textbf{only} such
permutation $\sigma$ (because the $n$ numbers $\xi_{1},\xi_{2},\ldots,\xi_{n}$
are distinct)\footnote{Here is the argument in more detail: Recall that the
$n$ numbers $\xi_{1},\xi_{2},\ldots,\xi_{n}$ are distinct. Therefore, if
$\sigma\in S_{n}$ is a permutation distinct from $\pi$, then Proposition
\ref{prop.etapi.dist} (applied to $\eta=\xi$) shows that $\xi\circ\sigma
\neq\xi\circ\pi=\nu$, so that $\nu\neq\xi\circ\sigma$. Hence, the only
permutation $\sigma\in S_{n}$ satisfying $\nu=\xi\circ\sigma$ is $\pi$.}.
Hence, the sum $\sum_{\substack{\sigma\in S_{n};\\\nu=\xi\circ\sigma}}\left(
-1\right)  ^{\sigma}$ has only one addend, namely the addend for $\sigma=\pi$.
Thus,
\begin{equation}
\sum_{\substack{\sigma\in S_{n};\\\nu=\xi\circ\sigma}}\left(  -1\right)
^{\sigma}=\left(  -1\right)  ^{\pi}. \label{pf.lem.xi-det.c3.short.sum=}%
\end{equation}

\end{vershort}

\begin{verlong}
It is easy to see that $\pi$ is the \textbf{only} permutation $\sigma\in
S_{n}$ satisfying $\nu=\xi\circ\sigma$\ \ \ \ \footnote{\textit{Proof.} It is
clear that $\pi$ is a permutation $\sigma\in S_{n}$ satisfying $\nu=\xi
\circ\sigma$ (because $\pi\in S_{n}$ is a permutation satisfying $\nu=\xi
\circ\pi$). It thus remains to show that $\pi$ is the \textbf{only} such
permutation. In other words, it remains to show that if $\sigma\in S_{n}$ is a
permutation satisfying $\nu=\xi\circ\sigma$, then $\sigma=\pi$. So let us show
this.
\par
Let $\sigma\in S_{n}$ be a permutation satisfying $\nu=\xi\circ\sigma$. Then,
$\xi\circ\sigma=\nu=\xi\circ\pi$.
\par
Recall that the $n$ numbers $\xi_{1},\xi_{2},\ldots,\xi_{n}$ are distinct.
Thus, Proposition \ref{prop.etapi.dist} (applied to $\eta=\xi$) shows that if
the permutations $\sigma$ and $\pi$ were distinct, then we would have
$\xi\circ\sigma\neq\xi\circ\pi$; but this would contradict $\xi\circ\sigma
=\xi\circ\pi$. Hence, $\sigma$ and $\pi$ cannot be distinct. In other words,
we must have $\sigma=\pi$.
\par
Forget that we fixed $\sigma$. We thus have shown that if $\sigma\in S_{n}$ is
a permutation satisfying $\nu=\xi\circ\sigma$, then $\sigma=\pi$. In other
words, $\pi$ is the \textbf{only} permutation $\sigma\in S_{n}$ satisfying
$\nu=\xi\circ\sigma$ (since we already know that $\pi$ is such a permutation).
Qed.}. Hence, the sum $\sum_{\substack{\sigma\in S_{n};\\\nu=\xi\circ\sigma
}}\left(  -1\right)  ^{\sigma}$ has only one addend, namely the addend for
$\sigma=\pi$. Thus,
\begin{equation}
\sum_{\substack{\sigma\in S_{n};\\\nu=\xi\circ\sigma}}\left(  -1\right)
^{\sigma}=\left(  -1\right)  ^{\pi}. \label{pf.lem.xi-det.c3.sum=}%
\end{equation}

\end{verlong}

\begin{vershort}
From $\nu=\xi\circ\pi$, we easily obtain%
\begin{equation}
\det\left(  \left(  \left[  \nu_{i}\trianglerighteq j\right]  \right)
_{i,j\in\left[  n\right]  }\right)  =\left(  -1\right)  ^{\pi}\cdot\det\left(
\left(  \left[  \xi_{i}\trianglerighteq j\right]  \right)  _{i,j\in\left[
n\right]  }\right)  . \label{pf.lem.xi-det.short.c3.det=1}%
\end{equation}
(Indeed, this can be proved just as we showed (\ref{pf.lem.nu-det.c2.det=1}),
except that $\eta$ and the $\geq$ sign are replaced by $\xi$ and the
$\trianglerighteq$ sign.)
\end{vershort}

\begin{verlong}
We have $\nu=\xi\circ\pi=\left(  \xi_{\pi\left(  1\right)  },\xi_{\pi\left(
2\right)  },\ldots,\xi_{\pi\left(  n\right)  }\right)  $ (by Definition
\ref{def.etapi}). Thus, for each $i\in\left[  n\right]  $, we have
\begin{equation}
\nu_{i}=\left(  \xi_{\pi\left(  1\right)  },\xi_{\pi\left(  2\right)  }%
,\ldots,\xi_{\pi\left(  n\right)  }\right)  _{i}=\xi_{\pi\left(  i\right)  }.
\label{pf.lem.xi-det.c3.nui=}%
\end{equation}

On the other hand, it is well-known (see, e.g., \cite[Corollary 6.4.15]{21s}
or \cite[Lemma 6.17 \textbf{(a)}]{detnotes}) that when the rows of a matrix
are permuted, then the determinant of this matrix gets multiplied by $\left(
-1\right)  ^{\tau}$, where $\tau$ is the permutation used to permute the rows.
In other words: If $\left(  a_{i,j}\right)  _{i,j\in\left[  n\right]  }$ is a
square matrix (with integer entries), and if $\tau\in S_{n}$ is a permutation,
then%
\[
\det\left(  \left(  a_{\tau\left(  i\right)  ,j}\right)  _{i,j\in\left[
n\right]  }\right)  =\left(  -1\right)  ^{\tau}\cdot\det\left(  \left(
a_{i,j}\right)  _{i,j\in\left[  n\right]  }\right)  .
\]
Applying this to $a_{i,j}=\left[  \xi_{i}\trianglerighteq j\right]  $ and
$\tau=\pi$, we obtain%
\[
\det\left(  \left(  \left[  \xi_{\pi\left(  i\right)  }\trianglerighteq
j\right]  \right)  _{i,j\in\left[  n\right]  }\right)  =\left(  -1\right)
^{\pi}\cdot\det\left(  \left(  \left[  \xi_{i}\trianglerighteq j\right]
\right)  _{i,j\in\left[  n\right]  }\right)  .
\]
In view of (\ref{pf.lem.xi-det.c3.nui=}), we can rewrite this as%
\begin{equation}
\det\left(  \left(  \left[  \nu_{i}\trianglerighteq j\right]  \right)
_{i,j\in\left[  n\right]  }\right)  =\left(  -1\right)  ^{\pi}\cdot\det\left(
\left(  \left[  \xi_{i}\trianglerighteq j\right]  \right)  _{i,j\in\left[
n\right]  }\right)  . \label{pf.lem.xi-det.c3.det=1}%
\end{equation}

\end{verlong}

\begin{vershort}
However, the definition of the determinant of a matrix yields%
\begin{align*}
\det\left(  \left(  \left[  \xi_{i}\trianglerighteq j\right]  \right)
_{i,j\in\left[  n\right]  }\right)   &  =\sum_{\sigma\in S_{n}}\left(
-1\right)  ^{\sigma}\prod_{i=1}^{n}\left[  \xi_{i}\trianglerighteq
\sigma\left(  i\right)  \right] \\
&  =\underbrace{\left(  -1\right)  ^{\operatorname*{id}}}_{=1}\prod_{i=1}%
^{n}\left[  \xi_{i}\trianglerighteq\underbrace{\operatorname*{id}\left(
i\right)  }_{=i}\right]  +\sum_{\substack{\sigma\in S_{n};\\\sigma
\neq\operatorname*{id}}}\left(  -1\right)  ^{\sigma}\underbrace{\prod
_{i=1}^{n}\left[  \xi_{i}\trianglerighteq\sigma\left(  i\right)  \right]
}_{\substack{=0\\\text{(by Lemma \ref{lem.xi-unique})}}}\\
&  \ \ \ \ \ \ \ \ \ \ \ \ \ \ \ \ \ \ \ \ \left(
\begin{array}
[c]{c}%
\text{here, we have split off the addend}\\
\text{for }\sigma=\operatorname*{id}\text{ from the sum (since }%
\operatorname*{id}\in S_{n}\text{)}%
\end{array}
\right) \\
&  =\prod_{i=1}^{n}\underbrace{\left[  \xi_{i}\trianglerighteq i\right]
}_{\substack{=1\\\text{(this follows easily}\\\text{from
(\ref{pf.lem.xi-det.xii=}) and (\ref{pf.lem.xi-det.xin=}))}}}+\underbrace{\sum
_{\substack{\sigma\in S_{n};\\\sigma\neq\operatorname*{id}}}\left(  -1\right)
^{\sigma}0}_{=0}=\prod_{i=1}^{n}1=1.
\end{align*}
Thus, (\ref{pf.lem.xi-det.short.c3.det=1}) becomes%
\[
\det\left(  \left(  \left[  \nu_{i}\trianglerighteq j\right]  \right)
_{i,j\in\left[  n\right]  }\right)  =\left(  -1\right)  ^{\pi}\cdot
\underbrace{\det\left(  \left(  \left[  \xi_{i}\trianglerighteq j\right]
\right)  _{i,j\in\left[  n\right]  }\right)  }_{=1}=\left(  -1\right)  ^{\pi
}=\sum_{\substack{\sigma\in S_{n};\\\nu=\xi\circ\sigma}}\left(  -1\right)
^{\sigma}%
\]
(by (\ref{pf.lem.xi-det.c3.short.sum=})). Thus, Lemma \ref{lem.xi-det} is
proved in Case 3. \medskip
\end{vershort}

\begin{verlong}
However, we have
\begin{equation}
\left[  \xi_{i}\trianglerighteq i\right]  =1\ \ \ \ \ \ \ \ \ \ \text{for each
}i\in\left\{  1,2,\ldots,n\right\}  \label{pf.lem.xi-det.c3.diagelt}%
\end{equation}
\footnote{\textit{Proof of (\ref{pf.lem.xi-det.c3.diagelt}):} Let
$i\in\left\{  1,2,\ldots,n\right\}  $. We must prove that $\left[  \xi
_{i}\trianglerighteq i\right]  =1$. We are in one of the following two cases:
\par
\textit{Case 1:} We have $i\leq n-p$.
\par
\textit{Case 2:} We have $i>n-p$.
\par
Let us first consider Case 1. In this case, we have $i\leq n-p$. Combining
this with $i\geq1$ (which follows from $i\in\left\{  1,2,\ldots,n\right\}  $),
we obtain $i\in\left\{  1,2,\ldots,n-p\right\}  $. Thus,
(\ref{pf.lem.xi-det.xii=}) yields $\xi_{i}=i$. Hence, $\xi_{i}-i=0\in\left\{
0,1\right\}  $. In other words, $\xi_{i}\trianglerighteq i$ (because
Definition \ref{def.cover01} shows that the statement \textquotedblleft%
$\xi_{i}\trianglerighteq i$\textquotedblright\ is equivalent to
\textquotedblleft$\xi_{i}-i\in\left\{  0,1\right\}  $\textquotedblright).
Therefore, $\left[  \xi_{i}\trianglerighteq i\right]  =1$. Thus,
(\ref{pf.lem.xi-det.c3.diagelt}) is proved in Case 1.
\par
Let us next consider Case 2. In this case, we have $i>n-p$. Hence, $i\geq
n-p+1$ (since $i$ and $n-p$ are integers). Combining this with $i\leq n$
(which follows from $i\in\left\{  1,2,\ldots,n\right\}  $), we obtain
$i\in\left\{  n-p+1,n-p+2,\ldots,n\right\}  $. Thus, (\ref{pf.lem.xi-det.xin=}%
) yields $\xi_{i}=i+1$. Hence, $\xi_{i}-i=1\in\left\{  0,1\right\}  $. In
other words, $\xi_{i}\trianglerighteq i$ (because Definition \ref{def.cover01}
shows that the statement \textquotedblleft$\xi_{i}\trianglerighteq
i$\textquotedblright\ is equivalent to \textquotedblleft$\xi_{i}-i\in\left\{
0,1\right\}  $\textquotedblright). Therefore, $\left[  \xi_{i}\trianglerighteq
i\right]  =1$. Thus, (\ref{pf.lem.xi-det.c3.diagelt}) is proved in Case 2.
\par
We have now proved (\ref{pf.lem.xi-det.c3.diagelt}) in both Cases 1 and 2.
Thus, the proof of (\ref{pf.lem.xi-det.c3.diagelt}) is complete.}.

Now, consider the identity permutation $\operatorname*{id}:\left[  n\right]
\rightarrow\left[  n\right]  $ of the set $\left[  n\right]  $. The definition
of the determinant of a matrix yields%
\begin{align*}
\det\left(  \left(  \left[  \xi_{i}\trianglerighteq j\right]  \right)
_{i,j\in\left[  n\right]  }\right)   &  =\sum_{\sigma\in S_{n}}\left(
-1\right)  ^{\sigma}\prod_{i=1}^{n}\left[  \xi_{i}\trianglerighteq
\sigma\left(  i\right)  \right] \\
&  =\underbrace{\left(  -1\right)  ^{\operatorname*{id}}}_{=1}\prod_{i=1}%
^{n}\left[  \xi_{i}\trianglerighteq\underbrace{\operatorname*{id}\left(
i\right)  }_{=i}\right]  +\sum_{\substack{\sigma\in S_{n};\\\sigma
\neq\operatorname*{id}}}\left(  -1\right)  ^{\sigma}\underbrace{\prod
_{i=1}^{n}\left[  \xi_{i}\trianglerighteq\sigma\left(  i\right)  \right]
}_{\substack{=0\\\text{(by Lemma \ref{lem.xi-unique})}}}\\
&  \ \ \ \ \ \ \ \ \ \ \ \ \ \ \ \ \ \ \ \ \left(
\begin{array}
[c]{c}%
\text{here, we have split off the addend}\\
\text{for }\sigma=\operatorname*{id}\text{ from the sum (since }%
\operatorname*{id}\in S_{n}\text{)}%
\end{array}
\right) \\
&  =\prod_{i=1}^{n}\underbrace{\left[  \xi_{i}\trianglerighteq i\right]
}_{\substack{=1\\\text{(by (\ref{pf.lem.xi-det.c3.diagelt}))}}%
}+\underbrace{\sum_{\substack{\sigma\in S_{n};\\\sigma\neq\operatorname*{id}%
}}\left(  -1\right)  ^{\sigma}0}_{=0}=\prod_{i=1}^{n}1=1.
\end{align*}
Thus, (\ref{pf.lem.xi-det.c3.det=1}) becomes%
\[
\det\left(  \left(  \left[  \nu_{i}\trianglerighteq j\right]  \right)
_{i,j\in\left[  n\right]  }\right)  =\left(  -1\right)  ^{\pi}\cdot
\underbrace{\det\left(  \left(  \left[  \xi_{i}\trianglerighteq j\right]
\right)  _{i,j\in\left[  n\right]  }\right)  }_{=1}=\left(  -1\right)  ^{\pi
}=\sum_{\substack{\sigma\in S_{n};\\\nu=\xi\circ\sigma}}\left(  -1\right)
^{\sigma}%
\]
(by (\ref{pf.lem.xi-det.c3.sum=})). Thus, Lemma \ref{lem.xi-det} is proved in
Case 3. \medskip
\end{verlong}

We have now proved Lemma \ref{lem.xi-det} in all three Cases 1, 2 and 3. This
completes the proof of Lemma \ref{lem.xi-det}.
\end{proof}

Our next lemma is an analogue to Lemma \ref{lem.nu-det2}:

\begin{lemma}
\label{lem.xi-det2}Let $n\in\mathbb{N}$. For any permutation $\sigma\in S_{n}%
$, we let $\overline{\sigma}$ denote the $n$-tuple $\left(  \sigma\left(
1\right)  ,\sigma\left(  2\right)  ,\ldots,\sigma\left(  n\right)  \right)
\in\mathbb{Z}^{n}$.

Let $\nu\in\mathbb{Z}^{n}$ be an $n$-tuple. Then,%
\[
\det\left(  \left(  \left[  \nu_{i}\trianglerighteq j\right]  \right)
_{i,j\in\left[  n\right]  }\right)  =\sum_{\substack{\sigma\in S_{n}%
;\\\nu-\overline{\sigma}\in\left\{  0,1\right\}  ^{n}}}\left(  -1\right)
^{\sigma}.
\]

\end{lemma}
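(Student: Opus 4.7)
The plan is to mimic the proof of Lemma \ref{lem.nu-det2} essentially verbatim, replacing the relation $\geq$ by $\trianglerighteq$ and the set $\mathbb{N}^{n}$ by $\left\{  0,1\right\}  ^{n}$. The key observation is the following Iverson-bracket identity: for each $\sigma\in S_{n}$, the truth value $\left[  \nu-\overline{\sigma}\in\left\{  0,1\right\}  ^{n}\right]  $ equals the product $\prod_{i=1}^{n}\left[  \nu_{i}\trianglerighteq\sigma\left(  i\right)  \right]  $. This is because $\overline{\sigma}_{i}=\sigma\left(  i\right)  $ (by the definition of $\overline{\sigma}$), hence $\left(  \nu-\overline{\sigma}\right)  _{i}=\nu_{i}-\sigma\left(  i\right)  $, and the statement \textquotedblleft$\nu-\overline{\sigma}\in\left\{  0,1\right\}  ^{n}$\textquotedblright\ unpacks into the conjunction \textquotedblleft$\nu_{i}-\sigma\left(  i\right)  \in\left\{  0,1\right\}  $ for all $i\in\left[  n\right]  $\textquotedblright, which by Definition \ref{def.cover01} is precisely \textquotedblleft$\nu_{i}\trianglerighteq\sigma\left(  i\right)  $ for all $i\in\left[  n\right]  $\textquotedblright. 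The usual rule that the truth value of a conjunction is the product of the truth values of its parts then yields the claimed equality.

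First I would establish this Iverson-bracket identity explicitly (by the two-case analysis used in the long version of the proof of Lemma \ref{lem.nu-det2}: either $\nu-\overline{\sigma}\in\left\{  0,1\right\}  ^{n}$, in which case all factors are $1$; or there exists some $j\in\left[  n\right]  $ with $\nu_{j}-\sigma\left(  j\right)  \notin\left\{  0,1\right\}  $, in which case the $j$-th factor is $0$, killing the product, while the left-hand side is also $0$).

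Next, I would expand the determinant on the left side of the claim using the Leibniz formula:
\[
\det\left(  \left(  \left[  \nu_{i}\trianglerighteq j\right]  \right)
_{i,j\in\left[  n\right]  }\right)  =\sum_{\sigma\in S_{n}}\left(  -1\right)
^{\sigma}\prod_{i=1}^{n}\left[  \nu_{i}\trianglerighteq\sigma\left(  i\right)
\right]  ,
\]
substitute the Iverson-bracket identity into the product, and then split the sum over $\sigma\in S_{n}$ according to whether $\nu-\overline{\sigma}\in\left\{  0,1\right\}  ^{n}$ or not. The addends satisfying $\nu-\overline{\sigma}\in\left\{  0,1\right\}  ^{n}$ contribute $\left(  -1\right)  ^{\sigma}\cdot1$, while all remaining addends contribute $\left(  -1\right)  ^{\sigma}\cdot0=0$ and can be discarded. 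This yields the desired equality
\[
\det\left(  \left(  \left[  \nu_{i}\trianglerighteq j\right]  \right)
_{i,j\in\left[  n\right]  }\right)  =\sum_{\substack{\sigma\in S_{n}
;\\\nu-\overline{\sigma}\in\left\{  0,1\right\}  ^{n}}}\left(  -1\right)
^{\sigma}.
\]

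There is no real obstacle here; the proof is a mechanical transcription of the argument in Lemma \ref{lem.nu-det2}, with the only conceptual change being the swap of the \textquotedblleft$a-b\in\mathbb{N}\Leftrightarrow a\geq b$\textquotedblright\ equivalence for the \textquotedblleft$a-b\in\left\{  0,1\right\}  \Leftrightarrow a\trianglerighteq b$\textquotedblright\ equivalence (which is just Definition \ref{def.cover01}).
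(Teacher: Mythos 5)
Your proposal is correct and is essentially identical to the paper's own proof, which likewise obtains the result by transcribing the argument of Lemma \ref{lem.nu-det2} with $\geq$ replaced by $\trianglerighteq$ and $\mathbb{N}^{n}$ replaced by $\left\{ 0,1\right\} ^{n}$, using the Iverson-bracket identity $\left[ \nu-\overline{\sigma}\in\left\{ 0,1\right\} ^{n}\right] =\prod_{i=1}^{n}\left[ \nu_{i}\trianglerighteq\sigma\left( i\right) \right] $ and the Leibniz expansion of the determinant.
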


\begin{vershort}
\begin{proof}
[Proof of Lemma \ref{lem.xi-det2}.]This proof is similar to the above proof of
Lemma \ref{lem.nu-det2}; we leave the necessary changes to the reader.
\end{proof}
\end{vershort}

\begin{verlong}
The following proof of Lemma \ref{lem.xi-det2} is strongly similar to the
above proof of Lemma \ref{lem.nu-det2}; we nevertheless provide it here in
full for the sake of completeness.

\begin{proof}
[Proof of Lemma \ref{lem.xi-det2}.]We first observe that each $\sigma\in
S_{n}$ satisfies%
\begin{equation}
\left[  \nu-\overline{\sigma}\in\left\{  0,1\right\}  ^{n}\right]
=\prod_{i=1}^{n}\left[  \nu_{i}\trianglerighteq\sigma\left(  i\right)
\right]  . \label{pf.lem.xi-det2.ives}%
\end{equation}

[\textit{Proof of (\ref{pf.lem.xi-det2.ives}):} Let $\sigma\in S_{n}$ be
arbitrary. We must prove the equality (\ref{pf.lem.xi-det2.ives}).

The definition of $\overline{\sigma}$ yields $\overline{\sigma}=\left(
\sigma\left(  1\right)  ,\sigma\left(  2\right)  ,\ldots,\sigma\left(
n\right)  \right)  $. Hence, for each $i\in\left[  n\right]  $, we have%
\begin{equation}
\overline{\sigma}_{i}=\sigma\left(  i\right)  .
\label{pf.pf.lem.xi-det2.ives.pf.sigbar}%
\end{equation}

We are in one of the following two cases:

\textit{Case 1:} We have $\nu-\overline{\sigma}\in\left\{  0,1\right\}  ^{n}$.

\textit{Case 2:} We have $\nu-\overline{\sigma}\notin\left\{  0,1\right\}
^{n}$. \medskip

Let us first consider Case 1. In this case, we have $\nu-\overline{\sigma}%
\in\left\{  0,1\right\}  ^{n}$. Now, let $i\in\left[  n\right]  $. Then,
$\left(  \nu-\overline{\sigma}\right)  _{i}\in\left\{  0,1\right\}  $ (since
$\nu-\overline{\sigma}\in\left\{  0,1\right\}  ^{n}$). In view of
\[
\left(  \nu-\overline{\sigma}\right)  _{i}=\nu_{i}-\underbrace{\overline
{\sigma}_{i}}_{\substack{=\sigma\left(  i\right)  \\\text{(by
(\ref{pf.pf.lem.xi-det2.ives.pf.sigbar}))}}}=\nu_{i}-\sigma\left(  i\right)
,
\]
we can rewrite this as $\nu_{i}-\sigma\left(  i\right)  \in\left\{
0,1\right\}  $. In other words, $\nu_{i}\trianglerighteq\sigma\left(
i\right)  $ (since Definition \ref{def.cover01} says that the statement
\textquotedblleft$\nu_{i}\trianglerighteq\sigma\left(  i\right)
$\textquotedblright\ is equivalent to \textquotedblleft$\nu_{i}-\sigma\left(
i\right)  \in\left\{  0,1\right\}  $\textquotedblright). Thus, $\left[
\nu_{i}\trianglerighteq\sigma\left(  i\right)  \right]  =1$.

Forget that we fixed $i$. Thus, we have shown that $\left[  \nu_{i}%
\trianglerighteq\sigma\left(  i\right)  \right]  =1$ for each $i\in\left[
n\right]  $. Multiplying these equalities over all $i\in\left[  n\right]  $,
we obtain $\prod_{i\in\left[  n\right]  }\left[  \nu_{i}\trianglerighteq
\sigma\left(  i\right)  \right]  =\prod_{i\in\left[  n\right]  }1=1$. On the
other hand, $\left[  \nu-\overline{\sigma}\in\left\{  0,1\right\}
^{n}\right]  =1$ (since $\nu-\overline{\sigma}\in\left\{  0,1\right\}  ^{n}$).
Comparing these two equalities, we obtain%
\[
\left[  \nu-\overline{\sigma}\in\left\{  0,1\right\}  ^{n}\right]
=\underbrace{\prod_{i\in\left[  n\right]  }}_{=\prod_{i=1}^{n}}\left[  \nu
_{i}\trianglerighteq\sigma\left(  i\right)  \right]  =\prod_{i=1}^{n}\left[
\nu_{i}\trianglerighteq\sigma\left(  i\right)  \right]  .
\]
Thus, (\ref{pf.lem.xi-det2.ives}) is proved in Case 1. \medskip

Let us now consider Case 2. In this case, we have $\nu-\overline{\sigma}%
\notin\left\{  0,1\right\}  ^{n}$. Hence, there exists some $j\in\left[
n\right]  $ such that $\left(  \nu-\overline{\sigma}\right)  _{j}%
\notin\left\{  0,1\right\}  $. Consider this $j$. We have $\overline{\sigma
}_{j}=\sigma\left(  j\right)  $ (by (\ref{pf.pf.lem.xi-det2.ives.pf.sigbar}),
applied to $i=j$). Now,
\[
\left(  \nu-\overline{\sigma}\right)  _{j}=\nu_{j}-\underbrace{\overline
{\sigma}_{j}}_{=\sigma\left(  j\right)  }=\nu_{j}-\sigma\left(  j\right)  ,
\]
so that $\nu_{j}-\sigma\left(  j\right)  =\left(  \nu-\overline{\sigma
}\right)  _{j}\notin\left\{  0,1\right\}  $. In other words, we don't have
$\nu_{j}-\sigma\left(  j\right)  \in\left\{  0,1\right\}  $. In other words,
we don't have $\nu_{j}\trianglerighteq\sigma\left(  j\right)  $ (since
Definition \ref{def.cover01} says that the statement \textquotedblleft$\nu
_{j}\trianglerighteq\sigma\left(  j\right)  $\textquotedblright\ is equivalent
to \textquotedblleft$\nu_{j}-\sigma\left(  j\right)  \in\left\{  0,1\right\}
$\textquotedblright). Thus, $\left[  \nu_{j}\trianglerighteq\sigma\left(
j\right)  \right]  =0$.

Now, consider the product $\prod_{i=1}^{n}\left[  \nu_{i}\trianglerighteq
\sigma\left(  i\right)  \right]  $. The term $\left[  \nu_{j}\trianglerighteq
\sigma\left(  j\right)  \right]  $ is one of the factors of this product
(namely, the factor for $i=j$). Since this term is $0$ (because we have just
shown that $\left[  \nu_{j}\trianglerighteq\sigma\left(  j\right)  \right]
=0$), we thus conclude that one of the factors of the product $\prod_{i=1}%
^{n}\left[  \nu_{i}\trianglerighteq\sigma\left(  i\right)  \right]  $ is $0$.
Therefore, the entire product is $0$. In other words,
\begin{equation}
\prod_{i=1}^{n}\left[  \nu_{i}\trianglerighteq\sigma\left(  i\right)  \right]
=0. \label{pf.pf.lem.xi-det2.ives.pf.c2.prod0}%
\end{equation}
On the other hand, we do not have $\nu-\overline{\sigma}\in\left\{
0,1\right\}  ^{n}$ (since $\nu-\overline{\sigma}\notin\left\{  0,1\right\}
^{n}$). Hence, $\left[  \nu-\overline{\sigma}\in\left\{  0,1\right\}
^{n}\right]  =0$. Comparing this with
(\ref{pf.pf.lem.xi-det2.ives.pf.c2.prod0}), we obtain%
\[
\left[  \nu-\overline{\sigma}\in\left\{  0,1\right\}  ^{n}\right]
=\prod_{i=1}^{n}\left[  \nu_{i}\trianglerighteq\sigma\left(  i\right)
\right]  .
\]
Thus, (\ref{pf.lem.xi-det2.ives}) is proved in Case 2. \medskip

We have now proved (\ref{pf.lem.xi-det2.ives}) in both Cases 1 and 2. Thus,
the proof of (\ref{pf.lem.xi-det2.ives}) is complete.] \medskip

Now, the definition of the determinant of a matrix yields%
\begin{align*}
&  \det\left(  \left(  \left[  \nu_{i}\trianglerighteq j\right]  \right)
_{i,j\in\left[  n\right]  }\right) \\
&  =\sum_{\sigma\in S_{n}}\left(  -1\right)  ^{\sigma}\underbrace{\prod
_{i=1}^{n}\left[  \nu_{i}\trianglerighteq\sigma\left(  i\right)  \right]
}_{\substack{=\left[  \nu-\overline{\sigma}\in\left\{  0,1\right\}
^{n}\right]  \\\text{(by (\ref{pf.lem.xi-det2.ives}))}}}=\sum_{\sigma\in
S_{n}}\left(  -1\right)  ^{\sigma}\left[  \nu-\overline{\sigma}\in\left\{
0,1\right\}  ^{n}\right] \\
&  =\sum_{\substack{\sigma\in S_{n};\\\nu-\overline{\sigma}\in\left\{
0,1\right\}  ^{n}}}\left(  -1\right)  ^{\sigma}\underbrace{\left[
\nu-\overline{\sigma}\in\left\{  0,1\right\}  ^{n}\right]  }%
_{\substack{=1\\\text{(since }\nu-\overline{\sigma}\in\left\{  0,1\right\}
^{n}\text{)}}}+\sum_{\substack{\sigma\in S_{n};\\\nu-\overline{\sigma}%
\notin\left\{  0,1\right\}  ^{n}}}\left(  -1\right)  ^{\sigma}%
\underbrace{\left[  \nu-\overline{\sigma}\in\left\{  0,1\right\}  ^{n}\right]
}_{\substack{=0\\\text{(since we don't}\\\text{have }\nu-\overline{\sigma}%
\in\left\{  0,1\right\}  ^{n}\\\text{(because }\nu-\overline{\sigma}%
\notin\left\{  0,1\right\}  ^{n}\text{))}}}\\
&  \ \ \ \ \ \ \ \ \ \ \ \ \ \ \ \ \ \ \ \ \left(
\begin{array}
[c]{c}%
\text{since each }\sigma\in S_{n}\text{ satisfies either }\nu-\overline
{\sigma}\in\left\{  0,1\right\}  ^{n}\\
\text{or }\nu-\overline{\sigma}\notin\left\{  0,1\right\}  ^{n}\text{ (but not
both at the same time)}%
\end{array}
\right) \\
&  =\sum_{\substack{\sigma\in S_{n};\\\nu-\overline{\sigma}\in\left\{
0,1\right\}  ^{n}}}\left(  -1\right)  ^{\sigma}+\underbrace{\sum
_{\substack{\sigma\in S_{n};\\\nu-\overline{\sigma}\notin\left\{  0,1\right\}
^{n}}}\left(  -1\right)  ^{\sigma}0}_{=0}=\sum_{\substack{\sigma\in
S_{n};\\\nu-\overline{\sigma}\in\left\{  0,1\right\}  ^{n}}}\left(  -1\right)
^{\sigma}.
\end{align*}
This proves Lemma \ref{lem.xi-det2}.
\end{proof}
\end{verlong}

We can now prove Theorem \ref{thm.pre-pieri2}:

\begin{vershort}
\begin{proof}
[Proof of Theorem \ref{thm.pre-pieri2}.]The first step is to check that
$\left\vert \xi\right\vert =\left(  1+2+\cdots+n\right)  +p$. The rest of the
proof is a straightforward modification of the above proof of Theorem
\ref{thm.pre-pieri} (Lemmas \ref{lem.xi-det} and \ref{lem.xi-det2} have to be
applied instead of Lemmas \ref{lem.nu-det} and \ref{lem.nu-det2}). We leave
the details to the reader.
\end{proof}
\end{vershort}

\begin{verlong}
\begin{proof}
[Proof of Theorem \ref{thm.pre-pieri2}.]The definition of $\xi$ yields%
\begin{equation}
\xi=\left(  1,2,\ldots,n\right)  +\left(  \underbrace{0,0,\ldots,0}_{n-p\text{
zeroes}},\underbrace{1,1,\ldots,1}_{p\text{ ones}}\right)  .
\label{pf.thm.pre-pieri2.1}%
\end{equation}
Thus,%
\begin{align*}
\left\vert \xi\right\vert  &  =\left\vert \left(  1,2,\ldots,n\right)
+\left(  \underbrace{0,0,\ldots,0}_{n-p\text{ zeroes}},\underbrace{1,1,\ldots
,1}_{p\text{ ones}}\right)  \right\vert \\
&  =\left\vert \left(  1,2,\ldots,n\right)  \right\vert +\left\vert \left(
\underbrace{0,0,\ldots,0}_{n-p\text{ zeroes}},\underbrace{1,1,\ldots
,1}_{p\text{ ones}}\right)  \right\vert
\end{align*}
(by Lemma \ref{lem.additive}, applied to $\alpha=\left(  1,2,\ldots,n\right)
$ and $\beta=\left(  \underbrace{0,0,\ldots,0}_{n-p\text{ zeroes}%
},\underbrace{1,1,\ldots,1}_{p\text{ ones}}\right)  $). In view of%
\[
\left\vert \left(  1,2,\ldots,n\right)  \right\vert =1+2+\cdots
+n\ \ \ \ \ \ \ \ \ \ \left(  \text{by the definition of }\left\vert \left(
1,2,\ldots,n\right)  \right\vert \right)
\]
and%
\begin{align*}
&  \left\vert \left(  \underbrace{0,0,\ldots,0}_{n-p\text{ zeroes}%
},\underbrace{1,1,\ldots,1}_{p\text{ ones}}\right)  \right\vert \\
&  =\underbrace{0+0+\cdots+0}_{n-p\text{ zeroes}}+\underbrace{1+1+\cdots
+1}_{p\text{ ones}}\\
&  \ \ \ \ \ \ \ \ \ \ \ \ \ \ \ \ \ \ \ \ \left(  \text{by the definition of
}\left\vert \left(  \underbrace{0,0,\ldots,0}_{n-p\text{ zeroes}%
},\underbrace{1,1,\ldots,1}_{p\text{ ones}}\right)  \right\vert \right) \\
&  =\left(  n-p\right)  \cdot0+p\cdot1=p,
\end{align*}
this rewrites as%
\begin{equation}
\left\vert \xi\right\vert =\left(  1+2+\cdots+n\right)  +p.
\label{pf.thm.pre-pieri2.sum-xi}%
\end{equation}

For any permutation $\sigma\in S_{n}$, we let $\overline{\sigma}$ denote the
$n$-tuple $\left(  \sigma\left(  1\right)  ,\sigma\left(  2\right)
,\ldots,\sigma\left(  n\right)  \right)  \in\mathbb{Z}^{n}$. This $n$-tuple
$\overline{\sigma}$ satisfies $\left\vert \overline{\sigma}\right\vert
=1+2+\cdots+n$\ \ \ \ \footnote{\textit{Proof.} Let $\sigma\in S_{n}$. Thus,
$\sigma$ is a permutation of $\left[  n\right]  $ (since $S_{n}$ was defined
as the set of all permutations of $\left[  n\right]  $). In other words,
$\sigma$ is a bijection from $\left[  n\right]  $ to $\left[  n\right]  $.
However, the definition of $\overline{\sigma}$ yields $\overline{\sigma
}=\left(  \sigma\left(  1\right)  ,\sigma\left(  2\right)  ,\ldots
,\sigma\left(  n\right)  \right)  $. Thus,%
\begin{align*}
\left\vert \overline{\sigma}\right\vert  &  =\left\vert \left(  \sigma\left(
1\right)  ,\sigma\left(  2\right)  ,\ldots,\sigma\left(  n\right)  \right)
\right\vert =\sigma\left(  1\right)  +\sigma\left(  2\right)  +\cdots
+\sigma\left(  n\right) \\
&  \ \ \ \ \ \ \ \ \ \ \ \ \ \ \ \ \ \ \ \ \left(  \text{by the definition of
}\left\vert \left(  \sigma\left(  1\right)  ,\sigma\left(  2\right)
,\ldots,\sigma\left(  n\right)  \right)  \right\vert \right) \\
&  =\sum_{i\in\left[  n\right]  }\sigma\left(  i\right)  =\sum_{i\in\left[
n\right]  }i\ \ \ \ \ \ \ \ \ \ \left(
\begin{array}
[c]{c}%
\text{here, we have substituted }i\text{ for }\sigma\left(  i\right)  \text{
in the sum,}\\
\text{since the map }\sigma:\left[  n\right]  \rightarrow\left[  n\right]
\text{ is a bijection}%
\end{array}
\right) \\
&  =1+2+\cdots+n,
\end{align*}
qed.} and therefore
\begin{equation}
\underbrace{\left\vert \overline{\sigma}\right\vert }_{=1+2+\cdots
+n}+p=\left(  1+2+\cdots+n\right)  +p=\left\vert \xi\right\vert
\label{pf.thm.pre-pieri2.modsigbar1}%
\end{equation}
(by (\ref{pf.thm.pre-pieri2.sum-xi})). Moreover, for each $\sigma\in S_{n}$
and each $i\in\left[  n\right]  $, we have%
\begin{equation}
\overline{\sigma}_{i}=\sigma\left(  i\right)
\label{pf.thm.pre-pieri2.sigbari}%
\end{equation}
(since $\overline{\sigma}=\left(  \sigma\left(  1\right)  ,\sigma\left(
2\right)  ,\ldots,\sigma\left(  n\right)  \right)  $).

Now, we claim the following:

\begin{statement}
\textit{Claim 1:} Let $\beta\in\left\{  0,1\right\}  ^{n}$ and $\sigma\in
S_{n}$ be arbitrary. Then, the statement \textquotedblleft$\left\vert
\beta\right\vert =p$\textquotedblright\ is equivalent to \textquotedblleft%
$\left\vert \beta+\overline{\sigma}\right\vert =\left\vert \xi\right\vert
$\textquotedblright.
\end{statement}

[\textit{Proof of Claim 1:} We have $\beta+\overline{\sigma}=\overline{\sigma
}+\beta$ (since $\mathbb{Z}^{n}$ is an abelian group). Thus, $\left\vert
\beta+\overline{\sigma}\right\vert =\left\vert \overline{\sigma}%
+\beta\right\vert =\left\vert \overline{\sigma}\right\vert +\left\vert
\beta\right\vert $ (by Lemma \ref{lem.additive}, applied to $\alpha
=\overline{\sigma}$). Therefore,%
\begin{align}
\underbrace{\left\vert \beta+\overline{\sigma}\right\vert }_{=\left\vert
\overline{\sigma}\right\vert +\left\vert \beta\right\vert }%
-\underbrace{\left\vert \xi\right\vert }_{\substack{=\left\vert \overline
{\sigma}\right\vert +p\\\text{(by (\ref{pf.thm.pre-pieri2.modsigbar1}))}}}  &
=\left(  \left\vert \overline{\sigma}\right\vert +\left\vert \beta\right\vert
\right)  -\left(  \left\vert \overline{\sigma}\right\vert +p\right)
\nonumber\\
&  =\left\vert \beta\right\vert -p. \label{pf.thm.pre-pieri2.modsigbar2}%
\end{align}
Hence, we have the following chain of logical equivalences:%
\begin{align*}
\left(  \left\vert \beta\right\vert =p\right)  \  &  \Longleftrightarrow
\ \left(  \underbrace{\left\vert \beta\right\vert -p}_{\substack{=\left\vert
\beta+\overline{\sigma}\right\vert -\left\vert \xi\right\vert \\\text{(by
(\ref{pf.thm.pre-pieri2.modsigbar2}))}}}=0\right)  \ \ \Longleftrightarrow
\ \left(  \left\vert \beta+\overline{\sigma}\right\vert -\left\vert
\xi\right\vert =0\right) \\
&  \Longleftrightarrow\ \left(  \left\vert \beta+\overline{\sigma}\right\vert
=\left\vert \xi\right\vert \right)  .
\end{align*}
In other words, the statement \textquotedblleft$\left\vert \beta\right\vert
=p$\textquotedblright\ is equivalent to \textquotedblleft$\left\vert
\beta+\overline{\sigma}\right\vert =\left\vert \xi\right\vert $%
\textquotedblright. This proves Claim 1.] \medskip

Let us now set%
\begin{equation}
\prod_{i=1}^{n}a_{i}:=a_{1}a_{2}\cdots a_{n}
\label{pf.thm.pre-pieri2.bottom-up-prod}%
\end{equation}
for any $a_{1},a_{2},\ldots,a_{n}\in R$. (This is, of course, the usual
meaning of the notation $\prod_{i=1}^{n}a_{i}$ when the ring $R$ is
commutative; however, we are now extending it to the case of arbitrary $R$ by
using the formula (\ref{pf.thm.pre-pieri2.bottom-up-prod}).)

Thus, if $\left(  a_{i,j}\right)  _{i,j\in\left[  n\right]  }$ is any $n\times
n$-matrix over $R$, then the definition of $\operatorname*{rowdet}\left(
\left(  a_{i,j}\right)  _{i,j\in\left[  n\right]  }\right)  $ yields%
\begin{align}
\operatorname*{rowdet}\left(  \left(  a_{i,j}\right)  _{i,j\in\left[
n\right]  }\right)   &  =\sum_{\sigma\in S_{n}}\left(  -1\right)  ^{\sigma
}\underbrace{a_{1,\sigma\left(  1\right)  }a_{2,\sigma\left(  2\right)
}\cdots a_{n,\sigma\left(  n\right)  }}_{=\prod_{i=1}^{n}a_{i,\sigma\left(
i\right)  }}\nonumber\\
&  =\sum_{\sigma\in S_{n}}\left(  -1\right)  ^{\sigma}\prod_{i=1}%
^{n}a_{i,\sigma\left(  i\right)  }. \label{pf.thm.pre-pieri2.rowdetA=}%
\end{align}

For each $\beta\in\left\{  0,1\right\}  ^{n}$, we have%
\begin{align}
t_{\alpha+\beta}  &  =\operatorname*{rowdet}\left(  \left(  h_{\left(
\alpha+\beta\right)  _{i}+j,\ i}\right)  _{i,j\in\left[  n\right]  }\right)
\ \ \ \ \ \ \ \ \ \ \left(  \text{by the definition of }t_{\alpha+\beta
}\right) \nonumber\\
&  =\operatorname*{rowdet}\left(  \left(  h_{\alpha_{i}+\beta_{i}%
+j,\ i}\right)  _{i,j\in\left[  n\right]  }\right) \nonumber\\
&  \ \ \ \ \ \ \ \ \ \ \ \ \ \ \ \ \ \ \ \ \left(  \text{since }\left(
\alpha+\beta\right)  _{i}=\alpha_{i}+\beta_{i}\text{ for each }i\in\left[
n\right]  \right) \nonumber\\
&  =\sum_{\sigma\in S_{n}}\left(  -1\right)  ^{\sigma}\prod_{i=1}^{n}%
h_{\alpha_{i}+\beta_{i}+\sigma\left(  i\right)  ,\ i}%
\label{pf.thm.pre-pieri2.t1}\\
&  \ \ \ \ \ \ \ \ \ \ \ \ \ \ \ \ \ \ \ \ \left(  \text{by
(\ref{pf.thm.pre-pieri2.rowdetA=}), applied to }a_{i,j}=h_{\alpha_{i}%
+\beta_{i}+j,\ i}\right)  .\nonumber
\end{align}
However, for each $\beta\in\left\{  0,1\right\}  ^{n}$ and each $\sigma\in
S_{n}$, we have%
\[
\beta_{i}+\underbrace{\sigma\left(  i\right)  }_{\substack{=\overline{\sigma
}_{i}\\\text{(by (\ref{pf.thm.pre-pieri2.sigbari}))}}}=\beta_{i}%
+\overline{\sigma}_{i}=\left(  \beta+\overline{\sigma}\right)  _{i}%
\]
and therefore%
\begin{equation}
h_{\alpha_{i}+\beta_{i}+\sigma\left(  i\right)  ,\ i}=h_{\alpha_{i}+\left(
\beta+\overline{\sigma}\right)  _{i},\ i}. \label{pf.thm.pre-pieri2.h=h1}%
\end{equation}

Hence, for each $\beta\in\left\{  0,1\right\}  ^{n}$, we have the equality%
\begin{align*}
t_{\alpha+\beta}  &  =\sum_{\sigma\in S_{n}}\left(  -1\right)  ^{\sigma}%
\prod_{i=1}^{n}\underbrace{h_{\alpha_{i}+\beta_{i}+\sigma\left(  i\right)
,\ i}}_{\substack{=h_{\alpha_{i}+\left(  \beta+\overline{\sigma}\right)
_{i},\ i}\\\text{(by (\ref{pf.thm.pre-pieri2.h=h1}))}}%
}\ \ \ \ \ \ \ \ \ \ \left(  \text{by (\ref{pf.thm.pre-pieri2.t1})}\right) \\
&  =\sum_{\sigma\in S_{n}}\left(  -1\right)  ^{\sigma}\prod_{i=1}^{n}%
h_{\alpha_{i}+\left(  \beta+\overline{\sigma}\right)  _{i},\ i}.
\end{align*}
Summing these equalities over all $\beta\in\left\{  0,1\right\}  ^{n}$
satisfying $\left\vert \beta\right\vert =p$, we obtain%
\begin{align}
\sum_{\substack{\beta\in\left\{  0,1\right\}  ^{n};\\\left\vert \beta
\right\vert =p}}t_{\alpha+\beta}  &  =\underbrace{\sum_{\substack{\beta
\in\left\{  0,1\right\}  ^{n};\\\left\vert \beta\right\vert =p}}\ \ \sum
_{\sigma\in S_{n}}}_{=\sum_{\sigma\in S_{n}}\ \ \sum_{\substack{\beta
\in\left\{  0,1\right\}  ^{n};\\\left\vert \beta\right\vert =p}}}\left(
-1\right)  ^{\sigma}\prod_{i=1}^{n}h_{\alpha_{i}+\left(  \beta+\overline
{\sigma}\right)  _{i},\ i}\nonumber\\
&  =\sum_{\sigma\in S_{n}}\ \ \sum_{\substack{\beta\in\left\{  0,1\right\}
^{n};\\\left\vert \beta\right\vert =p}}\left(  -1\right)  ^{\sigma}\prod
_{i=1}^{n}h_{\alpha_{i}+\left(  \beta+\overline{\sigma}\right)  _{i}%
,\ i}\nonumber\\
&  =\sum_{\sigma\in S_{n}}\left(  -1\right)  ^{\sigma}\sum_{\substack{\beta
\in\left\{  0,1\right\}  ^{n};\\\left\vert \beta\right\vert =p}}\ \ \prod
_{i=1}^{n}h_{\alpha_{i}+\left(  \beta+\overline{\sigma}\right)  _{i},\ i}.
\label{pf.thm.pre-pieri2.sumt=1}%
\end{align}

Now, fix a permutation $\sigma\in S_{n}$. We shall rewrite the sum
$\sum_{\substack{\beta\in\left\{  0,1\right\}  ^{n};\\\left\vert
\beta\right\vert =p}}\ \ \prod_{i=1}^{n}h_{\alpha_{i}+\left(  \beta
+\overline{\sigma}\right)  _{i},\ i}$ in terms of $\beta+\overline{\sigma}$.
Indeed, Claim 1 shows that for any $n$-tuple $\beta\in\left\{  0,1\right\}
^{n}$, the statement \textquotedblleft$\left\vert \beta\right\vert
=p$\textquotedblright\ is equivalent to \textquotedblleft$\left\vert
\beta+\overline{\sigma}\right\vert =\left\vert \xi\right\vert $%
\textquotedblright. Hence, we have the following equality of summation signs:%
\begin{align*}
\sum_{\substack{\beta\in\left\{  0,1\right\}  ^{n};\\\left\vert \beta
\right\vert =p}}  &  =\sum_{\substack{\beta\in\left\{  0,1\right\}
^{n};\\\left\vert \beta+\overline{\sigma}\right\vert =\left\vert
\xi\right\vert }}=\sum_{\substack{\beta\in\mathbb{Z}^{n};\\\beta\in\left\{
0,1\right\}  ^{n};\\\left\vert \beta+\overline{\sigma}\right\vert =\left\vert
\xi\right\vert }}\ \ \ \ \ \ \ \ \ \ \left(  \text{since }\left\{
0,1\right\}  ^{n}\subseteq\mathbb{Z}^{n}\right) \\
&  =\sum_{\substack{\beta\in\mathbb{Z}^{n};\\\left(  \beta+\overline{\sigma
}\right)  -\overline{\sigma}\in\left\{  0,1\right\}  ^{n};\\\left\vert
\beta+\overline{\sigma}\right\vert =\left\vert \xi\right\vert }%
}\ \ \ \ \ \ \ \ \ \ \left(  \text{since }\beta=\left(  \beta+\overline
{\sigma}\right)  -\overline{\sigma}\right)  .
\end{align*}
Thus,%
\[
\underbrace{\sum_{\substack{\beta\in\left\{  0,1\right\}  ^{n};\\\left\vert
\beta\right\vert =p}}}_{\substack{=\sum_{\substack{\beta\in\mathbb{Z}%
^{n};\\\left(  \beta+\overline{\sigma}\right)  -\overline{\sigma}\in\left\{
0,1\right\}  ^{n};\\\left\vert \beta+\overline{\sigma}\right\vert =\left\vert
\xi\right\vert }}}}\ \ \prod_{i=1}^{n}h_{\alpha_{i}+\left(  \beta
+\overline{\sigma}\right)  _{i},\ i}=\sum_{\substack{\beta\in\mathbb{Z}%
^{n};\\\left(  \beta+\overline{\sigma}\right)  -\overline{\sigma}\in\left\{
0,1\right\}  ^{n};\\\left\vert \beta+\overline{\sigma}\right\vert =\left\vert
\xi\right\vert }}\ \ \prod_{i=1}^{n}h_{\alpha_{i}+\left(  \beta+\overline
{\sigma}\right)  _{i},\ i}.
\]

However, $\mathbb{Z}^{n}$ is a group (under addition). Hence, for any $\rho
\in\mathbb{Z}^{n}$, the map%
\begin{align*}
\mathbb{Z}^{n}  &  \rightarrow\mathbb{Z}^{n},\\
\beta &  \mapsto\beta+\rho
\end{align*}
is a bijection (whose inverse is the map $\mathbb{Z}^{n}\rightarrow
\mathbb{Z}^{n},\ \nu\mapsto\nu-\rho$). Applying this to $\rho=\overline
{\sigma}$, we conclude that the map
\begin{align*}
\mathbb{Z}^{n}  &  \rightarrow\mathbb{Z}^{n},\\
\beta &  \mapsto\beta+\overline{\sigma}%
\end{align*}
is a bijection (whose inverse is the map $\mathbb{Z}^{n}\rightarrow
\mathbb{Z}^{n},\ \nu\mapsto\nu-\overline{\sigma}$). Therefore, we can
substitute $\nu$ for $\beta+\overline{\sigma}$ in the sum $\sum
_{\substack{\beta\in\mathbb{Z}^{n};\\\left(  \beta+\overline{\sigma}\right)
-\overline{\sigma}\in\left\{  0,1\right\}  ^{n};\\\left\vert \beta
+\overline{\sigma}\right\vert =\left\vert \xi\right\vert }}\ \ \prod_{i=1}%
^{n}h_{\alpha_{i}+\left(  \beta+\overline{\sigma}\right)  _{i},\ i}$. We thus
obtain%
\[
\sum_{\substack{\beta\in\mathbb{Z}^{n};\\\left(  \beta+\overline{\sigma
}\right)  -\overline{\sigma}\in\left\{  0,1\right\}  ^{n};\\\left\vert
\beta+\overline{\sigma}\right\vert =\left\vert \xi\right\vert }}\ \ \prod
_{i=1}^{n}h_{\alpha_{i}+\left(  \beta+\overline{\sigma}\right)  _{i},\ i}%
=\sum_{\substack{\nu\in\mathbb{Z}^{n};\\\nu-\overline{\sigma}\in\left\{
0,1\right\}  ^{n};\\\left\vert \nu\right\vert =\left\vert \xi\right\vert
}}\ \ \prod_{i=1}^{n}h_{\alpha_{i}+\nu_{i},\ i}.
\]
Hence, our above computation becomes%
\begin{align}
\sum_{\substack{\beta\in\left\{  0,1\right\}  ^{n};\\\left\vert \beta
\right\vert =p}}\ \ \prod_{i=1}^{n}h_{\alpha_{i}+\left(  \beta+\overline
{\sigma}\right)  _{i},\ i}  &  =\sum_{\substack{\beta\in\mathbb{Z}%
^{n};\\\left(  \beta+\overline{\sigma}\right)  -\overline{\sigma}\in\left\{
0,1\right\}  ^{n};\\\left\vert \beta+\overline{\sigma}\right\vert =\left\vert
\xi\right\vert }}\ \ \prod_{i=1}^{n}h_{\alpha_{i}+\left(  \beta+\overline
{\sigma}\right)  _{i},\ i}\nonumber\\
&  =\sum_{\substack{\nu\in\mathbb{Z}^{n};\\\nu-\overline{\sigma}\in\left\{
0,1\right\}  ^{n};\\\left\vert \nu\right\vert =\left\vert \xi\right\vert
}}\ \ \prod_{i=1}^{n}h_{\alpha_{i}+\nu_{i},\ i}.
\label{pf.thm.pre-pieri2.innersum}%
\end{align}

Forget that we fixed $\sigma$. We thus have proved
(\ref{pf.thm.pre-pieri2.innersum}) for each permutation $\sigma\in S_{n}$.
Now, (\ref{pf.thm.pre-pieri2.sumt=1}) becomes%
\begin{align}
\sum_{\substack{\beta\in\left\{  0,1\right\}  ^{n};\\\left\vert \beta
\right\vert =p}}t_{\alpha+\beta}  &  =\sum_{\sigma\in S_{n}}\left(  -1\right)
^{\sigma}\underbrace{\sum_{\substack{\beta\in\left\{  0,1\right\}
^{n};\\\left\vert \beta\right\vert =p}}\ \ \prod_{i=1}^{n}h_{\alpha
_{i}+\left(  \beta+\overline{\sigma}\right)  _{i},\ i}}_{\substack{=\sum
_{\substack{\nu\in\mathbb{Z}^{n};\\\nu-\overline{\sigma}\in\left\{
0,1\right\}  ^{n};\\\left\vert \nu\right\vert =\left\vert \xi\right\vert
}}\ \ \prod_{i=1}^{n}h_{\alpha_{i}+\nu_{i},\ i}\\\text{(by
(\ref{pf.thm.pre-pieri2.innersum}))}}}\nonumber\\
&  =\sum_{\sigma\in S_{n}}\left(  -1\right)  ^{\sigma}\sum_{\substack{\nu
\in\mathbb{Z}^{n};\\\nu-\overline{\sigma}\in\left\{  0,1\right\}
^{n};\\\left\vert \nu\right\vert =\left\vert \xi\right\vert }}\ \ \prod
_{i=1}^{n}h_{\alpha_{i}+\nu_{i},\ i}\nonumber\\
&  =\underbrace{\sum_{\sigma\in S_{n}}\ \ \sum_{\substack{\nu\in\mathbb{Z}%
^{n};\\\nu-\overline{\sigma}\in\left\{  0,1\right\}  ^{n};\\\left\vert
\nu\right\vert =\left\vert \xi\right\vert }}}_{=\sum_{\substack{\nu
\in\mathbb{Z}^{n};\\\left\vert \nu\right\vert =\left\vert \xi\right\vert
}}\ \ \sum_{\substack{\sigma\in S_{n};\\\nu-\overline{\sigma}\in\left\{
0,1\right\}  ^{n}}}}\left(  -1\right)  ^{\sigma}\prod_{i=1}^{n}h_{\alpha
_{i}+\nu_{i},\ i}\nonumber\\
&  =\sum_{\substack{\nu\in\mathbb{Z}^{n};\\\left\vert \nu\right\vert
=\left\vert \xi\right\vert }}\ \ \sum_{\substack{\sigma\in S_{n}%
;\\\nu-\overline{\sigma}\in\left\{  0,1\right\}  ^{n}}}\left(  -1\right)
^{\sigma}\prod_{i=1}^{n}h_{\alpha_{i}+\nu_{i},\ i}\nonumber\\
&  =\sum_{\substack{\nu\in\mathbb{Z}^{n};\\\left\vert \nu\right\vert
=\left\vert \xi\right\vert }}\left(  \sum_{\substack{\sigma\in S_{n}%
;\\\nu-\overline{\sigma}\in\left\{  0,1\right\}  ^{n}}}\left(  -1\right)
^{\sigma}\right)  \prod_{i=1}^{n}h_{\alpha_{i}+\nu_{i},\ i}.
\label{pf.thm.pre-pieri2.2}%
\end{align}

Now, if $\nu\in\mathbb{Z}^{n}$ is an $n$-tuple satisfying $\left\vert
\nu\right\vert =\left\vert \xi\right\vert $, then%
\begin{align}
\sum_{\substack{\sigma\in S_{n};\\\nu-\overline{\sigma}\in\left\{
0,1\right\}  ^{n}}}\left(  -1\right)  ^{\sigma}  &  =\det\left(  \left(
\left[  \nu_{i}\trianglerighteq j\right]  \right)  _{i,j\in\left[  n\right]
}\right)  \ \ \ \ \ \ \ \ \ \ \left(  \text{by Lemma \ref{lem.xi-det2}}\right)
\nonumber\\
&  =\sum_{\substack{\sigma\in S_{n};\\\nu=\xi\circ\sigma}}\left(  -1\right)
^{\sigma}\ \ \ \ \ \ \ \ \ \ \left(  \text{by Lemma \ref{lem.xi-det}}\right)
. \label{pf.thm.pre-pieri2.sum=sum}%
\end{align}

Thus, (\ref{pf.thm.pre-pieri2.2}) becomes%
\begin{align}
\sum_{\substack{\beta\in\left\{  0,1\right\}  ^{n};\\\left\vert \beta
\right\vert =p}}t_{\alpha+\beta}  &  =\sum_{\substack{\nu\in\mathbb{Z}%
^{n};\\\left\vert \nu\right\vert =\left\vert \xi\right\vert }%
}\underbrace{\left(  \sum_{\substack{\sigma\in S_{n};\\\nu-\overline{\sigma
}\in\left\{  0,1\right\}  ^{n}}}\left(  -1\right)  ^{\sigma}\right)
}_{\substack{=\sum_{\substack{\sigma\in S_{n};\\\nu=\xi\circ\sigma}}\left(
-1\right)  ^{\sigma}\\\text{(by (\ref{pf.thm.pre-pieri2.sum=sum}))}}%
}\prod_{i=1}^{n}h_{\alpha_{i}+\nu_{i},\ i}\nonumber\\
&  =\sum_{\substack{\nu\in\mathbb{Z}^{n};\\\left\vert \nu\right\vert
=\left\vert \xi\right\vert }}\left(  \sum_{\substack{\sigma\in S_{n};\\\nu
=\xi\circ\sigma}}\left(  -1\right)  ^{\sigma}\right)  \prod_{i=1}^{n}%
h_{\alpha_{i}+\nu_{i},\ i}\nonumber\\
&  =\sum_{\substack{\nu\in\mathbb{Z}^{n};\\\left\vert \nu\right\vert
=\left\vert \xi\right\vert }}\ \ \sum_{\substack{\sigma\in S_{n};\\\nu
=\xi\circ\sigma}}\left(  -1\right)  ^{\sigma}\prod_{i=1}^{n}h_{\alpha_{i}%
+\nu_{i},\ i}. \label{pf.thm.pre-pieri2.2b}%
\end{align}

On the other hand,%
\begin{equation}
\operatorname*{rowdet}\left(  \left(  h_{\alpha_{i}+\xi_{j},\ i}\right)
_{i,j\in\left[  n\right]  }\right)  =\sum_{\sigma\in S_{n}}\left(  -1\right)
^{\sigma}\prod_{i=1}^{n}h_{\alpha_{i}+\xi_{\sigma\left(  i\right)  },\ i}
\label{pf.thm.pre-pieri2.3a}%
\end{equation}
(by (\ref{pf.thm.pre-pieri2.rowdetA=}), applied to $a_{i,j}=h_{\alpha_{i}%
+\xi_{j},\ i}$). However, for each $\sigma\in S_{n}$, the $n$-tuple $\xi
\circ\sigma$ belongs to $\mathbb{Z}^{n}$ and satisfies $\left\vert \xi
\circ\sigma\right\vert =\left\vert \xi\right\vert $ (by Proposition
\ref{prop.etapi.len}). In other words, for each $\sigma\in S_{n}$, the
$n$-tuple $\xi\circ\sigma$ is a $\nu\in\mathbb{Z}^{n}$ satisfying $\left\vert
\nu\right\vert =\left\vert \xi\right\vert $. Hence, any sum ranging over all
$\sigma\in S_{n}$ can be split according to the value of $\xi\circ\sigma$. In
other words, we have the following equality of summation signs:%
\[
\sum_{\sigma\in S_{n}}=\sum_{\substack{\nu\in\mathbb{Z}^{n};\\\left\vert
\nu\right\vert =\left\vert \xi\right\vert }}\ \ \underbrace{\sum
_{\substack{\sigma\in S_{n};\\\xi\circ\sigma=\nu}}}_{=\sum_{\substack{\sigma
\in S_{n};\\\nu=\xi\circ\sigma}}}=\sum_{\substack{\nu\in\mathbb{Z}%
^{n};\\\left\vert \nu\right\vert =\left\vert \xi\right\vert }}\ \ \sum
_{\substack{\sigma\in S_{n};\\\nu=\xi\circ\sigma}}.
\]
Thus, (\ref{pf.thm.pre-pieri2.3a}) becomes%
\begin{align*}
\operatorname*{rowdet}\left(  \left(  h_{\alpha_{i}+\xi_{j},\ i}\right)
_{i,j\in\left[  n\right]  }\right)   &  =\underbrace{\sum_{\sigma\in S_{n}}%
}_{=\sum_{\substack{\nu\in\mathbb{Z}^{n};\\\left\vert \nu\right\vert
=\left\vert \xi\right\vert }}\ \ \sum_{\substack{\sigma\in S_{n};\\\nu
=\xi\circ\sigma}}}\left(  -1\right)  ^{\sigma}\prod_{i=1}^{n}h_{\alpha_{i}%
+\xi_{\sigma\left(  i\right)  },\ i}\\
&  =\sum_{\substack{\nu\in\mathbb{Z}^{n};\\\left\vert \nu\right\vert
=\left\vert \xi\right\vert }}\ \ \sum_{\substack{\sigma\in S_{n};\\\nu
=\xi\circ\sigma}}\left(  -1\right)  ^{\sigma}\prod_{i=1}^{n}%
\underbrace{h_{\alpha_{i}+\xi_{\sigma\left(  i\right)  },\ i}}%
_{\substack{=h_{\alpha_{i}+\nu_{i},\ i}\\\text{(since }\xi_{\sigma\left(
i\right)  }=\nu_{i}\\\text{(because }\nu=\xi\circ\sigma=\left(  \xi
_{\sigma\left(  1\right)  },\xi_{\sigma\left(  2\right)  },\ldots,\xi
_{\sigma\left(  n\right)  }\right)  \\\text{(by Definition \ref{def.etapi})
and therefore }\nu_{i}=\xi_{\sigma\left(  i\right)  }\text{))}}}\\
&  =\sum_{\substack{\nu\in\mathbb{Z}^{n};\\\left\vert \nu\right\vert
=\left\vert \xi\right\vert }}\ \ \sum_{\substack{\sigma\in S_{n};\\\nu
=\xi\circ\sigma}}\left(  -1\right)  ^{\sigma}\prod_{i=1}^{n}h_{\alpha_{i}%
+\nu_{i},\ i}.
\end{align*}
Comparing this with (\ref{pf.thm.pre-pieri2.2b}), we obtain%
\[
\sum_{\substack{\beta\in\left\{  0,1\right\}  ^{n};\\\left\vert \beta
\right\vert =p}}t_{\alpha+\beta}=\operatorname*{rowdet}\left(  \left(
h_{\alpha_{i}+\xi_{j},\ i}\right)  _{i,j\in\left[  n\right]  }\right)  .
\]
This proves Theorem \ref{thm.pre-pieri2}.
\end{proof}
\end{verlong}

\subsection{Corollaries}

Several corollaries can be obtained from Theorem \ref{thm.pre-pieri2} in the
same way as we did above with Theorem \ref{thm.pre-pieri}. Here is an analogue
of Corollary \ref{cor.pre-pieri.2}:

\begin{corollary}
\label{cor.pre-pieri2.2}Let $n\in\mathbb{N}$ and $p\in\left\{  0,1,\ldots
,n\right\}  $. Let $q=n-p$. Let $h_{k,\ i}$ be an element of $R$ for all
$k\in\mathbb{Z}$ and $i\in\left[  n\right]  $. Assume that%
\begin{equation}
h_{k,\ i}=0\ \ \ \ \ \ \ \ \ \ \text{for all }k<0\text{ and }i>q.
\label{eq.cor.pre-pieri2.2.ass}%
\end{equation}

For any $\alpha\in\mathbb{Z}^{n}$, we define%
\[
t_{\alpha}:=\operatorname*{rowdet}\left(  \left(  h_{\alpha_{i}+j,\ i}\right)
_{i,j\in\left[  n\right]  }\right)  \in R.
\]

Let $\alpha\in\mathbb{Z}^{n}$. Assume that%
\begin{equation}
\alpha_{i}<-q\ \ \ \ \ \ \ \ \ \ \text{for each }i>q.
\label{eq.cor.pre-pieri2.2.ass-minus}%
\end{equation}
Then,%
\[
\sum_{\substack{\beta\in\left\{  0,1\right\}  ^{n};\\\left\vert \beta
\right\vert =p}}t_{\alpha+\beta}=\operatorname*{rowdet}\left(  \left(
h_{\alpha_{i}+j,\ i}\right)  _{i,j\in\left[  q\right]  }\right)
\cdot\operatorname*{rowdet}\left(  \left(  h_{\alpha_{q+i}+q+j+1,\ q+i}%
\right)  _{i,j\in\left[  p\right]  }\right)  .
\]

\end{corollary}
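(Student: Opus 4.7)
The plan is to apply Theorem \ref{thm.pre-pieri2} and then evaluate the resulting row-determinant by exploiting a block-triangular structure that will be forced by the hypotheses. By Theorem \ref{thm.pre-pieri2}, the left-hand side equals $\operatorname*{rowdet}\left(\left(h_{\alpha_i+\xi_j,\ i}\right)_{i,j\in[n]}\right)$, where $\xi = (1,2,\ldots,q,\ q+2,q+3,\ldots,n+1)$. So the entire task is to factor this $n\times n$ row-determinant as the product of the two row-determinants on the right.

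The first substantive step is to show that the matrix $M := \left(h_{\alpha_i+\xi_j,\ i}\right)_{i,j\in[n]}$ has a zero block in its lower-left $p\times q$ corner. Indeed, if $i>q$ and $j\in[q]$, then $\xi_j = j \leq q$, and the assumption (\ref{eq.cor.pre-pieri2.2.ass-minus}) gives $\alpha_i < -q$, hence $\alpha_i + \xi_j < -q + q = 0$; then (\ref{eq.cor.pre-pieri2.2.ass}) forces $h_{\alpha_i+\xi_j,\ i} = 0$. Meanwhile, for $i\leq q$ and $j\in[q]$ we have $\xi_j = j$, so the top-left $q\times q$ block of $M$ is exactly $\left(h_{\alpha_i+j,\ i}\right)_{i,j\in[q]}$; and for $i>q$ and $j\in\{q+1,\ldots,n\}$, writing $j = q+j'$ with $j'\in[p]$, one has $\xi_{q+j'} = q+j'+1$, so the bottom-right $p\times p$ block of $M$ is $\left(h_{\alpha_{q+i'}+q+j'+1,\ q+i'}\right)_{i',j'\in[p]}$, which is precisely the second factor on the right-hand side.

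The main obstacle, and really the only one, is to establish a block-triangular factorization of row-determinants: if $A = (a_{i,j})_{i,j\in[n]}$ is an $n\times n$ matrix over $R$ satisfying $a_{i,j} = 0$ whenever $i>q$ and $j\leq q$, then
\[
\operatorname*{rowdet}A \;=\; \operatorname*{rowdet}\left((a_{i,j})_{i,j\in[q]}\right)\cdot\operatorname*{rowdet}\left((a_{q+i,\ q+j})_{i,j\in[p]}\right).
\]
This generalizes Lemma \ref{lem.laplace.pre} and I would prove it by the same method used there. Namely, in the defining sum $\operatorname*{rowdet}A = \sum_{\sigma\in S_n}(-1)^{\sigma}\, a_{1,\sigma(1)}\cdots a_{n,\sigma(n)}$, any permutation $\sigma$ with $\sigma(i)\leq q$ for some $i>q$ contributes $0$ because of the assumed vanishing. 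The surviving $\sigma$ are precisely those that map $\{q+1,\ldots,n\}$ into itself; being bijections of $[n]$, such $\sigma$ also preserve $[q]$, and hence decompose uniquely as $\sigma = \sigma_1\sqcup\sigma_2$ with $\sigma_1\in S_q$ and $\sigma_2$ a permutation of $\{q+1,\ldots,n\}$, satisfying $(-1)^{\sigma} = (-1)^{\sigma_1}(-1)^{\sigma_2}$. The product $a_{1,\sigma(1)}\cdots a_{n,\sigma(n)}$ then splits naturally (in the correct left-to-right order, which is what row-determinants need) as $(a_{1,\sigma_1(1)}\cdots a_{q,\sigma_1(q)})\cdot(a_{q+1,\sigma_2(q+1)}\cdots a_{n,\sigma_2(n)})$, and after reindexing the second factor by $i\mapsto q+i$ we obtain the product of the two row-determinants.

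Once this block-triangular lemma is available, applying it to $M$ (with the block decomposition identified in the previous paragraph) yields the claimed identity, completing the proof of the corollary. The only place where noncommutativity of $R$ matters is the factorization of the product inside the permutation sum, and there it works in our favor because the rows $1,\ldots,q$ precede the rows $q+1,\ldots,n$ in the row-ordering used to define $\operatorname*{rowdet}$.
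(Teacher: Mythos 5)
Your proposal is correct and follows essentially the same route as the paper: apply Theorem \ref{thm.pre-pieri2}, observe that the hypotheses force the lower-left $p\times q$ block of the resulting matrix to vanish, and invoke a block-triangular factorization of row-determinants (the paper's Lemma \ref{lem.block-tria-rowdet}, proved there by the same permutation-decomposition argument you sketch). Your remark about why the left-to-right ordering of factors in the row-determinant makes the noncommutative splitting work is exactly the point the paper also emphasizes.
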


This can be derived from Theorem \ref{thm.pre-pieri2} using the following
lemma (which generalizes both our Lemma \ref{lem.laplace.pre} and
\cite[Exercise 6.29]{detnotes}, although it is stated in a rather different way):

\begin{lemma}
\label{lem.block-tria-rowdet}Let $n\in\mathbb{N}$. Let $A=\left(
a_{i,j}\right)  _{i,j\in\left[  n\right]  }\in R^{n\times n}$ be an $n\times
n$-matrix. Let $k\in\left\{  0,1,\ldots,n\right\}  $. Assume that%
\[
a_{i,j}=0\ \ \ \ \ \ \ \ \ \ \text{for every }i\in\left\{  k+1,k+2,\ldots
,n\right\}  \text{ and }j\in\left\{  1,2,\ldots,k\right\}  .
\]
Then,
\[
\operatorname*{rowdet}A=\operatorname*{rowdet}\left(  \left(  a_{i,j}\right)
_{i,j\in\left[  k\right]  }\right)  \cdot\operatorname*{rowdet}\left(  \left(
a_{k+i,k+j}\right)  _{i,j\in\left[  n-k\right]  }\right)  .
\]

\end{lemma}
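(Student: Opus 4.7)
The plan is to expand the row-determinant as a sum over $S_n$ and show that only permutations preserving the block $\{1,2,\ldots,k\}$ contribute. Concretely, I would begin from
\[
\operatorname{rowdet} A = \sum_{\sigma\in S_n} (-1)^\sigma a_{1,\sigma(1)} a_{2,\sigma(2)} \cdots a_{n,\sigma(n)}.
\]
For any $\sigma\in S_n$ satisfying $\sigma(i) \in \{1,\ldots,k\}$ for some $i\in\{k+1,\ldots,n\}$, the factor $a_{i,\sigma(i)}$ equals $0$ by the hypothesis of the lemma, so the corresponding summand vanishes. Since $\sigma$ is a bijection on a finite set, the absence of such an $i$ forces $\sigma(\{k+1,\ldots,n\}) = \{k+1,\ldots,n\}$ and hence $\sigma(\{1,\ldots,k\}) = \{1,\ldots,k\}$. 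Thus only those $\sigma$ that separately permute the two blocks survive.

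Next, any such surviving $\sigma$ is uniquely determined by the pair $(\sigma_1,\sigma_2) \in S_k \times S_{n-k}$, where $\sigma_1(i) = \sigma(i)$ for $i\in[k]$ and $\sigma_2(j) = \sigma(k+j) - k$ for $j\in[n-k]$; this correspondence is a bijection. Under it, one has the sign identity
\[
(-1)^\sigma = (-1)^{\sigma_1} (-1)^{\sigma_2},
\]
which follows from the standard fact that permutations with disjoint supports multiply to a permutation whose sign is the product of the individual signs (and no ``cross'' inversions occur here since every element of $\{1,\ldots,k\}$ is smaller than every element of $\{k+1,\ldots,n\}$, while both blocks are mapped into themselves).

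The product of matrix entries then factors as
\[
a_{1,\sigma(1)} \cdots a_{n,\sigma(n)} = \left(a_{1,\sigma_1(1)} \cdots a_{k,\sigma_1(k)}\right) \cdot \left(a_{k+1,\, k+\sigma_2(1)} \cdots a_{n,\, k+\sigma_2(n-k)}\right),
\]
with the order of multiplication preserved because the row indices $1,2,\ldots,n$ decompose into the two \emph{consecutive} blocks $1,\ldots,k$ and $k+1,\ldots,n$. Finally, splitting the sum via $\sum_{\sigma} = \sum_{\sigma_1\in S_k}\sum_{\sigma_2\in S_{n-k}}$ and factoring the two independent sums yields exactly
\[
\operatorname{rowdet}\left((a_{i,j})_{i,j\in[k]}\right) \cdot \operatorname{rowdet}\left((a_{k+i,k+j})_{i,j\in[n-k]}\right),
\]
as desired. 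The main obstacle, mild as it is, lies in the noncommutative setting: the factorization of the product into two subproducts is valid only because the indices split consecutively, so that the left-to-right order on both sides of the equation agrees. The sign identity is classical but deserves a brief verification via inversion-counting (or induction on transpositions) in the noncommutative write-up; beyond these two points, the argument is entirely routine.
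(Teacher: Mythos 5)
Your proof is correct and follows essentially the same route as the paper's: identify the vanishing summands via the hypothesis, restrict to permutations stabilizing $\left[ k\right] $, use the bijection with $S_{k}\times S_{n-k}$ together with the multiplicativity of the sign, and factor the product of entries (which is order-compatible precisely because the two blocks of row indices are consecutive). The only detail worth spelling out in a full write-up is that the final factoring of the double sum uses the fact that the integer signs $\left( -1\right) ^{\sigma_{2}}$ are central in $R$, which the paper notes explicitly.
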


\begin{example}
For $n=4$ and $k=2$, Lemma \ref{lem.block-tria-rowdet} is saying that%
\[
\operatorname*{rowdet}\left(
\begin{array}
[c]{cccc}%
a_{1,1} & a_{1,2} & a_{1,3} & a_{1,4}\\
a_{2,1} & a_{2,2} & a_{2,3} & a_{2,4}\\
0 & 0 & a_{3,3} & a_{3,4}\\
0 & 0 & a_{4,3} & a_{4,4}%
\end{array}
\right)  =\operatorname*{rowdet}\left(
\begin{array}
[c]{cc}%
a_{1,1} & a_{1,2}\\
a_{2,1} & a_{2,2}%
\end{array}
\right)  \cdot\operatorname*{rowdet}\left(
\begin{array}
[c]{cc}%
a_{3,3} & a_{3,4}\\
a_{4,3} & a_{4,4}%
\end{array}
\right)  .
\]

\end{example}

\begin{vershort}
\begin{proof}
[Proof of Lemma \ref{lem.block-tria-rowdet}.]This is a straightforward
generalization of \cite[Exercise 6.29]{detnotes}, and can be proved in the
same way (as long as the requisite attention is paid to the order of factors
in products, since the ring $R$ is not required to be
commutative\footnote{This means, in particular, that some products in
\cite[solution to Exercise 6.29]{detnotes} need to be reordered (and the
finite product notation $\prod_{i\in I}a_{i}$ needs to be understood as the
product of the $a_{i}$ in the order of increasing $i$). Furthermore, instead
of using \cite[Theorem 6.82 \textbf{(a)}]{detnotes}, we need to use the fact
that any $n\times n$-matrix $A=\left(  a_{i,j}\right)  _{i,j\in\left[
n\right]  }$ with $n>0$ satisfies%
\[
\operatorname*{rowdet}A=\sum_{q=1}^{n}\left(  -1\right)  ^{n+q}%
\operatorname*{rowdet}\left(  A_{\sim n,\sim q}\right)  \cdot a_{n,q}.
\]
(This generalizes the $p=n$ case of \cite[Theorem 6.82 \textbf{(a)}]{detnotes}
to the case of noncommutative $R$. The general case of \cite[Theorem 6.82
\textbf{(a)}]{detnotes} cannot be generalized to noncommutative $R$, but
fortunately we only need the $p=n$ case in our proof.)}). We leave the details
to the reader.
\end{proof}
\end{vershort}

\begin{verlong}
Lemma \ref{lem.block-tria-rowdet} is a straightforward generalization of
\cite[Exercise 6.29]{detnotes} to the case of an arbitrary ring $R$ (that is
not necessarily commutative)\footnote{This might not be immediately obvious
from the statements of Lemma \ref{lem.block-tria-rowdet} and \cite[Exercise
6.29]{detnotes}, since the notations used are rather different. However, when
the ring $R$ is commutative, Lemma \ref{lem.block-tria-rowdet} and
\cite[Exercise 6.29]{detnotes} are the same fact viewed from different angles.
Indeed, if $n$, $A$, $a_{i,j}$ and $k$ are as in Lemma
\ref{lem.block-tria-rowdet}, then $A=\left(
\begin{array}
[c]{cc}%
U & V\\
0_{\left(  n-k\right)  \times k} & W
\end{array}
\right)  $ for the three matrices%
\[
U=\left(  a_{i,j}\right)  _{i,j\in\left[  k\right]  }%
,\ \ \ \ \ \ \ \ \ \ V=\left(  a_{i,k+j}\right)  _{i\in\left[  k\right]
,\ j\in\left[  n-k\right]  }\ \ \ \ \ \ \ \ \ \ \text{and}%
\ \ \ \ \ \ \ \ \ \ W=\left(  a_{k+i,k+j}\right)  _{i,j\in\left[  n-k\right]
}.
\]
Thus, applying \cite[Exercise 6.29]{detnotes} to $k$, $n-k$, $U$, $V$ and $W$
instead of $n$, $m$, $A$, $B$ and $D$ yields that%
\[
\det A=\det U\cdot\det W=\det\left(  \left(  a_{i,j}\right)  _{i,j\in\left[
k\right]  }\right)  \cdot\det\left(  \left(  a_{k+i,k+j}\right)
_{i,j\in\left[  n-k\right]  }\right)
\]
in this case. Since the determinant of a matrix over a commutative ring $R$
equals its row-determinant, we can rewrite this as%
\[
\operatorname*{rowdet}A=\operatorname*{rowdet}\left(  \left(  a_{i,j}\right)
_{i,j\in\left[  k\right]  }\right)  \cdot\operatorname*{rowdet}\left(  \left(
a_{k+i,k+j}\right)  _{i,j\in\left[  n-k\right]  }\right)  .
\]
Thus, our Lemma \ref{lem.block-tria-rowdet} follows from \cite[Exercise
6.29]{detnotes} in the case when $R$ is commutative. The converse implication
is equally easy to see.}. Unsurprisingly, it can be proved in more or less the
same way as \cite[Exercise 6.29]{detnotes}, as long as the requisite attention
is paid to the order of factors in products\footnote{This means, in
particular, that some products in \cite[solution to Exercise 6.29]{detnotes}
need to be reordered (and the finite product notation $\prod_{i\in I}a_{i}$
needs to be understood as the product of the $a_{i}$ in the order of
increasing $i$). Furthermore, instead of using \cite[Theorem 6.82
\textbf{(a)}]{detnotes}, we need to use the fact that any $n\times n$-matrix
$A=\left(  a_{i,j}\right)  _{i,j\in\left[  n\right]  }$ with $n>0$ satisfies%
\[
\operatorname*{rowdet}A=\sum_{q=1}^{n}\left(  -1\right)  ^{n+q}%
\operatorname*{rowdet}\left(  A_{\sim n,\sim q}\right)  \cdot a_{n,q}.
\]
(This generalizes the $p=n$ case of \cite[Theorem 6.82 \textbf{(a)}]{detnotes}
to the case of noncommutative $R$. The general case of \cite[Theorem 6.82
\textbf{(a)}]{detnotes} cannot be generalized to noncommutative $R$, but
fortunately we only need the $p=n$ case in our proof.)}. A reader could easily
perform the necessary modifications to obtain a proof of Lemma
\ref{lem.block-tria-rowdet}. We shall, however, give a different proof of
Lemma \ref{lem.block-tria-rowdet}, which proceeds from scratch using a
combinatorial lemma. (This will obviously yield a new solution to
\cite[Exercise 6.29]{detnotes}.)

Before we state this lemma, we need to recall two notations concerning permutations:

\begin{itemize}
\item If $n\in\mathbb{N}$ and if $\sigma\in S_{n}$, then an \emph{inversion}
of $\sigma$ means a pair $\left(  i,j\right)  $ of integers satisfying $1\leq
i<j\leq n$ and $\sigma\left(  i\right)  >\sigma\left(  j\right)  $.

\item If $n\in\mathbb{N}$ and if $\sigma\in S_{n}$, then the \emph{length} of
$\sigma$ is defined to be the number of inversions of $\sigma$. This length is
denoted by $\ell\left(  \sigma\right)  $.
\end{itemize}

Recall that the sign $\left(  -1\right)  ^{\sigma}$ of a permutation
$\sigma\in S_{n}$ is defined to be $\left(  -1\right)  ^{\ell\left(
\sigma\right)  }$.

Now, we can state our combinatorial lemma:

\begin{lemma}
\label{lem.lalbe}Let $n\in\mathbb{N}$. Let $I$ be a subset of $\left\{
1,2,\ldots,n\right\}  $. Let $k=\left\vert I\right\vert $. Let $\left(
a_{1},a_{2},\ldots,a_{k}\right)  $ be the list of all elements of $I$ in
increasing order (with no repetitions). Let $\left(  b_{1},b_{2}%
,\ldots,b_{n-k}\right)  $ be the list of all elements of $\left\{
1,2,\ldots,n\right\}  \setminus I$ in increasing order (with no repetitions).
Let $\alpha\in S_{k}$ and $\beta\in S_{n-k}$. Then:

\textbf{(a)} There exists a unique $\sigma\in S_{n}$ satisfying%
\[
\left(  \sigma\left(  1\right)  ,\sigma\left(  2\right)  ,\ldots,\sigma\left(
n\right)  \right)  =\left(  a_{\alpha\left(  1\right)  },a_{\alpha\left(
2\right)  },\ldots,a_{\alpha\left(  k\right)  },b_{\beta\left(  1\right)
},b_{\beta\left(  2\right)  },\ldots,b_{\beta\left(  n-k\right)  }\right)  .
\]
Denote this $\sigma$ by $\sigma_{I,\alpha,\beta}$.

\textbf{(b)} Let $\sum I$ denote the sum of all elements of $I$. (Thus, $\sum
I=\sum_{i\in I}i$.) We have%
\[
\ell\left(  \sigma_{I,\alpha,\beta}\right)  =\ell\left(  \alpha\right)
+\ell\left(  \beta\right)  +\sum I-\left(  1+2+\cdots+k\right)
\]
and%
\[
\left(  -1\right)  ^{\sigma_{I,\alpha,\beta}}=\left(  -1\right)  ^{\alpha
}\cdot\left(  -1\right)  ^{\beta}\cdot\left(  -1\right)  ^{\sum I-\left(
1+2+\cdots+k\right)  }.
\]

\textbf{(c)} Forget that we fixed $\alpha$ and $\beta$. We thus have defined
an element $\sigma_{I,\alpha,\beta}\in S_{n}$ for every $\alpha\in S_{k}$ and
every $\beta\in S_{n-k}$. The map%
\begin{align*}
S_{k}\times S_{n-k}  &  \rightarrow\left\{  \tau\in S_{n}\ \mid\ \tau\left(
\left\{  1,2,\ldots,k\right\}  \right)  =I\right\}  ,\\
\left(  \alpha,\beta\right)   &  \mapsto\sigma_{I,\alpha,\beta}%
\end{align*}
is well-defined and a bijection.
\end{lemma}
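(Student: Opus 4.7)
The plan is to handle the three parts of the lemma in order, using a direct construction for \textbf{(a)}, a three-way split of inversions for \textbf{(b)}, and an explicit inverse map for \textbf{(c)}.

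For part \textbf{(a)}, I would simply \emph{define} a map $\sigma : [n] \to [n]$ by the formula
\[
\sigma(i) = \begin{cases} a_{\alpha(i)}, & \text{if } 1 \leq i \leq k; \\ b_{\beta(i-k)}, & \text{if } k+1 \leq i \leq n. \end{cases}
\]
Uniqueness is immediate from this formula. To see that $\sigma \in S_n$, I would observe that its image is $\{a_{\alpha(1)}, \ldots, a_{\alpha(k)}\} \cup \{b_{\beta(1)}, \ldots, b_{\beta(n-k)}\} = I \cup ([n] \setminus I) = [n]$ (since $\alpha$ and $\beta$ are bijections and $(a_1, \ldots, a_k)$, $(b_1, \ldots, b_{n-k})$ enumerate $I$ and its complement); so $\sigma$ is a surjection from a finite set to itself and hence a permutation.

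For part \textbf{(b)}, the idea is to classify each inversion $(i,j)$ of $\sigma_{I,\alpha,\beta}$ according to whether $i,j$ both lie in $\{1,\ldots,k\}$, both lie in $\{k+1,\ldots,n\}$, or are split between the two blocks. Since the sequence $(a_1, \ldots, a_k)$ is strictly increasing, the inversions of the first type biject with the inversions of $\alpha$, yielding a count of $\ell(\alpha)$; similarly, the inversions of the second type contribute $\ell(\beta)$. The inversions of the third type are pairs $(i, k+j)$ with $a_{\alpha(i)} > b_{\beta(j)}$, and since $\alpha$ and $\beta$ are bijections this is in bijection with the set of pairs $(a,b) \in I \times ([n]\setminus I)$ satisfying $a > b$. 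The main calculation is then showing that the number of such cross-pairs equals $\sum I - (1 + 2 + \cdots + k)$: for each $r \in [k]$, there are exactly $a_r - 1$ elements of $[n]$ strictly below $a_r$, of which $r-1$ (namely $a_1, \ldots, a_{r-1}$) lie in $I$, leaving $a_r - r$ that lie in the complement, and summing over $r$ gives the claimed formula. The sign statement then follows from $(-1)^{\sigma_{I,\alpha,\beta}} = (-1)^{\ell(\sigma_{I,\alpha,\beta})}$ and the multiplicativity of $(-1)^{(\cdot)}$.

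For part \textbf{(c)}, I first observe that the map is well-defined into the indicated codomain, since $\sigma_{I,\alpha,\beta}(\{1,\ldots,k\}) = \{a_{\alpha(1)},\ldots,a_{\alpha(k)}\} = I$. To show bijectivity, I would construct the inverse explicitly: given any $\tau \in S_n$ with $\tau(\{1,\ldots,k\}) = I$, the restriction of $\tau$ to $\{1,\ldots,k\}$ is a bijection $\{1,\ldots,k\} \to I$, and composing with the order-preserving bijection $I \to \{1,\ldots,k\}$ sending $a_r \mapsto r$ yields an element $\alpha \in S_k$; similarly $\tau\restriction \{k+1,\ldots,n\}$ produces a $\beta \in S_{n-k}$. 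A direct check from the defining formula of $\sigma_{I,\alpha,\beta}$ shows $(\alpha,\beta) \mapsto \tau$ and $\sigma_{I,\alpha,\beta} \mapsto (\alpha,\beta)$ under these operations, establishing that they are mutually inverse.

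The main obstacle is the cross-inversion count in part \textbf{(b)}; the rest is essentially bookkeeping. Once the identity $|\{(a,b) \in I \times ([n]\setminus I) : a > b\}| = \sum I - \binom{k+1}{2}$ is proved (by the $a_r - r$ argument above), everything else falls into place.
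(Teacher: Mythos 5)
Your proposal is correct and complete in all essentials. Note, however, that the paper does not actually prove Lemma \ref{lem.lalbe} at all: its entire ``proof'' is the citation ``Lemma \ref{lem.lalbe} is precisely \cite[Exercise 5.14]{detnotes}''. So there is no in-paper argument to compare against; what you have written is a self-contained replacement for that outsourced proof. Your three-way classification of inversions (both indices in $\left\{ 1,\ldots,k\right\}$, both in $\left\{ k+1,\ldots,n\right\}$, or split across the blocks) together with the count $\left\vert \left\{ \left( a,b\right) \in I\times\left( \left[ n\right] \setminus I\right) \ \mid\ a>b\right\} \right\vert =\sum_{r=1}^{k}\left( a_{r}-r\right) =\sum I-\left( 1+2+\cdots+k\right)$ is exactly the standard argument, and the $a_{r}-r$ computation is right: of the $a_{r}-1$ elements of $\left[ n\right]$ below $a_{r}$, exactly $r-1$ lie in $I$ because the $a_{i}$ are listed in increasing order. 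The only point worth making explicit in a full write-up of part \textbf{(c)} is that for $\tau\in S_{n}$ with $\tau\left( \left\{ 1,\ldots,k\right\} \right) =I$, the bijectivity of $\tau$ forces $\tau\left( \left\{ k+1,\ldots,n\right\} \right) =\left\{ 1,2,\ldots,n\right\} \setminus I$, which is what makes your construction of $\beta$ land in $S_{n-k}$; this is bookkeeping, not a gap.
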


\begin{proof}
[Proof of Lemma \ref{lem.lalbe}.]Lemma \ref{lem.lalbe} is precisely
\cite[Exercise 5.14]{detnotes}.
\end{proof}

We will use a particular case of Lemma \ref{lem.lalbe}, which has a slightly
simpler form:

\begin{lemma}
\label{lem.lalbe-k}Let $n\in\mathbb{N}$. Let $k\in\left\{  0,1,\ldots
,n\right\}  $. Let $\alpha\in S_{k}$ and $\beta\in S_{n-k}$. Then:

\textbf{(a)} There exists a unique $\sigma\in S_{n}$ satisfying%
\begin{align*}
&  \left(  \sigma\left(  1\right)  ,\sigma\left(  2\right)  ,\ldots
,\sigma\left(  n\right)  \right) \\
&  =\left(  \alpha\left(  1\right)  ,\ \alpha\left(  2\right)  ,\ \ldots
,\ \alpha\left(  k\right)  ,\ k+\beta\left(  1\right)  ,\ k+\beta\left(
2\right)  ,\ \ldots,\ k+\beta\left(  n-k\right)  \right)  .
\end{align*}
Denote this $\sigma$ by $\alpha\oplus\beta$.

\textbf{(b)} We have%
\[
\left(  -1\right)  ^{\alpha\oplus\beta}=\left(  -1\right)  ^{\alpha}%
\cdot\left(  -1\right)  ^{\beta}.
\]

\textbf{(c)} Forget that we fixed $\alpha$ and $\beta$. We thus have defined
an element $\alpha\oplus\beta\in S_{n}$ for every $\alpha\in S_{k}$ and every
$\beta\in S_{n-k}$. The map%
\begin{align*}
S_{k}\times S_{n-k}  &  \rightarrow\left\{  \tau\in S_{n}\ \mid\ \tau\left(
\left[  k\right]  \right)  =\left[  k\right]  \right\}  ,\\
\left(  \alpha,\beta\right)   &  \mapsto\alpha\oplus\beta
\end{align*}
is well-defined and a bijection.
\end{lemma}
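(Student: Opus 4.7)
The plan is to deduce Lemma \ref{lem.lalbe-k} directly from Lemma \ref{lem.lalbe} by specializing to the subset $I = \left[ k \right] = \left\{ 1, 2, \ldots, k \right\}$. With this choice of $I$, we have $\left\vert I \right\vert = k$, so the hypothesis $k = \left\vert I \right\vert$ of Lemma \ref{lem.lalbe} is satisfied. The list of elements of $I$ in increasing order is just $\left( 1, 2, \ldots, k \right)$, so $a_{i} = i$ for each $i \in \left[ k \right]$. Similarly, the list of elements of $\left\{ 1, 2, \ldots, n \right\} \setminus I = \left\{ k+1, k+2, \ldots, n \right\}$ in increasing order is $\left( k+1, k+2, \ldots, n \right)$, so $b_{i} = k + i$ for each $i \in \left[ n - k \right]$. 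Hence, for any $\alpha \in S_{k}$ and $\beta \in S_{n-k}$, we have $a_{\alpha\left( i \right)} = \alpha\left( i \right)$ and $b_{\beta\left( j \right)} = k + \beta\left( j \right)$.

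Now part \textbf{(a)} is immediate from Lemma \ref{lem.lalbe} \textbf{(a)}: the unique $\sigma \in S_{n}$ produced by Lemma \ref{lem.lalbe} \textbf{(a)} (for our choice of $I$) is precisely the permutation $\alpha \oplus \beta$ described in Lemma \ref{lem.lalbe-k} \textbf{(a)}. Thus we can set $\alpha \oplus \beta := \sigma_{I, \alpha, \beta}$.

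For part \textbf{(b)}, observe that $\sum I = 1 + 2 + \cdots + k$, so $\sum I - \left( 1 + 2 + \cdots + k \right) = 0$ and therefore $\left( -1 \right)^{\sum I - \left( 1 + 2 + \cdots + k \right)} = 1$. Plugging this into the sign formula of Lemma \ref{lem.lalbe} \textbf{(b)} yields
\[
\left( -1 \right)^{\alpha \oplus \beta} = \left( -1 \right)^{\sigma_{I, \alpha, \beta}} = \left( -1 \right)^{\alpha} \cdot \left( -1 \right)^{\beta} \cdot 1 = \left( -1 \right)^{\alpha} \cdot \left( -1 \right)^{\beta},
\]
as claimed.

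For part \textbf{(c)}, we invoke Lemma \ref{lem.lalbe} \textbf{(c)}, which says that the map $\left( \alpha, \beta \right) \mapsto \sigma_{I, \alpha, \beta}$ is a well-defined bijection from $S_{k} \times S_{n-k}$ onto $\left\{ \tau \in S_{n} \ \mid\ \tau\left( \left\{ 1, 2, \ldots, k \right\} \right) = I \right\}$. Since $I = \left[ k \right]$ and $\left\{ 1, 2, \ldots, k \right\} = \left[ k \right]$, this target set is exactly $\left\{ \tau \in S_{n} \ \mid\ \tau\left( \left[ k \right] \right) = \left[ k \right] \right\}$, and the map in question is the one from Lemma \ref{lem.lalbe-k} \textbf{(c)}. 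There is no real obstacle here; the entire lemma is a routine specialization, and the only thing to double-check is the matching of notations (in particular, that the tuple defining $\alpha \oplus \beta$ really agrees with the tuple $\left( a_{\alpha\left( 1 \right)}, \ldots, a_{\alpha\left( k \right)}, b_{\beta\left( 1 \right)}, \ldots, b_{\beta\left( n-k \right)} \right)$ from Lemma \ref{lem.lalbe} under the substitutions $a_{i} = i$ and $b_{i} = k + i$).
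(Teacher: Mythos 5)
Your proposal is correct and follows exactly the paper's own argument: specialize Lemma \ref{lem.lalbe} to $I=\left[ k\right]$, note that the increasing lists are $\left( 1,2,\ldots,k\right)$ and $\left( k+1,k+2,\ldots,n\right)$ so that $a_{\alpha\left( i\right) }=\alpha\left( i\right)$ and $b_{\beta\left( j\right) }=k+\beta\left( j\right)$, and observe that $\sum I-\left( 1+2+\cdots+k\right) =0$ kills the sign correction in part \textbf{(b)}. No gaps.
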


\begin{proof}
[Proof of Lemma \ref{lem.lalbe-k}.]From $k\in\left\{  0,1,\ldots,n\right\}  $,
we obtain $0\leq k\leq n$. The definition of $\left[  n\right]  $ yields
$\left[  n\right]  =\left\{  1,2,\ldots,n\right\}  $. The definition of
$\left[  k\right]  $ yields $\left[  k\right]  =\left\{  1,2,\ldots,k\right\}
$.

Let $I$ be the set $\left[  k\right]  $. Thus, $I=\left[  k\right]  =\left\{
1,2,\ldots,k\right\}  \subseteq\left\{  1,2,\ldots,n\right\}  $ (since $k\leq
n$). In other words, $I$ is a subset of $\left\{  1,2,\ldots,n\right\}  $.
Moreover, from $I=\left[  k\right]  $, we obtain $\left\vert I\right\vert
=\left\vert \left[  k\right]  \right\vert =k$ (since $\left[  k\right]  $ is a
$k$-element set), so that $k=\left\vert I\right\vert $.

Define a $k$-tuple $\left(  a_{1},a_{2},\ldots,a_{k}\right)  \in\mathbb{Z}%
^{k}$ by $\left(  a_{1},a_{2},\ldots,a_{k}\right)  =\left(  1,2,\ldots
,k\right)  $. Thus,%
\begin{equation}
a_{i}=i\ \ \ \ \ \ \ \ \ \ \text{for each }i\in\left[  k\right]  .
\label{pf.lem.lalbe-k.ai=}%
\end{equation}

Define an $\left(  n-k\right)  $-tuple $\left(  b_{1},b_{2},\ldots
,b_{n-k}\right)  \in\mathbb{Z}^{n-k}$ by \newline$\left(  b_{1},b_{2}%
,\ldots,b_{n-k}\right)  =\left(  k+1,k+2,\ldots,n\right)  $. Thus,
\[
\left(  b_{1},b_{2},\ldots,b_{n-k}\right)  =\left(  k+1,k+2,\ldots,n\right)
=\left(  k+1,k+2,\ldots,k+\left(  n-k\right)  \right)
\]
(since $n=k+\left(  n-k\right)  $). Hence,%
\begin{equation}
b_{i}=k+i\ \ \ \ \ \ \ \ \ \ \text{for each }i\in\left[  n-k\right]  .
\label{pf.lem.lalbe-k.bi=}%
\end{equation}

We have $I=\left\{  1,2,\ldots,k\right\}  $ and $1<2<\cdots<k$. Hence,
$\left(  1,2,\ldots,k\right)  $ is the list of all elements of $I$ in
increasing order (with no repetitions). In other words, $\left(  a_{1}%
,a_{2},\ldots,a_{k}\right)  $ is the list of all elements of $I$ in increasing
order (with no repetitions) (since $\left(  a_{1},a_{2},\ldots,a_{k}\right)
=\left(  1,2,\ldots,k\right)  $).

Moreover, we have%
\begin{align*}
\left\{  1,2,\ldots,n\right\}  \setminus\underbrace{I}_{=\left\{
1,2,\ldots,k\right\}  }  &  =\left\{  1,2,\ldots,n\right\}  \setminus\left\{
1,2,\ldots,k\right\} \\
&  =\left\{  k+1,k+2,\ldots,n\right\}  \ \ \ \ \ \ \ \ \ \ \left(  \text{since
}0\leq k\leq n\right)
\end{align*}
and $k+1<k+2<\cdots<n$. Hence, $\left(  k+1,k+2,\ldots,n\right)  $ is the list
of all elements of $\left\{  1,2,\ldots,n\right\}  \setminus I$ in increasing
order (with no repetitions). In other words, $\left(  b_{1},b_{2}%
,\ldots,b_{n-k}\right)  $ is the list of all elements of $\left\{
1,2,\ldots,n\right\}  \setminus I$ in increasing order (with no repetitions)
(since $\left(  b_{1},b_{2},\ldots,b_{n-k}\right)  =\left(  k+1,k+2,\ldots
,n\right)  $). \medskip

\textbf{(a)} Let $\alpha\in S_{k}$ and $\beta\in S_{n-k}$.

We have $\alpha\in S_{k}$; in other words, $\alpha$ is a permutation of
$\left[  k\right]  $ (since $S_{k}$ is the set of all permutations of $\left[
k\right]  $). Hence, for each $j\in\left[  k\right]  $, we have $\alpha\left(
j\right)  \in\left[  k\right]  $ and thus $a_{\alpha\left(  j\right)  }%
=\alpha\left(  j\right)  $ (by (\ref{pf.lem.lalbe-k.ai=}), applied to
$i=\alpha\left(  j\right)  $). In other words, we have%
\[
\left(  a_{\alpha\left(  1\right)  },a_{\alpha\left(  2\right)  }%
,\ldots,a_{\alpha\left(  k\right)  }\right)  =\left(  \alpha\left(  1\right)
,\ \alpha\left(  2\right)  ,\ \ldots,\ \alpha\left(  k\right)  \right)  .
\]

We have $\beta\in S_{n-k}$; in other words, $\beta$ is a permutation of
$\left[  n-k\right]  $ (since $S_{n-k}$ is the set of all permutations of
$\left[  n-k\right]  $). Hence, for each $j\in\left[  n-k\right]  $, we have
$\beta\left(  j\right)  \in\left[  n-k\right]  $ and thus $b_{\beta\left(
j\right)  }=k+\beta\left(  j\right)  $ (by (\ref{pf.lem.lalbe-k.bi=}), applied
to $i=\beta\left(  j\right)  $). In other words, we have%
\[
\left(  b_{\beta\left(  1\right)  },b_{\beta\left(  2\right)  },\ldots
,b_{\beta\left(  n-k\right)  }\right)  =\left(  k+\beta\left(  1\right)
,\ k+\beta\left(  2\right)  ,\ \ldots,\ k+\beta\left(  n-k\right)  \right)  .
\]

Now, let us use the notation $u\ast v$ for the concatenation of two finite
lists $u$ and $v$. This is defined as follows: If $u=\left(  u_{1}%
,u_{2},\ldots,u_{p}\right)  $ and $v=\left(  v_{1},v_{2},\ldots,v_{q}\right)
$ are two finite lists, then their \emph{concatenation} $u\ast v$ is defined
to be the finite list $\left(  u_{1},u_{2},\ldots,u_{p},v_{1},v_{2}%
,\ldots,v_{q}\right)  $.

Clearly,%
\begin{align}
&  \left(  a_{\alpha\left(  1\right)  },a_{\alpha\left(  2\right)  }%
,\ldots,a_{\alpha\left(  k\right)  },b_{\beta\left(  1\right)  }%
,b_{\beta\left(  2\right)  },\ldots,b_{\beta\left(  n-k\right)  }\right)
\nonumber\\
&  =\underbrace{\left(  a_{\alpha\left(  1\right)  },a_{\alpha\left(
2\right)  },\ldots,a_{\alpha\left(  k\right)  }\right)  }_{=\left(
\alpha\left(  1\right)  ,\ \alpha\left(  2\right)  ,\ \ldots,\ \alpha\left(
k\right)  \right)  }\ast\underbrace{\left(  b_{\beta\left(  1\right)
},b_{\beta\left(  2\right)  },\ldots,b_{\beta\left(  n-k\right)  }\right)
}_{=\left(  k+\beta\left(  1\right)  ,\ k+\beta\left(  2\right)
,\ \ldots,\ k+\beta\left(  n-k\right)  \right)  }\nonumber\\
&  =\left(  \alpha\left(  1\right)  ,\ \alpha\left(  2\right)  ,\ \ldots
,\ \alpha\left(  k\right)  \right)  \ast\left(  k+\beta\left(  1\right)
,\ k+\beta\left(  2\right)  ,\ \ldots,\ k+\beta\left(  n-k\right)  \right)
\nonumber\\
&  =\left(  \alpha\left(  1\right)  ,\ \alpha\left(  2\right)  ,\ \ldots
,\ \alpha\left(  k\right)  ,\ k+\beta\left(  1\right)  ,\ k+\beta\left(
2\right)  ,\ \ldots,\ k+\beta\left(  n-k\right)  \right)  .
\label{pf.lem.lalbe-k.a.concats}%
\end{align}

Now, recall that $\left(  a_{1},a_{2},\ldots,a_{k}\right)  $ is the list of
all elements of $I$ in increasing order (with no repetitions), whereas
$\left(  b_{1},b_{2},\ldots,b_{n-k}\right)  $ is the list of all elements of
$\left\{  1,2,\ldots,n\right\}  \setminus I$ in increasing order (with no
repetitions). Thus, we can apply Lemma \ref{lem.lalbe} \textbf{(a)}. We thus
conclude that there exists a unique $\sigma\in S_{n}$ satisfying%
\[
\left(  \sigma\left(  1\right)  ,\sigma\left(  2\right)  ,\ldots,\sigma\left(
n\right)  \right)  =\left(  a_{\alpha\left(  1\right)  },a_{\alpha\left(
2\right)  },\ldots,a_{\alpha\left(  k\right)  },b_{\beta\left(  1\right)
},b_{\beta\left(  2\right)  },\ldots,b_{\beta\left(  n-k\right)  }\right)  .
\]
In view of (\ref{pf.lem.lalbe-k.a.concats}), we can rewrite this as follows:
There exists a unique $\sigma\in S_{n}$ satisfying%
\begin{align*}
&  \left(  \sigma\left(  1\right)  ,\sigma\left(  2\right)  ,\ldots
,\sigma\left(  n\right)  \right) \\
&  =\left(  \alpha\left(  1\right)  ,\ \alpha\left(  2\right)  ,\ \ldots
,\ \alpha\left(  k\right)  ,\ k+\beta\left(  1\right)  ,\ k+\beta\left(
2\right)  ,\ \ldots,\ k+\beta\left(  n-k\right)  \right)  .
\end{align*}
This proves Lemma \ref{lem.lalbe-k} \textbf{(a)}. \medskip

\textbf{(b)} Let $\alpha\in S_{k}$ and $\beta\in S_{n-k}$. Then, we have%
\begin{align}
&  \left(  a_{\alpha\left(  1\right)  },a_{\alpha\left(  2\right)  }%
,\ldots,a_{\alpha\left(  k\right)  },b_{\beta\left(  1\right)  }%
,b_{\beta\left(  2\right)  },\ldots,b_{\beta\left(  n-k\right)  }\right)
\nonumber\\
&  =\left(  \alpha\left(  1\right)  ,\ \alpha\left(  2\right)  ,\ \ldots
,\ \alpha\left(  k\right)  ,\ k+\beta\left(  1\right)  ,\ k+\beta\left(
2\right)  ,\ \ldots,\ k+\beta\left(  n-k\right)  \right)  .
\label{pf.lem.lalbe-k.b.concats}%
\end{align}
(Indeed, this was already shown in our above proof of Lemma \ref{lem.lalbe-k}
\textbf{(a)}.)

The permutation $\alpha\oplus\beta$ is the unique $\sigma\in S_{n}$ satisfying%
\begin{align*}
&  \left(  \sigma\left(  1\right)  ,\sigma\left(  2\right)  ,\ldots
,\sigma\left(  n\right)  \right) \\
&  =\left(  \alpha\left(  1\right)  ,\ \alpha\left(  2\right)  ,\ \ldots
,\ \alpha\left(  k\right)  ,\ k+\beta\left(  1\right)  ,\ k+\beta\left(
2\right)  ,\ \ldots,\ k+\beta\left(  n-k\right)  \right)
\end{align*}
(by the definition of $\alpha\oplus\beta$). In view of
(\ref{pf.lem.lalbe-k.b.concats}), we can rewrite this as follows: The
permutation $\alpha\oplus\beta$ is the unique $\sigma\in S_{n}$ satisfying%
\[
\left(  \sigma\left(  1\right)  ,\sigma\left(  2\right)  ,\ldots,\sigma\left(
n\right)  \right)  =\left(  a_{\alpha\left(  1\right)  },a_{\alpha\left(
2\right)  },\ldots,a_{\alpha\left(  k\right)  },b_{\beta\left(  1\right)
},b_{\beta\left(  2\right)  },\ldots,b_{\beta\left(  n-k\right)  }\right)  .
\]

However, recall that $\left(  a_{1},a_{2},\ldots,a_{k}\right)  $ is the list
of all elements of $I$ in increasing order (with no repetitions), whereas
$\left(  b_{1},b_{2},\ldots,b_{n-k}\right)  $ is the list of all elements of
$\left\{  1,2,\ldots,n\right\}  \setminus I$ in increasing order (with no
repetitions). Thus, we can apply Lemma \ref{lem.lalbe}. In particular, the
permutation $\sigma_{I,\alpha,\beta}$ from Lemma \ref{lem.lalbe} \textbf{(a)}
is well-defined. Consider this permutation $\sigma_{I,\alpha,\beta}$.
According to its definition, this permutation $\sigma_{I,\alpha,\beta}$ is the
unique $\sigma\in S_{n}$ satisfying%
\[
\left(  \sigma\left(  1\right)  ,\sigma\left(  2\right)  ,\ldots,\sigma\left(
n\right)  \right)  =\left(  a_{\alpha\left(  1\right)  },a_{\alpha\left(
2\right)  },\ldots,a_{\alpha\left(  k\right)  },b_{\beta\left(  1\right)
},b_{\beta\left(  2\right)  },\ldots,b_{\beta\left(  n-k\right)  }\right)  .
\]
However, we have already seen that the permutation $\alpha\oplus\beta$ is the
unique $\sigma\in S_{n}$ satisfying%
\[
\left(  \sigma\left(  1\right)  ,\sigma\left(  2\right)  ,\ldots,\sigma\left(
n\right)  \right)  =\left(  a_{\alpha\left(  1\right)  },a_{\alpha\left(
2\right)  },\ldots,a_{\alpha\left(  k\right)  },b_{\beta\left(  1\right)
},b_{\beta\left(  2\right)  },\ldots,b_{\beta\left(  n-k\right)  }\right)  .
\]
Comparing the preceding two sentences, we thus conclude that $\sigma
_{I,\alpha,\beta}$ and $\alpha\oplus\beta$ are equal (since each of them is
the unique $\sigma\in S_{n}$ satisfying%
\[
\left(  \sigma\left(  1\right)  ,\sigma\left(  2\right)  ,\ldots,\sigma\left(
n\right)  \right)  =\left(  a_{\alpha\left(  1\right)  },a_{\alpha\left(
2\right)  },\ldots,a_{\alpha\left(  k\right)  },b_{\beta\left(  1\right)
},b_{\beta\left(  2\right)  },\ldots,b_{\beta\left(  n-k\right)  }\right)
\]
). In other words,
\begin{equation}
\sigma_{I,\alpha,\beta}=\alpha\oplus\beta. \label{pf.lem.lalbe-k.apb=}%
\end{equation}

Now, let $\sum I$ denote the sum of all elements of the set $I$. Thus,
\begin{align*}
\sum I  &  =\sum_{i\in I}i=\sum_{i\in\left\{  1,2,\ldots,k\right\}
}i\ \ \ \ \ \ \ \ \ \ \left(  \text{since }I=\left\{  1,2,\ldots,k\right\}
\right) \\
&  =1+2+\cdots+k,
\end{align*}
so that $\sum I-\left(  1+2+\cdots+k\right)  =0$. Thus,
\begin{equation}
\left(  -1\right)  ^{\sum I-\left(  1+2+\cdots+k\right)  }=\left(  -1\right)
^{0}=1. \label{pf.lem.lalbe-k.sumI-}%
\end{equation}
However, Lemma \ref{lem.lalbe} \textbf{(b)} yields that we have
\[
\ell\left(  \sigma_{I,\alpha,\beta}\right)  =\ell\left(  \alpha\right)
+\ell\left(  \beta\right)  +\sum I-\left(  1+2+\cdots+k\right)
\]
and%
\[
\left(  -1\right)  ^{\sigma_{I,\alpha,\beta}}=\left(  -1\right)  ^{\alpha
}\cdot\left(  -1\right)  ^{\beta}\cdot\left(  -1\right)  ^{\sum I-\left(
1+2+\cdots+k\right)  }.
\]
The second of these two equalities becomes%
\[
\left(  -1\right)  ^{\sigma_{I,\alpha,\beta}}=\left(  -1\right)  ^{\alpha
}\cdot\left(  -1\right)  ^{\beta}\cdot\underbrace{\left(  -1\right)  ^{\sum
I-\left(  1+2+\cdots+k\right)  }}_{\substack{=1\\\text{(by
(\ref{pf.lem.lalbe-k.sumI-}))}}}=\left(  -1\right)  ^{\alpha}\cdot\left(
-1\right)  ^{\beta}.
\]
In view of $\sigma_{I,\alpha,\beta}=\alpha\oplus\beta$, we can rewrite this as
$\left(  -1\right)  ^{\alpha\oplus\beta}=\left(  -1\right)  ^{\alpha}%
\cdot\left(  -1\right)  ^{\beta}$. This proves Lemma \ref{lem.lalbe-k}
\textbf{(b)}. \medskip

\textbf{(c)} Recall that $\left(  a_{1},a_{2},\ldots,a_{k}\right)  $ is the
list of all elements of $I$ in increasing order (with no repetitions), whereas
$\left(  b_{1},b_{2},\ldots,b_{n-k}\right)  $ is the list of all elements of
$\left\{  1,2,\ldots,n\right\}  \setminus I$ in increasing order (with no repetitions).

For every $\alpha\in S_{k}$ and $\beta\in S_{n-k}$, we define a permutation
$\sigma_{I,\alpha,\beta}\in S_{n}$ as in Lemma \ref{lem.lalbe} \textbf{(a)}.
In our above proof of Lemma \ref{lem.lalbe-k} \textbf{(b)}, we have shown that
this permutation $\sigma_{I,\alpha,\beta}$ satisfies
(\ref{pf.lem.lalbe-k.apb=}). In other words, we have $\sigma_{I,\alpha,\beta
}=\alpha\oplus\beta$ for every $\alpha\in S_{k}$ and $\beta\in S_{n-k}$. In
other words, we have $\sigma_{I,\alpha,\beta}=\alpha\oplus\beta$ for every
$\left(  \alpha,\beta\right)  \in S_{k}\times S_{n-k}$.

Lemma \ref{lem.lalbe} \textbf{(c)} yields that the map
\begin{align*}
S_{k}\times S_{n-k}  &  \rightarrow\left\{  \tau\in S_{n}\ \mid\ \tau\left(
\left\{  1,2,\ldots,k\right\}  \right)  =I\right\}  ,\\
\left(  \alpha,\beta\right)   &  \mapsto\sigma_{I,\alpha,\beta}%
\end{align*}
is well-defined and a bijection. In other words, the map%
\begin{align*}
S_{k}\times S_{n-k}  &  \rightarrow\left\{  \tau\in S_{n}\ \mid\ \tau\left(
\left\{  1,2,\ldots,k\right\}  \right)  =I\right\}  ,\\
\left(  \alpha,\beta\right)   &  \mapsto\alpha\oplus\beta
\end{align*}
is well-defined and a bijection (because we have $\sigma_{I,\alpha,\beta
}=\alpha\oplus\beta$ for every $\left(  \alpha,\beta\right)  \in S_{k}\times
S_{n-k}$). In view of $\left\{  1,2,\ldots,k\right\}  =\left[  k\right]  $ and
$I=\left[  k\right]  $, we can rewrite this as follows: The map%
\begin{align*}
S_{k}\times S_{n-k}  &  \rightarrow\left\{  \tau\in S_{n}\ \mid\ \tau\left(
\left[  k\right]  \right)  =\left[  k\right]  \right\}  ,\\
\left(  \alpha,\beta\right)   &  \mapsto\alpha\oplus\beta
\end{align*}
is well-defined and a bijection. This proves Lemma \ref{lem.lalbe-k}
\textbf{(c)}.
\end{proof}

Our proof of Lemma \ref{lem.block-tria-rowdet} will furthermore use a piece of
notation that we have already employed in some of our previous proofs:

\begin{convention}
\label{conv.prod-noncomm}Let us set $\prod_{i=1}^{n}a_{i}:=a_{1}a_{2}\cdots
a_{n}$ for any $a_{1},a_{2},\ldots,a_{n}\in R$. (This is, of course, the usual
meaning of the notation $\prod_{i=1}^{n}a_{i}$ when the ring $R$ is
commutative; however, we are now extending it to the case of arbitrary $R$.)
\end{convention}

Using this notation, we can rewrite the definition of a row-determinant in a
slicker form: If $\left(  a_{i,j}\right)  _{i,j\in\left[  n\right]  }\in
R^{n\times n}$ is any $n\times n$-matrix over $R$ (where $n$ is any
nonnegative integer), then%
\begin{align}
\operatorname*{rowdet}\left(  \left(  a_{i,j}\right)  _{i,j\in\left[
n\right]  }\right)   &  =\sum_{\sigma\in S_{n}}\left(  -1\right)  ^{\sigma
}\underbrace{a_{1,\sigma\left(  1\right)  }a_{2,\sigma\left(  2\right)
}\cdots a_{n,\sigma\left(  n\right)  }}_{\substack{=\prod_{i=1}^{n}%
a_{i,\sigma\left(  i\right)  }\\\text{(since }\prod_{i=1}^{n}a_{i,\sigma
\left(  i\right)  }\text{ is defined to be }a_{1,\sigma\left(  1\right)
}a_{2,\sigma\left(  2\right)  }\cdots a_{n,\sigma\left(  n\right)  }\text{)}%
}}\nonumber\\
&  \ \ \ \ \ \ \ \ \ \ \ \ \ \ \ \ \ \ \ \ \left(  \text{by the definition of
a row-determinant}\right) \nonumber\\
&  =\sum_{\sigma\in S_{n}}\left(  -1\right)  ^{\sigma}\prod_{i=1}%
^{n}a_{i,\sigma\left(  i\right)  }. \label{pf.lem.block-tria-rowdet.rowdet=}%
\end{align}

\begin{proof}
[Proof of Lemma \ref{lem.block-tria-rowdet}.]From $k\in\left\{  0,1,\ldots
,n\right\}  $, we obtain $0\leq k\leq n$. The definition of $\left[  k\right]
$ yields $\left[  k\right]  =\left\{  1,2,\ldots,k\right\}  \subseteq\left\{
1,2,\ldots,n\right\}  $ (since $k\leq n$). In other words, $\left[  k\right]
\subseteq\left[  n\right]  $ (since $\left[  n\right]  =\left\{
1,2,\ldots,n\right\}  $). In other words, $\left[  k\right]  $ is a subset of
$\left[  n\right]  $.

Lemma \ref{lem.lalbe-k} \textbf{(a)} defines a permutation $\alpha\oplus
\beta\in S_{n}$ for each $\alpha\in S_{k}$ and $\beta\in S_{n-k}$. Lemma
\ref{lem.lalbe-k} \textbf{(c)} shows that the map
\begin{align*}
S_{k}\times S_{n-k}  &  \rightarrow\left\{  \tau\in S_{n}\ \mid\ \tau\left(
\left[  k\right]  \right)  =\left[  k\right]  \right\}  ,\\
\left(  \alpha,\beta\right)   &  \mapsto\alpha\oplus\beta
\end{align*}
is well-defined and a bijection.

Now, we shall show the following:

\begin{statement}
\textit{Claim 1:} For each $\alpha\in S_{k}$ and each $\beta\in S_{n-k}$, we
have%
\begin{equation}
\prod_{i=1}^{n}a_{i,\left(  \alpha\oplus\beta\right)  \left(  i\right)
}=\left(  \prod_{i=1}^{k}a_{i,\alpha\left(  i\right)  }\right)  \left(
\prod_{i=1}^{n-k}a_{k+i,k+\beta\left(  i\right)  }\right)  .
\label{pf.lem.block-tria-rowdet.prod=prodprod}%
\end{equation}

\end{statement}

[\textit{Proof of Claim 1:} Let $\alpha\in S_{k}$ and $\beta\in S_{n-k}$. Let
$\gamma=\alpha\oplus\beta$.

Recall that $\alpha\oplus\beta$ is the unique $\sigma\in S_{n}$ satisfying%
\begin{align*}
&  \left(  \sigma\left(  1\right)  ,\sigma\left(  2\right)  ,\ldots
,\sigma\left(  n\right)  \right) \\
&  =\left(  \alpha\left(  1\right)  ,\ \alpha\left(  2\right)  ,\ \ldots
,\ \alpha\left(  k\right)  ,\ k+\beta\left(  1\right)  ,\ k+\beta\left(
2\right)  ,\ \ldots,\ k+\beta\left(  n-k\right)  \right)
\end{align*}
(because this is how $\alpha\oplus\beta$ was defined in Lemma
\ref{lem.lalbe-k} \textbf{(a)}). In other words, $\gamma$ is the unique
$\sigma\in S_{n}$ satisfying%
\begin{align*}
&  \left(  \sigma\left(  1\right)  ,\sigma\left(  2\right)  ,\ldots
,\sigma\left(  n\right)  \right) \\
&  =\left(  \alpha\left(  1\right)  ,\ \alpha\left(  2\right)  ,\ \ldots
,\ \alpha\left(  k\right)  ,\ k+\beta\left(  1\right)  ,\ k+\beta\left(
2\right)  ,\ \ldots,\ k+\beta\left(  n-k\right)  \right)
\end{align*}
(since $\gamma=\alpha\oplus\beta$). Hence, $\gamma$ is an element of $S_{n}$
and satisfies
\begin{align*}
&  \left(  \gamma\left(  1\right)  ,\ \gamma\left(  2\right)  ,\ \ldots
,\ \gamma\left(  n\right)  \right) \\
&  =\left(  \alpha\left(  1\right)  ,\ \alpha\left(  2\right)  ,\ \ldots
,\ \alpha\left(  k\right)  ,\ k+\beta\left(  1\right)  ,\ k+\beta\left(
2\right)  ,\ \ldots,\ k+\beta\left(  n-k\right)  \right)  .
\end{align*}

Thus, in particular, we have%
\begin{align*}
&  \left(  \gamma\left(  1\right)  ,\ \gamma\left(  2\right)  ,\ \ldots
,\ \gamma\left(  n\right)  \right) \\
&  =\left(  \alpha\left(  1\right)  ,\ \alpha\left(  2\right)  ,\ \ldots
,\ \alpha\left(  k\right)  ,\ k+\beta\left(  1\right)  ,\ k+\beta\left(
2\right)  ,\ \ldots,\ k+\beta\left(  n-k\right)  \right)  .
\end{align*}
In other words, we have%
\begin{equation}
\left(  \gamma\left(  i\right)  =\alpha\left(  i\right)
\ \ \ \ \ \ \ \ \ \ \text{for each }i\in\left\{  1,2,\ldots,k\right\}
\right)  \label{pf.lem.block-tria-rowdet.prod=prodprod.pf.1}%
\end{equation}
and%
\begin{equation}
\left(  \gamma\left(  i\right)  =k+\beta\left(  i-k\right)
\ \ \ \ \ \ \ \ \ \ \text{for each }i\in\left\{  k+1,k+2,\ldots,n\right\}
\right)  . \label{pf.lem.block-tria-rowdet.prod=prodprod.pf.2}%
\end{equation}

Now, let $i\in\left\{  1,2,\ldots,n-k\right\}  $. Thus, $k+i\in\left\{
k+1,k+2,\ldots,k+\left(  n-k\right)  \right\}  =\left\{  k+1,k+2,\ldots
,n\right\}  $ (since $k+\left(  n-k\right)  =n$). Hence,
(\ref{pf.lem.block-tria-rowdet.prod=prodprod.pf.2}) (applied to $k+i$ instead
of $i$) yields
\begin{equation}
\gamma\left(  k+i\right)  =k+\beta\left(  \underbrace{\left(  k+i\right)
-k}_{=i}\right)  =k+\beta\left(  i\right)  .
\label{pf.lem.block-tria-rowdet.prod=prodprod.pf.3}%
\end{equation}

Forget that we fixed $i$. We thus have proved that
(\ref{pf.lem.block-tria-rowdet.prod=prodprod.pf.3}) holds for each
$i\in\left\{  1,2,\ldots,n-k\right\}  $.

Now, Convention \ref{conv.prod-noncomm} yields%
\begin{align*}
\prod_{i=1}^{n}a_{i,\gamma\left(  i\right)  }  &  =a_{1,\gamma\left(
1\right)  }a_{2,\gamma\left(  2\right)  }\cdots a_{n,\gamma\left(  n\right)
}\\
&  =\left(  a_{1,\gamma\left(  1\right)  }a_{2,\gamma\left(  2\right)  }\cdots
a_{k,\gamma\left(  k\right)  }\right)  \left(  a_{k+1,\gamma\left(
k+1\right)  }a_{k+2,\gamma\left(  k+2\right)  }\cdots a_{n,\gamma\left(
n\right)  }\right)
\end{align*}
(since $0\leq k\leq n$). In view of%
\begin{align*}
a_{1,\gamma\left(  1\right)  }a_{2,\gamma\left(  2\right)  }\cdots
a_{k,\gamma\left(  k\right)  }  &  =\prod_{i=1}^{k}\underbrace{a_{i,\gamma
\left(  i\right)  }}_{\substack{=a_{i,\alpha\left(  i\right)  }\\\text{(since
(\ref{pf.lem.block-tria-rowdet.prod=prodprod.pf.1}) yields }\gamma\left(
i\right)  =\alpha\left(  i\right)  \text{)}}}\ \ \ \ \ \ \ \ \ \ \left(
\text{by Convention \ref{conv.prod-noncomm}}\right) \\
&  =\prod_{i=1}^{k}a_{i,\alpha\left(  i\right)  }%
\end{align*}
and%
\begin{align*}
a_{k+1,\gamma\left(  k+1\right)  }a_{k+2,\gamma\left(  k+2\right)  }\cdots
a_{n,\gamma\left(  n\right)  }  &  =a_{k+1,\gamma\left(  k+1\right)
}a_{k+2,\gamma\left(  k+2\right)  }\cdots a_{k+\left(  n-k\right)
,\gamma\left(  k+\left(  n-k\right)  \right)  }\\
&  \ \ \ \ \ \ \ \ \ \ \ \ \ \ \ \ \ \ \ \ \left(  \text{since }n=k+\left(
n-k\right)  \right) \\
&  =\prod_{i=1}^{n-k}\underbrace{a_{k+i,\gamma\left(  k+i\right)  }%
}_{\substack{=a_{k+i,k+\beta\left(  i\right)  }\\\text{(since
(\ref{pf.lem.block-tria-rowdet.prod=prodprod.pf.3}) yields }\gamma\left(
k+i\right)  =k+\beta\left(  i\right)  \text{)}}}\\
&  \ \ \ \ \ \ \ \ \ \ \ \ \ \ \ \ \ \ \ \ \left(  \text{by Convention
\ref{conv.prod-noncomm}}\right) \\
&  =\prod_{i=1}^{n-k}a_{k+i,k+\beta\left(  i\right)  },
\end{align*}
we can rewrite this as
\[
\prod_{i=1}^{n}a_{i,\gamma\left(  i\right)  }=\left(  \prod_{i=1}%
^{k}a_{i,\alpha\left(  i\right)  }\right)  \left(  \prod_{i=1}^{n-k}%
a_{k+i,k+\beta\left(  i\right)  }\right)  .
\]
In other words,%
\[
\prod_{i=1}^{n}a_{i,\left(  \alpha\oplus\beta\right)  \left(  i\right)
}=\left(  \prod_{i=1}^{k}a_{i,\alpha\left(  i\right)  }\right)  \left(
\prod_{i=1}^{n-k}a_{k+i,k+\beta\left(  i\right)  }\right)
\]
(since $\gamma=\alpha\oplus\beta$). This proves Claim 1.] \medskip

Next, we observe the following:

\begin{statement}
\textit{Claim 2:} Let $\sigma\in S_{n}$ satisfy $\sigma\left(  \left[
k\right]  \right)  \neq\left[  k\right]  $. Then,
\[
\prod_{i=1}^{n}a_{i,\sigma\left(  i\right)  }=0.
\]

\end{statement}

[\textit{Proof of Claim 2:} We have $\sigma\in S_{n}$. In other words,
$\sigma$ is a permutation of $\left[  n\right]  $ (since $S_{n}$ is the set of
all permutations of $\left[  n\right]  $). In other words, $\sigma$ is a
bijective map from $\left[  n\right]  $ to $\left[  n\right]  $. Thus, the map
$\sigma$ is bijective and therefore injective.

Hence, we have $\left\vert \sigma\left(  T\right)  \right\vert =\left\vert
T\right\vert $ for any subset $T$ of $\left[  n\right]  $%
\ \ \ \ \footnote{\textit{Proof.} Recall the following general fact from set
theory: If $X$ and $Y$ are two finite sets, and if $f:X\rightarrow Y$ is an
injective map, then we have $\left\vert f\left(  T\right)  \right\vert
=\left\vert T\right\vert $ for any subset $T$ of $X$. Applying this to
$X=\left[  n\right]  $ and $Y=\left[  n\right]  $ and $f=\sigma$, we conclude
that $\left\vert \sigma\left(  T\right)  \right\vert =\left\vert T\right\vert
$ for any subset $T$ of $\left[  n\right]  $ (since $\sigma$ is injective).}.
Applying this to $T=\left[  k\right]  $, we obtain $\left\vert \sigma\left(
\left[  k\right]  \right)  \right\vert =\left\vert \left[  k\right]
\right\vert $ (since $\left[  k\right]  $ is a subset of $\left[  n\right]
$). In other words, $\left\vert \left[  k\right]  \right\vert =\left\vert
\sigma\left(  \left[  k\right]  \right)  \right\vert $. Also, $\left[
k\right]  \neq\sigma\left(  \left[  k\right]  \right)  $ (since $\sigma\left(
\left[  k\right]  \right)  \neq\left[  k\right]  $). Moreover, the sets
$\left[  k\right]  $ and $\sigma\left(  \left[  k\right]  \right)  $ are
subsets of $\left[  n\right]  $, and thus are finite (since $\left[  n\right]
$ is finite).

We recall the following elementary fact from set theory: If $X$ and $Y$ are
two finite sets satisfying $\left\vert X\right\vert =\left\vert Y\right\vert $
and $X\neq Y$, then $X\not \subseteq Y$\ \ \ \ \footnote{\textit{Proof.} Let
$X$ and $Y$ be two finite sets satisfying $\left\vert X\right\vert =\left\vert
Y\right\vert $ and $X\neq Y$. We must prove that $X\not \subseteq Y$.
\par
Assume the contrary. Thus, $X\subseteq Y$. Hence, $\left\vert Y\setminus
X\right\vert =\left\vert Y\right\vert -\underbrace{\left\vert X\right\vert
}_{=\left\vert Y\right\vert }=\left\vert Y\right\vert -\left\vert Y\right\vert
=0$, so that $Y\setminus X=\varnothing$. In other words, $Y\subseteq X$.
Combining this with $X\subseteq Y$, we obtain $X=Y$. However, this contradicts
$X\neq Y$.
\par
This contradiction shows that our assumption was false. Hence, we have
$X\not \subseteq Y$, qed.}. We can apply this to $X=\left[  k\right]  $ and
$Y=\sigma\left(  \left[  k\right]  \right)  $ (since $\left\vert \left[
k\right]  \right\vert =\left\vert \sigma\left(  \left[  k\right]  \right)
\right\vert $ and $\left[  k\right]  \neq\sigma\left(  \left[  k\right]
\right)  $), and conclude that $\left[  k\right]  \not \subseteq \sigma\left(
\left[  k\right]  \right)  $. Hence, there exists some $u\in\left[  k\right]
$ such that $u\notin\sigma\left(  \left[  k\right]  \right)  $. Consider this
$u$.

We have $u\in\left[  k\right]  \subseteq\left[  n\right]  $. Thus,
$\sigma^{-1}\left(  u\right)  $ is well-defined (since $\sigma$ is a bijective
map from $\left[  n\right]  $ to $\left[  n\right]  $). Let $v=\sigma
^{-1}\left(  u\right)  $. Then, $v=\sigma^{-1}\left(  u\right)  \in\left[
n\right]  $ and $\sigma\left(  v\right)  =u$ (since $v=\sigma^{-1}\left(
u\right)  $). If we had $v\in\left[  k\right]  $, then we would thus have
$u=\sigma\left(  \underbrace{v}_{\in\left[  k\right]  }\right)  \in
\sigma\left(  \left[  k\right]  \right)  $, which would contradict
$u\notin\sigma\left(  \left[  k\right]  \right)  $. Hence, we cannot have
$v\in\left[  k\right]  $. Thus, we have $v\notin\left[  k\right]  $. Combining
$v\in\left[  n\right]  $ with $v\notin\left[  k\right]  $, we obtain%
\[
v\in\underbrace{\left[  n\right]  }_{=\left\{  1,2,\ldots,n\right\}
}\setminus\underbrace{\left[  k\right]  }_{=\left\{  1,2,\ldots,k\right\}
}=\left\{  1,2,\ldots,n\right\}  \setminus\left\{  1,2,\ldots,k\right\}
=\left\{  k+1,k+2,\ldots,n\right\}  .
\]
Moreover, $u\in\left[  k\right]  =\left\{  1,2,\ldots,k\right\}  $.

However, one of the assumptions of Lemma \ref{lem.block-tria-rowdet} says that%
\[
a_{i,j}=0\ \ \ \ \ \ \ \ \ \ \text{for every }i\in\left\{  k+1,k+2,\ldots
,n\right\}  \text{ and }j\in\left\{  1,2,\ldots,k\right\}  .
\]
Applying this to $i=v$ and $j=u$, we obtain $a_{v,u}=0$ (since $v\in\left\{
k+1,k+2,\ldots,n\right\}  $ and $u\in\left\{  1,2,\ldots,k\right\}  $). In
view of $\sigma\left(  v\right)  =u$, we can rewrite this as $a_{v,\sigma
\left(  v\right)  }=0$.

However, $v\in\left[  n\right]  =\left\{  1,2,\ldots,n\right\}  $. Thus,
$a_{v,\sigma\left(  v\right)  }$ is one of the factors of the product
$\prod_{i=1}^{n}a_{i,\sigma\left(  i\right)  }$ (namely, the factor for
$i=v$). Since we have just shown that $a_{v,\sigma\left(  v\right)  }=0$, we
thus conclude that one of the factors of this product is $0$. Thus, the whole
product must be $0$. In other words, we have $\prod_{i=1}^{n}a_{i,\sigma
\left(  i\right)  }=0$. This proves Claim 2.] \medskip

Now, from $A=\left(  a_{i,j}\right)  _{i,j\in\left[  n\right]  }$, we obtain%
\begin{align*}
\operatorname*{rowdet}A  &  =\operatorname*{rowdet}\left(  \left(
a_{i,j}\right)  _{i,j\in\left[  n\right]  }\right)  =\sum_{\sigma\in S_{n}%
}\left(  -1\right)  ^{\sigma}\prod_{i=1}^{n}a_{i,\sigma\left(  i\right)
}\ \ \ \ \ \ \ \ \ \ \left(  \text{by (\ref{pf.lem.block-tria-rowdet.rowdet=}%
)}\right) \\
&  =\underbrace{\sum_{\substack{\sigma\in S_{n};\\\sigma\left(  \left[
k\right]  \right)  =\left[  k\right]  }}}_{=\sum_{\sigma\in\left\{  \tau\in
S_{n}\ \mid\ \tau\left(  \left[  k\right]  \right)  =\left[  k\right]
\right\}  }}\left(  -1\right)  ^{\sigma}\prod_{i=1}^{n}a_{i,\sigma\left(
i\right)  }+\sum_{\substack{\sigma\in S_{n};\\\sigma\left(  \left[  k\right]
\right)  \neq\left[  k\right]  }}\left(  -1\right)  ^{\sigma}\underbrace{\prod
_{i=1}^{n}a_{i,\sigma\left(  i\right)  }}_{\substack{=0\\\text{(by Claim 2)}%
}}\\
&  \ \ \ \ \ \ \ \ \ \ \ \ \ \ \ \ \ \ \ \ \left(
\begin{array}
[c]{c}%
\text{since each }\sigma\in S_{n}\text{ satisfies either }\sigma\left(
\left[  k\right]  \right)  =\left[  k\right] \\
\text{or }\sigma\left(  \left[  k\right]  \right)  \neq\left[  k\right]
\text{ (but not both at the same time)}%
\end{array}
\right) \\
&  =\sum_{\sigma\in\left\{  \tau\in S_{n}\ \mid\ \tau\left(  \left[  k\right]
\right)  =\left[  k\right]  \right\}  }\left(  -1\right)  ^{\sigma}\prod
_{i=1}^{n}a_{i,\sigma\left(  i\right)  }+\underbrace{\sum_{\substack{\sigma\in
S_{n};\\\sigma\left(  \left[  k\right]  \right)  \neq\left[  k\right]
}}\left(  -1\right)  ^{\sigma}\cdot0}_{=0}\\
&  =\sum_{\sigma\in\left\{  \tau\in S_{n}\ \mid\ \tau\left(  \left[  k\right]
\right)  =\left[  k\right]  \right\}  }\left(  -1\right)  ^{\sigma}\prod
_{i=1}^{n}a_{i,\sigma\left(  i\right)  }\\
&  =\underbrace{\sum_{\left(  \alpha,\beta\right)  \in S_{k}\times S_{n-k}}%
}_{=\sum_{\alpha\in S_{k}}\ \ \sum_{\beta\in S_{n-k}}}\left(  -1\right)
^{\alpha\oplus\beta}\prod_{i=1}^{n}a_{i,\left(  \alpha\oplus\beta\right)
\left(  i\right)  }\\
&  \ \ \ \ \ \ \ \ \ \ \ \ \ \ \ \ \ \ \ \ \left(
\begin{array}
[c]{c}%
\text{here, we have substituted }\alpha\oplus\beta\text{ for }\sigma\text{ in
the sum,}\\
\text{since the map }S_{k}\times S_{n-k}\rightarrow\left\{  \tau\in
S_{n}\ \mid\ \tau\left(  \left[  k\right]  \right)  =\left[  k\right]
\right\}  ,\\
\left(  \alpha,\beta\right)  \mapsto\alpha\oplus\beta\text{ is a bijection}%
\end{array}
\right) \\
&  =\sum_{\alpha\in S_{k}}\ \ \sum_{\beta\in S_{n-k}}\underbrace{\left(
-1\right)  ^{\alpha\oplus\beta}}_{\substack{=\left(  -1\right)  ^{\alpha}%
\cdot\left(  -1\right)  ^{\beta}\\\text{(by Lemma \ref{lem.lalbe-k}
\textbf{(b)})}}}\ \ \underbrace{\prod_{i=1}^{n}a_{i,\left(  \alpha\oplus
\beta\right)  \left(  i\right)  }}_{\substack{=\left(  \prod_{i=1}%
^{k}a_{i,\alpha\left(  i\right)  }\right)  \left(  \prod_{i=1}^{n-k}%
a_{k+i,k+\beta\left(  i\right)  }\right)  \\\text{(by Claim 1)}}}\\
&  =\sum_{\alpha\in S_{k}}\ \ \sum_{\beta\in S_{n-k}}\left(  -1\right)
^{\alpha}\cdot\left(  -1\right)  ^{\beta}\left(  \prod_{i=1}^{k}%
a_{i,\alpha\left(  i\right)  }\right)  \left(  \prod_{i=1}^{n-k}%
a_{k+i,k+\beta\left(  i\right)  }\right)  .
\end{align*}
Comparing this with%
\begin{align*}
&  \underbrace{\operatorname*{rowdet}\left(  \left(  a_{i,j}\right)
_{i,j\in\left[  k\right]  }\right)  }_{\substack{=\sum_{\sigma\in S_{k}%
}\left(  -1\right)  ^{\sigma}\prod_{i=1}^{k}a_{i,\sigma\left(  i\right)
}\\\text{(by (\ref{pf.lem.block-tria-rowdet.rowdet=}), applied to
}k\\\text{instead of }n\text{)}}}\cdot\underbrace{\operatorname*{rowdet}%
\left(  \left(  a_{k+i,k+j}\right)  _{i,j\in\left[  n-k\right]  }\right)
}_{\substack{=\sum_{\sigma\in S_{n-k}}\left(  -1\right)  ^{\sigma}\prod
_{i=1}^{n-k}a_{k+i,k+\sigma\left(  i\right)  }\\\text{(by
(\ref{pf.lem.block-tria-rowdet.rowdet=}), applied to }n-k\\\text{and
}a_{k+i,k+j}\text{ instead of }n\text{ and }a_{i,j}\text{)}}}\\
&  =\underbrace{\left(  \sum_{\sigma\in S_{k}}\left(  -1\right)  ^{\sigma
}\prod_{i=1}^{k}a_{i,\sigma\left(  i\right)  }\right)  }_{\substack{=\sum
_{\alpha\in S_{k}}\left(  -1\right)  ^{\alpha}\prod_{i=1}^{k}a_{i,\alpha
\left(  i\right)  }\\\text{(here, we have renamed}\\\text{the summation index
}\sigma\text{ as }\alpha\text{)}}}\cdot\underbrace{\left(  \sum_{\sigma\in
S_{n-k}}\left(  -1\right)  ^{\sigma}\prod_{i=1}^{n-k}a_{k+i,k+\sigma\left(
i\right)  }\right)  }_{\substack{=\sum_{\beta\in S_{n-k}}\left(  -1\right)
^{\beta}\prod_{i=1}^{n-k}a_{k+i,k+\beta\left(  i\right)  }\\\text{(here, we
have renamed}\\\text{the summation index }\sigma\text{ as }\beta\text{)}}}\\
&  =\left(  \sum_{\alpha\in S_{k}}\left(  -1\right)  ^{\alpha}\prod_{i=1}%
^{k}a_{i,\alpha\left(  i\right)  }\right)  \cdot\left(  \sum_{\beta\in
S_{n-k}}\left(  -1\right)  ^{\beta}\prod_{i=1}^{n-k}a_{k+i,k+\beta\left(
i\right)  }\right) \\
&  =\sum_{\alpha\in S_{k}}\ \ \sum_{\beta\in S_{n-k}}\underbrace{\left(
\left(  -1\right)  ^{\alpha}\prod_{i=1}^{k}a_{i,\alpha\left(  i\right)
}\right)  \cdot\left(  \left(  -1\right)  ^{\beta}\prod_{i=1}^{n-k}%
a_{k+i,k+\beta\left(  i\right)  }\right)  }_{\substack{=\left(  -1\right)
^{\alpha}\cdot\left(  -1\right)  ^{\beta}\left(  \prod_{i=1}^{k}%
a_{i,\alpha\left(  i\right)  }\right)  \left(  \prod_{i=1}^{n-k}%
a_{k+i,k+\beta\left(  i\right)  }\right)  \\\text{(because }\left(  -1\right)
^{\beta}\text{ belongs to }\mathbb{Z}\text{ and thus commutes with}%
\\\text{every element of }R\text{)}}}\\
&  =\sum_{\alpha\in S_{k}}\ \ \sum_{\beta\in S_{n-k}}\left(  -1\right)
^{\alpha}\cdot\left(  -1\right)  ^{\beta}\left(  \prod_{i=1}^{k}%
a_{i,\alpha\left(  i\right)  }\right)  \left(  \prod_{i=1}^{n-k}%
a_{k+i,k+\beta\left(  i\right)  }\right)  ,
\end{align*}
we obtain%
\[
\operatorname*{rowdet}A=\operatorname*{rowdet}\left(  \left(  a_{i,j}\right)
_{i,j\in\left[  k\right]  }\right)  \cdot\operatorname*{rowdet}\left(  \left(
a_{k+i,k+j}\right)  _{i,j\in\left[  n-k\right]  }\right)  .
\]
This proves Lemma \ref{lem.block-tria-rowdet}.
\end{proof}
\end{verlong}

We can now easily derive Corollary \ref{cor.pre-pieri2.2} from Theorem
\ref{thm.pre-pieri2}:

\begin{proof}
[Proof of Corollary \ref{cor.pre-pieri2.2}.]We have $q=n-p\in\left\{
0,1,\ldots,n\right\}  $ (since $p\in\left\{  0,1,\ldots,n\right\}  $). From
$q=n-p$, we obtain $n-q=p$ and $q+p=n$.

Define an $n$-tuple $\xi\in\mathbb{Z}^{n}$ as in Theorem \ref{thm.pre-pieri2}.
Then, Theorem \ref{thm.pre-pieri2} yields%
\begin{equation}
\sum_{\substack{\beta\in\left\{  0,1\right\}  ^{n};\\\left\vert \beta
\right\vert =p}}t_{\alpha+\beta}=\operatorname*{rowdet}\left(  \left(
h_{\alpha_{i}+\xi_{j},\ i}\right)  _{i,j\in\left[  n\right]  }\right)  .
\label{pf.cor.pre-pieri2.2.1}%
\end{equation}

\begin{vershort}
However, the definition of $\xi$ yields%
\begin{align*}
\xi &  =\left(  1,2,\ldots,n-p,n-p+2,n-p+3,\ldots,n+1\right) \\
&  =\left(  1,2,\ldots,q,q+2,q+3,\ldots,n+1\right)
\ \ \ \ \ \ \ \ \ \ \left(  \text{since }n-p=q\right)  .
\end{align*}
Hence,
\begin{equation}
\xi_{k}=k\ \ \ \ \ \ \ \ \ \ \text{for each }k\in\left\{  1,2,\ldots
,q\right\}  \label{pf.cor.pre-pieri2.2.short.fn2.1}%
\end{equation}
and%
\begin{equation}
\xi_{k}=k+1\ \ \ \ \ \ \ \ \ \ \text{for each }k\in\left\{  q+1,q+2,\ldots
,n\right\}  . \label{pf.cor.pre-pieri2.2.short.fn2.2}%
\end{equation}

\end{vershort}

\begin{verlong}
However, the definition of $\xi$ yields%
\begin{align*}
\xi &  =\left(  1,2,\ldots,n\right)  +\left(  \underbrace{0,0,\ldots
,0}_{n-p\text{ zeroes}},\underbrace{1,1,\ldots,1}_{p\text{ ones}}\right) \\
&  =\left(  1,2,\ldots,n-p,n-p+2,n-p+3,\ldots,n+1\right)  .
\end{align*}
Thus,
\begin{align*}
\left(  \xi_{1},\xi_{2},\ldots,\xi_{n}\right)   &  =\xi=\left(  1,2,\ldots
,n-p,n-p+2,n-p+3,\ldots,n+1\right) \\
&  =\left(  1,2,\ldots,q,q+2,q+3,\ldots,n+1\right)
\ \ \ \ \ \ \ \ \ \ \left(  \text{since }n-p=q\right)  .
\end{align*}
In other words, we have%
\begin{equation}
\xi_{k}=k\ \ \ \ \ \ \ \ \ \ \text{for each }k\in\left\{  1,2,\ldots
,q\right\}  \label{pf.cor.pre-pieri2.2.fn2.1}%
\end{equation}
and%
\begin{equation}
\xi_{k}=k+1\ \ \ \ \ \ \ \ \ \ \text{for each }k\in\left\{  q+1,q+2,\ldots
,n\right\}  . \label{pf.cor.pre-pieri2.2.fn2.2}%
\end{equation}

\end{verlong}

\begin{vershort}
Now, it is easy to see that
\[
h_{\alpha_{i}+\xi_{j},\ i}=0\ \ \ \ \ \ \ \ \ \ \text{for every }i\in\left\{
q+1,q+2,\ldots,n\right\}  \text{ and }j\in\left\{  1,2,\ldots,q\right\}
\]
(because if $i\in\left\{  q+1,q+2,\ldots,n\right\}  $ and $j\in\left\{
1,2,\ldots,q\right\}  $, then (\ref{pf.cor.pre-pieri2.2.short.fn2.1}) yields
$\xi_{j}=j\leq q$, whereas (\ref{eq.cor.pre-pieri2.2.ass-minus}) yields
$\alpha_{i}<-q$, so that $\underbrace{\alpha_{i}}_{<-q}+\underbrace{\xi_{j}%
}_{\leq q}<-q+q=0$, and therefore (\ref{eq.cor.pre-pieri2.2.ass}) (applied to
$k=\alpha_{i}+\xi_{j}$) yields $h_{\alpha_{i}+\xi_{j},\ i}=0$). Therefore,
Lemma \ref{lem.block-tria-rowdet} (applied to $a_{i,j}=h_{\alpha_{i}+\xi
_{j},\ i}$ and $A=\left(  h_{\alpha_{i}+\xi_{j},\ i}\right)  _{i,j\in\left[
n\right]  }$ and $k=q$) yields%
\begin{align*}
&  \operatorname*{rowdet}\left(  \left(  h_{\alpha_{i}+\xi_{j},\ i}\right)
_{i,j\in\left[  n\right]  }\right) \\
&  =\operatorname*{rowdet}\underbrace{\left(  \left(  h_{\alpha_{i}+\xi
_{j},\ i}\right)  _{i,j\in\left[  q\right]  }\right)  }_{\substack{=\left(
h_{\alpha_{i}+j,\ i}\right)  _{i,j\in\left[  q\right]  }\\\text{(since
(\ref{pf.cor.pre-pieri2.2.short.fn2.1}) yields }\xi_{j}=j\\\text{for }%
j\in\left[  q\right]  \text{)}}}\cdot\operatorname*{rowdet}\underbrace{\left(
\left(  h_{\alpha_{q+i}+\xi_{q+j},\ q+i}\right)  _{i,j\in\left[  n-q\right]
}\right)  }_{\substack{=\left(  h_{\alpha_{q+i}+q+j+1,\ q+i}\right)
_{i,j\in\left[  n-q\right]  }\\\text{(since
(\ref{pf.cor.pre-pieri2.2.short.fn2.2}) yields }\xi_{q+j}=q+j+1\\\text{for
}j\in\left[  n-q\right]  \text{)}}}\\
&  =\operatorname*{rowdet}\left(  \left(  h_{\alpha_{i}+j,\ i}\right)
_{i,j\in\left[  q\right]  }\right)  \cdot\operatorname*{rowdet}\left(  \left(
h_{\alpha_{q+i}+q+j+1,\ q+i}\right)  _{i,j\in\left[  n-q\right]  }\right) \\
&  =\operatorname*{rowdet}\left(  \left(  h_{\alpha_{i}+j,\ i}\right)
_{i,j\in\left[  q\right]  }\right)  \cdot\operatorname*{rowdet}\left(  \left(
h_{\alpha_{q+i}+q+j+1,\ q+i}\right)  _{i,j\in\left[  p\right]  }\right)
\end{align*}
(since $n-q=p$). Combining this with (\ref{pf.cor.pre-pieri2.2.1}), we obtain%
\begin{align*}
\sum_{\substack{\beta\in\left\{  0,1\right\}  ^{n};\\\left\vert \beta
\right\vert =p}}t_{\alpha+\beta}  &  =\operatorname*{rowdet}\left(  \left(
h_{\alpha_{i}+\xi_{j},\ i}\right)  _{i,j\in\left[  n\right]  }\right) \\
&  =\operatorname*{rowdet}\left(  \left(  h_{\alpha_{i}+j,\ i}\right)
_{i,j\in\left[  q\right]  }\right)  \cdot\operatorname*{rowdet}\left(  \left(
h_{\alpha_{q+i}+q+j+1,\ q+i}\right)  _{i,j\in\left[  p\right]  }\right)  .
\end{align*}

\end{vershort}

\begin{verlong}
Now, it is easy to see that
\[
h_{\alpha_{i}+\xi_{j},\ i}=0\ \ \ \ \ \ \ \ \ \ \text{for every }i\in\left\{
q+1,q+2,\ldots,n\right\}  \text{ and }j\in\left\{  1,2,\ldots,q\right\}
\]
\footnote{\textit{Proof.} Let $i\in\left\{  q+1,q+2,\ldots,n\right\}  $ and
$j\in\left\{  1,2,\ldots,q\right\}  $. From $i\in\left\{  q+1,q+2,\ldots
,n\right\}  $, we obtain $i\geq q+1>q$. Hence,
(\ref{eq.cor.pre-pieri2.2.ass-minus}) yields $\alpha_{i}<-q$. On the other
hand, $j\in\left\{  1,2,\ldots,q\right\}  $. Hence, applying
(\ref{pf.cor.pre-pieri2.2.fn2.1}) to $k=j$, we obtain $\xi_{j}=j\leq q$ (since
$j\in\left\{  1,2,\ldots,q\right\}  $). Thus, $\underbrace{\alpha_{i}}%
_{<-q}+\underbrace{\xi_{j}}_{\leq q}<-q+q=0$. Therefore,
(\ref{eq.cor.pre-pieri2.2.ass}) (applied to $k=\alpha_{i}+\xi_{j}$) yields
$h_{\alpha_{i}+\xi_{j},\ i}=0$ (since $i>q$). Qed.}. Therefore, Lemma
\ref{lem.block-tria-rowdet} (applied to $a_{i,j}=h_{\alpha_{i}+\xi_{j},\ i}$
and $A=\left(  h_{\alpha_{i}+\xi_{j},\ i}\right)  _{i,j\in\left[  n\right]  }$
and $k=q$) yields%
\begin{align}
&  \operatorname*{rowdet}\left(  \left(  h_{\alpha_{i}+\xi_{j},\ i}\right)
_{i,j\in\left[  n\right]  }\right) \nonumber\\
&  =\operatorname*{rowdet}\left(  \left(  h_{\alpha_{i}+\xi_{j},\ i}\right)
_{i,j\in\left[  q\right]  }\right)  \cdot\operatorname*{rowdet}\left(  \left(
h_{\alpha_{q+i}+\xi_{q+j},\ q+i}\right)  _{i,j\in\left[  n-q\right]  }\right)
\nonumber\\
&  =\operatorname*{rowdet}\left(  \left(  h_{\alpha_{i}+\xi_{j},\ i}\right)
_{i,j\in\left[  q\right]  }\right)  \cdot\operatorname*{rowdet}\left(  \left(
h_{\alpha_{q+i}+\xi_{q+j},\ q+i}\right)  _{i,j\in\left[  p\right]  }\right)
\label{pf.cor.pre-pieri2.2.3}%
\end{align}
(since $n-q=p$).

However, for each $i\in\left[  q\right]  $ and $j\in\left[  q\right]  $, we
have $h_{\alpha_{i}+\xi_{j},\ i}=h_{\alpha_{i}+j,\ i}$%
\ \ \ \ \footnote{\textit{Proof.} Let $i\in\left[  q\right]  $ and
$j\in\left[  q\right]  $. Then, $j\in\left[  q\right]  =\left\{
1,2,\ldots,q\right\}  $. Hence, (\ref{pf.cor.pre-pieri2.2.fn2.1}) (applied to
$k=j$) yields $\xi_{j}=j$. Thus, $h_{\alpha_{i}+\xi_{j},\ i}=h_{\alpha
_{i}+j,\ i}$. Qed.}. Hence,
\begin{equation}
\left(  h_{\alpha_{i}+\xi_{j},\ i}\right)  _{i,j\in\left[  q\right]  }=\left(
h_{\alpha_{i}+j,\ i}\right)  _{i,j\in\left[  q\right]  }.
\label{pf.cor.pre-pieri2.2.4}%
\end{equation}

Furthermore, for each $i\in\left[  p\right]  $ and $j\in\left[  p\right]  $,
we have $h_{\alpha_{q+i}+\xi_{q+j},\ q+i}=h_{\alpha_{q+i}+q+j+1,\ q+i}%
$\ \ \ \ \footnote{\textit{Proof.} Let $i\in\left[  p\right]  $ and
$j\in\left[  p\right]  $. Then, $j\in\left[  p\right]  =\left\{
1,2,\ldots,p\right\}  $, so that $q+j\in\left\{  q+1,q+2,\ldots,q+p\right\}
=\left\{  q+1,q+2,\ldots,n\right\}  $ (since $q+p=n$). Thus,
(\ref{pf.cor.pre-pieri2.2.fn2.2}) (applied to $k=q+j$) yields $\xi
_{q+j}=q+j+1$. Hence, $h_{\alpha_{q+i}+\xi_{q+j},\ q+i}=h_{\alpha
_{q+i}+q+j+1,\ q+i}$. Qed.}. Hence,%
\begin{equation}
\left(  h_{\alpha_{q+i}+\xi_{q+j},\ q+i}\right)  _{i,j\in\left[  p\right]
}=\left(  h_{\alpha_{q+i}+q+j+1,\ q+i}\right)  _{i,j\in\left[  p\right]  }.
\label{pf.cor.pre-pieri2.2.5}%
\end{equation}

In light of (\ref{pf.cor.pre-pieri2.2.4}) and (\ref{pf.cor.pre-pieri2.2.5}),
we can rewrite (\ref{pf.cor.pre-pieri2.2.3}) as%
\begin{align*}
&  \operatorname*{rowdet}\left(  \left(  h_{\alpha_{i}+\xi_{j},\ i}\right)
_{i,j\in\left[  n\right]  }\right) \\
&  =\operatorname*{rowdet}\left(  \left(  h_{\alpha_{i}+j,\ i}\right)
_{i,j\in\left[  q\right]  }\right)  \cdot\operatorname*{rowdet}\left(  \left(
h_{\alpha_{q+i}+q+j+1,\ q+i}\right)  _{i,j\in\left[  p\right]  }\right)  .
\end{align*}
Hence, (\ref{pf.cor.pre-pieri2.2.1}) yields%
\begin{align*}
\sum_{\substack{\beta\in\left\{  0,1\right\}  ^{n};\\\left\vert \beta
\right\vert =p}}t_{\alpha+\beta}  &  =\operatorname*{rowdet}\left(  \left(
h_{\alpha_{i}+\xi_{j},\ i}\right)  _{i,j\in\left[  n\right]  }\right) \\
&  =\operatorname*{rowdet}\left(  \left(  h_{\alpha_{i}+j,\ i}\right)
_{i,j\in\left[  q\right]  }\right)  \cdot\operatorname*{rowdet}\left(  \left(
h_{\alpha_{q+i}+q+j+1,\ q+i}\right)  _{i,j\in\left[  p\right]  }\right)  .
\end{align*}

\end{verlong}

\noindent This proves Corollary \ref{cor.pre-pieri2.2}.
\end{proof}

Next, let us state a counterpart to Corollary \ref{cor.pre-pieri.4}:

\begin{corollary}
\label{cor.pre-pieri2.4}Let $n\in\mathbb{N}$ and $p\in\left\{  0,1,\ldots
,n\right\}  $. Let $q=n-p$. Let $h_{k,\ i}$ be an element of $R$ for all
$k\in\mathbb{Z}$ and $i\in\left[  n\right]  $. Assume that%
\begin{equation}
h_{k,\ i}=0\ \ \ \ \ \ \ \ \ \ \text{for all }k<0\text{ and }i>q.
\end{equation}

For any $m\in\left\{  0,1,\ldots,n\right\}  $ and any $\lambda\in
\mathbb{Z}^{m}$, we define%
\[
s_{\lambda}:=\operatorname*{rowdet}\left(  \left(  h_{\lambda_{i}%
+j-i,\ i}\right)  _{i,j\in\left[  m\right]  }\right)  \in R.
\]

Set%
\[
e_{p,\ q}:=\operatorname*{rowdet}\left(  \left(  h_{1+j-i,\ q+i}\right)
_{i,j\in\left[  p\right]  }\right)  \in R.
\]

Fix an $n$-tuple $\mu\in\mathbb{Z}^{n}$. Assume that%
\begin{equation}
\mu_{i}=0\ \ \ \ \ \ \ \ \ \ \text{for all }i>q.
\label{eq.cor.pre-pieri2.4.mui=0}%
\end{equation}

Let $\overline{\mu}=\left(  \mu_{1},\mu_{2},\ldots,\mu_{q}\right)  $. Then,%
\[
s_{\overline{\mu}}\cdot e_{p,\ q}=\sum_{\substack{\beta\in\left\{
0,1\right\}  ^{n};\\\left\vert \beta\right\vert =p}}s_{\mu+\beta}.
\]

\end{corollary}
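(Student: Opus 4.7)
The plan is to derive Corollary \ref{cor.pre-pieri2.4} from Corollary \ref{cor.pre-pieri2.2} in exactly the same way that Corollary \ref{cor.pre-pieri.4} was derived from Corollary \ref{cor.pre-pieri.2}. The key idea is the reindexing $\alpha_{i}:=\mu_{i}-i$, which converts the $s_{\lambda}$-style row-determinants (with shifts $\lambda_{i}+j-i$) into the $t_{\alpha}$-style row-determinants (with shifts $\alpha_{i}+j$) used in Corollary \ref{cor.pre-pieri2.2}.

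Concretely, I would define $t_{\alpha}\in R$ for each $\alpha\in\mathbb{Z}^{n}$ as in Corollary \ref{cor.pre-pieri2.2}, and take $\alpha:=\left(  \mu_{1}-1,\ \mu_{2}-2,\ \ldots,\ \mu_{n}-n\right)  \in\mathbb{Z}^{n}$. The hypothesis (\ref{eq.cor.pre-pieri2.2.ass-minus}) holds because for each $i>q$, the assumption (\ref{eq.cor.pre-pieri2.4.mui=0}) gives $\mu_{i}=0$ and hence $\alpha_{i}=-i\leq-\left(  q+1\right)  <-q$. Applying Corollary \ref{cor.pre-pieri2.2} then yields
\[
\sum_{\substack{\beta\in\left\{  0,1\right\}  ^{n};\\\left\vert \beta\right\vert =p}}t_{\alpha+\beta}=\operatorname*{rowdet}\left(  \left(  h_{\alpha_{i}+j,\ i}\right)  _{i,j\in\left[  q\right]  }\right)  \cdot\operatorname*{rowdet}\left(  \left(  h_{\alpha_{q+i}+q+j+1,\ q+i}\right)  _{i,j\in\left[  p\right]  }\right)  .
\]
Three routine index identifications then finish the proof. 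First, for each $\beta\in\mathbb{Z}^{n}$ and each $i,j\in\left[  n\right]  $, we have $\left(  \alpha+\beta\right)  _{i}+j=\left(  \mu+\beta\right)  _{i}+j-i$, so the matrices defining $t_{\alpha+\beta}$ and $s_{\mu+\beta}$ coincide and hence $t_{\alpha+\beta}=s_{\mu+\beta}$. Second, for $i\leq q$ we have $\overline{\mu}_{i}=\mu_{i}$, so $\alpha_{i}+j=\overline{\mu}_{i}+j-i$, showing that the first right-hand factor equals $s_{\overline{\mu}}$. Third, for $i\in\left[  p\right]  $ we have $\mu_{q+i}=0$ and therefore $\alpha_{q+i}=-\left(  q+i\right)  $, which gives $\alpha_{q+i}+q+j+1=1+j-i$, so the second right-hand factor equals $e_{p,\ q}$ by definition. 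Substituting these three identifications into the identity above yields the claim.

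The one subtle point to watch out for is a hypothesis mismatch: Corollary \ref{cor.pre-pieri2.4} as stated assumes only $h_{k,\ n}=0$ for $k<0$, whereas Corollary \ref{cor.pre-pieri2.2} requires the stronger $h_{k,\ i}=0$ for all $k<0$ and $i>q$. To make the plan above rigorous one presumably intends the stronger hypothesis here as well (it holds in every natural application, for example the Pieri rule for $e_{p}$, where $h_{k,\ i}=h_{k}$ with $h_{k}=0$ for $k<0$, and indeed the entries $h_{k,\ q+i}$ with $k<0$ already appear inside $e_{p,\ q}$ itself). Under that reading, no further obstacle remains and the proof is a direct transcription of the derivation of Corollary \ref{cor.pre-pieri.4} from Corollary \ref{cor.pre-pieri.2}.
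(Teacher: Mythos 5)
Your proposal is precisely the paper's proof: set $\alpha=\left(\mu_{1}-1,\ \mu_{2}-2,\ \ldots,\ \mu_{n}-n\right)$, verify $\alpha_{i}<-q$ for $i>q$ using (\ref{eq.cor.pre-pieri2.4.mui=0}), apply Corollary \ref{cor.pre-pieri2.2}, and make the three index identifications $t_{\alpha+\beta}=s_{\mu+\beta}$, first factor $=s_{\overline{\mu}}$, second factor $=e_{p,\ q}$. All three identifications are carried out correctly.

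The hypothesis mismatch you flag is genuine, and it is a defect of the paper's statement rather than of your argument: the paper's own proof invokes Corollary \ref{cor.pre-pieri2.2} without ever checking its assumption (\ref{eq.cor.pre-pieri2.2.ass}), which demands $h_{k,\ i}=0$ for all $k<0$ and \emph{all} $i>q$, not merely $i=n$. The stronger assumption really is needed. For instance, take $n=3$, $p=2$, $q=1$ and $\mu=\left(\mu_{1},0,0\right)$, and assume only $h_{k,\ 3}=0$ for $k<0$. A direct expansion of the three row-determinants $s_{\mu+\left(1,1,0\right)}+s_{\mu+\left(1,0,1\right)}+s_{\mu+\left(0,1,1\right)}$ gives
\[
h_{\mu_{1},\ 1}h_{1,\ 2}h_{1,\ 3}-h_{\mu_{1},\ 1}h_{2,\ 2}h_{0,\ 3}-h_{\mu_{1}+2,\ 1}h_{-1,\ 2}h_{1,\ 3}+h_{\mu_{1}+3,\ 1}h_{-1,\ 2}h_{0,\ 3},
\]
which differs from $s_{\overline{\mu}}\cdot e_{2,\ 1}=h_{\mu_{1},\ 1}\left(h_{1,\ 2}h_{1,\ 3}-h_{2,\ 2}h_{0,\ 3}\right)$ by the two terms involving $h_{-1,\ 2}$; these do not vanish in general (e.g.\ for free noncommuting variables $h_{k,\ i}$ with only the $h_{k,\ 3}$, $k<0$, set to zero). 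So the assumption in Corollary \ref{cor.pre-pieri2.4} should be strengthened to $h_{k,\ i}=0$ for all $k<0$ and $i>q$ (which holds automatically in the intended application $h_{k,\ i}=h_{k}$); under that reading your proof is complete and coincides with the paper's.
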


\begin{vershort}
\begin{proof}
[Proof of Corollary \ref{cor.pre-pieri2.4}.]This follows from Corollary
\ref{cor.pre-pieri2.2} in the same way as Corollary \ref{cor.pre-pieri.4}
follows from Corollary \ref{cor.pre-pieri.2} (i.e., by setting $\alpha=\left(
\mu_{1}-1,\ \mu_{2}-2,\ \ldots,\ \mu_{n}-n\right)  $ and rewriting all
determinants involved).
\end{proof}
\end{vershort}

\begin{verlong}
\begin{proof}
[Proof of Corollary \ref{cor.pre-pieri2.4}.]Define $t_{\alpha}\in R$ for each
$\alpha\in\mathbb{Z}^{n}$ as in Corollary \ref{cor.pre-pieri2.2}.

Define an $n$-tuple $\alpha\in\mathbb{Z}^{n}$ by%
\[
\alpha=\left(  \mu_{1}-1,\ \mu_{2}-2,\ \ldots,\ \mu_{n}-n\right)  .
\]
Thus,%
\begin{equation}
\alpha_{i}=\mu_{i}-i\ \ \ \ \ \ \ \ \ \ \text{for each }i\in\left[  n\right]
. \label{pf.cor.pre-pieri2.4.ali=}%
\end{equation}
Hence, for each $i\in\left[  n\right]  $ satisfying $i>q$, we have%
\[
\alpha_{i}=\underbrace{\mu_{i}}_{\substack{=0\\\text{(by
(\ref{eq.cor.pre-pieri2.4.mui=0}))}}}-i=0-i=-i<-q\ \ \ \ \ \ \ \ \ \ \left(
\text{since }i>q\right)  .
\]
Hence, Corollary \ref{cor.pre-pieri2.2} yields
\begin{align}
&  \sum_{\substack{\beta\in\left\{  0,1\right\}  ^{n};\\\left\vert
\beta\right\vert =p}}t_{\alpha+\beta}\nonumber\\
&  =\operatorname*{rowdet}\left(  \left(  h_{\alpha_{i}+j,\ i}\right)
_{i,j\in\left[  q\right]  }\right)  \cdot\operatorname*{rowdet}\left(  \left(
h_{\alpha_{q+i}+q+j+1,\ q+i}\right)  _{i,j\in\left[  p\right]  }\right)  .
\label{pf.cor.pre-pieri2.4.old}%
\end{align}

However, each $\beta\in\mathbb{N}^{n}$ satisfies
\begin{equation}
t_{\alpha+\beta}=s_{\mu+\beta} \label{pf.cor.pre-pieri2.4.3}%
\end{equation}
\footnote{\textit{Proof.} Let $\beta\in\mathbb{N}^{n}$. For each
$i,j\in\left[  n\right]  $, we have%
\[
\underbrace{\left(  \alpha+\beta\right)  _{i}}_{=\alpha_{i}+\beta_{i}%
}+j=\underbrace{\alpha_{i}}_{\substack{=\mu_{i}-i\\\text{(by
(\ref{pf.cor.pre-pieri2.4.ali=}))}}}+\beta_{i}+j=\mu_{i}-i+\beta
_{i}+j=\underbrace{\mu_{i}+\beta_{i}}_{=\left(  \mu+\beta\right)  _{i}%
}+j-i=\left(  \mu+\beta\right)  _{i}+j-i
\]
and therefore
\[
h_{\left(  \alpha+\beta\right)  _{i}+j,\ i}=h_{\left(  \mu+\beta\right)
_{i}+j-i,\ i}.
\]
In other words, we have%
\[
\left(  h_{\left(  \alpha+\beta\right)  _{i}+j,\ i}\right)  _{i,j\in\left[
n\right]  }=\left(  h_{\left(  \mu+\beta\right)  _{i}+j-i,\ i}\right)
_{i,j\in\left[  n\right]  }.
\]
\par
The definition of $t_{\alpha+\beta}$ yields
\[
t_{\alpha+\beta}=\operatorname*{rowdet}\left(  \underbrace{\left(  h_{\left(
\alpha+\beta\right)  _{i}+j,\ i}\right)  _{i,j\in\left[  n\right]  }%
}_{=\left(  h_{\left(  \mu+\beta\right)  _{i}+j-i,\ i}\right)  _{i,j\in\left[
n\right]  }}\right)  =\operatorname*{rowdet}\left(  \left(  h_{\left(
\mu+\beta\right)  _{i}+j-i,\ i}\right)  _{i,j\in\left[  n\right]  }\right)
=s_{\mu+\beta}%
\]
(since the definition of $s_{\mu+\beta}$ yields $s_{\mu+\beta}%
=\operatorname*{rowdet}\left(  \left(  h_{\left(  \mu+\beta\right)
_{i}+j-i,\ i}\right)  _{i,j\in\left[  n\right]  }\right)  $). Qed.}
Furthermore, we have
\begin{equation}
\operatorname*{rowdet}\left(  \left(  h_{\alpha_{i}+j,\ i}\right)
_{i,j\in\left[  q\right]  }\right)  =s_{\overline{\mu}}
\label{pf.cor.pre-pieri2.4.4}%
\end{equation}
\footnote{\textit{Proof.} We have $\overline{\mu}=\left(  \mu_{1},\mu
_{2},\ldots,\mu_{q}\right)  $; thus,
\begin{equation}
\overline{\mu}_{i}=\mu_{i}\ \ \ \ \ \ \ \ \ \ \text{for each }i\in\left[
q\right]  . \label{pf.cor.pre-pieri2.4.4.pf.1}%
\end{equation}
\par
The definition of $s_{\overline{\mu}}$ yields%
\begin{equation}
s_{\overline{\mu}}=\operatorname*{rowdet}\left(  \left(  h_{\overline{\mu}%
_{i}+j-i,\ i}\right)  _{i,j\in\left[  q\right]  }\right)  .
\label{pf.cor.pre-pieri2.4.4.pf.2}%
\end{equation}
\par
We have $q=n-p\in\left\{  0,1,\ldots,n\right\}  $ (since $p\in\left\{
0,1,\ldots,n\right\}  $). Thus, $q\leq n$. Therefore, $\left\{  1,2,\ldots
,q\right\}  \subseteq\left\{  1,2,\ldots,n\right\}  $. In other words,
$\left[  q\right]  \subseteq\left[  n\right]  $ (since $\left[  q\right]
=\left\{  1,2,\ldots,q\right\}  $ and $\left[  n\right]  =\left\{
1,2,\ldots,n\right\}  $).
\par
For each $i,j\in\left[  q\right]  $, we have $i\in\left[  q\right]
\subseteq\left[  n\right]  $ and thus%
\[
\underbrace{\alpha_{i}}_{\substack{=\mu_{i}-i\\\text{(by
(\ref{pf.cor.pre-pieri2.4.ali=}))}}}+j=\underbrace{\mu_{i}}%
_{\substack{=\overline{\mu}_{i}\\\text{(by (\ref{pf.cor.pre-pieri2.4.4.pf.1}%
))}}}-i+j=\overline{\mu}_{i}-i+j=\overline{\mu}_{i}+j-i
\]
and therefore $h_{\alpha_{i}+j,\ i}=h_{\overline{\mu}_{i}+j-i,\ i}$. In other
words, we have
\[
\left(  h_{\alpha_{i}+j,\ i}\right)  _{i,j\in\left[  q\right]  }=\left(
h_{\overline{\mu}_{i}+j-i,\ i}\right)  _{i,j\in\left[  q\right]  }.
\]
Hence,%
\[
\operatorname*{rowdet}\left(  \underbrace{\left(  h_{\alpha_{i}+j,\ i}\right)
_{i,j\in\left[  q\right]  }}_{=\left(  h_{\overline{\mu}_{i}+j-i,\ i}\right)
_{i,j\in\left[  q\right]  }}\right)  =\operatorname*{rowdet}\left(  \left(
h_{\overline{\mu}_{i}+j-i,\ i}\right)  _{i,j\in\left[  q\right]  }\right)
=s_{\overline{\mu}}%
\]
(by (\ref{pf.cor.pre-pieri2.4.4.pf.2})).}. Finally, we have%
\begin{equation}
\operatorname*{rowdet}\left(  \left(  h_{\alpha_{q+i}+q+j+1,\ q+i}\right)
_{i,j\in\left[  p\right]  }\right)  =e_{p,\ q} \label{pf.cor.pre-pieri2.4.5}%
\end{equation}
\footnote{\textit{Proof of (\ref{pf.cor.pre-pieri2.4.5}):} Let $i\in\left[
p\right]  $ and $j\in\left[  p\right]  $. Then, $i\in\left[  p\right]
=\left\{  1,2,\ldots,p\right\}  $, so that $q+i\in\left\{  q+1,q+2,\ldots
,q+p\right\}  =\left\{  q+1,q+2,\ldots,n\right\}  $ (since $q+p=n$ (because
$q=n-p$)). Thus, $q+i\geq q+1>q$. Also, $q+i\in\left\{  q+1,q+2,\ldots
,n\right\}  \subseteq\left\{  1,2,\ldots,n\right\}  =\left[  n\right]  $. Now,
(\ref{eq.cor.pre-pieri2.4.mui=0}) (applied to $q+i$ instead of $i$) yields
$\mu_{q+i}=0$ (since $q+i>q$). However, (\ref{pf.cor.pre-pieri2.4.ali=})
(applied to $q+i$ instead of $i$) yields $\alpha_{q+i}=\underbrace{\mu_{q+i}%
}_{=0}-\left(  q+i\right)  =-\left(  q+i\right)  $. Thus,%
\[
\underbrace{\alpha_{q+i}}_{=-\left(  q+i\right)  }+q+j+1=-\left(  q+i\right)
+q+j+1=1+j-i.
\]
Thus, $h_{\alpha_{q+i}+q+j+1,\ q+i}=h_{1+j-i,\ q+i}$.
\par
Forget that we fixed $i$ and $j$. We thus have shown that $h_{\alpha
_{q+i}+q+j+1,\ q+i}=h_{1+j-i,\ q+i}$ for all $i\in\left[  p\right]  $ and
$j\in\left[  p\right]  $. Thus,%
\[
\left(  h_{\alpha_{q+i}+q+j+1,\ q+i}\right)  _{i,j\in\left[  p\right]
}=\left(  h_{1+j-i,\ q+i}\right)  _{i,j\in\left[  p\right]  }.
\]
Hence,%
\[
\operatorname*{rowdet}\left(  \left(  h_{\alpha_{q+i}+q+j+1,\ q+i}\right)
_{i,j\in\left[  p\right]  }\right)  =\operatorname*{rowdet}\left(  \left(
h_{1+j-i,\ q+i}\right)  _{i,j\in\left[  p\right]  }\right)  .
\]
On the other hand,
\[
e_{p,\ q}=\operatorname*{rowdet}\left(  \left(  h_{1+j-i,\ q+i}\right)
_{i,j\in\left[  p\right]  }\right)
\]
(by the definition of $e_{p,\ q}$). Comparing these two equalities, we obtain
$\operatorname*{rowdet}\left(  \left(  h_{\alpha_{q+i}+q+j+1,\ q+i}\right)
_{i,j\in\left[  p\right]  }\right)  =e_{p,\ q}$. This proves
(\ref{pf.cor.pre-pieri2.4.5}).}.

Using (\ref{pf.cor.pre-pieri2.4.3}), (\ref{pf.cor.pre-pieri2.4.4}) and
(\ref{pf.cor.pre-pieri2.4.5}), we can rewrite (\ref{pf.cor.pre-pieri2.4.old})
as%
\[
\sum_{\substack{\beta\in\left\{  0,1\right\}  ^{n};\\\left\vert \beta
\right\vert =p}}s_{\mu+\beta}=s_{\overline{\mu}}\cdot e_{p,\ q}.
\]

\noindent This proves Corollary \ref{cor.pre-pieri2.4}.
\end{proof}
\end{verlong}

Counterparts to Corollary \ref{cor.pre-pieri.2comm} and Corollary
\ref{cor.pre-pieri.4comm} can be stated as well, but we omit them to save
space. (They are trivial consequences of Corollary \ref{cor.pre-pieri2.2} and
Corollary \ref{cor.pre-pieri2.4}.)

We note that (\ref{eq.intro.pieri4b}) is the particular case of Corollary
\ref{cor.pre-pieri2.4} for $R=\Lambda$, $h_{k,\ i}=h_{k}$ and $\mu=\lambda$.

\section{A pre-LR rule?}

We have now proved two fairly general determinantal identities -- Theorem
\ref{thm.pre-pieri} and Theorem \ref{thm.pre-pieri2} -- and seen some of their
consequences. Anyone familiar with symmetric functions will likely view these
two identities as two \textquotedblleft antipodal\textquotedblright%
\ statements, in the sense in which the complete homogeneous symmetric
functions are \textquotedblleft antipodal\textquotedblright\ to the elementary
symmetric functions.\footnote{The analogy strikes the eye from many
directions: The summation signs $\sum_{\substack{\beta\in\mathbb{N}%
^{n};\\\left\vert \beta\right\vert =p}}$ and $\sum_{\substack{\beta\in\left\{
0,1\right\}  ^{n};\\\left\vert \beta\right\vert =p}}$ in Theorem
\ref{thm.pre-pieri} and Theorem \ref{thm.pre-pieri2} are precisely the ones
that appear in the definitions of the respective symmetric functions; the
determinant $e_{p,\ q}$ in Corollary \ref{cor.pre-pieri2.4} is a rather
straightforward generalization of the Jacobi--Trudi determinant for the $p$-th
elementary symmetric function; and so on.} The latter \textquotedblleft
antipodality\textquotedblright\ can be understood particularly well by viewing
both families of symmetric functions as corner cases of \emph{Schur functions}
(see, e.g., \cite[Chapter 7]{Stanley-EC2} or \cite[\S I.3]{Macdon95}). It is
thus natural to ask whether our determinantal identities can be viewed as
corner cases of something more general, too:

\begin{question}
\label{quest.pre-LR}Is there a common generalization of Theorem
\ref{thm.pre-pieri} and Theorem \ref{thm.pre-pieri2}?
\end{question}

Such a generalization might resemble (perhaps even generalize) the
\textquotedblleft immaculate Littlewood--Richardson rule\textquotedblright\ of
Berg, Bergeron, Saliola, Serrano and Zabrocki (\cite[Theorem 7.3]{BBSSZ15}).
Indeed, as we have seen above (in our proof of Proposition \ref{prop.immac}),
the \textquotedblleft right-Pieri rule\textquotedblright\ \cite[Theorem
3.5]{BBSSZ13} is a particular case of our Theorem \ref{thm.pre-pieri}; one can
likewise derive a \textquotedblleft second right-Pieri rule\textquotedblright%
\ (with $E_{s}=\mathfrak{S}_{\left(  1^{s}\right)  }$ taking the place of
$H_{s}$) from our Theorem \ref{thm.pre-pieri2}. Both of these
\textquotedblleft right-Pieri rules\textquotedblright\ are particular cases of
the \textquotedblleft immaculate Littlewood--Richardson rule\textquotedblright%
. Thus, it is not too outlandish to suspect that the latter rule can, too, be
viewed as a particular case of a (noncommutative) determinantal identity.

A step in the general direction of such a generalization appears to be the
following proposition:

\begin{proposition}
\label{prop.pre-LR.symm}Let $n\in\mathbb{N}$. Let $h_{k,\ i}$ be an element of
$R$ for all $k\in\mathbb{Z}$ and $i\in\left[  n\right]  $.

Let $B$ be a finite set of $n$-tuples $\beta\in\mathbb{Z}^{n}$. Assume that
this set $B$ is invariant under the right $S_{n}$-action on $\mathbb{Z}^{n}$.
(This $S_{n}$-action was introduced in Definition \ref{def.etapi}.)

For any $\alpha\in\mathbb{Z}^{n}$, we define%
\[
t_{\alpha}:=\operatorname*{rowdet}\left(  \left(  h_{\alpha_{i}+j,\ i}\right)
_{i,j\in\left[  n\right]  }\right)  \in R.
\]

Let $\alpha\in\mathbb{Z}^{n}$. Then, there exists a family $\left(
\lambda_{\gamma}\right)  _{\gamma\in\mathbb{Z}^{n}}$ of coefficients
$\lambda_{\gamma}\in\mathbb{Z}$ such that all but finitely many $\gamma
\in\mathbb{Z}^{n}$ satisfy $\lambda_{\gamma}=0$, and such that%
\begin{equation}
\sum_{\beta\in B}t_{\alpha+\beta}=\sum_{\gamma\in\mathbb{Z}^{n}}%
\lambda_{\gamma}\operatorname*{rowdet}\left(  \left(  h_{\alpha_{i}+\gamma
_{j},\ i}\right)  _{i,j\in\left[  n\right]  }\right)  .
\label{eq.prop.pre-LR.symm.claim}%
\end{equation}

\end{proposition}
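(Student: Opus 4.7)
The plan is to generalize the opening strategy of the proof of Theorem~\ref{thm.pre-pieri}, first expanding the left-hand side into a sum of monomials in the $h_{k,\ i}$, and then using the $S_{n}$-invariance of $B$ to repackage these monomials into row-determinants.

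First, I would expand each $t_{\alpha + \beta}$ via the Leibniz formula for row-determinants. Writing $\overline{\sigma} := \left(\sigma\left(1\right), \sigma\left(2\right), \ldots, \sigma\left(n\right)\right)$ as in the proof of Theorem~\ref{thm.pre-pieri}, one has $t_{\alpha + \beta} = \sum_{\sigma \in S_{n}} \left(-1\right)^{\sigma} \prod_{i=1}^{n} h_{\alpha_{i} + \left(\beta + \overline{\sigma}\right)_{i},\ i}$. Summing over $\beta \in B$ and, for each fixed $\sigma$, substituting $\nu = \beta + \overline{\sigma}$ (the map $\beta \mapsto \beta + \overline{\sigma}$ being a bijection $\mathbb{Z}^{n} \to \mathbb{Z}^{n}$) gives
\[
\sum_{\beta \in B} t_{\alpha + \beta} \;=\; \sum_{\nu \in \mathbb{Z}^{n}} c_{\nu} \prod_{i=1}^{n} h_{\alpha_{i} + \nu_{i},\ i}, \qquad \text{where } c_{\nu} \;:=\; \sum_{\substack{\sigma \in S_{n} \\ \nu - \overline{\sigma} \in B}} \left(-1\right)^{\sigma} \in \mathbb{Z}.
\]
Since $B$ and $S_{n}$ are finite, the set $\left\{\nu \in \mathbb{Z}^{n} : c_{\nu} \neq 0\right\}$ is finite.

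The heart of the argument is the following alternating property:
\[
c_{\nu \circ \tau} \;=\; \left(-1\right)^{\tau} c_{\nu} \qquad \text{for all } \nu \in \mathbb{Z}^{n} \text{ and all } \tau \in S_{n}.
\]
To prove it, I would first verify the identity $\left(\nu \circ \tau\right) - \overline{\sigma} = \left(\nu - \overline{\sigma \tau^{-1}}\right) \circ \tau$ component-wise (each side has $i$-th entry $\nu_{\tau\left(i\right)} - \sigma\left(i\right)$). Using the $S_{n}$-invariance of $B$, the condition \textquotedblleft$\left(\nu \circ \tau\right) - \overline{\sigma} \in B$\textquotedblright{} is then equivalent to \textquotedblleft$\nu - \overline{\sigma \tau^{-1}} \in B$\textquotedblright. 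Reindexing the defining sum of $c_{\nu \circ \tau}$ by $\pi := \sigma \tau^{-1}$ (and using $\left(-1\right)^{\sigma} = \left(-1\right)^{\pi} \left(-1\right)^{\tau}$) yields the claim. Specializing to a transposition $\tau$ that exchanges two equal entries of $\nu$ forces $c_{\nu} = -c_{\nu}$, so $c_{\nu} = 0$ whenever $\nu$ has a repeated entry.

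To conclude, I would partition the (finite) support of $c$ into $S_{n}$-orbits under the action $\circ$ of Definition~\ref{def.etapi}. Orbits consisting of tuples with repeated entries contribute nothing. For each remaining orbit $O$, the entries of any $\nu \in O$ are pairwise distinct, so by Proposition~\ref{prop.etapi.dist} the map $\sigma \mapsto \gamma_{O} \circ \sigma$ is a bijection from $S_{n}$ onto $O$ once a representative $\gamma_{O} \in O$ has been chosen. Expanding the row-determinant column-wise gives
\[
\operatorname*{rowdet}\!\left(\left(h_{\alpha_{i} + \left(\gamma_{O}\right)_{j},\ i}\right)_{i,j \in \left[n\right]}\right) \;=\; \sum_{\sigma \in S_{n}} \left(-1\right)^{\sigma} \prod_{i=1}^{n} h_{\alpha_{i} + \left(\gamma_{O} \circ \sigma\right)_{i},\ i},
\]
and the alternating property gives $c_{\gamma_{O} \circ \sigma} = \left(-1\right)^{\sigma} c_{\gamma_{O}}$. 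Multiplying the preceding display by the integer $c_{\gamma_{O}}$ (which commutes with every element of $R$) therefore reproduces precisely $\sum_{\nu \in O} c_{\nu} \prod_{i=1}^{n} h_{\alpha_{i} + \nu_{i},\ i}$. Summing over the finitely many contributing orbits and setting $\lambda_{\gamma_{O}} := c_{\gamma_{O}}$ (and $\lambda_{\gamma} := 0$ for every $\gamma$ that is not a chosen representative) produces the desired identity~(\ref{eq.prop.pre-LR.symm.claim}). The main obstacle I anticipate is the permutation bookkeeping needed to verify the alternating property $c_{\nu \circ \tau} = \left(-1\right)^{\tau} c_{\nu}$; once this is in hand, the orbit decomposition---together with the antisymmetry of $\operatorname*{rowdet}$ under column permutations, which holds even over a noncommutative $R$---makes the remainder routine.
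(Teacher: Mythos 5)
Your proof is correct, and it reaches the same structural insight as the paper's proof --- namely that the coefficient family attached to the monomials $\prod_{i=1}^{n}h_{\alpha_{i}+\nu_{i},\ i}$ is alternating under the right $S_{n}$-action, hence vanishes on tuples with repeated entries and regroups orbit-by-orbit into row-determinants --- but it implements this differently. The paper lifts the whole computation to the free noncommutative polynomial ring $N=\mathbb{Z}\left\langle X_{k,\ i}\right\rangle$, shows that $T_{B}=\sum_{\beta\in B}T_{\beta}$ is an antisymmetric multilinear element, proves a standalone structural claim (every antisymmetric multilinear element of $N$ is a $\mathbb{Z}$-linear combination of row-determinants $\operatorname*{rowdet}\left(\left(X_{\gamma_{j},\ i}\right)_{i,j}\right)$, where the coefficient relation $\mu_{\alpha\cdot\sigma}=\left(-1\right)^{\sigma}\mu_{\alpha}$ is extracted by comparing coefficients, using linear independence of monomials in the free algebra), and then specializes via the ring homomorphism $X_{k,\ i}\mapsto h_{\alpha_{i}+k,\ i}$. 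You instead define the coefficients $c_{\nu}=\sum_{\sigma:\ \nu-\overline{\sigma}\in B}\left(-1\right)^{\sigma}$ combinatorially and verify the alternating property $c_{\nu\circ\tau}=\left(-1\right)^{\tau}c_{\nu}$ directly from the $S_{n}$-invariance of $B$ (your componentwise identity $\left(\nu\circ\tau\right)-\overline{\sigma}=\left(\nu-\overline{\sigma\tau^{-1}}\right)\circ\tau$ is right), which lets you work entirely inside $R$ with no auxiliary free algebra, no coefficient extraction, and no specialization step. Your route is the more elementary and self-contained of the two, and it has the mild bonus of producing the coefficients $\lambda_{\gamma}$ explicitly as signed counts; the paper's route isolates a reusable fact about antisymmetric elements of $N^{\operatorname*{mult}}$ that is of some independent interest. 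All the steps you flag as routine (finiteness of the support, that the support is a union of orbits, the bijection $\sigma\mapsto\gamma_{O}\circ\sigma$ onto an orbit of a repetition-free tuple via Proposition~\ref{prop.etapi.dist}) do go through as you describe.
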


\begin{proof}
[Proof of Proposition \ref{prop.pre-LR.symm} (sketched).]Define the ring%
\[
N:=\mathbb{Z}\left\langle X_{k,\ i}\ \mid\ k\in\mathbb{Z}\text{ and }%
i\in\left[  n\right]  \right\rangle .
\]
This is the ring of noncommutative polynomials over $\mathbb{Z}$ in the
variables $X_{k,\ i}$ for all $k\in\mathbb{Z}$ and $i\in\left[  n\right]  $.

A noncommutative monomial in $N$ will be called \emph{multilinear} if it has
the form $X_{p_{1},\ 1}X_{p_{2},\ 2}\cdots X_{p_{n},\ n}$ for some
$p\in\mathbb{Z}^{n}$. Let $N^{\operatorname*{mult}}$ denote the $\mathbb{Z}%
$-linear span of all multilinear monomials in $N$. The symmetric group $S_{n}$
acts $\mathbb{Z}$-linearly on this $\mathbb{Z}$-submodule
$N^{\operatorname*{mult}}$ from the right according to the rule%
\begin{equation}
\left(  X_{p_{1},\ 1}X_{p_{2},\ 2}\cdots X_{p_{n},\ n}\right)  \cdot
\tau=X_{p_{\tau\left(  1\right)  },\ 1}X_{p_{\tau\left(  2\right)  }%
,\ 2}\cdots X_{p_{\tau\left(  n\right)  },\ n}
\label{pf.prop.pre-LR.symm.mon-acn}%
\end{equation}
(for all multilinear monomials $X_{p_{1},\ 1}X_{p_{2},\ 2}\cdots X_{p_{n}%
,\ n}$ and all $\tau\in S_{n}$).

An element $p\in N^{\operatorname*{mult}}$ will be called \emph{antisymmetric}
if each $\tau\in S_{n}$ satisfies $p\cdot\tau=\left(  -1\right)  ^{\tau}p$.
Let $N^{\operatorname*{sign}}$ denote the set of all antisymmetric elements
$p\in N^{\operatorname*{mult}}$; this is a $\mathbb{Z}$-submodule of
$N^{\operatorname*{mult}}$.

For each $\alpha\in\mathbb{Z}^{n}$, we define%
\[
T_{\alpha}:=\operatorname*{rowdet}\left(  \left(  X_{\alpha_{i}+j,\ i}\right)
_{i,j\in\left[  n\right]  }\right)  \in N.
\]
It is easy to see that $T_{\alpha}\in N^{\operatorname*{mult}}$ for each
$\alpha\in\mathbb{Z}^{n}$. Hence, $\sum_{\beta\in B}T_{\beta}\in
N^{\operatorname*{mult}}$. Set%
\begin{equation}
T_{B}:=\sum_{\beta\in B}T_{\beta}. \label{pf.prop.pre-LR.symm.TB=}%
\end{equation}
We shall now show that $T_{B}\in N^{\operatorname*{sign}}$.

[\textit{Proof:} We have $T_{B}=\sum_{\beta\in B}T_{\beta}\in
N^{\operatorname*{mult}}$. It thus remains to show that $T_{B}$ is
antisymmetric, i.e., that each $\tau\in S_{n}$ satisfies $T_{B}\cdot
\tau=\left(  -1\right)  ^{\tau}T_{B}$.

Let $\tau\in S_{n}$ be arbitrary. For each $\beta\in\mathbb{Z}^{n}$, we have%
\begin{align}
T_{\beta}  &  =\operatorname*{rowdet}\left(  \left(  X_{\beta_{i}%
+j,\ i}\right)  _{i,j\in\left[  n\right]  }\right)
\ \ \ \ \ \ \ \ \ \ \left(  \text{by the definition of }T_{\beta}\right)
\nonumber\\
&  =\sum_{\sigma\in S_{n}}\left(  -1\right)  ^{\sigma}X_{\beta_{1}%
+\sigma\left(  1\right)  ,\ 1}X_{\beta_{2}+\sigma\left(  2\right)  ,\ 2}\cdots
X_{\beta_{n}+\sigma\left(  n\right)  ,\ n} \label{pf.prop.pre-LR.symm.Tbet=}%
\end{align}
and thus%
\begin{align}
&  T_{\beta}\cdot\tau\nonumber\\
&  =\left(  \sum_{\sigma\in S_{n}}\left(  -1\right)  ^{\sigma}X_{\beta
_{1}+\sigma\left(  1\right)  ,\ 1}X_{\beta_{2}+\sigma\left(  2\right)
,\ 2}\cdots X_{\beta_{n}+\sigma\left(  n\right)  ,\ n}\right)  \cdot
\tau\nonumber\\
&  =\sum_{\sigma\in S_{n}}\left(  -1\right)  ^{\sigma}\underbrace{\left(
X_{\beta_{1}+\sigma\left(  1\right)  ,\ 1}X_{\beta_{2}+\sigma\left(  2\right)
,\ 2}\cdots X_{\beta_{n}+\sigma\left(  n\right)  ,\ n}\right)  \cdot\tau
}_{\substack{=X_{\beta_{\tau\left(  1\right)  }+\sigma\left(  \tau\left(
1\right)  \right)  ,\ 1}X_{\beta_{\tau\left(  2\right)  }+\sigma\left(
\tau\left(  2\right)  \right)  ,\ 2}\cdots X_{\beta_{\tau\left(  n\right)
}+\sigma\left(  \tau\left(  n\right)  \right)  ,\ n}\\\text{(by
(\ref{pf.prop.pre-LR.symm.mon-acn}))}}}\nonumber\\
&  =\sum_{\sigma\in S_{n}}\left(  -1\right)  ^{\sigma}X_{\beta_{\tau\left(
1\right)  }+\sigma\left(  \tau\left(  1\right)  \right)  ,\ 1}X_{\beta
_{\tau\left(  2\right)  }+\sigma\left(  \tau\left(  2\right)  \right)
,\ 2}\cdots X_{\beta_{\tau\left(  n\right)  }+\sigma\left(  \tau\left(
n\right)  \right)  ,\ n}\nonumber\\
&  =\sum_{\sigma\in S_{n}}\left(  -1\right)  ^{\sigma}X_{\left(  \beta
\cdot\tau\right)  _{1}+\left(  \sigma\circ\tau\right)  \left(  1\right)
,\ 1}X_{\left(  \beta\cdot\tau\right)  _{2}+\left(  \sigma\circ\tau\right)
\left(  2\right)  ,\ 2}\cdots X_{\left(  \beta\cdot\tau\right)  _{n}+\left(
\sigma\circ\tau\right)  \left(  n\right)  ,\ n}\nonumber\\
&  \ \ \ \ \ \ \ \ \ \ \ \ \ \ \ \ \ \ \ \ \left(  \text{since }\beta
_{\tau\left(  i\right)  }=\left(  \beta\cdot\tau\right)  _{i}\text{ and
}\sigma\left(  \tau\left(  i\right)  \right)  =\left(  \sigma\circ\tau\right)
\left(  i\right)  \text{ for each }i\right) \nonumber\\
&  =\sum_{\sigma\in S_{n}}\underbrace{\left(  -1\right)  ^{\sigma\circ
\tau^{-1}}}_{=\left(  -1\right)  ^{\sigma}\left(  -1\right)  ^{\tau}%
}X_{\left(  \beta\cdot\tau\right)  _{1}+\sigma\left(  1\right)  ,\ 1}%
X_{\left(  \beta\cdot\tau\right)  _{2}+\sigma\left(  2\right)  ,\ 2}\cdots
X_{\left(  \beta\cdot\tau\right)  _{n}+\sigma\left(  n\right)  ,\ n}%
\nonumber\\
&  \ \ \ \ \ \ \ \ \ \ \ \ \ \ \ \ \ \ \ \ \left(  \text{here, we have
substituted }\sigma\text{ for }\sigma\circ\tau\text{ in the sum}\right)
\nonumber\\
&  =\left(  -1\right)  ^{\tau}\underbrace{\sum_{\sigma\in S_{n}}\left(
-1\right)  ^{\sigma}X_{\left(  \beta\cdot\tau\right)  _{1}+\sigma\left(
1\right)  ,\ 1}X_{\left(  \beta\cdot\tau\right)  _{2}+\sigma\left(  2\right)
,\ 2}\cdots X_{\left(  \beta\cdot\tau\right)  _{n}+\sigma\left(  n\right)
,\ n}}_{\substack{=T_{\beta\cdot\tau}\\\text{(by
(\ref{pf.prop.pre-LR.symm.Tbet=}), applied to }\beta\cdot\tau\text{ instead of
}\beta\text{)}}}\nonumber\\
&  =\left(  -1\right)  ^{\tau}\cdot T_{\beta\cdot\tau}.
\label{pf.prop.pre-LR.symm.Tbetatau}%
\end{align}
Now, from (\ref{pf.prop.pre-LR.symm.TB=}), we obtain%
\begin{align*}
T_{B}\cdot\tau &  =\left(  \sum_{\beta\in B}T_{\beta}\right)  \cdot\tau
=\sum_{\beta\in B}\underbrace{T_{\beta}\cdot\tau}_{\substack{=\left(
-1\right)  ^{\tau}\cdot T_{\beta\cdot\tau}\\\text{(by
(\ref{pf.prop.pre-LR.symm.Tbetatau}))}}}=\sum_{\beta\in B}\left(  -1\right)
^{\tau}\cdot T_{\beta\cdot\tau}=\left(  -1\right)  ^{\tau}\cdot\sum_{\beta\in
B}T_{\beta\cdot\tau}\\
&  =\left(  -1\right)  ^{\tau}\cdot\underbrace{\sum_{\beta\in B}T_{\beta}%
}_{=T_{B}}\ \ \ \ \ \ \ \ \ \ \left(
\begin{array}
[c]{c}%
\text{here, we have substituted }\beta\text{ for }\beta\cdot\tau\text{ in the
sum,}\\
\text{since the map }B\rightarrow B,\ \beta\mapsto\beta\cdot\tau\text{ is a
bijection}\\
\text{(because }B\text{ is invariant under the }S_{n}\text{-action)}%
\end{array}
\right) \\
&  =\left(  -1\right)  ^{\tau}T_{B}.
\end{align*}
This completes our proof of $T_{B}\in N^{\operatorname*{sign}}$.] \medskip

On the other hand, we claim the following:

\begin{statement}
\textit{Claim 1:} Each $p\in N^{\operatorname*{sign}}$ is a $\mathbb{Z}%
$-linear combination of the row-determinants $\operatorname*{rowdet}\left(
\left(  X_{\gamma_{j},\ i}\right)  _{i,j\in\left[  n\right]  }\right)  $ with
$\gamma\in\mathbb{Z}^{n}$.
\end{statement}

Before we prove this, let us note that all these row-determinants
\newline$\operatorname*{rowdet}\left(  \left(  X_{\gamma_{j},\ i}\right)
_{i,j\in\left[  n\right]  }\right)  $ actually belong to
$N^{\operatorname*{sign}}$ (since each $\gamma\in\mathbb{Z}^{n}$ satisfies%
\begin{align*}
\operatorname*{rowdet}\left(  \left(  X_{\gamma_{j},\ i}\right)
_{i,j\in\left[  n\right]  }\right)   &  =\sum_{\sigma\in S_{n}}\left(
-1\right)  ^{\sigma}\underbrace{X_{\gamma_{\sigma\left(  1\right)  }%
,\ 1}X_{\gamma_{\sigma\left(  2\right)  },\ 2}\cdots X_{\gamma_{\sigma\left(
n\right)  },\ n}}_{\substack{=\left(  X_{\gamma_{1},\ 1}X_{\gamma_{2}%
,\ 2}\cdots X_{\gamma_{n},\ n}\right)  \cdot\sigma\\\text{(by
(\ref{pf.prop.pre-LR.symm.mon-acn}))}}}\\
&  =\sum_{\sigma\in S_{n}}\left(  -1\right)  ^{\sigma}\left(  X_{\gamma
_{1},\ 1}X_{\gamma_{2},\ 2}\cdots X_{\gamma_{n},\ n}\right)  \cdot\sigma,
\end{align*}
which easily yields that $\operatorname*{rowdet}\left(  \left(  X_{\gamma
_{j},\ i}\right)  _{i,j\in\left[  n\right]  }\right)  \in
N^{\operatorname*{sign}}$); thus, Claim 1 shows that these row-determinants
span the $\mathbb{Z}$-module $N^{\operatorname*{sign}}$. However, we will not
need this. \medskip

[\textit{Proof of Claim 1:} Let $p\in N^{\operatorname*{sign}}$. We shall show
that $p$ is a $\mathbb{Z}$-linear combination of the row-determinants
$\operatorname*{rowdet}\left(  \left(  X_{\gamma_{j},\ i}\right)
_{i,j\in\left[  n\right]  }\right)  $.

We know that $p$ is antisymmetric (since $p\in N^{\operatorname*{sign}}$). In
other words, each $\sigma\in S_{n}$ satisfies%
\begin{equation}
p\cdot\sigma=\left(  -1\right)  ^{\sigma}p. \label{pf.prop.pre-LR.symm.asy}%
\end{equation}

On the other hand, $p\in N^{\operatorname*{sign}}\subseteq
N^{\operatorname*{mult}}$. Hence, we can write $p$ as a $\mathbb{Z}$-linear
combination of multilinear monomials (by the definition of
$N^{\operatorname*{mult}}$). In other words,%
\begin{equation}
p=\sum_{\alpha\in\mathbb{Z}^{n}}\mu_{\alpha}X_{\alpha_{1},\ 1}X_{\alpha
_{2},\ 2}\cdots X_{\alpha_{n},\ n} \label{pf.prop.pre-LR.symm.p=sum}%
\end{equation}
for some scalars $\mu_{\alpha}\in\mathbb{Z}$ (almost all of them $0$). These
scalars $\mu_{\alpha}$ must furthermore satisfy%
\begin{equation}
\mu_{\alpha\cdot\sigma}=\left(  -1\right)  ^{\sigma}\cdot\mu_{\alpha
}\ \ \ \ \ \ \ \ \ \ \text{for all }\alpha\in\mathbb{Z}^{n}\text{ and all
}\sigma\in S_{n} \label{pf.prop.pre-LR.symm.mu-eq}%
\end{equation}
(by comparing coefficients in (\ref{pf.prop.pre-LR.symm.asy})). Hence, we have
$\mu_{\alpha}=0$ for any $n$-tuple $\alpha\in\mathbb{Z}^{n}$ that has at least
two equal entries (because for any such $\alpha$, there exists some
transposition $\sigma\in S_{n}$ such that $\alpha\cdot\sigma=\alpha$, so that
the equality (\ref{pf.prop.pre-LR.symm.mu-eq}) simplifies to $\mu_{\alpha
}=\underbrace{\left(  -1\right)  ^{\sigma}}_{=-1}\cdot\mu_{\alpha}%
=-\mu_{\alpha}$, and therefore we have $\mu_{\alpha}=0$). Therefore,
(\ref{pf.prop.pre-LR.symm.p=sum}) simplifies to%
\begin{align*}
p  &  =\sum_{\substack{\alpha\in\mathbb{Z}^{n};\\\text{all entries of }%
\alpha\text{ are distinct}}}\mu_{\alpha}X_{\alpha_{1},\ 1}X_{\alpha_{2}%
,\ 2}\cdots X_{\alpha_{n},\ n}\\
&  =\sum_{\substack{\gamma\in\mathbb{Z}^{n};\\\gamma_{1}<\gamma_{2}%
<\cdots<\gamma_{n}}}\ \ \sum_{\sigma\in S_{n}}\underbrace{\mu_{\gamma
\cdot\sigma}}_{\substack{=\left(  -1\right)  ^{\sigma}\mu_{\gamma}\\\text{(by
(\ref{pf.prop.pre-LR.symm.mu-eq}))}}}X_{\gamma_{\sigma\left(  1\right)  }%
,\ 1}X_{\gamma_{\sigma\left(  2\right)  },\ 2}\cdots X_{\gamma_{\sigma\left(
n\right)  },\ n}\\
&  \ \ \ \ \ \ \ \ \ \ \ \ \ \ \ \ \ \ \ \ \left(
\begin{array}
[c]{c}%
\text{here, we have split the sum according to the }n\text{-tuple }\gamma\\
\text{obtained by sorting the }n\text{-tuple }\alpha\text{ in increasing
order}%
\end{array}
\right) \\
&  =\sum_{\substack{\gamma\in\mathbb{Z}^{n};\\\gamma_{1}<\gamma_{2}%
<\cdots<\gamma_{n}}}\ \ \sum_{\sigma\in S_{n}}\left(  -1\right)  ^{\sigma}%
\mu_{\gamma}X_{\gamma_{\sigma\left(  1\right)  },\ 1}X_{\gamma_{\sigma\left(
2\right)  },\ 2}\cdots X_{\gamma_{\sigma\left(  n\right)  },\ n}\\
&  =\sum_{\substack{\gamma\in\mathbb{Z}^{n};\\\gamma_{1}<\gamma_{2}%
<\cdots<\gamma_{n}}}\mu_{\gamma}\underbrace{\sum_{\sigma\in S_{n}}\left(
-1\right)  ^{\sigma}X_{\gamma_{\sigma\left(  1\right)  },\ 1}X_{\gamma
_{\sigma\left(  2\right)  },\ 2}\cdots X_{\gamma_{\sigma\left(  n\right)
},\ n}}_{=\operatorname*{rowdet}\left(  \left(  X_{\gamma_{j},\ i}\right)
_{i,j\in\left[  n\right]  }\right)  }\\
&  =\sum_{\substack{\gamma\in\mathbb{Z}^{n};\\\gamma_{1}<\gamma_{2}%
<\cdots<\gamma_{n}}}\mu_{\gamma}\operatorname*{rowdet}\left(  \left(
X_{\gamma_{j},\ i}\right)  _{i,j\in\left[  n\right]  }\right)  .
\end{align*}
This equality shows that $p$ is a $\mathbb{Z}$-linear combination of the
row-determinants $\operatorname*{rowdet}\left(  \left(  X_{\gamma_{j}%
,\ i}\right)  _{i,j\in\left[  n\right]  }\right)  $. This completes our proof
of Claim 1.] \medskip

Now, recall that $T_{B}\in N^{\operatorname*{sign}}$. Hence, Claim 1 shows
that $T_{B}$ is a $\mathbb{Z}$-linear combination of the row-determinants
$\operatorname*{rowdet}\left(  \left(  X_{\gamma_{j},\ i}\right)
_{i,j\in\left[  n\right]  }\right)  $ with $\gamma\in\mathbb{Z}^{n}$. In other
words, there exists a family $\left(  \lambda_{\gamma}\right)  _{\gamma
\in\mathbb{Z}^{n}}$ of coefficients $\lambda_{\gamma}\in\mathbb{Z}$ such that
all but finitely many $\gamma\in\mathbb{Z}^{n}$ satisfy $\lambda_{\gamma}=0$,
and such that%
\begin{equation}
T_{B}=\sum_{\gamma\in\mathbb{Z}^{n}}\lambda_{\gamma}\operatorname*{rowdet}%
\left(  \left(  X_{\gamma_{j},\ i}\right)  _{i,j\in\left[  n\right]  }\right)
. \label{pf.prop.pre-LR.symm.N-goal}%
\end{equation}
Consider this family $\left(  \lambda_{\gamma}\right)  _{\gamma\in
\mathbb{Z}^{n}}$. We shall now show that this family also satisfies
(\ref{eq.prop.pre-LR.symm.claim}).

Indeed, consider the ring homomorphism $f:N\rightarrow R$ that sends each
$X_{k,\ i}$ to $h_{\alpha_{i}+k,\ i}$. (This clearly exists by the universal
property of the free $\mathbb{Z}$-algebra $N$.) For each $\beta\in
\mathbb{Z}^{n}$, we have%
\begin{align}
f\left(  T_{\beta}\right)   &  =f\left(  \operatorname*{rowdet}\left(  \left(
X_{\beta_{i}+j,\ i}\right)  _{i,j\in\left[  n\right]  }\right)  \right)
\ \ \ \ \ \ \ \ \ \ \left(  \text{by the definition of }T_{\beta}\right)
\nonumber\\
&  =\operatorname*{rowdet}\left(  \left(  f\left(  X_{\beta_{i}+j,\ i}\right)
\right)  _{i,j\in\left[  n\right]  }\right)  \ \ \ \ \ \ \ \ \ \ \left(
\text{since }f\text{ is a ring homomorphism}\right) \nonumber\\
&  =\operatorname*{rowdet}\left(  \left(  h_{\left(  \alpha+\beta\right)
_{i}+j,\ i}\right)  _{i,j\in\left[  n\right]  }\right) \nonumber\\
&  \ \ \ \ \ \ \ \ \ \ \ \ \ \ \ \ \ \ \ \ \left(
\begin{array}
[c]{c}%
\text{because we have }f\left(  X_{\beta_{i}+j,\ i}\right)  =h_{\alpha
_{i}+\beta_{i}+j,\ i}=h_{\left(  \alpha+\beta\right)  _{i}+j,\ i}\\
\text{for all }i\in\left[  n\right]  \text{ and }j\in\left[  n\right]
\end{array}
\right) \nonumber\\
&  =t_{\alpha+\beta} \label{pf.prop.pre-LR.symm.fTb}%
\end{align}
(by the definition of $t_{\alpha+\beta}$). Now, applying the homomorphism $f$
to both sides of the equality (\ref{pf.prop.pre-LR.symm.TB=}), we obtain%
\begin{align*}
f\left(  T_{B}\right)   &  =f\left(  \sum_{\beta\in B}T_{\beta}\right)
=\sum_{\beta\in B}\underbrace{f\left(  T_{\beta}\right)  }%
_{\substack{=t_{\alpha+\beta}\\\text{(by (\ref{pf.prop.pre-LR.symm.fTb}))}%
}}\ \ \ \ \ \ \ \ \ \ \left(  \text{since }f\text{ is a ring homomorphism}%
\right) \\
&  =\sum_{\beta\in B}t_{\alpha+\beta}.
\end{align*}
Hence, applying the homomorphism $f$ to both sides of
(\ref{pf.prop.pre-LR.symm.N-goal}), we obtain%
\begin{align*}
\sum_{\beta\in B}t_{\alpha+\beta}  &  =\sum_{\gamma\in\mathbb{Z}^{n}}%
\lambda_{\gamma}\underbrace{f\left(  \operatorname*{rowdet}\left(  \left(
X_{\gamma_{j},\ i}\right)  _{i,j\in\left[  n\right]  }\right)  \right)
}_{\substack{=\operatorname*{rowdet}\left(  \left(  f\left(  X_{\gamma
_{j},\ i}\right)  \right)  _{i,j\in\left[  n\right]  }\right)  \\\text{(since
}f\text{ is a ring homomorphism)}}}\\
&  =\sum_{\gamma\in\mathbb{Z}^{n}}\lambda_{\gamma}\operatorname*{rowdet}%
\left(  \left(  \underbrace{f\left(  X_{\gamma_{j},\ i}\right)  }%
_{\substack{=h_{\alpha_{i}+\gamma_{j},\ i}\\\text{(by the definition of
}f\text{)}}}\right)  _{i,j\in\left[  n\right]  }\right) \\
&  =\sum_{\gamma\in\mathbb{Z}^{n}}\lambda_{\gamma}\operatorname*{rowdet}%
\left(  \left(  h_{\alpha_{i}+\gamma_{j},\ i}\right)  _{i,j\in\left[
n\right]  }\right)  .
\end{align*}
In other words, (\ref{eq.prop.pre-LR.symm.claim}) holds.

Thus, we have found a family $\left(  \lambda_{\gamma}\right)  _{\gamma
\in\mathbb{Z}^{n}}$ of coefficients $\lambda_{\gamma}\in\mathbb{Z}$ such that
all but finitely many $\gamma\in\mathbb{Z}^{n}$ satisfy $\lambda_{\gamma}=0$,
and such that (\ref{eq.prop.pre-LR.symm.claim}) holds. The proof of
Proposition \ref{prop.pre-LR.symm} is thus complete.
\end{proof}

\end{document}